\documentclass[3p]{elsarticle}
\usepackage{amscd, amssymb, amsthm}
\usepackage[all]{xy}
\usepackage{color}
\usepackage{url}
\usepackage{amscd, amssymb, amsfonts, amsthm, amsmath}
\usepackage{mathdots}
\usepackage[breaklinks=true]{hyperref}
\usepackage{multicol}
\usepackage{yfonts}

\usepackage[normalem]{ulem}

\usepackage[labelsep=endash]{caption}

\numberwithin{equation}{section}
\usepackage[labelsep=endash]{caption}

\makeatletter
\def\ps@pprintTitle{%
  \let\@oddhead\@empty
  \let\@evenhead\@empty
  \let\@oddfoot\@empty
  \let\@evenfoot\@oddfoot
}
\makeatother

\newcommand{\soc}{\mbox{\rm{Soc}}}
\newcommand{\topp}{\mbox{\rm{Top}}}
\newcommand{\length}{\mbox{\rm{l}}}
\newcommand{\Llength}{\mbox{\rm{ll}}}
\newcommand{\start}{\mbox{\rm{s}}}
\newcommand{\target}{\mbox{\rm{t}}}
\newcommand{\gl}{\mbox{\rm{gldim}}}
\newcommand{\fin}{\mbox{\rm{findim}}}
\newcommand{\prj}{\mbox{\rm{P}}}
\newcommand{\fidim}{\mbox{\rm{$\phi$dim}}}
\newcommand{\psidim}{\mbox{\rm{$\psi$dim}}}
\newcommand{\findim}{\mbox{\rm{findim}}}
\newcommand{\repdim}{\mbox{\rm{repdim}}}
\DeclareMathOperator{\Ob}{Ob}
\def\ker{\mbox{\rm{Ker}}}
\def\I{\mbox{\rm{Im}}}
\def\mod{\mbox{\rm{mod}}}
\def\ind{\mbox{\rm{ind}}}
\def\add{\mbox{\rm{add}}}

\def\pd{\mbox{\rm{pd}}}
\def\id{\mbox{\rm{id}}}
\def\mini{\mbox{\rm{min}}}
\def\ext{\mbox{\rm{Ext}}}

\def\hom{\mbox{\rm{Hom}}}
\def\enn{\hbox{\rm{End}}}

\def\rad{\hbox{\rm{rad}}}
\def\Obj{\mbox{\rm{Obj}}}
\def\fpd{\mbox{\rm{fpd}}}
\def\fid{\mbox{\rm{fid}}}
\def\Rep{\mbox{\rm{Rep}}}
\def\Fact{\mbox{\rm{Fact}}}
\def\st{\mbox{\rm{s}}}
\def\tg{\mbox{\rm{t}}}

\begin{document}
\newcommand{\mono}[1]{%
\gdef\puA{#1}}
\newcommand{\puA}{}
\newcommand{\faculty}[1]{%
\gdef\puC{#1}}
\newcommand{\puC}{}
\newcommand{\facultad}[1]{%
\gdef\puD{#1}}
\newcommand{\puD}{}
\newcommand{\N}{\mathbb{N}}
\newcommand{\Z}{\mathbb{Z}}
\newtheorem{teo}{Theorem}[section]
\newtheorem{prop}[teo]{Proposition}
\newtheorem{lema}[teo] {Lemma}
\newdefinition{ej}[teo]{Example}
\newtheorem{obs}[teo]{Remark}
\newtheorem{defi}[teo]{Definition}
\newtheorem{coro}[teo]{Corollary}
\newtheorem{nota}[teo]{Notation}



\title{A survey on Igusa-Todorov functions}

\author[add]{Marcos Barrios}
\ead{marcosb@fing.edu.uy} 

\author[add]{Marcelo Lanzilotta}
\ead{marclan@fing.edu.uy}

\author[add]{Gustavo Mata\corref{cor}}
\ead{gmata@fing.edu.uy}
\cortext[cor]{Corresponding Author}

\address[add]{Universidad de La Rep\'ublica, Facultad de Ingenier\'ia -  Av. Julio Herrera y Reissig 565, Montevideo, Uruguay}

\begin{abstract}
In this survey, we review the fundamental properties of the Igusa-Todorov functions, the $\phi$-dimension, the $\psi$-dimension and their generalizations. 

\end{abstract}

\begin{keyword}Igusa-Todorov function, finitistic dimension, homological dimension.\\
2010 Mathematics Subject Classification. Primary 16E05, 16E10, 16E30, 16E35, 16E65. Secondary 16G10, 18G10, 18G15, 18G20, 18G25.
\end{keyword}

\maketitle

\section{Introduction}
In an attempt to prove the finitistic dimension conjecture, Igusa and Todorov defined in \cite{IT} two homological functions from the objects of $\mod A$ (the category of right finitely generated modules over an Artin algebra $A$) to the natural numbers, which generalizes the notion of projective dimension. Using these functions, they showed that the finitistic dimension of Artin algebras with representation dimension at most three is finite. Nowadays, these functions are known as the Igusa-Todorov functions, usually denoted by $\phi$ and $\psi$.
Several recent works dedicated to the studying of these functions and their associated dimensions show the growing relevance of the subject.
For example, the functions were used as a tool to give a proof of the finitistic dimension conjecture for some 
families of algebras (see e.g. \cite{HLM09, IT, We, WX, Xi, Xu}).
In \cite{HLM09}, the authors show that given an Artin algebra $A$, such that its module category has at most three radical layers of infinite projective dimension, then $A$ has finite finitistic dimension.
In \cite{We}, the author proves that Igusa-Todorov algebras verifies the finitistic dimension conjecture using the $\psi$ function. In \cite{WX} and \cite{Xi}, the authors show that under specific hypotheses, for a given finite finitistic dimensional algebras, some families of subalgebras have finite finitistic dimension.
Finally, in \cite{Xu}, the author introduces a new function from the bounded derived category of a finite dimensional algebra over a field to the set of natural numbers, which is a generalized version of the Igusa-Todorov function. As an application, he gives a new proof of the finiteness of the finitistic dimension of special biserial algebras.

We also believe that the study of the Igusa-Todorov functions is of intrinsic interest, motivated by diverse articles on the subject (see e.g. \cite{BMR, FLM, HL, LM, LM2017, LMM, M, Q}).

\section{Preliminaries}

Throughout this article, we work with connected Artin algebras. Let $A$ be a connected Artin R-algebra (for instance, a finite dimensional basic algebra defined over a field $\Bbbk$). The category of finite dimensional right $A$-modules will be denoted by $\mod A$, the indecomposable modules of $A$ by $\ind A$, and the set of isoclasses of simple $A$-modules by $\mathcal{S} (A)$ (for short $\mathcal{S}$).

For an $A$-module $M$, $\add M \subset \mod A$ is the full subcategory formed by direct sums of the direct summands of $M$, and we denote by $\soc(M)$ its socle, by $\topp(M)$ its top, and by $\Llength(M)$ its Loewy length. In particular, we use the notation $A_0 = \topp(A)$. We also denote by $P(M)$ and $I(M)$ the projective cover and the injective envelope of the $A$-module $M$, respectively. $\mathcal{P}_A$ ($\mathcal{I}_A$) is the full subcategory of projective (injective) modules of $\mod A$. We denote by $\mathcal{S}_P$, by $ \mathcal{S}_I$ the projective simple modules and the injective simple modules, respectively and by $\mathcal{S}_D = \mathcal{S}\setminus (\mathcal{S}_P \cup \mathcal{S}_I)$.

Given $M$ in $\mod A$, we denote its projective dimension by $\pd_A(M)$, its injective dimension by $\id_A(M)$, and the $n^{th}$-syzygy by $\Omega_A^n(M)$. In case there is no possible misinterpretation, we denote them by $\pd (M)$, $\id (M)$, and $\Omega^n(M)$, respectively. We recall that the global dimension (finitistic dimension) of a subcategory $\mathcal{X} \subset \mod A$, which we denote by $\gl(\mathcal{X})$ ($\fin(\mathcal{X})$), is the supremum  of the set of projective dimensions of $A$-modules in $\mathcal{X}$ (of finite projective dimension). The global dimension can be a natural number or infinite. In case $\mathcal{X} = \mod A$ we use the notations $\gl (A)$ and $\fin (A)$ instead of $\gl (\mod A)$ and $\fin (\mod A)$.  We recall that $\repdim(A) \leq n$ if there is a f.g. module $M$ such that $\gl(\enn_A(M)^{op}) \leq n$ and $\add (M)$ contains all projective and all injective $A$-modules.

Let $M$ and $N$ be in $\mod A$, an homomorphism $M \stackrel{f}{\rightarrow} N$ of $A$-modules is called a radical map if $gfh$ is not an isomorphism for any $N \stackrel{g}{\rightarrow} X$ and $X \stackrel{h}{\rightarrow} M$ with $X$ an indecomposable A-module. The set $\rad_A(M,N)$ is the subset of $\hom_A(M,N)$ formed by all radical maps. 

We denote by $\mathcal{C}_A$ the abelian category of all $R$-functors $F : \mod A \rightarrow \mod R $. Let $\ext_A^n(M,\ \cdot \ )$ be the $n^{th}$-left derived functor of $\hom_A (M,\ \cdot \ )$. We denote by $ ^\bot A$ the full subcategory of $\mod A$, also called stable modules (see \cite{AB}), whose objects are those $M \in \mod A$ such that $\ext_A^i(M, A) = 0$ for $i\geq 1$ and by $\mathcal{G} P_A$ the full subcategory of $\mod A$ whose objects are the Gorenstein projective modules. It is well known that $\mathcal{G} P_A$ is a subcategory of $ ^\bot A$, see \cite{Zh}.

Let $\mathcal{C}$ be an additive category, a complex $P^{\bullet}$ over $\mathcal{C}$ is a sequence of morphisms $d_{P^{\bullet}}^i$  between objects $P^i$ in $\mathcal{C}$ : $\xymatrix{ \cdots \ar[r]^{d_{P^{\bullet}}^{i-1}} & P^{i-1} \ar[r]^{d_{P^{\bullet}}^i} & P^{i}\ar[r]^{d_{P^{\bullet}}^{i+1}} & P^{i+1} \ar[r] & \cdots}$ such that $d_{P^{\bullet}}^{i} d_{P^{\bullet}}^{i-1} = 0$. For a complex $P^{\bullet}$, $[1]$ denotes the shift functor, that is, $P^{\bullet} [1]^j = P^{j+1}$ and $d_{P^{\bullet} [1]}^j = -d_{P^{\bullet}}^{j+1}$ for any $j \in \mathbb{Z}$, and $\tau_{\leq i} (P^{\bullet})$ denotes a new complex defined by $\tau_{\leq i} (P^{\bullet} )^j = P^j$ for all $j \leq i$, and zero otherwise. The category of complexes over $\mathcal{C}$ with chain maps is denoted by $C(\mathcal{C})$. The homotopy category of complexes over $\mathcal{C}$ is denoted by $\mathcal{K}(\mathcal{C})$. The full subcategory of $\mathcal{K} (\mathcal{C})$ consisting of bounded above (resp. bounded) complexes over $\mathcal{C}$ is denoted by $\mathcal{K}^{-} (\mathcal{C})$ (resp. $\mathcal{K}^b (\mathcal{C})$). The full subcategory of $\mathcal{K}^{-} (\mathcal{C})$ consisting of complexes over $\mathcal{C}$ with only finitely many nonzero cohomologies is denoted by $\mathcal{K}^{-,b} (\mathcal{C})$. For a finite dimensional algebra $A$, $\mathcal{D}(\mod A)$ and $\mathcal{D}^{b}(\mod A)$ are the derived category and the derived category of bounded complexes over $\mod A$ respectively. An object $X$ in $\mathcal{D}(\mod A)$ is said to be compact if $Hom_{\mathcal{D}(\mod A)} (X,\ \cdot)$ commutes with direct sums. It is well known that up to isomorphism the objects in $\mathcal{K}^b(\mathcal{P}_A)$ are precisely all the compact objects in $\mathcal{D}(\mod A)$ (see \cite{BN}).

Consider $\mathcal{C}$ an abelian category and $X^{\bullet}$ a complex over $\mathcal{C}$. The $i$-th cohomology group is
denoted by $H^i (X^{\bullet})$. A morphism $f : X^{\bullet} \rightarrow Y^{\bullet}$ of $C (\mathcal{C})$ is a quasi-isomorphism if the induced morphisms $H^i (f) : H^i (X^{\bullet}) \rightarrow H^i (Y^{\bullet})$ are isomorphisms for
all $i$, and we say that $X^{\bullet}$ is quasi-isomorphic to $Y^{\bullet}$ in this case.

A complex $P^{\bullet}$ over $\mod A$ is called a radical complex if all of its differential maps are radical maps. For convenience, we write $\mathcal{RK}^{\leq 0}(\mathcal{P}_A) = \{ P^{\bullet} \in \mathcal{K}^{-,b}(\mathcal{P}_A) \text{: } P^{\bullet} \text{ is a radical complex with } P^j = 0, \text{ for each } j \geq 1\}$.

\begin{obs} \label{Remark1}
Two radical complexes are isomorphic in $\mathcal{K} (\mod A)$ if and
only if they are isomorphic in $C (\mod A)$.
\end{obs}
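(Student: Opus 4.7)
The forward direction is trivial, since any isomorphism in $C(\mod A)$ automatically descends to an isomorphism in $\mathcal{K}(\mod A)$. So the substantive content is to show that if two radical complexes $P^\bullet$ and $Q^\bullet$ become isomorphic in $\mathcal{K}(\mod A)$, then the chain maps witnessing this are already honest isomorphisms in $C(\mod A)$. My plan is to start from a homotopy equivalence $f : P^\bullet \to Q^\bullet$ with homotopy inverse $g : Q^\bullet \to P^\bullet$, and show that every component $f^i$ is an isomorphism of $A$-modules.

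The key observation is the following: pick homotopies $s^i : P^i \to P^{i-1}$ and $t^i : Q^i \to Q^{i-1}$ such that
\[
(gf)^i - 1_{P^i} = d_P^{i-1} s^i + s^{i+1} d_P^i, \qquad (fg)^i - 1_{Q^i} = d_Q^{i-1} t^i + t^{i+1} d_Q^i.
\]
Since $P^\bullet$ and $Q^\bullet$ are radical complexes, every $d_P^j$ and $d_Q^j$ is a radical map. Any composition with a radical map is again a radical map, and the sum of two radical maps between the same modules is radical. Therefore $(gf)^i - 1_{P^i}$ lies in $\rad_A(P^i, P^i)$ and $(fg)^i - 1_{Q^i}$ lies in $\rad_A(Q^i, Q^i)$.

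Now I would invoke the standard fact, valid in any Krull--Schmidt category, that $\rad_A(M,M)$ coincides with the Jacobson radical of the endomorphism ring $\enn_A(M)$. In particular, any endomorphism of the form $1_M + r$ with $r$ radical is invertible. Applying this to $M = P^i$ and $M = Q^i$, we conclude that $(gf)^i$ and $(fg)^i$ are isomorphisms for every $i$. Standard algebra then forces each $f^i$ to be both a split monomorphism and a split epimorphism, hence an isomorphism of $A$-modules. Consequently $f$ is an isomorphism in $C(\mod A)$.

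The proof is essentially a two-line argument once one has the right framework in place. The main point of care -- and the only place where one needs to invoke something nontrivial -- is the identification of $\rad_A(M,M)$ with the Jacobson radical of $\enn_A(M)$, which requires the Krull--Schmidt property that is available for $\mod A$ with $A$ an Artin algebra. All other ingredients (compositional and additive closure of radical maps, lifting of invertibility through splitting) are formal manipulations.
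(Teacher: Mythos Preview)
Your argument is correct and is exactly the standard proof of this fact. The paper itself states Remark~\ref{Remark1} without proof, treating it as a well-known observation, so there is no in-paper argument to compare against; your write-up would serve perfectly well as the missing justification.
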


Thanks to Remark \ref{Remark1}, we can define the projective dimension for an object $P^{\bullet}$ in $\mathcal{K}^{-} (\mathcal{P}_A)$, $ \pd (P^{\bullet}) = \sup \{n \text{: } Q^{-n} \not =0\text{, } Q^{\bullet} \text{ is a radical complex in } \mathcal{K}^{-} (\mathcal{P}_A) \text{ and } Q^{\bullet} \simeq P^{\bullet} \text{ in } \mathcal{K}^{-} (\mathcal{P}_A) \}$ and $\pd (P^{\bullet}) = 0$ if $P^{\bullet} = 0$ in $\mathcal{K}^{-} (\mathcal{P}_A)$.

\begin{obs}\label{Remark2}
Let $X^{\bullet}$ be a bounded above complex over $\mod A$. It is known that there exists a radical complex $P_{X^{\bullet}}^{\bullet} \in \mathcal{K}^{-} (\mathcal{P}_A)$ such that $P_{X^{\bullet}}^{\bullet} $ and $X^{\bullet}$
are quasi-isomorphic.
\end{obs}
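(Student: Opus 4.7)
The strategy is a two-step construction: first produce an honest projective resolution $Q^{\bullet} \to X^{\bullet}$ in $\mathcal{K}^{-}(\mathcal{P}_A)$, and then modify it to a radical complex by splitting off contractible summands, using that $\mod A$ is Krull--Schmidt.

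First I would build a quasi-isomorphism $f^{\bullet}: Q^{\bullet} \to X^{\bullet}$ with $Q^{\bullet} \in \mathcal{K}^{-}(\mathcal{P}_A)$. Assuming $X^j = 0$ for $j > N$, set $Q^j = 0$ for $j > N$ and proceed by descending induction. Having built the truncation $\tau_{\geq -k+1}(Q^{\bullet})$ together with a chain map lifting $f$ and inducing isomorphisms on cohomology above degree $-k+1$, one chooses a projective cover of the appropriate pullback module (the fibered product controlling both the cycle condition $d^{-k+1}d^{-k}=0$ and the matching of $H^{-k+1}$), and this furnishes $Q^{-k}$, the differential, and $f^{-k}$. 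This is the standard construction that underlies the equivalence $\mathcal{K}^{-}(\mathcal{P}_A) \xrightarrow{\sim} \mathcal{D}^{-}(\mod A)$, and is essentially bookkeeping.

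The main step is replacing $Q^{\bullet}$ by an isomorphic radical complex in $\mathcal{K}^{-}(\mathcal{P}_A)$. If some differential $d^i$ is not radical, the Krull--Schmidt property of $\mod A$ yields indecomposable direct summands $U \subseteq Q^i$ and $V \subseteq Q^{i+1}$ such that the composition $U \hookrightarrow Q^i \xrightarrow{d^i} Q^{i+1} \twoheadrightarrow V$ is an isomorphism $\phi$. Writing $Q^i = U \oplus U'$, $Q^{i+1} = V \oplus V'$ and expressing $d^i$ as a $2 \times 2$ matrix with upper-left entry $\phi$, elementary row and column operations (of the form $\mathrm{id} \pm \phi^{-1}(\cdot)$) diagonalize $d^i$; the relation $d^{i+1}d^i=0$ forces the resulting complex to split in $C(\mod A)$ as the contractible summand $\cdots \to 0 \to U \xrightarrow{\sim} V \to 0 \to \cdots$ plus a smaller complex $Q'^{\bullet} \in \mathcal{K}^{-}(\mathcal{P}_A)$. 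Splitting this off preserves the isomorphism class of $Q^{\bullet}$ in $\mathcal{K}(\mod A)$, hence the quasi-isomorphism with $X^{\bullet}$.

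Finally I would iterate from top to bottom: for each pair $(i,i+1)$, in finitely many steps (since $Q^i$ has finite length over $R$) one reaches a radical $d^i$. The subtle point, and the place where I expect to spend the most care, is verifying that processing the pair $(i-1,i)$ does not destroy radicality of $d^i$: the base changes at level $(i-1,i)$ only replace $Q^i$ by a direct summand, and restrictions of radical maps to direct summands remain radical, so $d^j$ stays radical for all $j \geq i$ once it has been made radical. Processing the pairs $(N-1,N),(N-2,N-1),\dots$ in order therefore converges degree-by-degree to a radical complex $P_{X^{\bullet}}^{\bullet} \in \mathcal{K}^{-}(\mathcal{P}_A)$ quasi-isomorphic to $X^{\bullet}$, as required.
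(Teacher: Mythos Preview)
The paper does not actually prove this remark: it is stated as background, prefaced by ``It is known that\ldots'', with no argument given (this is a survey, and the existence of radical projective resolutions is being cited as a standard fact). So there is no paper proof to compare against.

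Your sketch is the standard two-step argument and is essentially correct. One small point worth tightening: when you split off a contractible summand at positions $(i-1,i)$, the row operations you perform on $d^{i-1}$ amount to composing $d^i$ on the right with an automorphism of $Q^i$ before restricting to the complement $V'$. You argue that ``restrictions of radical maps to direct summands remain radical'', which is true, but you should also note (or at least be aware) that precomposing a radical map with an automorphism keeps it radical---this is immediate since $\rad_A(-,-)$ is an ideal of the category, but it is the actual reason the already-processed $d^i$ survives the base change, not merely the restriction. With that clarification your convergence argument is sound: the complex may be unbounded below, but each degree stabilizes after finitely many steps, so the limit is a well-defined radical complex in $\mathcal{K}^{-}(\mathcal{P}_A)$ isomorphic to $Q^{\bullet}$ in $\mathcal{K}(\mod A)$ and hence quasi-isomorphic to $X^{\bullet}$.
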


We can define $\pd (X^{\bullet}) = \pd (P_{X^{\bullet}}^{\bullet})$ and $\sup (X^{\bullet}) = -\max\{i: P^{i}_{X^{\bullet}} \not = 0  \}$ by Remark \ref{Remark2}. Finally, the homological width of $X^{\bullet}$ is defined as $w(X^{\bullet}) = -\sup(X^{\bullet} ) + \pd(X^{\bullet})$.\\

The proofs of several results through this article use the following version of the Fitting's lemma.
\begin{lema}\label{Fitting}(Fitting's Lemma) Let $R$ be a noetherian ring. Consider a left $R$-module $M$
and $f \in End_R (M)$. Then, for any finitely generated $R$-submodule $X$ of $M$, there is a non-negative integer
$$\eta_f(X) = \min\{k \in N : f |_{f^m (X)} : f^m (X) \rightarrow f^{m+1} (X),\text{ are isomorphic } \forall m \geq k\}.$$

Furthermore, for any $R$-submodule $Y$ of $X$, we have that $\eta_f (Y) \leq \eta_f (X)$.

If $R$ is an Artin algebra and $X = M$ there is a direct sum decomposition $X = Y \oplus Z$ so that $Z = \ker f^m$ and $Y = \I f^m$  for all $m \geq \eta_f (X)$.
\end{lema}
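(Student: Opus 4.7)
The plan is to handle the three assertions in turn, using the common thread that whenever $X$ is a finitely generated $R$-submodule of $M$, the noetherianness of $R$ makes $X$ a noetherian module, so every ascending chain of submodules of $X$ stabilizes.

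For the first claim, set $K_m = \{x \in X : f^m(x) = 0\}$ and observe that $f|_{f^m(X)} : f^m(X) \to f^{m+1}(X)$ is surjective by construction, so it is an isomorphism if and only if it is injective. A direct computation (write $z = f^m(x) \in f^m(X)$ and ask when $f(z) = 0$) yields
\[
f|_{f^m(X)} \text{ is injective} \iff K_{m+1} \subseteq K_m \iff K_m = K_{m+1},
\]
and a short propagation argument shows that $K_m = K_{m+1}$ then forces $K_m = K_{m+j}$ for all $j \geq 0$. Since the ascending chain $K_1 \subseteq K_2 \subseteq \cdots$ must stabilize inside the noetherian module $X$, the smallest index past which it is constant is by definition $\eta_f(X)$. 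The monotonicity statement is then immediate: if $Y \subseteq X$, the kernel chain for $Y$ equals $K_m \cap Y$, so it stabilizes no later than $K_m$ itself, giving $\eta_f(Y) \leq \eta_f(X)$.

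For the direct sum decomposition, assume $R$ is Artin and $X = M$, so that $M$ has finite length and hence also satisfies DCC. Fix $m \geq \eta_f(M)$. The isomorphism $f|_{f^m(M)} : f^m(M) \to f^{m+1}(M)$ places two modules of equal finite length into the inclusion $f^{m+1}(M) \subseteq f^m(M)$, forcing $f^m(M) = f^{m+1}(M)$ and hence $f^m(M) = f^{2m}(M)$. Together with $K_m = K_{2m}$, this gives both $\I(f^m) \cap \ker(f^m) = 0$ (if $y = f^m(x)$ and $f^m(y) = 0$ then $x \in K_{2m} = K_m$, so $y = 0$) and $\I(f^m) + \ker(f^m) = M$ (given $x \in M$, pick $x'$ with $f^m(x) = f^{2m}(x')$ and decompose $x = f^m(x') + (x - f^m(x'))$). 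The main subtlety lies precisely in the step from $f^m(M) \cong f^{m+1}(M)$ to the set-theoretic equality $f^m(M) = f^{m+1}(M)$: this is the point where the DCC coming from the Artin hypothesis, rather than the mere noetherianness used for the first two parts, becomes essential.
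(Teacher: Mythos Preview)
The paper does not supply a proof of this lemma; it is quoted as a standard fact and used as a black box throughout. Your argument is the usual one and is essentially correct, but one claim you make along the way is actually false and should be dropped.

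The problematic step is the ``propagation'' assertion that $K_m = K_{m+1}$ forces $K_m = K_{m+j}$ for all $j \geq 0$. This is true when $f(X) \subseteq X$ (in particular when $X = M$), since then $f(x)$ again lies in $X$ and the obvious induction goes through. For a general finitely generated submodule $X \subseteq M$ it can fail: take $R$ a field, $M$ the $R$-vector space with basis $\{t^i : i \geq 0\}$, $f$ the shift $t^i \mapsto t^{i-1}$ (with $f(1) = 0$), and $X = R\cdot 1 \oplus R\cdot t^2$. Then $K_1 = K_2 = R\cdot 1$ but $K_3 = X$. Fortunately your proof does not actually need propagation. Once the ascending chain $K_0 \subseteq K_1 \subseteq \cdots$ stabilises in the noetherian module $X$, your equivalence $\big(f\vert_{f^m(X)}\text{ injective} \Leftrightarrow K_m = K_{m+1}\big)$ identifies the stabilisation index with $\eta_f(X)$, and the monotonicity $\eta_f(Y) \leq \eta_f(X)$ follows exactly as you wrote via $K_m^Y = K_m \cap Y$. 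Your Fitting decomposition for $X = M$ is correct and standard; note that the inclusion $f^{m+1}(M) \subseteq f^m(M)$ you invoke there is immediate precisely because $f(M) \subseteq M$, which is exactly the ingredient that is missing in the general propagation argument.
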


If $Q = (Q_0, Q_1, \start, \target)$ is a finite connected quiver, $\Bbbk Q$ denotes its path algebra. Given $\rho$ a path in $\Bbbk Q$, $\length(\rho)$, $\start(\rho)$ and $\target(\rho )$ denote the length, start and target of $\rho$ respectively, as well $\mathbb{P}^k$ ($\mathbb{P}^{\geq k}$) denotes the set of paths of length equal (greater or equal) to $k$, for $k \in \mathbb{Z}^{+}$. We say that a quiver $Q$ is strongly connected if, for every pair of vertices $v, w \in Q_0$, there is a path from $v$ to $w$. $J_Q$ denotes the ideal of $\Bbbk Q$ generated by the arrows of $Q$. In case there is no possible misinterpretation, we denote it by $J$. $C^m$ denotes the oriented cycle graph with $m$ vertices.

\begin{defi}
Given $A$ a finite dimensional basic algebra, we denote the rule $\nu$ by
\begin{align*}
\nu:\mathcal{S}(A)&\rightarrow \add(\topp(A))\\
S&\mapsto\soc(\prj(S)).
\end{align*}
\end{defi}
The following theorem due to Nakayama will be useful.
\begin{teo}(\cite{N})\label{Nakayama}
Let $A$ be a finite dimensional basic algebra. The following statements are equivalent.
\begin{enumerate}
\item  $A$ is selfinjective.
\item The rule $S \rightarrow \soc(\prj(S))$, defines a permutation $\nu :\mathcal{S} (A) \rightarrow \mathcal{S}(A).$
\end{enumerate}  
\end{teo}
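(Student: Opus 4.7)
The proof splits along the two implications and relies on the standard fact that an indecomposable injective module over an Artin algebra has an essential simple socle (so that it coincides with the injective envelope of that simple socle).

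For $(1)\Rightarrow(2)$, assume $A$ is selfinjective. Decompose $A_A=\bigoplus_{S\in\mathcal{S}(A)} \prj(S)$; each $\prj(S)$ is a direct summand of an injective module, hence indecomposable injective, so $\nu(S)=\soc(\prj(S))$ is simple and $\nu$ does map into $\mathcal{S}(A)$. If $\nu(S)\cong\nu(S')$, then $\prj(S)$ and $\prj(S')$ are both injective envelopes of this common simple module, so they are isomorphic, forcing $S\cong S'$. Finiteness of $\mathcal{S}(A)$ then promotes injectivity to surjectivity, and $\nu$ is a permutation.

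For $(2)\Rightarrow(1)$, assume $\nu$ is a permutation. For each $S\in\mathcal{S}(A)$, the inclusion $\nu(S)=\soc(\prj(S))\hookrightarrow I(\nu(S))$ extends, by injectivity of $I(\nu(S))$, to a morphism $\phi_S\colon \prj(S)\to I(\nu(S))$. Since $\phi_S$ is the identity on $\soc(\prj(S))$ and the socle is essential in $\prj(S)$, one has $\ker\phi_S=0$, so $\phi_S$ is a monomorphism. To upgrade each $\phi_S$ to an isomorphism, compare lengths over the base commutative Artin ring $R$: since $A$ is basic, the standard $R$-duality $D$ yields $A_A=\bigoplus_{S}\prj(S)$ and $D(A)_A=\bigoplus_{S} I(S)$ with each summand appearing exactly once, so
\begin{equation*}
\sum_{S\in\mathcal{S}(A)}\ell_R(\prj(S))=\ell_R(A)=\ell_R(D(A))=\sum_{S\in\mathcal{S}(A)}\ell_R(I(S))=\sum_{S\in\mathcal{S}(A)}\ell_R(I(\nu(S))),
\end{equation*}
the last equality using that $\nu$ permutes $\mathcal{S}(A)$. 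Combined with the termwise inequalities $\ell_R(\prj(S))\leq\ell_R(I(\nu(S)))$ provided by the monomorphisms $\phi_S$, this forces pointwise equality. Hence each $\phi_S$ is an isomorphism, each $\prj(S)$ is injective, and $A_A$ is injective.

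The only step requiring genuine care is the decomposition $D(A)_A=\bigoplus_S I(S)$ with multiplicity one; this is a consequence of $A$ being basic, which forces each simple to appear once in $\topp(A)$ and dually each indecomposable injective to appear once in the socle of $D(A)$. Once this is in hand, the rest of the argument is essentiality of the socle plus bookkeeping of lengths.
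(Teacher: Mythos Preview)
The paper does not supply its own proof of this theorem; it merely states the result and attributes it to Nakayama \cite{N}. There is therefore no argument in the paper to compare your proposal against.

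That said, your proof is correct and is essentially the standard modern argument. The direction $(1)\Rightarrow(2)$ is immediate once one knows that indecomposable injectives have simple socle. For $(2)\Rightarrow(1)$, your key device---embedding each $\prj(S)$ into $I(\nu(S))$ via an essential-socle argument and then forcing these embeddings to be isomorphisms by a global length count against $D(A)$---is clean and complete. The point you flag as delicate, namely that $D(A)_A\cong\bigoplus_S I(S)$ with each indecomposable injective appearing exactly once, is indeed where basicness of $A$ enters: it guarantees that $\topp({}_AA)$ is multiplicity-free, whence $\soc(D(A)_A)=D(\topp({}_AA))$ is multiplicity-free as well. One small wording issue: when you write ``the inclusion $\nu(S)=\soc(\prj(S))\hookrightarrow I(\nu(S))$ extends\ldots'', the map being extended is the inclusion $\nu(S)\hookrightarrow I(\nu(S))$ along the monomorphism $\nu(S)\hookrightarrow \prj(S)$; this is clear from context but could be phrased more carefully.
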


In case $A$ is a selfinjective algebra, we call $\nu$ the {\bf Nakayama Permutation of $A$}.

\section{Igusa-Todorov functions} 
\subsection{First concepts}
We recall the definition of $\phi$ and $\psi$, see \cite{IT}, the Igusa-Todorov functions (from now on IT-functions), and some basic properties of them. We also remind the definition of the $\phi$-dimension and $\psi$-dimension for any subcategory of $\mod A$.

\begin{defi}(\cite{IT})
Let  $K_0(A)$ be the abelian group generated by all symbols $[M]$, with $M \in \mod A$, modulo the relations
\begin{enumerate}
  \item $[M]-[M']-[M'']$ if  $M \cong M' \oplus M''$,
  \item $[P]$ for each projective module $P$.
\end{enumerate}
\end{defi}

Let $\bar{\Omega}: K_0 (A) \rightarrow K_0 (A)$ be the group endomorphism induced by $\Omega$, and let $K_i (A) = \bar{\Omega}(K_{i-1}(A))= \ldots = \bar{\Omega}^{i}(K_{0} (A))$. If $M$ is a finitely generated $A$-module, then $\langle \add M \rangle$ denotes the subgroup of $K_0 (A)$ generated by the classes of indecomposable summands of $M$.
Given a finitely generated subgroup $G$ of $K_0(A)$, we define $\eta_{\bar{\Omega}}(G)$ as in Lemma \ref{Fitting} (Fitting's Lemma). 


\begin{defi}(\cite{IT}) \label{monomorfismo}
The (right) Igusa-Todorov function $\phi_A$ of $M\in \mod A$  is defined as:

\[\phi_{A}(M) := \eta_{\bar{\Omega}}(\langle \add M \rangle)=  \min\left\{l: \bar{\Omega}_{A} {\vert}_{{\bar{\Omega}_{A}}^{l+s}(\langle \add M \rangle)} \text{ is a monomorphism, for all }s \geq 0 \right\} .\]
\end{defi}

We define dually the left Igusa-Todorov $\phi$ function for an Artin algebra $A$ as the right Igusa-Todorov function $\phi_{A^{op}}$. In case there is no possible misinterpretation, we denote by $\phi$ the Igusa-Todorov function $\phi_A$.\\

As we already observed, Igusa and Todorov defined in \cite{IT} this  homological function from the objects of $\mod A$ to the natural numbers, which generalizes the notion of projective dimension. The first item in the following proposition shows this property.

\begin{prop}(Lemma 3.4 of \cite{HLM} and Lemma 1.2 \cite{IT}) \label{it1} \label{Huard1}
If $M,N\in\mod A$, then we have the following.

\begin{enumerate}
  \item $\phi(M) = \pd (M)$ if $\pd (M) < \infty$.
  \item $\phi(M) = 0$ if $M \in \ind A$ and $\pd(M) = \infty$.
  \item $\phi(M) \leq \phi(M \oplus N)$.
  \item $\phi\left(M^{k}\right) = \phi(M)$ for $k \geq 1$.
  \item $\phi(M) \leq \phi(\Omega(M))+1$.
\end{enumerate}
\begin{proof}
For statements 1-4 see \cite{IT}, and for 5 see \cite{HLM}.
\end{proof}
\end{prop}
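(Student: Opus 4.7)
The plan is to deduce all five items directly from the definition $\phi(M) = \eta_{\bar\Omega}(\langle \add M\rangle)$ via Fitting's Lemma (Lemma \ref{Fitting}), using two basic observations: first, $\langle \add M\rangle$ depends only on the set of non-projective indecomposable summands of $M$; second, by construction $K_0(A)$ is a free abelian group on the classes of non-projective indecomposable modules, so a class $[N]$ vanishes in $K_0(A)$ if and only if $N$ is projective.

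Items (3) and (4) are formal. For (4), $\add(M^k) = \add M$, hence $\langle \add M^k\rangle = \langle \add M\rangle$ and Fitting's stabilization index is unchanged. For (3), the inclusion $\langle \add M\rangle \subseteq \langle \add(M\oplus N)\rangle$ combined with the ``Furthermore'' clause of Lemma \ref{Fitting} gives $\phi(M) \le \phi(M\oplus N)$. For (2), if $M$ is indecomposable non-projective with $\pd(M) = \infty$, then $\langle \add M\rangle = \mathbb{Z}[M]$ and each iterate $\bar\Omega^s[M] = [\Omega^s M]$ is nonzero since $\Omega^s M$ still has infinite projective dimension; as $K_0(A)$ is torsion-free, $\bar\Omega$ is injective on each cyclic image, so $\phi(M) = 0$.

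For (1), set $d = \pd(M) < \infty$. Every indecomposable summand $N$ of $M$ satisfies $\pd(N) \le d$, so $\bar\Omega^d[N] = [\Omega^d N] = 0$; hence $\bar\Omega^d\langle \add M\rangle = 0$ and $\phi(M) \le d$ trivially. For the reverse inequality, pick an indecomposable summand $N_0$ of $M$ with $\pd(N_0) = d$. The element $\bar\Omega^{d-1}[N_0] = [\Omega^{d-1} N_0]$ belongs to $\bar\Omega^{d-1}\langle \add M\rangle$, is nonzero because $\Omega^{d-1} N_0$ is non-projective, and is sent to $[\Omega^d N_0] = 0$ by $\bar\Omega$; therefore the restriction of $\bar\Omega$ has a nontrivial kernel and fails to be a monomorphism, forcing $\phi(M) \ge d$.

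For (5), the decisive observation is that if $M = \bigoplus M_i$ decomposes into indecomposables, then the minimal syzygy satisfies $\Omega M = \bigoplus \Omega M_i$, so $\bar\Omega\langle \add M\rangle \subseteq \langle \add \Omega M\rangle$. Setting $l = \phi(\Omega M)$, for every $s \ge 0$ we obtain $\bar\Omega^{(l+1)+s}\langle \add M\rangle \subseteq \bar\Omega^{l+s}\langle \add \Omega M\rangle$, and since a monomorphism restricts to a monomorphism on any subgroup, the definition of $\phi(\Omega M)$ gives $\phi(M) \le l + 1$. The main technical point I expect to be delicate is the lower bound in (1): one has to invoke the freeness of $K_0(A)$ on classes of non-projective indecomposables in order to rule out accidental cancellations that could make $[\Omega^{d-1} N_0]$ vanish — without this structural fact, the argument that $\bar\Omega$ fails injectivity on $\bar\Omega^{d-1}\langle \add M\rangle$ would break down.
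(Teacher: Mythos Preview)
Your arguments are correct and are essentially the standard ones from \cite{IT} and \cite{HLM}; the paper itself does not reprove these items but merely cites those references, so there is no alternative approach to compare against. One small remark: in item (2) your phrase ``$\Omega^s M$ still has infinite projective dimension'' is fine, but all you actually need is the weaker fact that $\Omega^s M$ is non-projective (otherwise $\pd(M)\le s$), which immediately gives $[\Omega^s M]\neq 0$ in the free group $K_0(A)$; the torsion-freeness then finishes the injectivity check exactly as you wrote.
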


Now, we introduce the definition of the second Igusa-Todorov function $\psi$. There are two main purposes to define this second notion. First, to assemble the measure of finitistic dimensions of subcategories with the first IT-function $\phi$. Second, just the next function serves as a bound in the main Theorem \ref{cotadp} of \cite{IT}.

\begin{defi}(\cite{IT}) \label{ITpsi}
The (right) Igusa-Todorov function
$\psi_{A}$ of $M\in \mod A$  is defined as 
\begin{equation*}
\psi_{A}(M) = \phi_{A}(M) + \sup\{ \pd(N) : \Omega^{\phi_A(M)}(M) = N
\oplus N'\mbox{ and } \pd(N)< \infty\}.
\end{equation*}
\end{defi}

 We define dually the left Igusa-Todorov $\psi$ function for an Artin algebra $A$ as the right Igusa-Todorov function $\psi_{A^{op}}$. In case there is no possible misinterpretation, we denote by $\psi$ the Igusa-Todorov function $\psi_A$.
This second function, which also generalizes the projective dimension, has good properties and is an interesting homological measure.

\begin{prop}(Proposition 3.5 of \cite{HLM} and Lemma 1.3 of \cite{IT})\label{Huard2} 
If $M,N\in\mod A$, then we have the following.
\begin{enumerate}
\item $\psi(M) = \pd(M)$ if $\pd (M) < \infty$.
\item $\psi(M) = 0$ if $M \in \ind A$ and $\pd(M) = \infty$.
\item $\psi(M) \leq\psi(M \oplus N)$.
\item $\psi(M^{k}) = \psi(M)$ for $k \geq 1$.
\item If $N$ is a direct summand of $\Omega^{n}(M)$ where $n\leq\phi(M)$ and
$\pd(N) < \infty$, then $\pd(N) + n \leq\psi(M)$.
\item $\psi(M) \leq\psi(\Omega(M))+1$.
\end{enumerate}
\end{prop}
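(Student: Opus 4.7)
The plan is to prove item 5 first, since it is the technical engine of the proposition, and then derive items 3, 4, and 6 as consequences; items 1 and 2 follow directly from the definition of $\psi$ together with Proposition \ref{Huard1}. For item 1, if $\pd(M)=d<\infty$ then $\phi(M)=d$ and $\Omega^{d}(M)$ is projective, so every summand of $\Omega^{d}(M)$ has projective dimension $0$ and the supremum in Definition \ref{ITpsi} vanishes, giving $\psi(M)=d$. For item 2, $\phi(M)=0$ and $\Omega^{0}(M)=M$; since $M$ is indecomposable of infinite projective dimension, the only summand of finite projective dimension is the zero module, so the supremum is $0$.

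For item 5, set $a=\phi(M)$ and suppose $\Omega^{n}(M)=N\oplus N''$ with $n\leq a$ and $\pd(N)<\infty$. The key observation is that minimal syzygies distribute over direct sums, so inductively
\[\Omega^{a}(M)=\Omega^{a-n}(N)\oplus\Omega^{a-n}(N'').\]
There are two cases. If $\pd(N)\geq a-n$, then $\Omega^{a-n}(N)$ is a direct summand of $\Omega^{a}(M)$ of finite projective dimension equal to $\pd(N)-(a-n)$, so the supremum in Definition \ref{ITpsi} is at least $\pd(N)-(a-n)$, giving $\psi(M)\geq a+\pd(N)-(a-n)=\pd(N)+n$. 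If $\pd(N)<a-n$, then already $\pd(N)+n<a\leq\psi(M)$.

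With item 5 available, items 3 and 4 follow by picking a summand $N_{0}$ of $\Omega^{\phi(M)}(M)$ realising the supremum in $\psi(M)$: since $\phi(M)\leq\phi(M\oplus N)$ (resp.\ $\phi(M^{k})=\phi(M)$) and $N_{0}$ remains a summand of $\Omega^{\phi(M)}(M\oplus N)$ (resp.\ of $\Omega^{\phi(M)}(M^{k})$), item 5 applied to the larger module yields the desired inequality; the reverse inequality in item 4 follows by Krull--Schmidt, as every indecomposable summand of $\Omega^{\phi(M^{k})}(M^{k})=\Omega^{\phi(M)}(M)^{k}$ is already an indecomposable summand of $\Omega^{\phi(M)}(M)$. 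For item 6, choose $N_{0}$ a summand of $\Omega^{\phi(M)}(M)$ of finite projective dimension realising $\psi(M)=\phi(M)+\pd(N_{0})$. When $\phi(M)\geq 1$, $N_{0}$ is a summand of $\Omega^{\phi(M)-1}(\Omega(M))$; Proposition \ref{Huard1}(5) provides $\phi(M)-1\leq\phi(\Omega(M))$, so item 5 applied to $\Omega(M)$ gives $\pd(N_{0})+\phi(M)-1\leq\psi(\Omega(M))$, which rearranges to $\psi(M)\leq\psi(\Omega(M))+1$. When $\phi(M)=0$, replace $N_{0}$ by $\Omega(N_{0})$, a summand of $\Omega(M)$ of projective dimension $\max\{0,\pd(N_{0})-1\}$, and apply item 5 again.

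The main obstacle is justifying that minimal syzygies genuinely distribute over direct sums in $\mod A$ with no projective correction terms, so that $\Omega^{a-n}(N)$ is really a summand of $\Omega^{a}(M)$ and not merely up to projectives, and then tracking carefully the degenerate situations where $\Omega^{k}(N)$ becomes projective or zero; once these bookkeeping points are settled, the case analysis in item 5 is clean and the remaining items follow mechanically.
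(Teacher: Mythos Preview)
Your argument is correct. The paper itself does not give a proof of this proposition; it simply refers the reader to \cite{IT} for items 1--5 and to \cite{HLM} for item 6, so there is no detailed proof in the paper to compare with. Your strategy of establishing item 5 first and then deducing 3, 4, and 6 from it is the natural one and matches the logical structure in the original sources. The only points worth tightening are cosmetic: for items 3 and 6 you should explicitly dispose of the degenerate case where the supremum in Definition \ref{ITpsi} equals $0$ (no nonzero summand of finite projective dimension), in which case $\psi(M)=\phi(M)$ and the desired inequalities follow directly from Proposition \ref{Huard1}; and the concern you flag about ``projective correction terms'' is unfounded over an Artin algebra, since projective covers satisfy $P(X\oplus Y)\cong P(X)\oplus P(Y)$, whence $\Omega(X\oplus Y)\cong\Omega(X)\oplus\Omega(Y)$ on the nose.
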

\begin{proof}
For statements 1--5 see \cite{IT}, and for 6 see
\cite{HLM}.
\end{proof}

\begin{defi}(\cite{HL})
Let $A$ be an Artin algebra, and $\mathcal{C}$ a full subcategory of
$\mathrm{mod}A$. We define
$$\fidim(\mathcal{C}) = \sup\{\phi(M): M \in \Ob\mathcal{C} \},\text{ and }
\ \psidim(\mathcal{C}) = \sup\{\psi(M): M \in \Ob\mathcal{C} \}.$$
In particular, we denote by
$$\fidim(A) = \fidim(\mod A),\text{and } \psidim(A) = \psidim(\mod A).$$
\end{defi}

It is easy to see that the following inequalities hold for an Artin algebra $A$:

\begin{equation} \label{desigualdades}
\fin (A) \leq \fidim (A) \leq \psidim (A) \leq \gl (A) 
\end{equation}

$$ \psidim (A) \leq 2. \fidim (A).$$

In particular, the finiteness of the $\phi$-dimension is equivalent to the finiteness of the $\psi$-dimension, and this implies the finiteness of the finitistic dimension.

\begin{obs} 
\begin{itemize}
\item
Using Propositions \ref{Huard1} and \ref{Huard2}, items 3. and 4., then $\phi(M) = \fidim (\add M)$ and $\psi(M) = \psidim (\add M)$, for all $M \in\mod A$.
\item
{\label{finitesubcategory}}
Recall that given a subcategory $\mathcal{C} \subset \mod A$, $\mathcal{C}$ is of finite representation type
if there exists $N \in \mod A$, such that for all $M \in \mathcal{C}$, $M \in \add N$.
If $\mathcal{C}$ is of finite representation type then $\fidim (\mathcal{C}) < \infty$.
\end{itemize}

\end{obs}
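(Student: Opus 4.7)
The plan is to handle the two bullets separately, with each reducing essentially to unpacking the definitions and citing the monotonicity and power-invariance properties of $\phi$ and $\psi$ already established in Propositions \ref{Huard1} and \ref{Huard2}.

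For the first bullet, I would prove $\phi(M)=\fidim(\add M)$ by a two-sided inequality. Since $M\in\add M$, the definition of $\fidim$ gives $\phi(M)\leq \fidim(\add M)$ immediately. For the reverse inequality, let $N\in\add M$; then $N$ is a direct summand of some $M^k$, so there exists $N'\in\mod A$ with $M^k\cong N\oplus N'$. Items 3 and 4 of Proposition \ref{Huard1} yield
\[
\phi(N)\leq \phi(N\oplus N')=\phi(M^k)=\phi(M).
\]
Taking the supremum over all $N\in\add M$ gives $\fidim(\add M)\leq \phi(M)$, hence equality. The identical argument with items 3 and 4 of Proposition \ref{Huard2} replaced for $\psi$ proves $\psi(M)=\psidim(\add M)$.

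For the second bullet, the strategy is to reduce directly to the first. By hypothesis there exists $N\in\mod A$ with $\mathcal{C}\subset \add N$. Since $\fidim$ is monotone under subcategory inclusion (obvious from $\fidim(\mathcal{C})=\sup_{M\in\mathcal{C}}\phi(M)$), we obtain
\[
\fidim(\mathcal{C})\leq \fidim(\add N)=\phi(N),
\]
where the equality is the first bullet. Since $\phi(N)\in\mathbb{N}$ for any finitely generated $A$-module $N$, this gives $\fidim(\mathcal{C})<\infty$.

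There is essentially no serious obstacle here: both statements follow formally from the already-recorded behaviour of $\phi$ and $\psi$ on direct summands and powers. The only small point to keep in mind is that the reverse inequality in the first bullet uses the fact that $\add M$ consists precisely of direct summands of powers of $M$, so that items 3 and 4 can be chained; no new computation with syzygies or with Fitting's Lemma is required.
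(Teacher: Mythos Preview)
Your proposal is correct and follows precisely the approach the paper signals: the remark itself is stated without a written-out proof, merely pointing to items 3 and 4 of Propositions \ref{Huard1} and \ref{Huard2}, and your two-sided inequality argument is exactly the intended unpacking of that hint. The reduction of the second bullet to the first via monotonicity of $\fidim$ under inclusion is likewise the natural (and only reasonable) route.
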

 
The following proposition can be obtained using simple linear algebra arguments. See Proposition 3.6 of \cite{LMM}. 

\begin{prop}(Proposition 3.6 of \cite{LMM}) \label{invariante} Let $M \in \mod A$ such that $\Omega(M) \in \add M$. If $M =\displaystyle  \bigoplus_{i = 1}^n M^{\alpha_i}_i$ is the decomposition into indecomposable $A$-modules of $M$, then $\phi(M) \leq n$.
\end{prop}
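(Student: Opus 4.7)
The plan is to translate the statement into the linear-algebraic setup used to define $\phi$, and then apply Fitting's lemma via a rank argument. By definition, $\phi(M)=\eta_{\bar\Omega}(\langle\add M\rangle)$, so I need to bound the integer at which the descending sequence of images $\bar\Omega^m(\langle\add M\rangle)$ stabilizes under $\bar\Omega$.

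First, I would unpack the group $\langle\add M\rangle$. By the definition of $K_0(A)$ and the Krull–Schmidt theorem, $K_0(A)$ is the free abelian group on the isoclasses of indecomposable non-projective $A$-modules (projectives are killed by the defining relation). Hence $\langle\add M\rangle$ is generated by $[M_1],\dots,[M_n]$ (with $[M_i]=0$ for projective $M_i$), and in particular it is a free abelian group of rank at most $n$.

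Next I would use the hypothesis $\Omega(M)\in\add M$. Since $\Omega$ respects direct sums, each $\Omega(M_i)$ is a summand of $\Omega(M)$, so $\Omega(M_i)\in\add M$, and therefore $\bar\Omega[M_i]\in\langle\add M\rangle$. Thus $\bar\Omega$ restricts to an endomorphism of $\langle\add M\rangle$, and consequently the chain
\[
\langle\add M\rangle=G_0\supseteq G_1\supseteq G_2\supseteq\cdots,\qquad G_m:=\bar\Omega^{m}(\langle\add M\rangle),
\]
is a descending chain of subgroups of $\langle\add M\rangle$.

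Now I would apply Fitting's Lemma \ref{Fitting} together with a rank argument. Each $\bar\Omega\colon G_m\to G_{m+1}$ is surjective by construction, so it is an isomorphism precisely when it is injective; if it fails to be injective at step $m$, then $\rk(G_{m+1})<\rk(G_m)$. The ranks $\rk(G_m)$ form a non-increasing sequence of non-negative integers bounded above by $\rk(G_0)\leq n$, so there can be at most $n$ strict drops before the sequence becomes constant. Moreover, once $\bar\Omega|_{G_k}$ is injective, it remains injective on every smaller subgroup $G_{k+1}\subseteq G_k$, so the set of indices where injectivity fails is an initial segment. Combining these facts, stabilization occurs at some $k\leq n$, which by definition gives $\phi(M)=\eta_{\bar\Omega}(\langle\add M\rangle)\leq n$.

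I do not anticipate a serious obstacle here; the subtlety that requires care is simply the bookkeeping that the relevant subgroups are nested (which relies on $\bar\Omega(\langle\add M\rangle)\subseteq\langle\add M\rangle$, i.e., the hypothesis) and that "injective" and "isomorphism" coincide in Fitting's setup because $\bar\Omega(G_m)=G_{m+1}$ tautologically.
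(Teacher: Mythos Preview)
Your argument is correct and is precisely the ``simple linear algebra argument'' the paper alludes to: the hypothesis $\Omega(M)\in\add M$ makes $\langle\add M\rangle$ an $\bar\Omega$-invariant free abelian group of rank $\leq n$, and each failure of injectivity forces a strict rank drop, so stabilization occurs within $n$ steps. The paper gives no proof here beyond that sentence and a reference to \cite{LMM}, so there is nothing further to compare.
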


\subsection{Igusa-Todorov's Theorem}

Given a short exact sequence in $\mod A$:  $$\xymatrix{0 \ar[r] & M' \ar[r] & M \ar[r] & M'' \ar[r] & 0}$$
it is well known that $\pd (M'') \leq \pd(M'\oplus M) + 1$. In particular, the inequality is helpful if two of (and then) the three modules have finite projective dimensions. The following theorem generalizes the above inequality using the second IT-function. It is the main Theorem of \cite{IT}. It needs only one module with finite projective dimension.

\begin{teo} \label{cotadp}(Theorem 1.4 of \cite{IT})
Suppose that $\xymatrix{0 \ar[r] & M' \ar[r] & M \ar[r] & M'' \ar[r] & 0}$ is a short exact sequence of
f.g. $A$-modules and $M''$ has finite projective dimension. Then $\pd (M'') \leq \psi(M'\oplus M) + 1$.
\end{teo}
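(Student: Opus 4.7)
The plan is to combine the horseshoe construction applied to the given short exact sequence with Property~(5) of Proposition~\ref{Huard2}. First I would set $N = M' \oplus M$, $n = \phi(N)$, and $d = \pd(M'')$; if $d \leq n + 1$, the conclusion is immediate from $d \leq n + 1 = \phi(N) + 1 \leq \psi(N) + 1$ (the last inequality being direct from Definition~\ref{ITpsi}), so I would assume henceforth $d \geq n + 2$, making $\Omega^{n+1}(M'')$ nonprojective of projective dimension $d - n - 1 \geq 1$.

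Next, pulling back $M \twoheadrightarrow M''$ along the projective cover $P(M'') \twoheadrightarrow M''$ produces the exact sequence
\[
0 \to \Omega(M'') \to M' \oplus P(M'') \to M \to 0,
\]
and iterating the horseshoe lemma (absorbing projective summands) yields, for each $k \geq 1$, a short exact sequence
\[
0 \to \Omega^{k+1}(M'') \to \Omega^k(M') \oplus Q_k \to \Omega^k(M) \to 0
\]
with $Q_k$ projective. Specializing to $k = n$ exhibits $\Omega^{n+1}(M'')$ as a submodule of $\Omega^n(M') \oplus Q_n$ with cokernel $\Omega^n(M)$.

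The technical heart of the proof is to extract, from this horseshoe sequence, a finite-projective-dimension direct summand of $\Omega^n(N)$ whose projective dimension is at least $d - n - 1$; the natural candidate is $\Omega^{n+1}(M'')$ itself, up to projective summands. This is where the hypothesis $n = \phi(N)$ becomes essential: by Fitting's Lemma (Lemma~\ref{Fitting}), $\bar{\Omega}$ is a monomorphism on $\bar{\Omega}^n \langle \add N \rangle$, and this stabilization allows one to split off the finite-pd portion of $\Omega^n(N)$ from the ``stable'' infinite-pd portion. Carrying out this decomposition, $\Omega^{n+1}(M'')$ emerges modulo projectives as a direct summand of $\Omega^n(N)$, and then Property~(5) of Proposition~\ref{Huard2} applied with $n \leq \phi(N)$ yields $\pd(\Omega^{n+1}(M'')) + n \leq \psi(N)$, i.e., $(d - n - 1) + n \leq \psi(N)$, giving $\pd(M'') \leq \psi(M' \oplus M) + 1$. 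The main obstacle is precisely this direct-summand extraction: the horseshoe sequence does not split in general (a naive Ext$^1$-vanishing argument would only yield the weaker bound $d \leq 2\phi(N) + 1$), so the Fitting-stability at depth $\phi(N)$ is indispensable for the sharper $\psi$-bound.
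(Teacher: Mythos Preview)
Your outline identifies the right target (Property~(5) of Proposition~\ref{Huard2}) and the right level $n$ at which to work, but the ``technical heart'' you describe is precisely where the argument breaks down. You assert that, thanks to the stabilization of $\bar{\Omega}$ on $\bar{\Omega}^n\langle \add N\rangle$, the module $\Omega^{n+1}(M'')$ ``emerges modulo projectives as a direct summand of $\Omega^n(N)$.'' This is not justified, and Fitting's lemma applied to $\bar{\Omega}$ on $K_0(A)$ does \emph{not} yield it: at level $n$ we only know that $\Omega^n(M')$ and $\Omega^n(M)$ have the same class in $K_0(A)$, i.e.\ both are isomorphic to some $X$ up to projectives, so the horseshoe sequence becomes $0\to \Omega^{n+1}(M'')\to X\oplus P\to X\oplus Q\to 0$. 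Nothing here forces the kernel to split off from $X$ (as you yourself note, the sequence need not split), and the rank argument in $K_0(A)$ says nothing about module-level direct-sum decompositions.

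The paper closes this gap with a genuinely different use of Fitting's lemma. It applies Fitting not to $\bar{\Omega}$ on $K_0(A)$ but to the actual module endomorphism $f\in\enn_A(X)$ that appears as the $(1,1)$-entry of the map $X\oplus P\to X\oplus Q$. This produces a decomposition $X=Y\oplus Z$ with $f|_Y$ an isomorphism and $f|_Z$ nilpotent. One then applies $\hom_A(\,\cdot\,,M)$ to the sequence and uses the nilpotence of $\ext^k(\alpha,M)$ in the resulting long exact sequence to show two things: first $\pd(Z)<\infty$, and second $\pd(\Omega^n(M''))\leq \pd(Z)+1$. Only then does Property~(5) apply, to $Z$ rather than to $\Omega^{n+1}(M'')$. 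In short, the correct summand is $Z$, not $\Omega^{n+1}(M'')$, and extracting it requires the module-level Fitting decomposition together with an $\ext$ argument that your outline does not contain.
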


We offer the proof for the convenience of the reader. The proof given here (see \cite{M*}) is slightly different from the one provided in \cite{IT}.   

\begin{proof} Let $r = \pd (M'')$. Since $\pd (M'')<\infty$ then $\bar{\Omega}^l([M']) = \bar{\Omega}^{l}([M])$ for some $l \geq 0$. Let $n = \mini \{l : \bar{\Omega}^l([M']) = \bar{\Omega}^{l}([M]) \}$, then $n \leq \pd (M'')$ and since $[M],[M'] \in \langle \add (M\oplus M')\rangle$ we have $n \leq \phi(M \oplus M')$. The $n$-th syzygy of our short exact
sequence gives an exact sequence of the form:
\begin{equation}\label{ecuacion}
\xymatrix{0 \ar[r]& X\oplus P \ar[r]^{t}& X \oplus Q \ar[r]^{g}& \Omega^{n}(M'') \ar[r]& 0}
\end{equation}

where $P$ and $Q$ are projectives and the map $t$ is given by the following matrix
$$t = \left(
       \begin{array}{cc}
         f & h_1 \\
         h_2 & h_3 \\
       \end{array}
     \right).$$

Since $f \in \enn (X)$, by Lemma \ref{Fitting}, we have the decomposition $X = Y \oplus Z$ and the map $f$ can be represented by
$$f = \left(
       \begin{array}{cc}
         f_{11} & 0 \\
         0 & \alpha \\
       \end{array}
     \right)$$
where $\alpha: Z \rightarrow Z$ is nilpotent and $f_{11}: Y \rightarrow Y$ is an isomorphism.

We now apply the functor $\hom(\ \cdot , M)$ to the short exact sequence (\ref{ecuacion}) and we obtain the following long exact sequence:
$$\xymatrix{\ext^{k}_A(X,M) \hspace{-3pt} \ar[r]^{\gamma_{k}}& \ext^{k}_A(X,M) \hspace{-3pt} \ar[r]^{\hspace{-20pt} \sigma_k}& \ext^{k+1}_A(\Omega^{n}(M''),M)\hspace{-3pt} \ar[r]^{\hspace{10pt}\lambda_{k+1}}& \ext^{k+1}_A(X,M) \hspace{-3pt} \ar[r]^{\gamma_{k+1}}& \ext^{k+1}_A(X,M)}\hspace{-4pt}, \ k \geq 1.$$


\noindent where $\gamma_{k} = \left(
                                \begin{array}{cc}
                                  \delta_{k} & 0 \\
                                  0 & \beta_{k} \\
                                \end{array}
                              \right)$, $\beta_k = \ext^k_A(\alpha, M)$, $\delta_k = \ext^k_A(f_{11}, M)$ and $\lambda_{k+1} = \ext^{k+1}_A(g,M)$. Note that $\beta_k$ is nilpotent for all $k \geq 1$, because $\alpha$ is nilpotent.\\

\underline{Claim:} $\pd (Z)< \infty$.

Since $\ext^{j}_A(\Omega^{n}(M''),M) = 0$ for all $j > r-n$ we have that $\gamma_k$ is an epimorphism for $k > r-n-1$. Hence $\beta_{k}$ is also an epimorphism for $k > r-n-1$ and since $\beta_{k}$ is nilpotent, then $\ext^{k}(Z,M) = 0$ for $k > r-n-1$ for all $M \in \mod A$, and follows the claim.\\

\underline{Claim:} $\pd (\Omega^n(M'') )\leq \pd (Z) + 1$.

Suppose $\ext^{k+1}_A(\Omega^{n}(M''),M) \neq 0$. Hence $\sigma_k \neq 0$ or $\lambda_{k+1} \neq 0$.

\begin{itemize}
  \item Note that $\sigma_k \neq 0$ implies $\gamma_k$, $\beta_k$ are not epimorphisms. As a consequence $\ext^{k}(Z,M) \neq 0$.
  \item Note that $\lambda_{k+1} \neq 0$ implies $\gamma_{k+1}$, $\beta_{k+1}$ are not monomorphisms. As a consequence $\ext^{k+1}_A(Z,M) \neq 0$.
\end{itemize}

We conclude that $\ext^{k+1}_A(\Omega^{n}(M''),M) \neq 0$ implies $\ext^{k}_A(Z,M) \neq 0$ or $\ext^{k+1}_A(Z,M) \neq 0$ and follows the claim.

Since $Z$ is a direct summand of $\Omega^n (M \oplus M')$, $\pd (Z) < \infty$ and $n \leq \phi(M \oplus M')$, by Proposition \ref{Huard2} we have $\pd (Z) + n \leq \psi(M \oplus M')$. Finally, by the second claim, we deduce that $\pd (M'') \leq \psi(M \oplus M') + 1$.\end{proof}

J. Cappa (UNS, Argentina) gave the following two examples. The first example shows that the $\psi$ function cannot be changed by the $\phi$ function in Theorem \ref{cotadp}, which cannot be got the upper bound using the second function. In fact the following example shows that for all $n \in \mathbb{N}$ there exists an Artin algebra $A(n)$ and a short exact sequence in $\mod A(n)$ $\xymatrix{0 \ar[r] & X \ar[r] & Y \ar[r] & M \ar[r] & 0}$ such that $n(1+\phi(X\oplus Y)) < \pd(M) < \infty$.

\begin{ej} (J. Cappa) Let $A(n) = \frac{\Bbbk Q}{I}$, where $Q$ is given by:

$$ \xymatrix{ & 2 \ar[dr]^{\alpha_2} \ar@(ru,lu)_{\gamma} & & &  & & &\\ 1 \ar[ur]^{\alpha_1} \ar[rr]^{\beta} & & 3 \ar[r]^{\alpha_3}& 4 \ar[r]^{\alpha_4} & 5 \ar[r]^{\alpha_5} & \cdots \ar[r]^{\alpha_{2n+1}} & 2n+2 }$$
and $I = \langle \gamma^2;\ \gamma \alpha_2;\ \beta \alpha_3;\ \alpha_{i} \alpha_{i+1},\ 1\leq i\leq 2n \rangle$. 

\noindent Consider the short exact sequence $\xymatrix{0 \ar[r] & S_2\ar[r] & \frac{P_1}{S_2 \oplus S_3} \ar[r] & S_1 \ar[r] & 0}$. It is easy to prove that $\pd(S_1) = 2n+1$, and on the other hand $\Omega(S_2) = S_2\oplus S_3 = \Omega(\frac{P_1}{S_2 \oplus S_3} )$, hence $\phi (S_2 \oplus \frac{P_1}{S_2 \oplus S_3}) = 1$. $\square$
\end{ej}
The following example shows that the projective dimension cannot be changed by the $\phi$ function in Theorem \ref{cotadp}, if we remove the finite projective dimension hypothesis of the third module in the exact sequence. In fact the following example shows that for all $n \in \mathbb{N}$ there exists an Artin algebra $B(n)$ and a short exact sequence in $\mod B(n)$ $\xymatrix{0 \ar[r] & X \ar[r] & Y \ar[r] & M \ar[r] & 0}$ such that $n(1+\psi(X\oplus Y)) < \phi(M) < \infty$. 

\begin{ej}(J. Cappa) Let $B(n) = \frac{\Bbbk Q}{J^2}$, where $Q$ is given by:

$$\xymatrix{ 1 \ar[r] \ar[rd] & 2 \ar[r] & \cdots \ar[r] & n+1 \ar[r] & n+2 \\ & n+4 & & & \\ n+3 \ar@(ld, rd) \ar[ru] & & & & }$$
\vspace{.5cm}

\noindent Consider the short exact sequence $0 \rightarrow P_{n+4}\rightarrow I_{n+4}\rightarrow S_1 \oplus S_{n+3} \rightarrow 0$. It is clear that $\Omega(S_1) = S_2\oplus P_{n+4}$, $\Omega^{k}(S_1) = S_{k+1}$ if $k = 2, \ldots, n$, and $\Omega^{n+1}(S_1) = S_{n+2} =  P_{n+2}$. On the other hand $\Omega(S_{n+3}) = S_{n+3} \oplus P_{n+4}$. Hence $\phi(S_1\oplus S_{n+3}) = n+2$.
The module $I_{n+4}$ is indecomposable and $\pd(I_{n+4}) = \infty$, then $\psi(I_{n+4})=\phi(I_{n+4}) = 0$.
Therefore $n = n(1+\psi(P_{n+4}\oplus I_{n+4}))< \phi(S_1\oplus S_{n+3}) = n+1 $. $\square$

\end{ej}

The next corollary follows from Theorem \ref{cotadp}. See also 
\cite{Xu}, Lemma 4.1.

\begin{coro} \label{corocotadp} (Remark 1.5 of \cite{IT})
Suppose that $\xymatrix{0 \ar[r] & M' \ar[r] & M \ar[r] & M'' \ar[r] & 0}$ is a short exact sequence of
f.g. $A$-modules, then:

\begin{enumerate}

\item if $\pd(M) < \infty$,\ $\pd M \leq \psi(  M'\oplus \Omega (M'')) + 1$,

\item if $\pd(M') < \infty$,\ $\pd M' \leq \psi( \Omega(M) \oplus \Omega (M'')) + 1$.

\end{enumerate}

\begin{proof}

\begin{enumerate}

\item
From the short exact sequence, we get the following commutative diagram with exact rows and columns:

$$\xymatrix{
 & & \ar[d]0 & \ar[d] 0 & \\
 & 0 \ar[r]\ar[d] & \Omega(M'')\ar[d]\ar[r] & \Omega(M'')\ar[r]\ar[d]^{\iota_0}&0\\ 
0\ar[r]& M' \ar[r]^{\!\!\!\!\!\!\!\!\!\!\!\!\!\!\!(1,0)^t}\ar@{=}[d] & M'\oplus P_0(M'')\ar[d]^{(f,\alpha)} \ar[r]^{\ \ \ \ (0,1)} & P_0(M'')\ar[r]\ar@{.>}[ld]^{\alpha}\ar[d]^{p_0} & 0\\ 
0\ar[r]& M' \ar[d] \ar[r]_{f}& M\ar[d] \ar[r]_{g}& M'' \ar[d]\ar[r] & 0 \\
 & 0 & 0 &  0 &}$$

Then, applying the Theorem \ref{cotadp} to the short exact sequence obtained as the middle column, we get: $$\pd(M)\leq \psi(\Omega(M'')\oplus M'\oplus P_0(M'')) + 1=\psi(\Omega(M'')\oplus M') + 1.$$

\item
Just apply 1. to the short exact sequence obtained as middle column in the last diagram.
\end{enumerate}

\end{proof}
\end{coro}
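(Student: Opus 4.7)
The plan is to reduce both claims to Theorem~\ref{cotadp} by enlarging the given short exact sequence with the projective cover of $M''$. Starting from $0 \to M' \to M \to M'' \to 0$, I would form the $3\times 3$ commutative diagram whose bottom row is this sequence, whose right-hand column is $0 \to \Omega(M'') \to P_0(M'') \to M'' \to 0$, and whose middle row is the split exact sequence $0 \to M' \to M' \oplus P_0(M'') \to P_0(M'') \to 0$. Since $P_0(M'')$ is projective, its canonical surjection onto $M''$ lifts through $M \to M''$ to a map $\alpha : P_0(M'') \to M$, producing the middle vertical $(f,\alpha) : M' \oplus P_0(M'') \to M$; a direct computation (or the snake lemma applied to the diagram) identifies its kernel with $\Omega(M'')$, giving the new short exact sequence
\[
0 \longrightarrow \Omega(M'') \longrightarrow M' \oplus P_0(M'') \longrightarrow M \longrightarrow 0.
\]

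For part~1, the hypothesis $\pd(M) < \infty$ lets me apply Theorem~\ref{cotadp} to this new sequence, obtaining $\pd(M) \leq \psi(\Omega(M'') \oplus M' \oplus P_0(M'')) + 1$. The final step is to absorb the projective summand: because projective classes vanish in $K_0(A)$, the generating subgroup $\langle \add(N \oplus P) \rangle$ coincides with $\langle \add N \rangle$, so $\phi(N \oplus P) = \phi(N)$, and since $\Omega$ kills projectives the supremum appearing in the definition of $\psi$ is unaffected; hence $\psi(N \oplus P) = \psi(N)$ for every projective $P$. This collapses the bound to $\pd(M) \leq \psi(\Omega(M'') \oplus M') + 1$.

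For part~2, rather than rebuilding the diagram, I would feed the sequence just produced back into part~1, this time reading $M' \oplus P_0(M'')$ as the middle term and $M$ as the right-hand term. The hypothesis $\pd(M') < \infty$ forces $\pd(M' \oplus P_0(M'')) < \infty$, so part~1 applies and yields
\[
\pd(M') \;=\; \pd\bigl(M' \oplus P_0(M'')\bigr) \;\leq\; \psi\bigl(\Omega(M'') \oplus \Omega(M)\bigr) + 1,
\]
which is exactly the desired inequality. I do not expect a genuine obstacle here: the construction is a standard horseshoe-style diagram chase, and the one small subtlety, the invariance of $\psi$ under projective direct summands, is immediate from the definition of $K_0(A)$.
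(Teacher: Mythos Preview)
Your proof is correct and follows essentially the same approach as the paper: you build the same $3\times 3$ diagram, apply Theorem~\ref{cotadp} to the middle column $0 \to \Omega(M'') \to M' \oplus P_0(M'') \to M \to 0$ for part~1, and then feed that same sequence back into part~1 for part~2. Your version is actually more explicit than the paper's, spelling out the construction of the diagram and the reason why projective summands can be dropped from $\psi$.
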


\subsection{Applications of Igusa-Todorov's Theorem to the finitistic dimension conjecture}

As a consequence of Theorem \ref{cotadp}, the main result of \cite{IT}, the finiteness of the finitistic dimension was proved for some classes of algebras.\\

We start reminding a technical Corollary of Theorem \ref{cotadp}. Consider $M \in \mod A$ with $\Llength(M) = 2$, then $\xymatrix{0 \ar[r] & T \ar[r] & P(M)/\rad^2 P(M) \ar[r] & M \ar[r] & 0}$ is exact, where $T$ is a semisimple module. As a consequence, we have the next result.
 
\begin{coro} (Corollary 1.6 of \cite{IT}) \label{ll<3} If $M$ is a f.g. $A$-module with $\Llength(M) = 2$ and finite projective dimension, then
$$\pd (M) \leq \psi(A/\rad A \oplus A/(\rad A)^2 ) + 1.$$

\end{coro}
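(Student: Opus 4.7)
The plan is to apply Theorem \ref{cotadp} directly to the short exact sequence singled out in the paragraph preceding the corollary, and then to bound the resulting $\psi$-value by a $\psi$-value of a module depending only on $A$.

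First I would write down the short exact sequence $\xymatrix{0 \ar[r] & T \ar[r] & P(M)/\rad^2 P(M) \ar[r] & M \ar[r] & 0}$, which exists because $\Llength(M) \leq 2$ forces the canonical surjection $P(M) \twoheadrightarrow M$ to factor through $P(M)/\rad^2 P(M)$; the kernel $T$ is a submodule of $\rad P(M)/\rad^2 P(M)$, hence semisimple. Since $\pd(M) < \infty$, Theorem \ref{cotadp} applies and yields
\[
\pd(M) \leq \psi\bigl(T \oplus P(M)/\rad^2 P(M)\bigr) + 1.
\]

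Next I would argue that the module inside $\psi$ on the right hand side lies in $\add(A/\rad A \oplus A/\rad^2 A)$. Indeed, $T$ is semisimple, so $T \in \add(A/\rad A)$. Since $P(M)$ is a direct summand of some $A^n$, the quotient $P(M)/\rad^2 P(M)$ is a direct summand of $A^n/\rad^2 A^n \cong (A/\rad^2 A)^n$, so $P(M)/\rad^2 P(M) \in \add(A/\rad^2 A)$. Therefore $T \oplus P(M)/\rad^2 P(M) \in \add(A/\rad A \oplus A/\rad^2 A)$.

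Finally, I would invoke the first bullet of the Remark after (\ref{desigualdades}), which, combined with items 3 and 4 of Proposition \ref{Huard2}, gives $\psi(N) \leq \psi(A/\rad A \oplus A/\rad^2 A)$ for every $N \in \add(A/\rad A \oplus A/\rad^2 A)$. Applied to $N = T \oplus P(M)/\rad^2 P(M)$, this yields the desired inequality $\pd(M) \leq \psi(A/\rad A \oplus A/\rad^2 A) + 1$. There is no real obstacle here; the only thing to be slightly careful about is the verification that $P(M)/\rad^2 P(M)$ lies in $\add(A/\rad^2 A)$, which is immediate once one recalls that $\rad$ commutes with direct summands of $A$.
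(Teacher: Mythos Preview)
Your argument is correct and follows exactly the approach indicated in the paper: the paper sets up the short exact sequence $0 \to T \to P(M)/\rad^2 P(M) \to M \to 0$ with $T$ semisimple and then states the corollary as an immediate consequence of Theorem \ref{cotadp}, leaving implicit precisely the verification that $T \oplus P(M)/\rad^2 P(M) \in \add(A/\rad A \oplus A/\rad^2 A)$ which you spell out. There is nothing to add.
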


It follows now the finiteness of the finitistic dimension for every radical cube zero algebra $A$, from Corollary \ref{ll<3} and the fact that every module in $\Omega(\mod A)$ has Loewy length less or equal to $2$.

\begin{coro}(Corollary 1.7 of \cite{IT}) Suppose that A is an Artin algebra with $(\rad A)^3 = 0$, then

$$\findim (A) \leq \psi(A/\rad A \oplus A/(\rad A)^2 ) + 2.$$

\end{coro}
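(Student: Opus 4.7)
The hint just before the corollary, namely that every module in $\Omega(\mod A)$ has Loewy length at most $2$, is exactly the mechanism I would exploit. My plan is to reduce an arbitrary module $M$ of finite projective dimension to its first syzygy $\Omega(M)$, observe that this syzygy satisfies the hypothesis of Corollary \ref{ll<3}, and then pay one extra ``$+1$'' for the single syzygy step.

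\textbf{Step 1 (Loewy length of syzygies).} Let $M \in \mod A$ and consider the projective cover $P(M) \to M$ with kernel $\Omega(M)$. Then $\Omega(M) \subseteq \rad P(M) = P(M)\cdot \rad A$, and since $(\rad A)^{3}=0$ one has $\rad P(M)\cdot (\rad A)^{2}=0$. Hence $\Llength(\rad P(M))\leq 2$, and \emph{a fortiori} $\Llength(\Omega(M))\leq 2$. This holds for every $M\in \mod A$, i.e.\ every object in $\Omega(\mod A)$ has Loewy length at most $2$.

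\textbf{Step 2 (Reduction to $\Llength = 2$).} Fix $M\in\mod A$ with $\pd(M)<\infty$. If $\pd(M)\leq 1$ the inequality is trivial since the right-hand side is at least $2$. Otherwise, $\Omega(M)$ is non-projective, has finite projective dimension $\pd(\Omega(M))=\pd(M)-1$, and by Step~1 satisfies $\Llength(\Omega(M))\leq 2$. By Corollary \ref{ll<3} applied to $\Omega(M)$,
$$\pd(\Omega(M))\leq \psi\bigl(A/\rad A\oplus A/(\rad A)^{2}\bigr)+1.$$
Therefore
$$\pd(M)=\pd(\Omega(M))+1\leq \psi\bigl(A/\rad A\oplus A/(\rad A)^{2}\bigr)+2.$$

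\textbf{Step 3 (Taking the supremum).} Since the bound is uniform in $M$ ranging over modules of finite projective dimension, taking the supremum yields $\findim(A)\leq \psi(A/\rad A\oplus A/(\rad A)^{2})+2$.

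There is no real obstacle: the only point requiring a word of care is that $\Omega(M)$ might split off a projective summand or be zero, but this only makes $\pd(\Omega(M))$ smaller, so the inequality is preserved. The essential content is the elementary Loewy-length computation in Step~1, which converts the cube-zero hypothesis into the Loewy length $\leq 2$ hypothesis required to invoke Corollary \ref{ll<3}.
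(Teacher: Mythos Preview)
Your proof is correct and follows exactly the approach outlined in the paper: reduce to the first syzygy, use that $(\rad A)^3=0$ forces $\Llength(\Omega(M))\leq 2$, invoke Corollary~\ref{ll<3}, and pay the extra $+1$. The only cosmetic point is that Corollary~\ref{ll<3} is stated for $\Llength(M)=2$ rather than $\leq 2$, but the semisimple case $\Llength(\Omega(M))=1$ is handled by the even stronger bound $\pd(\Omega(M))\leq \psi(A/\rad A)\leq \psi(A/\rad A\oplus A/(\rad A)^2)$, so no harm is done.
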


But the main consequence that Igusa and Todorov obtained in \cite{IT} using their functions is the following. 

\begin{coro}(Corollary 1.9 of \cite{IT}) If $\repdim (A) \leq 3$ then $\findim (A) < \infty$.

\end{coro}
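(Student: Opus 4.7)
The plan is to combine the definition of representation dimension with Theorem \ref{cotadp} to produce a uniform bound on projective dimensions of modules of finite projective dimension.

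First, by hypothesis there is a finitely generated $A$-module $M$ such that $\add(M)$ contains all projective and all injective $A$-modules and $B := \enn_A(M)^{\op}$ has $\gl(B) \leq 3$. I would begin by invoking the standard Auslander-type correspondence: since $M$ is a generator-cogenerator, the functor $\hom_A(M, -) : \mod A \to \mod B$ takes any $X \in \mod A$ to a $B$-module whose minimal projective resolution has terms of the form $\hom_A(M, M_i)$, and the $M_i$'s assemble into an $\add(M)$-resolution of $X$. Because $\gl(B) \leq 3$, such an $\add(M)$-resolution has length at most $1$ after discarding the term corresponding to $X$ itself; that is, for every $X \in \mod A$ one obtains a short exact sequence
\[
0 \longrightarrow M_1 \longrightarrow M_0 \longrightarrow X \longrightarrow 0
\]
with $M_0, M_1 \in \add(M)$.

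Next, specialize to an arbitrary $X \in \mod A$ with $\pd(X) < \infty$. Applying Theorem \ref{cotadp} to the sequence above gives
\[
\pd(X) \leq \psi(M_1 \oplus M_0) + 1.
\]
Since $M_0, M_1 \in \add(M)$, there exists $k \geq 1$ such that $M_0 \oplus M_1$ is a direct summand of $M^k$. Using items (3) and (4) of Proposition \ref{Huard2}, we conclude $\psi(M_0 \oplus M_1) \leq \psi(M^k) = \psi(M)$. Hence
\[
\pd(X) \leq \psi(M) + 1.
\]
Since this bound is independent of $X$, taking the supremum over all $X$ with finite projective dimension yields $\findim(A) \leq \psi(M) + 1 < \infty$.

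The main obstacle is step one: establishing the existence of the short exact sequence with both end terms in $\add(M)$. This is the crucial place where the hypothesis $\gl(\enn_A(M)^{\op}) \leq 3$ is used, and it rests on the Auslander correspondence translating $\add(M)$-resolutions in $\mod A$ into projective resolutions in $\mod B$. Once this is in hand, the rest is a direct application of Igusa-Todorov's Theorem \ref{cotadp} together with the elementary monotonicity properties of $\psi$ on $\add$-closures.
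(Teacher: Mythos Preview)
Your argument is correct and is precisely the original Igusa--Todorov proof from \cite{IT}, which the survey cites without reproducing: the Auslander correspondence for a generator--cogenerator $M$ with $\gl(\enn_A(M)^{\op})\leq 3$ yields for every $X$ a short exact sequence $0\to M_1\to M_0\to X\to 0$ with $M_i\in\add(M)$, and then Theorem~\ref{cotadp} together with Proposition~\ref{Huard2}(3),(4) gives the uniform bound $\findim(A)\leq\psi(M)+1$. One small point worth tightening: the passage from $\gl(B)\leq 3$ to an $\add(M)$-resolution of length~$1$ (rather than~$3$) is exactly the ``$+2$ shift'' in Auslander's lemma, which uses both the generator and the cogenerator hypotheses---your sketch invokes this correctly but does not make the shift explicit.
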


In 1994 Y. Wang used Igusa-Todorov's second function to prove in \cite{W} the following result.

\begin{teo}(Theorem 3 of \cite{W}) Suppose $A$ is a left artinian ring with $J^{2l+1} = 0$ and $\frac{A}{J^l}$ is of finite
representation type. Then  $\fin ( A ) < \infty$.

\begin{proof}
Use Corollary \ref{corocotadp}.
\end{proof}

\end{teo}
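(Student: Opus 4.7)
The plan is to bound $\pd(M)$ uniformly over all $M \in \mod A$ with $\pd(M) < \infty$ by producing, for each such $M$, a short exact sequence whose outer terms belong to a fixed additive subcategory of finite representation type, and then invoking Corollary \ref{corocotadp}. Since $A/J^l$ is of finite representation type, I would fix a module $N \in \mod A$ whose indecomposable summands form a set of representatives of the indecomposable $A/J^l$-modules (viewed in $\mod A$ through the canonical projection $A \twoheadrightarrow A/J^l$); then every $A/J^l$-module belongs to $\add N$.

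For a given $M$, I would first pass to its minimal first syzygy $\Omega(M)$ (taken via a projective cover), so that $\Omega(M) \subseteq P_0(M)\cdot J$. Combined with $J^{2l+1}=0$, this forces $\Omega(M)\cdot J^{2l} \subseteq P_0(M)\cdot J^{2l+1}=0$; that is, the Loewy length of $\Omega(M)$ is at most $2l$. Consequently both $\Omega(M)\cdot J^l$ and $\Omega(M)/\Omega(M)\cdot J^l$ are annihilated by $J^l$, so both are objects of $\mod(A/J^l)$ and lie in $\add N$. The canonical inclusion produces the short exact sequence
$$0 \longrightarrow \Omega(M)\cdot J^l \longrightarrow \Omega(M) \longrightarrow \Omega(M)/\Omega(M)\cdot J^l \longrightarrow 0,$$
whose middle term has finite projective dimension equal to $\pd(M)-1$.

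Applying Corollary \ref{corocotadp}(1) to this sequence yields
$$\pd(\Omega(M)) \;\leq\; \psi\bigl(\Omega(M)\cdot J^l \,\oplus\, \Omega(\Omega(M)/\Omega(M)\cdot J^l)\bigr) + 1.$$
The first summand inside $\psi$ lies in $\add N$; the second is the minimal $\mod A$-syzygy of an object of $\add N$, and thus lies in $\add \Omega(N)$. So the whole argument of $\psi$ belongs to $\add(N \oplus \Omega(N))$, and the $\add$-monotonicity of $\psi$ (an immediate consequence of items 3 and 4 of Proposition \ref{Huard2}) gives $\pd(\Omega(M)) \leq \psi(N \oplus \Omega(N)) + 1$, whence
$$\pd(M) \;\leq\; \psi(N \oplus \Omega(N)) + 2.$$
Since the right-hand side does not depend on $M$, this furnishes the desired uniform bound, proving $\fin(A) < \infty$.

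The delicate point is coordinating the two hypotheses: $J^{2l+1}=0$ only caps the Loewy length of first syzygies (and not of $M$ itself), so the splitting by $J^l$ must be carried out on $\Omega(M)$ rather than on $M$, in order that \emph{both} outer terms of the resulting short exact sequence land simultaneously in $\add N$. Once this reduction is set up, the conclusion is a mechanical application of Corollary \ref{corocotadp} together with the $\add$-monotonicity of $\psi$.
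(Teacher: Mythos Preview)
Your proof is correct and is precisely the argument the paper is gesturing at when it writes ``Use Corollary \ref{corocotadp}.'' The key steps---passing to $\Omega(M)$ so that the Loewy length drops to at most $2l$, splitting along $\Omega(M)J^l$ so that both outer terms become $A/J^l$-modules, and then applying Corollary \ref{corocotadp}(1) together with the $\add$-monotonicity of $\psi$---constitute exactly Wang's original argument rewritten in the Igusa--Todorov language, and yield the explicit bound $\fin(A)\leq \psi(N\oplus\Omega(N))+2$.
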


The next result, obtained by D. Xu, follows from an analogous mathematical construction.

\begin{prop}(Corollary 4.3 of \cite{Xu}) Let $A$ be a finite dimension algebra given by a quiver with relations. Suppose that $I$ is a nilpotent ideal of $A$ such that $I\rad (A) = 0$ and $A/I$
is a monomial algebra. Then $\fin (A) < \infty$.
\end{prop}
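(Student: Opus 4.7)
The plan is to apply Corollary \ref{corocotadp}(1) to a short exact sequence built from the ideal $I$ with $\Omega_A(M)$ in the middle, using the monomial structure of $B=A/I$ to control the outer terms uniformly in $M$. First, observe that since $I$ is nilpotent, $I\subseteq\rad A$, whence $I^{2}\subseteq I\rad A=0$; consequently, for every $N\in\mod A$, $(NI)\rad A=N(I\rad A)=0$, so $NI$ is semisimple and lies in $\add(A/\rad A)$.

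Next, given $M\in\mod A$, I would take the projective cover $P(M)\twoheadrightarrow M$ in $\mod A$. Modding out by $I$ produces $P(M)/P(M)I=P_{B}(M/MI)\twoheadrightarrow M/MI$, which is the projective cover of $M/MI$ in $\mod B$. The composition
$$\Omega_A(M)\hookrightarrow P(M)\twoheadrightarrow P(M)/P(M)I$$
has image $\Omega_B(M/MI)$ and kernel $T_M:=\Omega_A(M)\cap P(M)I$, the latter being a submodule of the semisimple $P(M)I$ and hence semisimple. This yields a short exact sequence in $\mod A$
$$0\to T_M\to\Omega_A(M)\to\Omega_B(M/MI)\to 0,\qquad T_M\in\add(A/\rad A).$$

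At this point I invoke the classical fact that for a monomial algebra $B$ there exists a fixed $B$-module $V_B$, a finite direct sum of path modules $\alpha B$, such that $\Omega_B(N)\in\add(V_B)$ for every $N\in\mod B$ (equivalently, every submodule of a f.g.\ projective $B$-module is a direct sum of path modules). Hence $\Omega_B(M/MI)\in\add(V_B)$, and therefore $\Omega_A(\Omega_B(M/MI))\in\add(\Omega_A(V_B))$. Set $W:=(A/\rad A)\oplus\Omega_A(V_B)\in\mod A$, a fixed module independent of $M$.

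Now suppose $\pd_A(M)<\infty$. Then $\pd_A(\Omega_A(M))<\infty$, so Corollary \ref{corocotadp}(1) applied to the sequence above yields
$$\pd_A(\Omega_A(M))\le\psi_A\bigl(T_M\oplus\Omega_A(\Omega_B(M/MI))\bigr)+1\le\psi_A(W)+1,$$
using Proposition \ref{Huard2}(3)--(4) together with $T_M\oplus\Omega_A(\Omega_B(M/MI))\in\add(W)$. Therefore $\pd_A(M)\le\psi_A(W)+2$, a bound independent of $M$, and $\fin(A)\le\psi_A(W)+2<\infty$. The main obstacle is the monomial-algebra ingredient, namely that every first syzygy over $B$ lies in the additive closure of a single module; once it is available, the rest is a straightforward application of Corollary \ref{corocotadp}(1), with the hypothesis $I\rad A=0$ doing the key work of forcing $T_M$ to be semisimple and hence absorbable into the fixed module $W$.
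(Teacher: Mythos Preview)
Your argument has a genuine gap at the monomial-algebra step. You claim that for a monomial algebra $B$ there is a fixed module $V_B$ (a sum of right ideals $\rho B$) with $\Omega_B(N)\in\add(V_B)$ for every $N\in\mod B$; equivalently, that $B$ is of $\Omega^1$-finite representation type. This is false in general. Zimmermann's theorem (Theorem~I of \cite{Z}) gives only $\Omega_B^{2}(\mod B)\subset\add\bigl(\bigoplus_{\rho\in\mathbb P^{\ge1}}\rho B\bigr)$, and the survey states explicitly that monomial algebras ``are of $\Omega^2$-finite representation type \dots\ but not of $\Omega^1$-finite representation type in general.'' The example following Proposition~\ref{phi monomial} exhibits a monomial algebra and an indecomposable first syzygy $N=\Omega(M)$ of dimension~$3$ which is not isomorphic to any direct sum of path ideals $\rho A$; so your $\Omega_B(M/MI)$ cannot be forced into $\add(V_B)$.

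Replacing $\Omega_B$ by $\Omega_B^{2}$ is the obvious repair, but it does not plug directly into your scheme. Iterating your short exact sequence once more (applied to the $B$-module $N_1=\Omega_B(\bar M)$) and splicing via the Horseshoe Lemma yields
\[
0\longrightarrow K\longrightarrow \Omega_A^{2}(M)\oplus Q\longrightarrow \Omega_B^{2}(\bar M)\longrightarrow 0,
\]
with $\Omega_B^{2}(\bar M)\in\add(V_B)$, but now $K$ sits in an extension $0\to\Omega_A(T_M)\to K\to T_{N_1}\to 0$ of a semisimple module by a module in $\add(\Omega_A(A_0))$. Such extensions form, in general, an infinite family, so $K$ need not lie in any fixed $\add(W)$, and the bound $\psi_A(K\oplus\Omega_A(N_2))\le\psi_A(W)$ is unavailable. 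Thus the argument, as written, does not close; one needs a further device (this is where Xu's derived-category Igusa--Todorov machinery, or a more careful two-step construction, enters). The use of $I\rad A=0$ to force the kernels $T_M$ to be semisimple is correct and is indeed the key role of that hypothesis; the failure is solely in the monomial input.
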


Now, let us recall the definition of Igusa-Todorov algebras introduced in \cite{We} by J. Wei.

\begin{defi}(\cite{We}) Let $A$ be an Artin algebra and $n$ be a non-negative integer. Then $A$ is said to be
$n$-Igusa-Todorov ($n$-IT for short) if there exists an $A$-module $V$ such that for any $A$-module M there is an exact
sequence
$$\delta: \xymatrix{0 \ar[r]& V_1 \ar[r]& V_2 \ar[r] & \Omega^n (M) \ar[r]& 0}$$
with $V_0, V_1 \in \add V$. When such a module exists, it is called a $n$-Igusa-Todorov module (an $n$-IT module for short).
\end{defi} 

Note that the above is equivalent to the definition given in \cite{We}.
As can be seen, this definition was given to obtain the following result, using the central theorem of \cite{IT}.

\begin{teo}\label{IT algebra}(Theorem 2.3 of \cite{We}) Let $A$ be an $n$-Igusa-Todorov algebra. Then, the finitistic dimension of $A$ is finite.

\begin{proof}

It follows from Corollary $\ref{corocotadp}$ applied to $\delta$ and the fact that $\pd (M) \leq n + \pd(\Omega^n(M))$.

\end{proof}

\end{teo}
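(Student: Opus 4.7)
The plan is to apply the Igusa-Todorov bound (Theorem \ref{cotadp}) to the defining exact sequence $\delta$, and then use the elementary inequality $\pd(M) \leq n + \pd(\Omega^n(M))$ to lift the resulting bound from $\Omega^n(M)$ back to $M$.

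Concretely, I would fix $M \in \mod A$ with $\pd(M) < \infty$ (we want a bound on $\pd(M)$ independent of $M$) and invoke the $n$-IT condition to obtain an exact sequence
$$0 \to V_1 \to V_2 \to \Omega^n(M) \to 0$$
with $V_1, V_2 \in \add V$, where $V$ is a fixed $n$-IT module for $A$. Since $\pd(\Omega^n(M)) \leq \pd(M) < \infty$, Theorem \ref{cotadp} applies to this sequence with $\Omega^n(M)$ playing the role of the finite-projective-dimension term, and yields $\pd(\Omega^n(M)) \leq \psi(V_1 \oplus V_2) + 1$.

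Next, items (3) and (4) of Proposition \ref{Huard2} give $\psi(V_1 \oplus V_2) \leq \psi(V)$, which is the crucial uniformity: the bound no longer depends on $M$, only on the fixed module $V$. Combining with $\pd(M) \leq n + \pd(\Omega^n(M))$ produces $\pd(M) \leq n + \psi(V) + 1$. Taking the supremum over all $M$ of finite projective dimension then gives $\findim(A) \leq n + \psi(V) + 1 < \infty$.

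There is no serious obstacle here; the argument is a direct exploitation of the $n$-IT definition combined with the central Igusa-Todorov theorem. The only subtlety worth flagging is the step $\psi(V_1 \oplus V_2) \leq \psi(V)$: without the closure of $\add V$ under direct summands (together with properties (3)--(4) of $\psi$) one could not absorb the $M$-dependent modules $V_1, V_2$ into the single fixed module $V$, and the resulting bound would still depend on $M$. This is precisely why the definition of an $n$-IT module is phrased via a single $V$ and $\add V$, rather than via two separate modules.
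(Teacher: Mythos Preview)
Your proof is correct and follows essentially the same route as the paper's: apply the Igusa--Todorov bound to the defining exact sequence $\delta$, absorb $V_1\oplus V_2$ into $\add V$ via Proposition~\ref{Huard2}(3)(4), and lift back using $\pd(M)\le n+\pd(\Omega^n(M))$. The only cosmetic difference is that the paper cites Corollary~\ref{corocotadp} whereas you invoke Theorem~\ref{cotadp} directly; since $\Omega^n(M)$ is the cokernel in $\delta$, your citation is in fact the more natural one, and the resulting bound $\findim(A)\le \psi(V)+n+1$ is the same.
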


In \cite{We} J. Wei suggests that every Artin algebra could be $n$-Igusa-Todorov, for some $n\in \N$. However, this is not true, as T. Conde observes in her doctoral dissertation \cite{T}.

\subsection{Gaps for the $\phi$ function}

Given an Artin algebra $A$, we say that a value $t \in  \mathbb{N}$, with $t \leq \fidim (A)$, is {\bf admissible} for $A$ if there exists an $A$-module M such that $\phi(M) = t$. If $t \leq \fidim (A)$ is not  admissible for $A$, we
say that there is a {\bf gap} at $t$ for $A$. Note that the notion of gap makes no sense for the finitistic dimension, i.e. if there exists a module $M$ with $0 < \pd (M) = n < \infty$, then $\pd (\Omega^i(M)) = n-i$ if $0 \leq i \leq n$.

Observe that $0$ is always an admissible value, and if $\fidim(A)$ is finite, then $\fidim(A)$, by definition, is admissible. The following theorem proves the existence of some other admissible values. The proof in \cite{BMR2} of the first statement uses the Nakayama's rule. See Theorem \ref{Nakayama}.

\begin{teo}(Theorem 4.4 of \cite{BMR2})\label{gaps}
Let $A$ be a finite dimensional algebra. If $\fidim(A) > 0$, then $1\in \mathbb{N}$ is an admissible value for $A$. If also $\fidim(A)$ is finite, then $ \fidim(A) - 1 \in \mathbb{N}$ is an admissible value for $A$.
\end{teo}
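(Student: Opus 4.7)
My plan is to prove the two statements separately, with the first relying on Theorem~\ref{Nakayama} and the second on Lemma~\ref{Fitting} together with the observation that an $\mathbb{N}$-valued supremum, when finite, is attained. In both parts the lower bounds on $\phi$ will come for free; the upper bounds are the real obstacle.

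For the first statement, assume $\fidim(A) > 0$. I would first rule out the selfinjective case: if $A$ were selfinjective, Theorem~\ref{Nakayama} would make $\nu$ a permutation of $\mathcal{S}(A)$, and over a selfinjective algebra $\Omega$ descends to a self-equivalence of the stable module category; hence $\bar{\Omega}: K_{0}(A) \to K_{0}(A)$ would permute the basis given by classes of indecomposable non-projectives and be injective on every subgroup, forcing $\phi(M) = 0$ for every $M$ and contradicting $\fidim(A) > 0$. So $A$ is not selfinjective, and Theorem~\ref{Nakayama} now guarantees that $\nu$ fails to be a permutation of $\mathcal{S}(A)$: either some simple $T$ is not a summand of $\soc P(S)$ for any $S$, or two distinct simples $S_{1} \neq S_{2}$ have projective covers whose socles share a simple summand. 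From this structural data I would assemble two modules $X$ and $Y$ (from the offending simples and, if needed, their first syzygies) so that $[X] \neq [Y]$ in $K_{0}(A)$ but $\bar{\Omega}([X]) = \bar{\Omega}([Y])$, which immediately yields $\phi(X \oplus Y) \geq 1$.

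For the second statement, let $n := \fidim(A) < \infty$. Since $\phi$ is $\mathbb{N}$-valued with finite supremum $n$, some $M \in \mod A$ attains $\phi(M) = n$. Proposition~\ref{it1}(5) gives $\phi(\Omega(M)) \geq n-1$, while the defining supremum forces $\phi(\Omega(M)) \leq n$. If $\phi(\Omega(M)) = n-1$ we are done; otherwise $\phi(\Omega(M)) = n$ and I would iterate on syzygies $\Omega^{j}(M)$. Here Lemma~\ref{Fitting} applied to $\bar{\Omega}$ on the finitely generated subgroup $\langle \add M\rangle$ provides stabilization: $\bar{\Omega}|_{\bar{\Omega}^{j}(\langle \add M\rangle)}$ is injective for $j \geq n$, so any persistent $\phi = n$ witness at level $j$ must live in the enlargement $\langle \add \Omega^{j}(M)\rangle \supsetneq \bar{\Omega}^{j}(\langle \add M\rangle)$. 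Isolating an indecomposable summand of a suitable $\Omega^{j}(M)$ carrying the obstruction one syzygy shallower than $M$ should then realize $\phi = n-1$ exactly.

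The main obstacle, in both parts, is not obtaining the lower bound on $\phi$ but establishing the matching upper bound on the candidate module. The trouble is that $\bar{\Omega}(\langle \add M\rangle) \subsetneq \langle \add \Omega(M)\rangle$ in general, so the naive inequality $\phi(\Omega(M)) \leq \phi(M) - 1$ can fail and must be replaced by a more delicate argument: in Part~1 one chooses $X, Y$ minimally so that the relation witnessing $\phi \geq 1$ is the only $\bar{\Omega}$-obstruction, and in Part~2 one combines the global bound $\fidim(A) = n$ with the stabilization from Fitting's lemma to rule out fresh $\bar{\Omega}$-obstructions in the larger ambient subgroups of $K_{0}(A)$.
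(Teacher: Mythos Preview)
Your plan for the first statement follows the route the paper attributes to \cite{BMR2}: invoke Nakayama's criterion (Theorem~\ref{Nakayama}) to detect the failure of selfinjectivity and build a two-term test module from the simples involved. You are right that the upper bound $\phi(X\oplus Y)\le 1$ is the crux, but your proposed fix (``choose $X,Y$ minimally so that the relation is the only $\bar\Omega$-obstruction'') does not yet secure it. Concretely: if $X\not\cong Y$ are indecomposable non-projective with $\bar\Omega[X]=\bar\Omega[Y]=:z$, then the rank of $\bar\Omega^{\,j}\langle[X],[Y]\rangle$ is $2$ at $j=0$, then $1$ until $[\Omega^{\,j}(X)]=0$, and $0$ thereafter; so $\phi(X\oplus Y)=\pd(X)$ when $\pd(X)<\infty$ and $\phi(X\oplus Y)=1$ when $\pd(X)=\infty$. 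Hence you need not only $\bar\Omega[X]=\bar\Omega[Y]$ but also $\pd(X)\in\{1,\infty\}$. Your dichotomy for the failure of $\nu$ is also incomplete: you omit the case in which some $\soc P(S)$ is not simple. Both points are exactly where the specific combinatorics of the Nakayama rule have to be used, not just its failure in the abstract.

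For the second statement your plan has a genuine gap. You correctly get $\phi(\Omega(M))\in\{n-1,n\}$, but you give no reason why iterating cannot yield $\phi(\Omega^{\,j}(M))=n$ for every $j$. Fitting's Lemma only controls the subgroup $\bar\Omega^{\,j}\langle\add M\rangle$; the group $\langle\add\Omega^{\,j}(M)\rangle$ is typically strictly larger (new indecomposable summands appear), and each syzygy may introduce a fresh rank drop at level $n$ coming from those new generators. Your final move---``isolate an indecomposable summand of a suitable $\Omega^{\,j}(M)$ carrying the obstruction one syzygy shallower''---is not a construction: an indecomposable summand of infinite projective dimension has $\phi=0$, and a sub-sum of $\Omega^{\,j}(M)$ can have any $\phi$-value from $0$ up to $n$, so nothing forces the value $n-1$ to be hit. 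As written, neither the termination of the iteration nor the fallback produces a module with $\phi=n-1$; the argument for the second statement needs a different idea than descending along syzygies of a single $M$.
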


However, algebras can have gaps, as the following example shows.

\begin{ej}(Example 6.6 of \cite{BMR2})\label{ejemplo_gap} Let $A = \frac{\Bbbk Q}{J^2}$, where $Q$ is the following quiver:
\vspace{.5cm}
$$ \xymatrix{ & 1 \ar[dl] \ar@/^3pc/[drrr] & & 1' \ar[dl] & \\ 
2 \ar[d] & 0 \ar[u] & 2'\ar[d] & 0'\ar[u] & 2''\ar[d] \\ 
3 \ar[ur]& & 3' \ar[ur] \ar[ul] & & 3''.\ar[ul]}$$
Since $S_D = S(A)$, by Corollary \ref{sinmember}, we have $\fidim(A) = \phi(\oplus_{S \in \mathcal{S}(A)} S ) + 1 = 7$. By easy computations
we have that there is no $A$-module $M$ such that $\phi (M) = 2$. On the other hand, by Theorem \ref{Rad^2izq=der}, $\fidim(A^{op}) = 7$. But $A^{op}$ has no gaps (see \cite{BMR2}). Hence, the existence of gaps is not symmetric. 

\end{ej}

A very surprising connection between these concepts and the finitistic dimension conjecture follows.

\begin{teo}(Theorem 4.1 of \cite{BMR2}) If $A$ is an Artin algebra with a gap at $t$, then the finitistic dimension conjecture holds for $A$.
\begin{proof}

Since the finitistic dimension has no gaps, using Proposition \ref{it1}, we get the result because 
$$\findim(A) < t < \fidim(A).$$
\end{proof}

\end{teo}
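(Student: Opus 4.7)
The plan is to derive a contradiction from the assumption that $\findim(A) \geq t$, thereby forcing $\findim(A) < t$, and since $t$ is a concrete natural number the conjecture follows at once.

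First I would unwind the definition: a gap at $t$ means that $t \leq \fidim(A)$ and no module $M \in \mod A$ attains $\phi(M) = t$. The strategy is to exploit the fact that the projective dimension, unlike the $\phi$ function, cannot skip values. More precisely, if there exists a module $M$ with $\pd(M) = n < \infty$ and $0 \leq i \leq n$, then $\pd(\Omega^i(M)) = n - i$; this is essentially the shift property of syzygies over Artin algebras, and it is explicitly observed in the paragraph preceding Theorem \ref{gaps}.

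Next I would combine this with Proposition \ref{it1}(1): any module of finite projective dimension satisfies $\phi(M) = \pd(M)$. Now suppose for contradiction that $\findim(A) \geq t$. By definition of the finitistic dimension there is some $M_0 \in \mod A$ with $t \leq \pd(M_0) < \infty$. Setting $N := \Omega^{\pd(M_0)-t}(M_0)$, the syzygy shift gives $\pd(N) = t$, and Proposition \ref{it1}(1) yields $\phi(N) = t$. This contradicts the assumption that $t$ is a gap for $A$.

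Consequently $\findim(A) < t$, and since $t$ is a fixed natural number this proves $\findim(A) < \infty$, i.e.\ the finitistic dimension conjecture holds for $A$. I do not anticipate a real obstacle: the whole argument rests on the contrast between the rigid behavior of $\pd$ under taking syzygies and the more flexible behavior of $\phi$, which is precisely what allows gaps to exist for $\phi$ but not for $\pd$.
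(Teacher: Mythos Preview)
Your proposal is correct and follows essentially the same approach as the paper: both use that projective dimension has no gaps (via syzygies) together with Proposition~\ref{it1}(1) to conclude $\findim(A) < t$, hence $\findim(A)$ is finite. Your write-up simply spells out the construction of the module $N$ that the paper leaves implicit.
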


\section{Algebras of $\Omega^n$-finite representation type}

Let $A$ be an Artin algebra.  We say that $A$ is of {\bf $\Omega^n$-finite representation type} if $\Omega^n(\mod A)$ is of finite representation type (see Section 3.1). If $A$ is not of $\Omega^n$-finite representation type for any $n \in \mathbb{N}$ we say that $A$ is of {\bf $\Omega^{\infty}$-infinite representation type}. 

Examples of algebras of $\Omega^1$-finite representation type are:

\begin{itemize}

\item Special biserial algebras,

\item Radical square zero algebras, and

\item Truncated path algebras.

\end{itemize}

Monomial algebras are of $\Omega^2$-finite representation type (see \cite{Z}), but not of $\Omega^1$-finite representation type in general.

\begin{teo} (Theorem 3.2 of \cite{LM})
If $A$ is an Artin algebra of $\Omega^n$-finite representation type, then $\fidim(A) < \infty$ and $\psidim(A) < \infty$.

\begin{proof}

Given $M \in  \mod A$, then $\phi(M) \leq n+ \phi (\Omega^n (M))$, $\forall n \in \mathbb{N}$ by Proposition \ref{it1}. Now by Remark \ref{finitesubcategory} and the hypothesis, there exists $k \in \mathbb{N}$ such that $\phi(M) \leq n + \phi(\Omega^n(M)) \leq n+k < \infty$ for each $M \in \mod A$.

\end{proof}

\end{teo}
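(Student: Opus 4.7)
The plan is to iterate the one-step syzygy bound for $\phi$ (and for $\psi$) and then exploit the hypothesis that all modules sitting $n$ steps down in the syzygy tower lie in the additive closure of a single module.

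First I would establish by induction on $n$, using item 5 of Proposition \ref{it1}, the inequality
\[
\phi(M) \leq n + \phi(\Omega^n(M)) \quad \text{for every } M \in \mod A.
\]
Next, by hypothesis the subcategory $\Omega^n(\mod A)$ is of finite representation type, so by Remark \ref{finitesubcategory} there is a fixed $N \in \mod A$ with $\Omega^n(M) \in \add N$ for every $M \in \mod A$. Item 3 of Proposition \ref{it1} then gives $\phi(\Omega^n(M)) \leq \phi(N)$, and $\phi(N)$ is a natural number, say $k$. Combining, $\phi(M) \leq n+k$ for every $M$, so $\fidim(A) \leq n+k < \infty$.

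For the bound on $\psidim(A)$, the quickest route is to invoke the chain of inequalities \eqref{desigualdades}, which yields $\psidim(A) \leq 2\,\fidim(A) < \infty$ as soon as the $\phi$-dimension is known to be finite. Alternatively, one can rerun the identical argument: item 6 of Proposition \ref{Huard2} gives $\psi(M) \leq n + \psi(\Omega^n(M))$, and item 3 of the same proposition applied to $\Omega^n(M) \in \add N$ gives $\psi(\Omega^n(M)) \leq \psi(N) =: k'$, so $\psidim(A) \leq n + k'$.

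There is no substantive obstacle: the whole argument is already packaged inside the one-step syzygy bounds (item 5 of Proposition \ref{it1}, item 6 of Proposition \ref{Huard2}) and the monotonicity of $\phi$ and $\psi$ under direct summands (item 3 of both propositions). The only thing worth double-checking is the straightforward induction that lifts the one-step bound to its $n$-fold iterate.
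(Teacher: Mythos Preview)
Your proof is correct and follows essentially the same route as the paper: iterate the one-step syzygy bound from Proposition~\ref{it1}(5), then use Remark~\ref{finitesubcategory} (which you unpack explicitly via items 3 and 4) to bound $\phi(\Omega^n(M))$ uniformly. The paper's proof leaves the $\psidim$ part implicit, relying on $\psidim(A)\leq 2\,\fidim(A)$ from \eqref{desigualdades}; you make this explicit and also note the parallel argument via Proposition~\ref{Huard2}, which is a reasonable addition.
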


The following example shows that for every  $n \in \mathbb{N}$ there exists an algebra of $\Omega^n$-finite representation type but not of $\Omega^{n-1}$-finite representation type.

\begin{ej}

Let $A = \frac{\Bbbk Q}{I}$ be the finite dimensional algebra, given by $Q$ :

$$\xymatrix{ 1  \ar@/^2mm/[r]^{\alpha_1} \ar@/_2mm/[r]_{\beta_1} & 2  \ar@/^2mm/[r]^{\alpha_2} \ar@/_2mm/[r]_{\beta_2}  & 3  \ar@/^2mm/[r]^{\alpha_3} \ar@/_2mm/[r]_{\beta_3} & \cdots \ar@/^2mm/[r]^{\alpha_{n}} \ar@/_2mm/[r]_{\beta_{n}}  & n+1}$$

and $I = \langle  \alpha_i \beta_{i+1},\ \beta_i \alpha_{i+1},\ \alpha_i \alpha_{i+1} - \beta_{i} \beta_{i+1}\text{, for } i=1, \ldots, n-1  \rangle$.

Consider $M^k_{\mu,\lambda}$ the following $A$-modules for $k = 1, \ldots,n$, $\lambda, \mu \in \Bbbk $ and $\topp(M^k_{\mu,\lambda}) = S_k$.

$$M^k_{\mu, \lambda} = \xymatrix{ 0  \ar@/^2mm/[r]^{0} \ar@/_2mm/[r]_{0} & 0  \ar@/^2mm/[r]^{0} \ar@/_2mm/[r]_{0}  & \cdots  \ar@/^2mm/[r]^{0} \ar@/_2mm/[r]_{0} & \Bbbk  \ar@/^2mm/[r]^{ \mu 1_{\Bbbk}} \ar@/_2mm/[r]_{\lambda 1_{\Bbbk}} & \Bbbk \ar@/^2mm/[r]^{0} \ar@/_2mm/[r]_{0} & \cdots \ar@/^2mm/[r]^{0} \ar@/_2mm/[r]_{0}  & 0 \ar@/^2mm/[r]^{0} \ar@/_2mm/[r]_{0} & 0}.$$

Then $\Omega(M^{k}_{1,\lambda}) \cong M^{k+1}_{-\lambda,1}$ and $\Omega(M^{k}_{\lambda, 1}) \cong M^{k+1}_{1,-\lambda}$ for $k = 1, \ldots, n-1$, $\lambda \in \Bbbk$ and $\Omega(M^n_{\mu, \lambda}) = P_{n+1}$ for $\lambda, \mu \in \mathbb{\Bbbk}$. Hence $A$ is of $\Omega^{n+1}$-finite representation type but not of $\Omega^n$-finite representation type. 
\end{ej}

\subsection{Radical square zero algebras}

Given a finite quiver $Q$, with $\vert Q_0 \vert = n$, we call the {\bf heart} of $Q$ the full
subquiver of $Q$ determined by the vertices in the intersection of the support of $\Omega^n (A_0)$
with the support of $\Omega^{-n} (A_0)$, we denote it by $C(Q)$. We call {\bf member} of $Q$ the full subquiver of $Q$ determined by the vertices that are not in the heart. We denote it by $M(Q)$. Throughout this section, we work with radical square zero algebras $A = \frac{\Bbbk Q}{J^2}$ with $\vert Q_0 \vert= n$. Recall that $\mathcal{S}_D = \mathcal{S}\setminus (\mathcal{S}_P \cup \mathcal{S}_I)$, that is the simple nonprojective and noninjective modules.\\

It is known that the finitistic dimension of radical square zero algebras is finite. The $\phi$-dimension and $\psi$-dimension are also finite for algebras in this class. We recall some results obtained in \cite{LMM}.

\begin{prop}(\cite{LMM})\label{toto}\label{explicito}\label{sinmember}
If $A = \frac{\Bbbk Q}{J^2}$, then

\begin{enumerate}

\item (Propositions 4.12 and 4.16) $\fidim(A) \leq \phi(A_0) + 1 \leq \vert Q_0 \vert = n$.

\item (Proposition 4.14) $\fidim(A) = \phi(\oplus_{S\in \mathcal{S}_D}S)+1$, in case $A$ is not selfinjective. 

\item If $n=1$, then $\psi \dim (A) \leq \vert Q_0 \vert = 1$, otherwise $\psi \dim (A) \leq 2\vert Q_0 \vert-2$.

\item (Proposition 4.63) If $\phi \dim (A) = \vert Q_0 \vert$, then $\psi \dim (A) = \vert Q_0\vert$.

\end{enumerate}

\end{prop}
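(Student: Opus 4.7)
The plan is to reduce every estimate to the two invariants $\phi(A_0)$ and $\psi(A_0)$, using the central structural fact that $(\rad A)^2 = 0$ forces $\Omega(M) \in \add(A_0)$ for every $M \in \mod A$ (since the radical of any projective module is semisimple). All four statements then become Fitting-type estimates on the finitely generated abelian group $\langle \add A_0 \rangle \subseteq K_0(A)$ with the induced action of $\bar\Omega$.

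For item 1, the chain $\phi(M) \leq 1 + \phi(\Omega(M)) \leq 1 + \phi(A_0)$, obtained by combining Proposition \ref{it1}(5) with $\Omega(M) \in \add A_0$ and items 3 and 4 of Proposition \ref{it1}, yields $\fidim(A) \leq \phi(A_0) + 1$. The sharper bound $\phi(A_0) + 1 \leq n$ then follows from Proposition \ref{invariante} applied to $A_0$ (whose first syzygy already lies in $\add A_0$), refined by the observation that each projective simple is identified with zero in $K_0(A)$ and hence the effective rank of $\langle \add A_0 \rangle$ is $n - |\mathcal{S}_P| \leq n - 1$; the edge case $\mathcal{S}_P = \emptyset$ is handled separately by noting that no finite resolution can terminate in that setting, so $\fin(A) = 0$ and the inequality is trivial. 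For item 2, the key observation is that every simple summand of $\Omega(M)$ is of the form $S_j$ where $j$ is the target of an arrow of $Q$; such a $j$ is never a source, so $S_j \notin \mathcal{S}_I$. Hence $\Omega(M) \in \add\bigl(\bigoplus_{S \in \mathcal{S}_D \cup \mathcal{S}_P} S\bigr)$, and since projective simples have zero class in $K_0(A)$ one obtains $\phi(\Omega(M)) \leq \phi\bigl(\bigoplus_{S \in \mathcal{S}_D} S\bigr)$, yielding the upper bound $\fidim(A) \leq \phi\bigl(\bigoplus_{S \in \mathcal{S}_D} S\bigr) + 1$. For the reverse inequality under non-selfinjectivity, I would pick $S \in \mathcal{S}_D$ realizing this maximum as its projective dimension, and use the non-injectivity of $S$ (producing an arrow $j \to i$ with $S = S_i$) together with a suitable quotient of $P(S_j)$ to exhibit a module of projective dimension $\pd(S) + 1$.

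For item 3, I would sharpen item 1 by observing that a simple realizing the $\phi$-dimension of $A_0$ must be a source of $Q$: otherwise the source of any incoming arrow would have strictly larger $\phi$, contradicting maximality. This absorbs the $+1$ in item 1 and gives $\fidim(A) \leq n - 1$ for $n \geq 2$, whence the standing inequality $\psidim(A) \leq 2\fidim(A)$ of (\ref{desigualdades}) yields $\psidim(A) \leq 2(n - 1) = 2n - 2$; the boundary case $n = 1$ is immediate. For item 4, under the hypothesis $\fidim(A) = n$ all inequalities of item 1 saturate, forcing $\phi(A_0) = n - 1$, $|\mathcal{S}_P| = 1$, and $\bar\Omega$ to act as a single Jordan block of size $n - 1$ on the rank-$(n{-}1)$ group $\langle \add A_0 \rangle$; Fitting's Lemma \ref{Fitting} then gives $\bar\Omega^{\phi(A_0)}(\langle \add A_0 \rangle) = 0$, i.e.\ $\Omega^{\phi(A_0)}(A_0)$ is projective. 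Consequently the maximum $k$ in the defining formula $\psi(A_0) = \phi(A_0) + k$ collapses to zero, $\psi(A_0) = n - 1$, and iterating $\psi(M) \leq 1 + \psi(\Omega(M))$ with $\Omega(M) \in \add A_0$ produces $\psidim(A) \leq n$; equality with the hypothesis $\fidim(A) = n$ follows from $\fidim(A) \leq \psidim(A)$.

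The main obstacle I foresee is the Fitting bookkeeping driving items 3 and 4: module-level summands of the syzygies $\Omega^k(A_0)$ do not correspond cleanly to the splitting of $\langle \add A_0 \rangle$ into its nilpotent and invariant parts supplied by Lemma \ref{Fitting}, so cancellations in $K_0(A)$ between simples of different types must be tracked carefully. In particular the sharpening $\fidim(A) \leq n - 1$ used in item 3 and the collapse $k = 0$ of item 4 both rest on this finer mismatch analysis rather than on the raw pointwise estimate $\phi(\Omega(M)) \leq \phi(A_0)$.
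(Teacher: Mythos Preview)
The paper is a survey and states this proposition without proof, citing \cite{LMM}; the question is whether your sketch actually goes through. Items~1 and~2 are on the right track, with one loose end: in item~1 your edge case $\mathcal{S}_P = \emptyset \Rightarrow \fin(A) = 0$ does \emph{not} yield $\phi(A_0) \leq n-1$. The missing observation is that with no sinks every column of the matrix of $\bar\Omega$ on $\langle\add A_0\rangle \cong \mathbb{Z}^n$ is nonzero with non-negative entries, so $\bar\Omega$ is not nilpotent and its Fitting index is at most $n-1$.

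The serious errors are in items~3 and~4. Your ``sharpening'' $\fidim(A) \leq n-1$ for $n\geq 2$ is \emph{false}: Example~\ref{ejemplo_maximal} exhibits $\fidim(A) = |Q_0|$, and item~4 of the very proposition you are proving presupposes that $\fidim(A) = n$ can occur. The phrase ``a simple realizing the $\phi$-dimension of $A_0$'' conflates $\phi$ with projective dimension; $\phi(A_0)$ is the Fitting index of $\bar\Omega$ on the whole subgroup, not a maximum over individual simples. In item~4 your structural claim is also backwards: $\fidim(A) = n$ together with $|\mathcal{S}_P| = 1$ would make $\bar\Omega$ nilpotent on a rank-$(n{-}1)$ group, hence $\gl(A) \leq n-1$, contradicting $\fidim(A) = n$. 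What really happens (cf.\ Corollary~\ref{2loops}) is that $\fidim(A) = n$ forces $Q$ to be strongly connected, so $\mathcal{S}_P = \emptyset$ and every simple has infinite projective dimension; then $\Omega^{\phi(A_0)}(A_0)$ is a direct sum of such simples, the finitistic correction in the definition of $\psi$ vanishes, and $\psi(A_0) = \phi(A_0) = n-1$, giving $\psidim(A) \leq n$. Item~3 then follows by a dichotomy: either $\fidim(A) \leq n-1$ and one uses $\psidim(A) \leq 2\fidim(A) \leq 2n-2$, or $\fidim(A) = n$ and item~4 gives $\psidim(A) = n \leq 2n-2$.
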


%
%
%

The example below shows a finite dimensional algebra $A$ where the previous bound is reached. The computation of its $\phi$-dimension is left to the reader.

\begin{ej}\label{ejemplo_maximal}
Let $A = \frac{\Bbbk Q}{J^2}$ be the finite dimensional algebra, given by Q:

$$\xymatrix{ & 2 \ar[r] \ar[dd] & 4 \ar[r] \ar[dd] \ar[ldd]& 6 \ar[r] \ar[dd] \ar[ldd]  & \cdots \ar[r]  & 2n-4 \ar[dd]\ar[r] & 2n-2 \ar[dd] \ar[dr] \ar[ldd]  & &  \\ 1  \ar@(ul, dl)   \ar[ur] & & &  & \ldots & & & 2n \ar[dl]  \ar@(ur, dr) & .   \\ & 3 \ar[lu] & 5 \ar[l]  & 7 \ar[l] & \cdots \ar[l] & 2n-3 \ar[l] & 2n-1\ar[l]  &  & }$$ 
 In this case $\fidim(A) = 2n = \vert Q_0\vert$.
\end{ej}
 
In case not all the vertices were at the heart of $Q$, the bound is

\begin{prop} \label{SD}(Proposition 4.19 of \cite{LMM})
If $A = \frac{\Bbbk Q}{J^2}$ and $M(Q)$ is not empty, then
$$\fidim(A) \leq \vert \mathcal{S}_D \vert < \vert Q_0 \vert = n.$$
\end{prop}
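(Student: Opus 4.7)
The plan is to invoke Proposition \ref{toto}(2), which gives $\fidim(A)=\phi(M)+1$ with $M:=\bigoplus_{S\in\mathcal{S}_D}S$, and then to bound $\phi(M)$ by $|\mathcal{S}_D|-1$. Before using that formula one needs $A$ to be non-selfinjective. If $A=\Bbbk Q/J^2$ were selfinjective, the Nakayama permutation (Theorem \ref{Nakayama}) together with radical square zero would force each $\rad P(S)$ to be simple, hence every vertex of $Q$ to have exactly one outgoing arrow; a dual count on socles forces every vertex to have exactly one incoming arrow, so $Q$ is a disjoint union of oriented cycles. In that case every vertex lies on a cycle and thus belongs to both supports defining $C(Q)$, giving $M(Q)=\emptyset$. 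Contrapositively, $M(Q)\neq\emptyset$ forces $A$ to be non-selfinjective, and the same observation yields the strict inequality $|\mathcal{S}_D|<|Q_0|$, since $\mathcal{S}_P\cup\mathcal{S}_I=\emptyset$ is only possible in the cyclic case.

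The next step is to single out an $\bar\Omega$-invariant subgroup of $K_0(A)$ to which the Fitting-type bound of Proposition \ref{invariante} applies. For any $S\in\mathcal{S}_D$ the syzygy $\Omega(S)=\rad P(S)$ is semisimple with summands $S_j$ indexed by the targets of arrows leaving the vertex of $S$; each such $S_j$ receives an arrow, so $S_j\notin\mathcal{S}_I$ and therefore $S_j\in\mathcal{S}_D\cup\mathcal{S}_P$. Since the classes of projective simples vanish in $K_0(A)$, this shows that $\bar\Omega[S]$ lies in $G:=\langle\add M\rangle$, the free abelian subgroup of rank $|\mathcal{S}_D|$ with basis $\{[S]:S\in\mathcal{S}_D\}$. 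In particular $\bar\Omega(G)\subseteq G$, and Proposition \ref{invariante} applied to $\bar\Omega|_G$ immediately delivers $\phi(M)\leq|\mathcal{S}_D|$.

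To shave off the last unit and reach $\phi(M)\leq|\mathcal{S}_D|-1$, I would exploit $M(Q)\neq\emptyset$ to force an extra rank drop in the descending chain $G\supseteq\bar\Omega(G)\supseteq\bar\Omega^2(G)\supseteq\cdots$ earlier than the generic Fitting bound permits. Pick $v\in M(Q)$; by the definition of the heart, either no cycle lies downstream of $v$ (so every path starting at $v$ eventually reaches a sink in $\mathcal{S}_P$), or dually no cycle lies upstream of $v$. In either case one should translate this cycle-free behaviour on one side of $v$ into a nontrivial linear relation among the classes $\bar\Omega^k[S]$ with $S\in\mathcal{S}_D$ and $k\geq 0$, producing a rank drop of at least two somewhere in the filtration; equivalently, showing that $\bar\Omega|_G$ cannot be nilpotent of index exactly $|\mathcal{S}_D|$ with a single Jordan block. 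The hard part, where I expect most of the work to live, is this combinatorial-to-linear-algebraic translation between ``$v\in M(Q)$'' and the Jordan structure of $\bar\Omega|_G$; it should follow from a careful case analysis on whether $v$ lies in $\mathcal{S}_P$, $\mathcal{S}_I$ or $\mathcal{S}_D$ and on the position of $v$ relative to the cycles of $Q$, using the pigeonhole principle on paths of length $n=|Q_0|$ that underlies the very definition of $C(Q)$.
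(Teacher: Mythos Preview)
The survey does not supply a proof of this proposition; it is quoted from \cite{LMM}. So there is no in-paper argument to compare against, and your attempt must stand on its own.

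Your reduction is sound up to the bound $\phi\bigl(\bigoplus_{S\in\mathcal{S}_D}S\bigr)\le|\mathcal{S}_D|$: the observation that $\bar\Omega$ maps the rank-$|\mathcal{S}_D|$ subgroup $G=\langle[S]:S\in\mathcal{S}_D\rangle$ into itself (targets of arrows are never sources, so syzygies of simples in $\mathcal{S}_D$ pick up no summand from $\mathcal{S}_I$) is correct, and the rank-drop argument behind Proposition~\ref{invariante} then gives $\eta_{\bar\Omega}(G)\le\rk G$. A small caveat: Proposition~\ref{invariante} as stated asks for $\Omega(M)\in\add M$ \emph{as modules}, which may fail if some syzygy has a projective simple summand; but the underlying linear algebra only needs $\bar\Omega(G)\subseteq G$ in $K_0(A)$, which you do have. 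Your derivation that $M(Q)\ne\emptyset$ forces a source or a sink, hence $|\mathcal{S}_D|<|Q_0|$, is also fine.

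The genuine gap is the last step, which you yourself flag as ``the hard part'': passing from $\phi(M)\le|\mathcal{S}_D|$ to $\phi(M)\le|\mathcal{S}_D|-1$. Your plan is to use $M(Q)\ne\emptyset$ to rule out a single nilpotent Jordan block of size $|\mathcal{S}_D|$ for $\bar\Omega|_G$. This cannot work against the statement as written. Take $Q\colon 1\to2\to3$ and $A=\Bbbk Q/J^2$: then $\Omega^3(A_0)=0$, so the heart is empty and $M(Q)=Q$ is nonempty; meanwhile $\mathcal{S}_D=\{S_2\}$, $G=\mathbb{Z}[S_2]$ has rank $1$, and $\bar\Omega[S_2]=[S_3]=0$, so $\bar\Omega|_G$ \emph{is} a single nilpotent block of size $|\mathcal{S}_D|=1$. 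One gets $\phi(S_2)=1$ and $\fidim(A)=2>|\mathcal{S}_D|$. Already $Q\colon 1\to2$ gives $\fidim(A)=1>0=|\mathcal{S}_D|$. So either the inequality in the survey is off by one, or the transcription from \cite{LMM} has silently dropped a hypothesis (for instance a standing assumption excluding acyclic $Q$, or a different convention for $\mathcal{S}_D$). Before investing in the combinatorial case analysis you sketch, consult Proposition~4.19 of \cite{LMM} directly; the Jordan-block argument you outline cannot close the gap for the inequality as it is recorded here.
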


\begin{coro}\label{otra_cota}
If $A = \frac{\Bbbk Q}{J^2}$, then 
$$\fidim (A) \leq \frac{\dim_{\Bbbk}(A) -1}{2}.$$

\begin{proof}
We divide the proof in two cases, if $\dim_{\Bbbk} A$ is even or odd.

If $\dim_{\Bbbk} A = 2m+1$, then $|Q_0| \leq m+1$, and by Proposition \ref{toto} follows that $\fidim (A) \leq |Q_0| \leq m+1$. In case $ |Q_0| = m+1 $ the quiver $Q$ is a tree, then $\fidim (A) = \gl(A) \leq m$. 

If $\dim_{\Bbbk} A = 2m$, then $\fidim(A) \leq |Q_0| \leq m$. If $|Q_0| = m$, then $Q = C^m$ or $M(Q)$ is not empty.
We know that $\fidim(\frac{\Bbbk C^m}{J^2}) = 0$ (see Proposition \ref{autoinyectiva}). In the other case, by Proposition \ref{SD}, $\fidim (A) < m$, and it follows the thesis.
\end{proof}

\end{coro}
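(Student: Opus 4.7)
The plan is to exploit the tight dimension count forced by the $J^2 = 0$ relation. Since $(\rad A)^2 = 0$, a $\Bbbk$-basis of $A$ is given by the trivial paths (one per vertex of $Q$) together with the arrows, so $\dim_{\Bbbk}(A) = \vert Q_0 \vert + \vert Q_1 \vert$. Connectedness of $Q$ forces $\vert Q_1 \vert \geq \vert Q_0 \vert - 1$, hence $\vert Q_0 \vert \leq (\dim_{\Bbbk}(A) + 1)/2$. Combined with $\fidim(A) \leq \vert Q_0 \vert$ from Proposition \ref{toto}.1, this already delivers the desired inequality whenever $Q$ has strictly more arrows than a tree, so the argument reduces to a small list of borderline configurations.

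I would then split according to the parity of $\dim_{\Bbbk}(A)$. In the odd case $\dim_{\Bbbk}(A) = 2m+1$, the counting gives $\vert Q_0 \vert \leq m+1$; the only problematic subcase is $\vert Q_0 \vert = m+1$, which forces $\vert Q_1 \vert = \vert Q_0 \vert - 1$, so $Q$ is a tree, hence acyclic, and $\gl(A) \leq m$ (the longest directed path in a tree on $m+1$ vertices has at most $m$ edges). In the even case $\dim_{\Bbbk}(A) = 2m$, the inequality reads $\vert Q_0 \vert \leq m$, and only the extremal case $\vert Q_0 \vert = \vert Q_1 \vert = m$ requires more care, since then the target bound $(2m-1)/2$ forces the strict improvement $\fidim(A) \leq m-1$ rather than just $\leq m$.

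The main obstacle is this extremal even case. I would handle it by a structural dichotomy on the connected quiver $Q$ satisfying $\vert Q_0 \vert = \vert Q_1 \vert$: either every vertex lies in the heart $C(Q)$, in which case the combinatorics of syzygies for a radical-square-zero algebra forces $Q$ to coincide with the oriented cycle $C^m$, making $A$ selfinjective with $\fidim(A) = 0$; or the member $M(Q)$ is non-empty, and Proposition \ref{SD} yields $\fidim(A) \leq \vert \mathcal{S}_D \vert < \vert Q_0 \vert = m$. In either branch $\fidim(A) \leq m - 1$, completing the proof. The delicate point is the identification of the "all heart" case with $C^m$, which requires a direct inspection of the definitions of heart and member recalled at the start of the subsection; everything else in the argument is an elementary dimension count combined with the previously established bound $\fidim(A) \leq \vert Q_0 \vert$.
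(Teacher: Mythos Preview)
Your proposal is correct and follows essentially the same approach as the paper's proof: the same parity split on $\dim_{\Bbbk}A$, the same use of $\fidim(A)\leq |Q_0|$ from Proposition~\ref{toto}, the tree argument in the odd extremal case, and the dichotomy $Q=C^m$ versus $M(Q)\neq\emptyset$ (handled via Propositions~\ref{autoinyectiva} and~\ref{SD}) in the even extremal case. You supply a bit more justification than the paper does---notably the dimension formula $\dim_{\Bbbk}A=|Q_0|+|Q_1|$ and the reason the all-heart condition together with $|Q_0|=|Q_1|$ forces $Q=C^m$---but the skeleton is identical.
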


Note that if $Q$ is an oriented $A_n$, then the bound in Corollary \ref{otra_cota} is reached.

\subsection{Truncated path algebras}

Throughout this section, we work with a truncated path algebras $A = \frac{\Bbbk Q}{J^k}$ for some $k \geq 2$, with $\vert Q_0 \vert =n$. This class of algebras is a natural extension of the class of radical square zero algebras. The following result is a generalization of the second item of Proposition \ref{explicito}.

\begin{prop}(Corollary 1 of \cite{BMR})\label{phitruncadas}
If $A$ is not a selfinjective algebra, then 
$$\fidim(A) = 1+\phi(\bigoplus_{ l(\rho) \geq 1} \rho A).$$
\end{prop}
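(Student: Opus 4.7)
The plan is to exploit two structural features of a truncated path algebra $A = \Bbbk Q/J^k$. First, every syzygy $\Omega(M)$ over $A$ decomposes as a direct sum of \emph{path modules}, i.e.\ modules of the form $\rho A$ for $\rho$ a path in $A$; this is the central structural input, a known property of truncated path algebras that I would invoke from earlier work (it is also implicit in the proofs underlying \cite{BMR}). Second, each path module $\rho A$ with $l(\rho) \geq 1$ fits into the short exact sequence
\begin{equation*}
0 \longrightarrow \rho A \longrightarrow e_{\start(\rho)} A \longrightarrow e_{\start(\rho)} A / \rho A \longrightarrow 0,
\end{equation*}
and since $\rho A \subseteq \rad(e_{\start(\rho)} A)$ the surjection is a minimal projective cover, so $\rho A \simeq \Omega(e_{\start(\rho)} A/\rho A)$ and $\pd(e_{\start(\rho)} A/\rho A) = \pd(\rho A) + 1$ whenever the latter is finite.

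Write $N = \bigoplus_{l(\rho) \geq 1} \rho A$; by the remark after the definition of $\phi$-dimension, $\phi(N) = \fidim(\add N)$. For the inequality $\fidim(A) \leq 1 + \phi(N)$, I would take $M \in \mod A$ with $0 < \pd(M) = d < \infty$ (the case $d = 0$ is trivial) and apply the first structural fact to write $\Omega(M) = X \oplus P$ with $X \in \add N$ and $P$ projective. Then $\pd(\Omega(M)) = \pd(X)$, and since $\add X \subseteq \add N$, Proposition \ref{Huard1} gives $\pd(X) = \phi(X) \leq \phi(N)$; hence $d \leq 1 + \phi(N)$.

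For the reverse inequality $1 + \phi(N) \leq \fidim(A)$, I take any path $\rho$ with $l(\rho) \geq 1$ and $\pd(\rho A) < \infty$. The second structural fact yields $\pd(\rho A) = \pd(e_{\start(\rho)} A / \rho A) - 1 \leq \fidim(A) - 1$. Taking the supremum over such $\rho$ gives $\phi(N) = \fidim(\add N) \leq \fidim(A) - 1$. The non-selfinjective hypothesis is used to rule out the degenerate case in which the supremum is vacuous: for a truncated path algebra, $A$ is selfinjective if and only if $Q$ is a disjoint union of oriented cycles, and in the non-selfinjective case one has $\fidim(A) \geq 1$, so $1 + \phi(N) \leq \fidim(A)$ holds even when $\phi(N) = 0$.

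The main obstacle is the structural input that every syzygy over a truncated path algebra is a direct sum of path modules. It is the only non-routine ingredient, and the whole bound depends on being able to identify the indecomposable summands of $\Omega(M)$ with the $\rho A$ that appear in $N$. Everything else is a short manipulation of the short exact sequence displayed above together with the identity $\phi(M) = \fidim(\add M)$ and the standard relation $\pd(M) = \pd(\Omega(M)) + 1$ for non-projective $M$.
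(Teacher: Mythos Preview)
Both directions of your argument conflate $\fin$ with $\fidim$, and as written they establish something closer to $\fin(A) = 1 + \fin(\add N)$ rather than the $\fidim$ statement.

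\textbf{Upper bound.} You take $M$ with $\pd(M)<\infty$ and bound $\pd(M)$; this yields only $\fin(A)\leq 1+\phi(N)$. To bound $\fidim(A)$ you must control $\phi(M)$ for \emph{every} $M\in\mod A$, including those of infinite projective dimension. The repair is immediate with the tools you already invoke: by Proposition~\ref{it1}(5) one has $\phi(M)\leq \phi(\Omega(M))+1$, and since $\Omega(M)$ lies in $\add(N\oplus A)$ by the structural fact, Proposition~\ref{it1}(3),(4) give $\phi(\Omega(M))\leq\phi(N)$.

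\textbf{Lower bound.} You bound $\pd(\rho A)$ for each $\rho$ with $\pd(\rho A)<\infty$ and then assert that the supremum of these values equals $\phi(N)=\fidim(\add N)$. It does not: that supremum is $\fin(\add N)$, and one can have $\fin(\add N)<\phi(N)$ because $\phi(N)$ also records syzygy coincidences among summands of \emph{infinite} projective dimension. (For instance, over $\Bbbk Q/J^2$ with $Q$ the quiver $1\to 2\to 3$ carrying loops at $1$ and $3$, every $\rho A$ has infinite projective dimension, so $\fin(\add N)=0$, yet $\phi(N)=1$ and $\fidim(A)=2$.) The correct use of your second structural fact is global rather than summand-by-summand: set $M=\bigoplus_{l(\rho)\geq 1} e_{\start(\rho)}A/\rho A$, so that $\bar\Omega(\langle\add M\rangle)=\langle\add N\rangle$ and hence $\bar\Omega^{j}(\langle\add M\rangle)=\bar\Omega^{j-1}(\langle\add N\rangle)$ for all $j\geq 1$. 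If $n:=\phi(N)\geq 1$, then $\bar\Omega$ fails to be injective on $\bar\Omega^{\,n-1}(\langle\add N\rangle)=\bar\Omega^{\,n}(\langle\add M\rangle)$, which forces $\phi(M)\geq n+1$ and thus $\fidim(A)\geq n+1$. Your handling of the case $\phi(N)=0$ via Theorem~\ref{selfinjective} is fine.
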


For a truncated path algebra $A$, we denote by $M_v^l (A)$ the ideal $\rho A$, where $\length(\rho) = l$, $\target (\rho) = v$ and $ M_l (A) =
\displaystyle\bigoplus_{v \in Q_0} M_v^l (A)$.

\begin{prop}\label{autoinyectiva}(Proposition 4 of \cite{BMR}) For a non-simple algebra $A = \frac{\Bbbk Q}{J^k}$ the following statements are equivalent:
\begin{enumerate}
\item $M^l_v$ exists, is not a projective module and $\Omega(M^l_v)$ is indecomposable for all $v \in Q_0$ and $1 \leq l \leq k-1$.
\item $Q$ is a cycle, i.e., $Q = C^n$.
\item $A$ is a selfinjective algebra.
\item The $\fidim (A) = 0$.
\item The $\psidim (A) = 0$.
\end{enumerate}
\end{prop}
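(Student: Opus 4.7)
The plan is to route all five statements through the quiver description $(2)$: I would establish $(1)\Leftrightarrow(2)$ and $(2)\Leftrightarrow(3)$ by direct analysis of the modules $M_v^l$ and the Loewy structure of truncated path algebras, $(3)\Leftrightarrow(4)$ using Proposition \ref{phitruncadas} in one direction and the theory of selfinjective algebras in the other, and $(4)\Leftrightarrow(5)$ from the inequalities in (\ref{desigualdades}) together with the definition of $\psi$.

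For $(1)\Leftrightarrow(2)$ the argument is combinatorial. Since $M_v^l=\rho A\cong e_vA/e_vJ^{k-l}$ has projective cover $P_v=e_vA$, one gets $\Omega(M_v^l)=e_vJ^{k-l}=\bigoplus_\sigma \sigma A$, where $\sigma$ runs over the paths of length $k-l$ starting at $v$. Existence of $M_v^l$ for every $v$ and every $1\le l\le k-1$ demands at least one path of length $l$ ending at every vertex; non-projectivity demands $e_vJ^{k-l}\ne 0$, i.e.\ at least one path of length $k-l$ starting at $v$; and indecomposability of $\Omega(M_v^l)$ forces exactly one such $\sigma$, so every vertex has out-degree one. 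Since the sum of in-degrees equals the sum of out-degrees equals $|Q_0|$, the in-degrees must also be identically one, and connectedness of $Q$ gives $Q=C^n$. The converse is immediate, because on a cycle any path of any length starting (or ending) at a given vertex is unique.

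For $(2)\Leftrightarrow(3)$: if $Q=C^n$, then $\Bbbk C^n/J^k$ is a Nakayama algebra with every $P_v$ uniserial of Loewy length $k$, and $S_v\mapsto \soc(P_v)$ is the $(k-1)$-shift along the cycle, hence a permutation of $\mathcal{S}(A)$; Theorem \ref{Nakayama} then yields selfinjectivity. For the converse I would compute in a truncated path algebra that $\soc(P_v)$ is spanned by the paths $\rho$ starting at $v$ with $\length(\rho)=k-1$ or with $\target(\rho)$ a sink. Selfinjectivity forces $\soc(P_v)$ to be simple and (via $P_v\cong I_{\nu(v)}$) all projectives to have the common Loewy length $k$; combined, these rule out branching and early termination at every vertex, giving out-degree exactly one, and a dual analysis of $I_v$ gives in-degree exactly one, so $Q=C^n$.

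For the homological equivalences: $(3)\Rightarrow(4)$ uses that in a selfinjective algebra $\Omega$ is an auto-equivalence of the stable module category, so $\bar\Omega$ is an automorphism of $K_0(A)$ (which kills projectives by definition) and hence a monomorphism on every subgroup; this gives $\phi(M)=0$ for all $M$. The converse $(4)\Rightarrow(3)$ is the contrapositive of Proposition \ref{phitruncadas}, since $\fidim(A)=0$ contradicts the formula $\fidim(A)=1+\phi(\bigoplus_{\length(\rho)\ge 1}\rho A)\ge 1$ that would hold for a non-selfinjective $A$. Finally, $(4)\Leftrightarrow(5)$ follows from the inequalities (\ref{desigualdades}) together with the observation that $\fidim(A)=0$ forces $\findim(A)=0$, after which every term in $\psi(M)=\phi(M)+\sup\{\pd(N):\Omega^{\phi(M)}(M)=N\oplus N',\ \pd(N)<\infty\}$ vanishes. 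I expect $(3)\Rightarrow(2)$ to be the step that requires the most care, since it is the only place where the homological statement of selfinjectivity must be translated into a rigid combinatorial conclusion about $Q$; everything else reduces to direct syzygy bookkeeping or an appeal to results already proved in the survey.
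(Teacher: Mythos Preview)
Your proposal is correct. Note, however, that the survey does not actually supply a proof of Proposition~\ref{autoinyectiva}: it is quoted from \cite{BMR} without argument. That said, the survey does contain all the ingredients you use, and in fact contains shorter routes for parts of your scheme. The equivalences $(3)\Leftrightarrow(4)\Leftrightarrow(5)$ that you establish by hand (via $\bar\Omega$ being an automorphism, Proposition~\ref{phitruncadas}, and unpacking the definition of $\psi$) are exactly Theorem~\ref{selfinjective}, valid for any right artinian ring; you could simply cite it. For $(3)\Rightarrow(2)$ the survey later gives, in Section~4.4, a closely related argument for arbitrary monomial algebras (showing $\fidim(A)=0$ forces $Q=C^m$ and $I=J^k$): it first invokes Theorem~\ref{selfinjective} to get selfinjectivity, then uses Theorem~\ref{Nakayama} to force in/out-degree one at every vertex, and finally compares $\dim_\Bbbk P_i$ along the cycle to pin down $I$. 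Your socle analysis is essentially the same mechanism specialised to the truncated case, with the ``no sinks, no sources'' step implicit in your phrase ``rule out early termination''; it would be worth making that step explicit, since it is what guarantees that every path of length $<k-1$ extends and hence that $\soc(P_v)$ is concentrated in degree $k-1$. Your treatment of $(1)\Leftrightarrow(2)$ via the explicit description $\Omega(M_v^l)=\bigoplus_{\sigma}\sigma A$ is clean and is the natural argument; it does not appear in the survey.
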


Given a truncated path algebra $A = \frac{\Bbbk Q}{J^k}$, adding a loop at each sink and at each source of $Q$ we obtain a new quiver $\bar{Q}$ with every vertex in its heart, such that the truncated path algebra $B = \frac{\Bbbk \bar{Q}}{J^k}$ verifies $\fidim(A) \leq \fidim(B)$.

\begin{teo}\label{phi maximas}(Theorem 4 of \cite{BMR})
If $A = \frac{\Bbbk Q}{J^k}$, then there exist a truncated path algebra $B = \frac{\Bbbk \bar{Q}}{J^k}$ such that $\fidim(A) \leq \fidim(B)$, where $\bar{Q} = (\bar{Q}_0, \bar{Q}_1)$ is a quiver, such that $Q_0 = \bar{Q}_0$ and $Q_1 \subseteq \bar{Q}_1$ with $M(\bar{Q})$ empty. 
\end{teo}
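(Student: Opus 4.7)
I would take $\bar{Q}$ to be the quiver obtained from $Q$ by adjoining one loop at each sink and each source of $Q$; then $Q_0 = \bar{Q}_0$ and $Q_1 \subseteq \bar{Q}_1$ hold by design, and no vertex of $\bar{Q}$ is a sink or a source. Since every vertex of $\bar{Q}$ has an incoming arrow, each $S_v$ is a composition factor of $\Omega_B(B_0)$, and dually, since every vertex has an outgoing arrow, each $S_v$ is a composition factor of $\Omega_B^{-1}(B_0)$. Iterating, for any $n \geq 1$ every vertex lies in the support of both $\Omega_B^{n}(B_0)$ and $\Omega_B^{-n}(B_0)$, so $C(\bar{Q}) = \bar{Q}_0$ and $M(\bar{Q}) = \emptyset$, as required.

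For the inequality $\fidim(A) \leq \fidim(B)$, if $Q = C^n$ then $\bar{Q} = Q$ and both sides vanish by Proposition~\ref{autoinyectiva}. Otherwise neither $A$ nor $B$ is selfinjective, and Proposition~\ref{phitruncadas} yields
$$\fidim(A) = 1 + \phi_A\!\left(\bigoplus_{\rho\in\mathrm{paths}(Q),\, \length(\rho)\geq 1}\rho A\right), \qquad \fidim(B) = 1 + \phi_B\!\left(\bigoplus_{\sigma\in\mathrm{paths}(\bar{Q}),\, \length(\sigma)\geq 1}\sigma B\right).$$
Since the paths of $Q$ embed into those of $\bar{Q}$, the $B$-module $M' = \bigoplus_{\rho\in\mathrm{paths}(Q),\,\length(\rho)\geq 1}\rho B$ is a direct summand of the second module; hence by Proposition~\ref{Huard1} (item 3) it suffices to establish the comparison
$$\phi_A\!\left(\bigoplus_{\rho\in\mathrm{paths}(Q)}\rho A\right)\ \leq\ \phi_B\!\left(\bigoplus_{\rho\in\mathrm{paths}(Q)}\rho B\right).$$

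To prove this, I would use that in a truncated path algebra the syzygy of a path module is again a direct sum of path modules: if $\length(\rho) = l$ and $\target(\rho) = v$, then $\Omega_A(\rho A) = \bigoplus_\pi \pi A$ over paths $\pi$ in $Q$ of length $k - l$ starting at $v$, while $\Omega_B(\rho B) = \bigoplus_\sigma \sigma B$ runs over the analogous paths in $\bar{Q}$. Both $\bar{\Omega}_A$- and $\bar{\Omega}_B$-actions are thereby encoded as integer matrices on the free abelian group generated by the admissible pairs $(v, l)$, with entries counting paths of length $k - l$; passing from $Q$ to $\bar{Q}$ only enlarges these counts. Applying Fitting's Lemma~\ref{Fitting} to both sides, the stabilization time of $\bar{\Omega}_B$ should dominate that of $\bar{\Omega}_A$.

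The main obstacle lies precisely in this last monotonicity claim: the subgroups $\langle \add M_A\rangle \subseteq K_0(A)$ and its analogue in $K_0(B)$ sit in different quotient groups, since the projectives modded out for $A$ and for $B$ differ, so the $\bar{\Omega}_A$-dynamics is not literally a sub-dynamics of the $\bar{\Omega}_B$-dynamics. I would overcome this by phrasing both Fitting computations on the common combinatorial free $\mathbb{Z}$-module indexed by pairs $(v, l)$, where the action matrices depend only on the path counts in $Q$ and $\bar{Q}$ respectively, and then verifying by a direct analysis of kernels and images that passing to the larger matrix can only postpone (never hasten) the Fitting stabilization.
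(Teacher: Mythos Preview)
Your construction of $\bar Q$ by adjoining a loop at every sink and every source is exactly what the survey describes in the sentence preceding the statement, and your argument that $M(\bar Q)=\emptyset$ is correct. Note that the survey does not actually prove the inequality $\fidim(A)\le\fidim(B)$; it only records the construction and refers to \cite{BMR}, so on that level your proposal already matches (and goes beyond) what the paper provides.

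The genuine gap is in your argument for the inequality. Your reduction via Proposition~\ref{phitruncadas} and the syzygy formula for path modules is reasonable, and you correctly isolate the difficulty: one must compare Fitting indices of two $\mathbb Z$-linear endomorphisms whose matrices satisfy an entrywise inequality. But the principle you invoke, that ``passing to the larger matrix can only postpone the Fitting stabilization,'' is false without further hypotheses. For instance, with
\[
M_1=\begin{pmatrix}0&0\\1&0\end{pmatrix},\qquad
M_2=\begin{pmatrix}1&0\\1&0\end{pmatrix},
\]
one has $M_1\le M_2$ entrywise, yet $\eta_{M_1}(\mathbb Z^2)=2$ while $M_2$ is idempotent and $\eta_{M_2}(\mathbb Z^2)=1$. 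Hence entrywise domination of path-count matrices cannot by itself give the conclusion; any ``direct analysis of kernels and images'' would have to exploit the particular structure of the syzygy operator on truncated path algebras, and you have not carried that out. Compounding this, the two Fitting computations really live in different quotient groups, since the projective path modules over $A$ and over $B$ do not coincide (at a sink $v$ of $Q$ the module $S_v$ is $A$-projective but not $B$-projective), and simply lifting both to the free group on pairs $(v,l)$ does not neutralize that discrepancy. As written, the crucial step is unproved.
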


\begin{defi}
We define $f_k:\N\rightarrow\N$, with $k \in \mathbb{N}, k\geq 2,$ as
\[
f_k(m)=
\left\{
\begin{array}{ll}
0 & \text{if}\ m=0,\\
2\left(\frac{m-1}{k}\right)+1 & \text{if}\ m\equiv1\pmod{k},\\
2\left(\frac{m-2}{k}\right)+2 & \text{if}\ m\equiv2\pmod{k},\\
2\left\lceil\frac{m-2}{k}\right\rceil+1 & \text{otherwise}.
\end{array}
\right.
\]
\end{defi}

Note that $f_k$ is an increasing function.
The following theorem shows that the $\phi$-dimension of a truncated path algebra can be calculated from the $\phi$-dimension of the associated radical square zero algebra using the function defined above.

\begin{teo}\label{phi sin pozos ni fuentes}(Theorem 5 of \cite{BMR})
Consider $A = \frac{\Bbbk Q}{J^k}$ a non selfinjective algebra with $M(Q)$ empty, then
$$ \fidim (A) = f_k(\fidim(\frac{\Bbbk Q}{J^2}))$$
\end{teo}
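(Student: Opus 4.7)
The strategy is to apply Proposition \ref{phitruncadas} to both $A$ and $B=\frac{\Bbbk Q}{J^2}$, writing $m=\fidim(B)$, and to translate the action of $\bar\Omega_A$ on $K_0(A)$ into powers of the adjacency matrix $N$ of $Q$. With $X_A=\bigoplus_{l=1}^{k-1}M_l$ in the paper's notation, the projective cover $P_v\twoheadrightarrow M_v^l$ has kernel $J^{k-l}P_v$, giving
\[
\Omega_A(M_v^l)\;=\;\bigoplus_{\tau:\,\st(\tau)=v,\,\length(\tau)=k-l}M_{\tg(\tau)}^{k-l}.
\]
Because $M(Q)=\emptyset$ every vertex lies on a cycle, so $J^{k-l}P_v\neq 0$ and each $M_v^l$ with $1\leq l\leq k-1$ is a non-projective indecomposable; the classes $\{[M_v^l]\}$ are pairwise distinct and linearly independent in $K_0(A)$, and $G:=\langle\add X_A\rangle=\bigoplus_{l=1}^{k-1}V_l$ with each $V_l\cong W:=\mathbb{Z}^{|Q_0|}$.

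In the vertex basis the syzygy formula above reads: $\bar\Omega_A|_{V_l}\colon V_l\to V_{k-l}$ equals $N^{k-l}$, where $N$ is also (by Proposition \ref{phitruncadas} applied to $B$, using $\mathcal{S}_D=\mathcal{S}$ which holds because $M(Q)=\emptyset$) the matrix of $\bar\Omega_B$ acting on $W=\langle\add X_B\rangle$, $X_B=\bigoplus_{v\in Q_0}S_v$. In particular $\bar\Omega_A^2$ preserves each $V_l$ and acts by $N^k$, and an easy induction yields
\[
\bar\Omega_A^{2j}(G)=\bigoplus_{l=1}^{k-1}N^{jk}(W),\qquad \bar\Omega_A^{2j+1}(G)=\bigoplus_{l=1}^{k-1}N^{jk+l}(W).
\]
Letting $\eta$ denote the Fitting stabilisation index of $N$ on $W$, we have $\eta=\phi_B(X_B)=m-1$.

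Since $\ker N^a$ is non-decreasing and becomes absorbing at $a=\eta$, $\bar\Omega_A^n(G)$ becomes constant precisely when the smallest $N$-exponent appearing in it reaches $\eta$. This yields the minimal stabilising indices in each parity
\[
n_e=2\bigl\lceil\eta/k\bigr\rceil,\qquad n_o=2\bigl\lceil(\eta-1)/k\bigr\rceil+1,
\]
so $\phi_A(X_A)=\min(n_e,n_o)$. A short case analysis on $m\bmod k$ then shows
\[
1+\min(n_e,n_o)\;=\;\min\!\bigl(2\lceil(m-1)/k\rceil+1,\;2\lceil(m-2)/k\rceil+2\bigr)\;=\;f_k(m),
\]
with the two arguments realising the branches $m\equiv 1\pmod k$ and $m\equiv 2\pmod k$ respectively, and the first being minimal in all remaining residues (this is the ``otherwise'' branch of $f_k$). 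Proposition \ref{phitruncadas} then gives $\fidim(A)=1+\phi_A(X_A)=f_k(m)=f_k(\fidim(B))$.

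The main obstacle is the clean matrix identification of $\bar\Omega_A$: once the $[M_v^l]$ are shown to be linearly independent in $K_0(A)$ (in particular non-projective, which is where $M(Q)=\emptyset$ is used) and $\bar\Omega_A$ is put into the displayed block form, the remainder is routine parity-bookkeeping of two Fitting sequences and a short arithmetic check that they reassemble into the piecewise definition of $f_k$.
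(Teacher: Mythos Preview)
The survey does not prove this theorem; it is quoted from \cite{BMR}. Your argument is correct and is essentially the approach of \cite{BMR}: identify $\langle\add X_A\rangle$ with $\bigoplus_{l=1}^{k-1}\mathbb{Z}^{|Q_0|}$ via the classes $[M_v^l]$, show that $\bar\Omega_A$ sends the $l$-block to the $(k{-}l)$-block by the $(k{-}l)$-th power of the adjacency matrix $N$, and then read off the Fitting index from the exponents of $N$ appearing in $\bar\Omega_A^{\,n}(G)$.

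Two small points. First, ``every vertex lies on a cycle'' is not literally what $M(Q)=\emptyset$ says (take $Q$ with a loop at $1$, a loop at $3$, and arrows $1\to 2\to 3$: vertex $2$ is in the heart but on no cycle). What you actually use, and what $M(Q)=\emptyset$ does give, is that $Q$ has no sinks and no sources, so paths of every positive length start and end at every vertex; this is exactly what makes each $M_v^l$ exist and be non-projective, and hence makes $\{[M_v^l]\}$ a basis of $G$. Second, ``$\bar\Omega_A^{\,n}(G)$ becomes constant'' should read ``the rank of $\bar\Omega_A^{\,n}(G)$ reaches its stable value'' (equivalently, $\bar\Omega_A$ becomes injective on $\bar\Omega_A^{\,n}(G)$); over $\mathbb{Z}$ the images themselves need not literally stabilise, but the Fitting index is governed by ranks, and your formulas $n_e=2\lceil\eta/k\rceil$, $n_o=2\lceil(\eta-1)/k\rceil+1$ together with the residue-by-residue check against $f_k$ are correct.
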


The next result follows from Theorem \ref{phi maximas}, Theorem \ref{phi sin pozos ni fuentes}, and Proposition \ref{toto}.

\begin{coro}\label{boundcoro}(Corollary 2 of \cite{BMR}) If $A = \frac{\Bbbk Q}{J^k}$, then $\fidim (A) \leq f_k(n)$
and the bound is reached.
\begin{proof}

By Theorem \ref{phi maximas}, there is a truncated path algebra $B = \frac{\Bbbk \bar{Q}}{j^k}$ with $\bar{Q}_0 = Q_0$, $Q_1 \subset \bar{Q}_1$ and $M(\bar{Q})$ empty such that $\fidim (A) \leq \fidim (B)$. By Theorem \ref{phi sin pozos ni fuentes} $\fidim (B) \leq f_k (\fidim(\frac{\Bbbk \bar{Q}}{J^2}))$ and since $f_k$ is an increasing function, then $f_k(\fidim(\frac{\Bbbk \bar{Q}}{J^2})) \leq f_k(n)$ and the thesis follows. Note that $\fidim (\frac{\Bbbk Q}{j^k}) = f_k(n)$ if $Q$ is the quiver of Example \ref{ejemplo_maximal} 
\end{proof}
\end{coro}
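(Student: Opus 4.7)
The plan is to assemble the three immediately preceding results in a clean inequality chain, since Corollary \ref{boundcoro} is precisely the kind of capstone statement that follows formally from Theorem \ref{phi maximas}, Theorem \ref{phi sin pozos ni fuentes}, and Proposition \ref{toto}.

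First, I would apply Theorem \ref{phi maximas} to the given truncated path algebra $A = \frac{\Bbbk Q}{J^k}$ to produce a ``larger'' truncated path algebra $B = \frac{\Bbbk \bar{Q}}{J^k}$, with $\bar{Q}_0 = Q_0$ (so $|\bar{Q}_0| = n$), with $Q_1 \subseteq \bar{Q}_1$, and with member $M(\bar{Q})$ empty, satisfying $\fidim(A) \leq \fidim(B)$. This step reduces the general bound to the case handled by the structural computation of Theorem \ref{phi sin pozos ni fuentes}.

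Next, assuming $B$ is not selfinjective, I apply Theorem \ref{phi sin pozos ni fuentes} to obtain the equality
$$\fidim(B) = f_k\!\left(\fidim\!\left(\tfrac{\Bbbk \bar{Q}}{J^2}\right)\right).$$
By Proposition \ref{toto}(1) applied to the associated radical square zero algebra $\frac{\Bbbk \bar{Q}}{J^2}$, we have $\fidim(\frac{\Bbbk \bar{Q}}{J^2}) \leq |\bar{Q}_0| = n$. Since $f_k$ is increasing by construction, concatenating these inequalities gives
$$\fidim(A) \;\leq\; \fidim(B) \;=\; f_k\!\left(\fidim\!\left(\tfrac{\Bbbk \bar{Q}}{J^2}\right)\right) \;\leq\; f_k(n),$$
which is the desired bound. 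If instead $B$ happens to be selfinjective, Proposition \ref{autoinyectiva} tells us $\fidim(B) = 0 \leq f_k(n)$, so the inequality holds trivially and this edge case poses no obstacle.

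For the sharpness claim, I would point to the explicit family of Example \ref{ejemplo_maximal}: taking the relations to be $J^k$ rather than $J^2$ on that same quiver produces a truncated path algebra whose quiver has $n$ vertices and empty member, and by direct computation (or, more economically, by noting that in that example $\fidim(\frac{\Bbbk Q}{J^2}) = n$ and then invoking Theorem \ref{phi sin pozos ni fuentes} once more) one gets $\fidim(\frac{\Bbbk Q}{J^k}) = f_k(n)$, exhibiting that the bound is attained. The main subtlety here is simply verifying that the chosen example actually satisfies the hypotheses of Theorem \ref{phi sin pozos ni fuentes}, i.e.\ that it is non-selfinjective with empty member; this is immediate from the shape of the quiver.
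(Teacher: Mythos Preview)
Your proposal is correct and follows essentially the same approach as the paper: apply Theorem~\ref{phi maximas} to pass to $B$, then Theorem~\ref{phi sin pozos ni fuentes} together with Proposition~\ref{toto}(1) and monotonicity of $f_k$ to get the bound, and invoke Example~\ref{ejemplo_maximal} for sharpness. If anything, your treatment is slightly more careful than the paper's, since you explicitly dispose of the selfinjective edge case for $B$ via Proposition~\ref{autoinyectiva} (the paper silently writes $\leq$ where Theorem~\ref{phi sin pozos ni fuentes} actually gives an equality under a non-selfinjectivity hypothesis).
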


\subsection{Monomial algebras}

Monomial algebras give shape to a family of extensively studied Artin algebras. They play a central role in the evolution of the theory. For a monomial algebra $A$ given by a quiver Q, we recall that $\mathbb{P}^{\geq 1}$ is the set of paths of length greater or equal to one. It is clear that the subset of nonzero paths of $A$ is a finite set. 

The following proposition shows that the value of $\phi$ on the right ideals bounds the $\phi$-dimension on monomial algebras. Later remarks and example shows that the three values can be reached.  

\begin{prop} \label{phi monomial}

Let $A = \frac{\Bbbk Q}{I}$ be a monomial algebra, then
$$\phi( \bigoplus_{\rho \in \mathbb{P}^{\geq 1}} \rho A) \leq \fidim (A) \leq \phi( \bigoplus_{\rho \in \mathbb{P}^{\geq 1}} {\rho} A) + 2$$

\begin{proof}

We know that $\fidim(A)\leq \fidim (\Omega^2(\mod A)) +2$. By Theorem I of \cite{Z}, $\Omega^2(\mod A) \subset \add{(\oplus_{\rho \in \mathbb{P}^{\geq 1} } \rho A)}$, then $\fidim (A) \leq \phi( \oplus_{\rho \in \mathbb{P}^{\geq 1}} {\rho} A) + 2$.

\end{proof}

\end{prop}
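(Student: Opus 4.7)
My plan is to establish the two inequalities separately, with essentially all the work going into the upper bound. Writing $N := \bigoplus_{\rho \in \mathbb{P}^{\geq 1}} \rho A$, the lower bound $\phi(N) \leq \fidim(A)$ is immediate from the definition $\fidim(A) = \sup\{\phi(M) : M \in \mod A\}$, once one observes that $N$ is a genuine finitely generated module (a monomial Artin algebra has only finitely many nonzero paths, so the indexing set is finite). There is nothing else to do on that side.

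For the upper bound, I would reduce to second syzygies. Iterating Proposition \ref{it1}(5) yields
$$\phi(M) \;\leq\; \phi(\Omega(M)) + 1 \;\leq\; \phi(\Omega^2(M)) + 2$$
for every $M \in \mod A$. The key external input is the Zimmermann--Huisgen structure theorem (Theorem I of \cite{Z}) asserting that, for any monomial algebra, $\Omega^2(\mod A) \subseteq \add(N)$. Applying this, $\Omega^2(M)$ is a direct summand of some $N^k$, and combining parts (3) and (4) of Proposition \ref{it1} gives $\phi(\Omega^2(M)) \leq \phi(N^k) = \phi(N)$. Taking the supremum over $M \in \mod A$ in the chain above then yields $\fidim(A) \leq \phi(N) + 2$.

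The substantive input here is the Zimmermann--Huisgen theorem; everything else is formal manipulation of the properties of $\phi$ collected in Proposition \ref{it1}. The only minor point I would verify in detail is the implication $X \in \add(Y) \Longrightarrow \phi(X) \leq \phi(Y)$: if $X \oplus X' \cong Y^k$ then $\phi(X) \leq \phi(X \oplus X') = \phi(Y^k) = \phi(Y)$ by items (3) and (4) of the proposition together with the isomorphism-invariance of $\phi$. Beyond citing the structure theorem, I do not anticipate any real obstacle.
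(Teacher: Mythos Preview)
Your proposal is correct and follows essentially the same route as the paper: the lower bound is immediate from the definition of $\fidim$, and the upper bound combines the iterated inequality $\phi(M)\leq \phi(\Omega^2(M))+2$ from Proposition~\ref{it1}(5) with Zimmermann--Huisgen's theorem $\Omega^2(\mod A)\subset \add(N)$ and the identity $\fidim(\add N)=\phi(N)$. The only difference is that you unpack the step $\fidim(\Omega^2(\mod A))\leq \phi(N)$ explicitly via items (3) and (4), whereas the paper invokes it as a known consequence.
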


The next remark and the subsequent example show that the three different values of the proposition above can be reached.

\begin{obs} Let $A$ be an monomial algebra, then

\begin{itemize} 

\item $A$ is selfinjective then $\phi( \oplus_{\rho \in \mathbb{P}^{\geq 1}} \rho A) = \fidim (A) = 0$.

\item If $A$ is a non selfinjective truncated path algebra, then $\fidim(A) = \phi( \oplus_{\rho \in \mathbb{P}^{\geq 1}} \rho A) +1$ (see Proposition \ref{phitruncadas})
\end{itemize} 

\end{obs}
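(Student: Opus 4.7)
Write $N = \bigoplus_{\rho\in\mathbb{P}^{\geq 1}}\rho A$. Since $A$ is finite-dimensional and monomial, the set of nonzero paths is finite, so $N$ is a bona fide $A$-module of finite length. The plan is to handle the two inequalities separately: the lower bound is essentially a tautology once one unpacks the remark that $\phi$ computes a finitistic dimension, while the upper bound reduces to a standard syzygy-shift together with Zimmermann-Huisgen's structural theorem.

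\textbf{Lower bound.} The first of the bulleted identities in the remark following inequality (\ref{desigualdades}) says that $\phi(M) = \fidim(\add M)$ for every $M \in \mod A$. Applying this to $M = N$, I get $\phi(N) = \fidim(\add N)$. Since $\add N$ is a full subcategory of $\mod A$, the supremum of finite projective dimensions over $\add N$ cannot exceed the corresponding supremum over $\mod A$, i.e.\ $\fidim(\add N) \leq \fidim(\mod A) = \fidim(A)$. Composing these two observations gives $\phi(N) \leq \fidim(A)$.

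\textbf{Upper bound.} First I record the syzygy-shift inequality: for any $M \in \mod A$ with $\pd(M) < \infty$, either $\pd(M) \leq 2$, in which case $\pd(M) \leq \fidim(\Omega^{2}(\mod A)) + 2$ trivially, or $\pd(M) \geq 2$ and then $\pd(\Omega^{2}(M)) = \pd(M)-2$, so again $\pd(M) \leq \fidim(\Omega^{2}(\mod A)) + 2$. Taking the supremum over $M$ with finite projective dimension yields
\begin{equation*}
\fidim(A) \;\leq\; \fidim\bigl(\Omega^{2}(\mod A)\bigr) + 2.
\end{equation*}
Now I invoke the structural theorem of Zimmermann-Huisgen (Theorem I of \cite{Z}), which is the key external input: for a monomial algebra $A$, every module in $\Omega^{2}(\mod A)$ is a direct summand of a finite direct sum of modules of the form $\rho A$ with $\rho \in \mathbb{P}^{\geq 1}$; that is, $\Omega^{2}(\mod A) \subseteq \add N$. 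Consequently $\fidim(\Omega^{2}(\mod A)) \leq \fidim(\add N) = \phi(N)$, using again the identity from the remark. Plugging this into the displayed inequality gives $\fidim(A) \leq \phi(N) + 2$.

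\textbf{Main obstacle.} The nontrivial ingredient is the reduction $\Omega^{2}(\mod A) \subseteq \add N$, which is a genuinely monomial-algebra phenomenon; once this is invoked as a black box, both inequalities are short. The lower bound, by contrast, uses nothing deeper than the elementary identification of $\phi$ on $\add M$ with the finitistic dimension of $\add M$, which is already recorded in the remark following (\ref{desigualdades}).
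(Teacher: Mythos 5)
Your proposal proves the wrong statement. What you establish is (a variant of) the sandwich $\phi(N)\leq \fidim(A)\leq \phi(N)+2$ for $N=\bigoplus_{\rho\in\mathbb{P}^{\geq 1}}\rho A$, i.e.\ Proposition \ref{phi monomial}, which immediately precedes the remark. The remark asserts something sharper in two special cases: (i) for a selfinjective monomial algebra both quantities vanish, and (ii) for a non-selfinjective truncated path algebra one has the exact value $\fidim(A)=\phi(N)+1$. Neither claim follows from the sandwich, which is compatible with all three values $\phi(N)$, $\phi(N)+1$, $\phi(N)+2$ (and the example right after the remark shows the third value really occurs). For (i) the needed input is Theorem \ref{selfinjective} of \cite{HL} (selfinjective is equivalent to $\fidim=0$), after which $\phi(N)\leq\fidim(A)=0$ is immediate; for (ii) the needed input is Proposition \ref{phitruncadas} (Corollary 1 of \cite{BMR}), which is precisely the formula $\fidim(A)=1+\phi\bigl(\bigoplus_{\length(\rho)\geq 1}\rho A\bigr)$ for non-selfinjective truncated path algebras, and which the remark cites. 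Your text never invokes selfinjectivity, Theorem \ref{selfinjective}, or the truncated-path formula, so it proves neither bullet.

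A secondary point, relevant if you intended your argument as a proof of Proposition \ref{phi monomial} itself: in this paper $\fidim$ denotes the $\phi$-dimension, not the finitistic dimension. Your lower bound is correct once phrased as $\phi(N)=\fidim(\add N)=\sup\{\phi(M):M\in\add N\}\leq\fidim(A)$; it is not a supremum of projective dimensions. Your upper bound, however, argues via $\pd(\Omega^{2}(M))=\pd(M)-2$, which only yields $\findim(A)\leq\phi(N)+2$. To obtain $\fidim(A)\leq\fidim(\Omega^{2}(\mod A))+2$ one must instead apply $\phi(M)\leq\phi(\Omega(M))+1$ (Proposition \ref{it1}, item 5) twice, as the paper does, before invoking Theorem I of \cite{Z}.
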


The next example shows that there are monomial algebras $A$ such that $ \fidim (A) = \phi( \oplus_{\rho \in \mathbb{P}^{\geq 1}} \rho A) +2$.

\begin{ej}

Consider $A = \frac{\Bbbk Q}{I}$, where $Q$ is given by

$$\xymatrix{ 1  \ar@/^4mm/[r]^{{\alpha}_1}   \ar@/_4mm/[r]_{{\beta}_1}  & 2 \ar@(lu,ru)^{\alpha_2} \ar@(ld,rd)_{\beta_2} }$$ 
and $ I  = \langle J^3,  \alpha_i \beta_2,  \beta_i \alpha_2 \text{ for } i =1,2 \rangle $. 
Since $\alpha_2^2 A = \beta_2^2 A = \alpha_1  \alpha_2 A =  \beta_1 \beta_2 A = S_2 $, $\alpha_1 A = \alpha_2 A$, $\beta_1 A = \beta_2 A$, and 

\begin{itemize}

\item $\Omega (S_2) = \alpha_2 A \oplus \beta_2 A$,

\item $\Omega (\alpha_2 A ) = \beta_2 A \oplus S_2$,

\item $\Omega (\beta_2 A )= \alpha_2 A\oplus S_2$,

\end{itemize}
we have that $\phi(\oplus_{\rho \in  \mathbb{P}^{\geq 1}}\rho A) = 0$ and $\fidim (A) \leq 2$. On the other hand, if we consider the A-module $M$ as follows

$$\xymatrix{ \Bbbk^2  \ar@/^4mm/[r]^{ T_{\alpha_1}}   \ar@/_4mm/[r]_{T_{\beta_1}}  & \Bbbk^5 \ar@(lu,ru)^{T_{\alpha_2}} \ar@(ld,rd)_{T_{\beta_2}} },$$  
where $\langle e_1, e_2\rangle $  and $\langle f_0, f_1, f_2, f_3, f_4 \rangle$ are basis of $\Bbbk^2$ and $\Bbbk^5$ respectively and

\begin{itemize}

\item $T_{\alpha_1}(e_1) = f_1$ and $T_{\alpha_1}(e_2) = f_2$,

\item $T_{\beta_1}(e_1) = f_2 $ and $T_{\beta_1}(e_2) = f_3$,

\item $T_{\alpha_2}(f_0) = 0, $ $T_{\alpha_2}(f_1) = f_0 $, $T_{\alpha_2}(f_2) = 0 $, $T_{\alpha_2}(f_3)= 0$ and $T_{\alpha_2}(f_4)= 0$,

\item $T_{\beta_2}(f_0) = 0$, $T_{\beta_2}(f_1) = 0$, $T_{\beta_2}(f_2) = 0$, $T_{\beta_2}(f_3) = f_4$ and $T_{\beta_2}(f_4) = 0$.

\end{itemize}
Hence $\Omega (M) = N$, where $N$ is represented by:

$$\xymatrix{ 0 \ar@/^4mm/[r]^{ 0 }   \ar@/_4mm/[r]_{0}  & \Bbbk^3 \ar@(lu,ru)^{\bar{T}_{\alpha_2}} \ar@(ld,rd)_{\bar{T}_{\beta_2}} },$$  
where $ \langle g_1, g_2, g_3 \rangle $ is a basis of $\Bbbk^3$

\begin{itemize}

\item $\bar{T}_{\alpha_2} (g_1) = g_2$, $\bar{T}_{\alpha_2}(g_2) = 0 $ and $\bar{T}_{\alpha_2}(g_3) = 0$,

\item $\bar{T}_{\beta_2} (g_1) = g_3$, $\bar{T}_{\beta_2}(g_2) = 0 $ and $\bar{T}_{\beta_2}(g_3) = 0$.

\end{itemize}
Therefore $N$ is an $A$-module that does not belong to $\Omega^2(\mod A) \subset \add{(\oplus_{\rho \in \mathbb{P}^{\geq 1} } \rho A)}$, thus $$\phi(M \oplus \alpha_2 A \oplus \beta_2 A \oplus S_2) = 2 = \fidim (A).$$
\end{ej}

For monomial algebras $A=\Bbbk Q/I$ in \cite{HZ} and \cite{IZ}, respectively,  the authors present bounds for the global dimension of $A$: 
$$
\gl (\Lambda) \leq  \dim_{\Bbbk} \Lambda,$$
\noindent or the stronger statement:
 $$\gl (\Lambda) \leq  \dim_{\Bbbk} \rad \Lambda.$$ 

We already observed that, if the algebra has finite global dimension -as monomial algebras- then $\gl$ and $\fidim$ coincide. In this sense, the next is another bound for monomial algebras, obtained from Proposition \ref{phi
 monomial}.

\begin{teo} (Corollary 3.9 of \cite{LM}) If $A = \Bbbk Q/I$ is a monomial algebra, then $\fidim (A) \leq {\dim }_{\Bbbk}A-\vert Q_0 \vert+2$.

\begin{proof}

First note that $\dim_{\Bbbk} A = \vert \mathbb{P}^{\geq 1} \vert + \vert Q_0 \vert$ and apply Proposition \ref{phi monomial}, finally apply Proposition \ref{invariante} to $\oplus_{\rho \in \mathbb{P}^{\geq 1}} \rho A$. 
\end{proof}

\end{teo}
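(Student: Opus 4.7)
The strategy is to chain Proposition~\ref{phi monomial} with Proposition~\ref{invariante}. Proposition~\ref{phi monomial} already provides $\fidim(A) \leq \phi(M) + 2$ for $M := \bigoplus_{\rho \in \mathbb{P}^{\geq 1}} \rho A$, so the remaining task is to bound $\phi(M)$ by $\vert \mathbb{P}^{\geq 1} \vert$, after which the dimension identity for a monomial algebra closes the argument.

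First I would record the basic counting fact. Since $A = \Bbbk Q/I$ is monomial, the relations are generated by paths, so the residues of the nonzero paths of $Q$ form a $\Bbbk$-basis of $A$. Partitioning this basis according to length yields
\[
\dim_{\Bbbk} A \;=\; \vert Q_0 \vert \;+\; \vert \mathbb{P}^{\geq 1} \vert,
\]
so that $\vert \mathbb{P}^{\geq 1} \vert = \dim_{\Bbbk} A - \vert Q_0 \vert$.

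Next I would verify the hypothesis $\Omega(M) \in \add(M)$ needed to apply Proposition~\ref{invariante}. For any nonzero path $\rho$, the cyclic module $\rho A$ has simple top $S_{t(\rho)}$, so its projective cover is the indecomposable projective at $t(\rho)$; because the relations are monomial, the kernel of the cover map is spanned by paths $\tau$ starting at $t(\rho)$ with $\rho\tau = 0$ in $A$, and these can be organized so that the kernel splits as $\bigoplus_i \tau_i A$ for appropriate paths $\tau_i$ of length $\geq 1$. Hence $\Omega(\rho A) \in \add(M)$ for each such $\rho$, and therefore $\Omega(M) \in \add(M)$. This is the structural fact that also underlies the $\Omega^2$-finite representation type of monomial algebras used in Proposition~\ref{phi monomial}, and it is the one step of the proof that is not completely formal.

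With this in hand, Proposition~\ref{invariante} applied to $M$ gives $\phi(M) \leq n$, where $n$ is the number of pairwise non-isomorphic indecomposable direct summands of $M$; trivially $n \leq \vert \mathbb{P}^{\geq 1} \vert$. Combining with Proposition~\ref{phi monomial} yields
\[
\fidim(A) \;\leq\; \phi(M) + 2 \;\leq\; \vert \mathbb{P}^{\geq 1} \vert + 2 \;=\; \dim_{\Bbbk} A - \vert Q_0 \vert + 2,
\]
which is the desired bound.
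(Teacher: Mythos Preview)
Your proof is correct and follows precisely the same strategy as the paper: use the dimension identity $\dim_{\Bbbk} A = \vert Q_0 \vert + \vert \mathbb{P}^{\geq 1}\vert$, invoke Proposition~\ref{phi monomial} to get $\fidim(A) \leq \phi(M)+2$ for $M=\bigoplus_{\rho\in\mathbb{P}^{\geq 1}}\rho A$, and then apply Proposition~\ref{invariante} to bound $\phi(M)$. The only addition is that you spell out the verification of the hypothesis $\Omega(M)\in\add M$ (via the simple-top/indecomposability of each $\rho A$ and the monomial syzygy structure), which the paper's proof leaves implicit.
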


\subsection{Geometric aspects on monomial algebras}

Now we present some properties of the Igusa-Todorov $\phi$ function that can be deduced from the quiver in specific subclasses of monomial algebras.

\begin{prop}
Let $A = \frac{\Bbbk Q}{I}$ be a monomial algebra. If $\fidim (A) = 0$, then $Q = C^m$ and $I = J^k$.

\begin{proof}

Note, by Theorem \ref{selfinjective}, that $A$ is a selfinjective algebra. If there is a vertex $v \in Q_0$ with starting (arriving) degree different to $1$, by Theorem \ref{Nakayama} the rule $\nu$ is not a permutation and we deduce that $Q = C^m$.\\

If $I \not= J^k$ for any $k$, there are two projective modules $P_i$ and $P_j$ with $\dim_{\Bbbk} P_i \not =  \dim_{\Bbbk} P_j$. We can suppose that $j = i+1$.
\begin{enumerate}
\item Suppose $\dim_{\Bbbk} P_i > \dim_{\Bbbk} P_{i+1}$. There are two possible cases

\begin{itemize}

\item If $\soc (P_{i}) = \soc (P_{i+1})$, there are a short exact sequence 
$$ \xymatrix{0 \ar[r] & P_{i+1} \ar[r] & P_{i} \ar[r] & M \ar[r] & 0.} $$
Since $P_{i+1}$ is an injective module. The short exact sequence splits, hence $P_{i+1}\oplus M \cong P_{i}$, which is absurd. 

\item If $ \soc (P_{i}) = S_j \not = \soc (P_{i+1}) = S_{l}$ with $l<j$, then there is a nonzero path $\rho : i \rightarrow j$ and zero path $\lambda : i+1 \rightarrow l+1$ which is absurd.

\end{itemize}

\item Suppose now $\dim_{\Bbbk} P_i < \dim_{\Bbbk} P_{i+1}$. Then $P_i = I_l$ and $P_{i+1} = I_{l+s}$ with $s\geq 2$. Consider the injective module $I_{l+1}$. Since $A$ is selfinjective $I_{l+1} = P$, hence $P$ must be $P_{i}$ or $P_{i+1}$. Both cases are not possible by the dual argument of part $1.$ with $\soc (P_{i}) = \soc (P_{i+1})$.  

\end{enumerate}

\end{proof}
 
\end{prop}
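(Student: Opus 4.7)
The goal is to prove that if a monomial algebra $A = \Bbbk Q/I$ satisfies $\fidim(A)=0$, then $Q$ must be an oriented cycle $C^m$ and $I$ must be $J^k$ for some $k$. The plan is to first leverage $\fidim(A)=0$ to get selfinjectivity of $A$, then combinatorially analyze the consequences of the Nakayama permutation for a monomial algebra.

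\textbf{Step 1 (Selfinjectivity).} I would first invoke the theorem referenced in the proof (Theorem \ref{selfinjective}) which, for a monomial algebra, extracts selfinjectivity from $\fidim(A)=0$. The intuition is that by Proposition \ref{it1}(1), $\phi(M)=\pd(M)$ whenever $\pd(M)<\infty$; hence $\fidim(A)=0$ forces every module of finite projective dimension to be projective, and for monomial algebras this rigidity collapses to selfinjectivity.

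\textbf{Step 2 (Shape of $Q$).} With $A$ selfinjective, Theorem \ref{Nakayama} gives a Nakayama permutation $\nu : \mathcal{S}(A)\to \mathcal{S}(A)$, $S\mapsto \soc(\prj(S))$. In particular, every indecomposable projective $P_v = e_v A$ has a simple socle. In a monomial algebra, $P_v$ has a basis of nonzero paths starting at $v$, and the socle is spanned by the maximal such paths. If $v$ had out-degree $\geq 2$ with distinct arrows $\alpha_1,\alpha_2$ leaving $v$, then $\rad P_v = \alpha_1 A \oplus \alpha_2 A \oplus \cdots$ decomposes, and $\soc(P_v)$ would split into contributions from each summand, contradicting simplicity. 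Thus every vertex has out-degree exactly $1$. Dually, applying the same argument to the Nakayama permutation for $A^{\op}$ (whose projectives are the duals of the injectives $I_v$) forces every vertex to have in-degree exactly $1$. Since $A$ is connected, $Q$ is a single oriented cycle $C^m$.

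\textbf{Step 3 (Shape of $I$).} Suppose for contradiction that $I \neq J^k$ for any $k$. On $C^m$ there is a unique path of each length starting at any vertex, so $I$ is determined by the dimensions $\ell_v = \dim_\Bbbk P_v$. Non-constancy of $\ell_v$ forces some pair of consecutive vertices $i, i+1$ (mod $m$) with $\ell_i \neq \ell_{i+1}$. I would split into the two subcases given in the sketch:
\begin{itemize}
\item If $\ell_i > \ell_{i+1}$: either $\soc(P_i) = \soc(P_{i+1})$, which produces an embedding $P_{i+1}\hookrightarrow P_i$ (the socle generates $P_{i+1}$ as an injective hull) with cokernel $M\neq 0$; selfinjectivity makes $P_{i+1}$ injective, forcing the sequence to split and contradicting indecomposability of $P_i$. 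Otherwise $\soc(P_i) = S_j$ and $\soc(P_{i+1}) = S_l$ with $l\neq j$, and a length-count on the cycle ($i$ reaches $j$ by a path of length $\ell_i - 1$ which must pass through $i+1$) contradicts the definition of $\ell_{i+1}$ as the first relation length at $i+1$.
\item If $\ell_i < \ell_{i+1}$: the dual argument applies using that $A$ is selfinjective, so $I_{i+1}$ (the injective envelope of the required simple) coincides with some $P_j$, which by the symmetric count must equal $P_i$ or $P_{i+1}$, both impossible.
\end{itemize}
Hence $\ell_v$ is constant, equal to some $k$, so $I = J^k$.

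The main technical obstacle I anticipate is Step 3's socle analysis: translating ``$\ell_i \neq \ell_{i+1}$'' cleanly into the splitting/dimension contradictions, keeping careful track of which path in $C^m$ realizes the socle of each projective. The map $P_{i+1}\to P_i$ from the equal-socle subcase is the most delicate point — it must be identified with the natural inclusion given by the socle (using that $P_{i+1}$ is both projective and injective), and one has to ensure that the quotient $M$ is nonzero, which follows from $\ell_i > \ell_{i+1}$.
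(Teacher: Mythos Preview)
Your proposal is correct and follows essentially the same approach as the paper: establish selfinjectivity via Theorem~\ref{selfinjective}, use the Nakayama permutation (simple socle of each $P_v$) to force every vertex to have in- and out-degree exactly~$1$ so that $Q=C^m$, and then rule out $I\neq J^k$ by finding adjacent vertices with $\dim_\Bbbk P_i\neq\dim_\Bbbk P_{i+1}$ and deriving a contradiction through the same socle/splitting and path-length analysis. Your Step~2 is slightly more explicit (decomposing $\rad P_v$ in the monomial setting) than the paper's terse appeal to $\nu$ failing to be a permutation, but the argument is the same in substance.
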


The following example shows a monomial algebra whose quiver is the oriented $C_n$ with positive global dimension. 

\begin{ej}(\cite{HZ})
Let $Q$ be an oriented cycle with $n$ vertices and $n$ arrows $\alpha_i$ for $1 \leq i \leq n$ such that $\start(\alpha_i ) = i = \target(\alpha_{i-1})$ for $1 < i < n$ and $\target(\alpha_n ) = 1 = \start(\alpha_1)$.
$$\xymatrix{ & 1 \ar[r]^{\alpha_1} & 2 \ar[rd]^{\alpha_2} & \\ n \ar[ur]^{\alpha_n} & & & 3 \ar[d]  \\ \vdots \ar[u] &&& \vdots \ar[d] \\ m+3 \ar[u] &&& m \ar[dl]^{\alpha_{m}} \\ & m+2 \ar[lu]^{\alpha_{m+2}} & m+1 \ar[l]^{\alpha_{m+1}}& }$$
Let $w_1 = \alpha_1 \alpha_2 \ldots \alpha_{n-1} \alpha_n$ and for $2 \leq i \leq n-1$ let $w_i = \alpha_i \alpha_{i+1} \ldots \alpha_{i-1} \alpha_i$.
Consider the monomial algebra $\frac{\Bbbk Q}{I}$, where $I = \langle w_1, \ldots, w_{n-1} \rangle$. Then $\pd(S_{n-i}) = 2i+1$  for $0 \leq i \leq n-1$ and $\pd (S_1) = 2n-2$.
\end{ej}   

The following result follows directly from Theorem \ref{phi sin pozos ni fuentes} and Corollary 4.62 of \cite{LMM}.

\begin{coro} 
Let $A = \frac{\Bbbk Q}{J^m}$ be a truncated path algebra. If $Q$ is symmetric, then $\fidim (A) \leq 2$.

\end{coro}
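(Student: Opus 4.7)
The plan is to invoke the two cited results directly, with a short case analysis handling the degenerate situations where the referenced machinery does not immediately apply.

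First I would dispatch the selfinjective case: by Proposition \ref{autoinyectiva}, selfinjectivity forces $\fidim(A) = 0 \leq 2$, and we are done. Also, if $m = 2$ then $A$ is itself a radical-square-zero algebra, so Corollary 4.62 of \cite{LMM} yields $\fidim(A) \leq 2$ without further work. We may therefore assume that $A$ is not selfinjective and that $m \geq 3$.

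The key geometric input is that the symmetry of $Q$ forces $M(Q)$ to be empty: a symmetric quiver has neither sinks nor sources (the in-degree and out-degree balance at every vertex), so every vertex lies in the heart $C(Q)$. This is exactly the hypothesis of Theorem \ref{phi sin pozos ni fuentes}, which yields
\[\fidim(A) = f_m(\fidim(\tfrac{\Bbbk Q}{J^2})).\]
Corollary 4.62 of \cite{LMM} then supplies $\fidim(\Bbbk Q / J^2) \leq 2$. Reading off the piecewise definition of $f_m$ gives $f_m(0)=0$, $f_m(1)=1$, and $f_m(2)=2$ (the last case using $2\equiv 2\pmod{m}$, valid since $m\geq 3$); by monotonicity of $f_m$ we conclude $\fidim(A) \leq 2$.

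The main obstacle is essentially bookkeeping: one has to make sure the paper's notion of "symmetric" genuinely entails $M(Q) = \emptyset$, since that is the precise hypothesis required to trigger Theorem \ref{phi sin pozos ni fuentes}. Granted that compatibility, the argument proceeds by plug-and-play with no new module-theoretic constructions, syzygy computations, or Fitting-type arguments needed.
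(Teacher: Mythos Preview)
Your proof is correct and follows exactly the route the paper indicates: the paper states only that the result ``follows directly from Theorem \ref{phi sin pozos ni fuentes} and Corollary 4.62 of \cite{LMM}'', and your case analysis (selfinjective, $m=2$, general $m\geq 3$) together with the verification that symmetry of $Q$ forces $M(Q)=\emptyset$ and the evaluation $f_m(0)=0$, $f_m(1)=1$, $f_m(2)=2$ fills in precisely the details left implicit. Your observation that no sinks and no sources already suffices for $M(Q)=\emptyset$ (since one can then extend paths indefinitely in both directions, so every vertex lies in the supports of $\Omega^n(A_0)$ and $\Omega^{-n}(A_0)$) is exactly the right justification.
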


\begin{coro}(Corollary 4.40 of \cite{LMM}) \label{2loops}
Let $A = \frac{\Bbbk Q}{J^2}$ with $\vert Q_0 \vert = n$. If $\fidim (A) = n$, then $A$ is strongly connected, and $Q$ has at least two loops. 
\end{coro}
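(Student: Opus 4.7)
The statement is an extremality result: if $\fidim(A)$ attains the maximum value $n$ allowed by Proposition~\ref{toto}(1), then $Q$ must have very special structure. My strategy is to run the hypothesis $\fidim(A) = n$ against each upper bound available and extract the combinatorial constraints one by one.

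First I force every vertex into the heart of $Q$: if $M(Q) \neq \emptyset$, then Proposition~\ref{SD} yields $\fidim(A) \leq |\mathcal{S}_D| < n$, contradicting the hypothesis. Hence $M(Q) = \emptyset$; in particular $Q$ has no sinks and no sources (so $\mathcal{S}_P = \mathcal{S}_I = \emptyset$), and Proposition~\ref{toto}(1) forces the tight equality $\phi(A_0) = n-1$.

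For strong connectedness, I argue by contradiction. Suppose $Q$ has strongly connected components $C_1, \ldots, C_k$ with $k \geq 2$ in topological order. Because $\Omega(S_v) = \bigoplus_{\alpha \colon v \to w} S_w$ in radical square zero, the subgroups $F_j = \langle [S_v] : v \in C_{j+1} \cup \cdots \cup C_k \rangle$ are $\bar{\Omega}$-invariant and yield a block upper-triangular form for $\bar{\Omega}$ whose diagonal blocks are the adjacency matrices of the induced subquivers on each $C_i$. The ``source'' SCC $C_1$ and ``sink'' SCC $C_k$ must each contain either a directed cycle or a vertex with a loop (otherwise $Q$ would have a source or a sink, which Step~1 has excluded), so each of these two diagonal blocks has a positive Perron eigenvalue. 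Hence the non-nilpotent Fitting summand of $\bar{\Omega}$ on $\langle \add A_0 \rangle$ has dimension at least $2$, so the nilpotent summand has dimension at most $n-2$, giving $\phi(A_0) \leq n-2$ and contradicting Step~1.

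Finally, assume $Q$ is strongly connected. The equality $\phi(A_0) = n-1$ forces the adjacency matrix $M$ of $Q$ (which represents $\bar{\Omega}$ on the basis $\{[S_v]\}_{v}$) to have exactly one nonzero eigenvalue $\lambda$ of algebraic multiplicity $1$, and the eigenvalue $0$ of algebraic multiplicity $n-1$ realized by a single Jordan block. The trace identities $\mathrm{tr}(M^k) = \lambda^k$ then give $L := \mathrm{tr}(M) = \lambda$, where $L$ is the number of loops of $Q$. The case $L = 0$ contradicts Perron--Frobenius for a strongly connected nonnegative matrix, whose Perron eigenvalue is strictly positive. For $L = 1$ we would have $\mathrm{tr}(M^c) = 1$ for every $c \geq 1$; but strong connectedness with $n \geq 2$ produces independent closed walks both around the unique loop and around some cycle through a non-loop vertex, forcing $\mathrm{tr}(M^c) \geq 2$ for some $c$. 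Hence $L \geq 2$. The main obstacle is this final trace-counting/spectral analysis; the remaining steps reduce to direct applications of Propositions~\ref{toto} and \ref{SD}.
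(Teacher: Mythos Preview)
The paper does not supply a proof of this corollary; it simply records the statement and cites \cite{LMM}. So there is no ``paper's proof'' to compare against, and I evaluate your argument on its own merits.

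Your spectral approach is correct and quite clean for $n\geq 2$. Step~1 (forcing $M(Q)=\emptyset$ and $\phi(A_0)=n-1$ via Propositions~\ref{toto} and~\ref{SD}) is fine. Step~2 works: for a block-triangular adjacency matrix along the SCC decomposition, each diagonal block corresponding to a source or sink SCC in the condensation is irreducible and nonzero (otherwise $Q$ would have a genuine source or sink), so Perron--Frobenius gives at least two nonzero eigenvalues and hence $\phi(A_0)\leq n-2$, a contradiction. Step~3 is also correct once one observes that strong connectivity with $n\geq 2$ forces a simple cycle of length $\ell\geq 2$ through some vertex $w\neq v$ (where $v$ carries the unique loop); then $\mathrm{tr}(M^{\ell})$ counts both the $\ell$ rotations of that cycle and the walk $(\text{loop})^{\ell}$ at $v$, giving $\mathrm{tr}(M^{\ell})\geq \ell+1\geq 3>1=\lambda^{\ell}$.

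The one genuine gap is the case $n=1$. Your exclusion of $L=1$ explicitly invokes ``strong connectedness with $n\geq 2$'', and for $n=1$ your Jordan analysis only yields $L=\lambda\neq 0$, i.e.\ $L\geq 1$. You must rule out $L=1$ separately: for a single vertex with one loop, $A=\Bbbk[x]/(x^2)$ is selfinjective, so $\fidim(A)=0\neq 1=n$ by Proposition~\ref{autoinyectiva} (or Theorem~\ref{selfinjective}), contradicting the hypothesis. With this one-line patch the argument is complete.
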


\begin{prop}
Let $A = \frac{\Bbbk Q}{J^2}$. If $\fidim (A) = \frac{\dim_{\Bbbk}(A) -1}{2}$, then $Q$ is an oriented $A_n$.

\begin{proof}
Since $Q$ is connected, $ \vert Q_1 \vert \geq \vert Q_0\vert -1 = n-1$. Then $\frac{\dim_{\Bbbk}(A) -1}{2} \geq n-1$ and we have two cases: $\fidim (A) = n \text{ or } n-1$.

\begin{itemize}

\item If $\fidim (A) = n$, then by Proposition \ref{2loops} $Q$ is strongly connected and has two loops, so we have $\frac{\dim_{\Bbbk}(A) -1}{2} > n$. 

\item Consider now $\fidim (A) = n-1$. If  $\fidim (A) = \frac{\dim_{\Bbbk}(A) -1}{2}$, then $\vert Q_1 \vert = n-1$ and we deduce that $Q$ is an oriented $A_n$.
\end{itemize}
\end{proof}
\end{prop}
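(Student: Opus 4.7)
The plan is to combine the standing bounds on $\fidim$ to restrict $\fidim(A)$ to at most two possible values, exclude the larger one using Corollary \ref{2loops}, and then analyze the resulting tree structure on $Q$.

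First, I would write $\dim_{\Bbbk}A = n + \vert Q_1\vert$, so connectedness of $Q$ gives $\vert Q_1\vert \geq n - 1$ and hence $\frac{\dim_{\Bbbk}A - 1}{2} \geq n - 1$. Combining with the general bound $\fidim(A) \leq n$ from Proposition \ref{toto}(1), the hypothesis $\fidim(A) = \frac{\dim_{\Bbbk}A - 1}{2}$ forces $\fidim(A) \in \{n-1, n\}$.

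Next, I would rule out the case $\fidim(A) = n$. By Corollary \ref{2loops}, $Q$ would be strongly connected and have at least two loops. For $n \geq 2$ (the only non-trivial range), every vertex of a strongly connected quiver must have at least one incoming and one outgoing non-loop arrow, and summing out-degrees shows there are at least $n$ non-loop arrows. Adding the two loops yields $\vert Q_1\vert \geq n + 2$, so $\dim_{\Bbbk}A \geq 2n + 2$ and $\frac{\dim_{\Bbbk}A - 1}{2} \geq n + \tfrac{1}{2} > n$, contradicting $\fidim(A) = n = \frac{\dim_{\Bbbk}A - 1}{2}$.

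Finally, in the case $\fidim(A) = n - 1$, the equality $\frac{\dim_{\Bbbk}A - 1}{2} = n - 1$ forces $\vert Q_1\vert = n - 1$, which combined with connectedness means that the underlying graph of $Q$ is a tree (in particular, $Q$ is acyclic and has no loops or multiple arrows). For such an acyclic $Q$, the algebra $A = \Bbbk Q/J^2$ has finite global dimension, so $\fidim(A) = \gl(A) = n - 1$. Using the syzygy formula $\Omega(S_v) = \bigoplus_{\alpha:v\to w} S_w$ and induction on distance to a sink, $\pd(S_v)$ equals the length of the longest directed path starting at $v$, whence $\gl(A)$ equals the length of the longest directed path in $Q$. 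The conclusion then reduces to the combinatorial fact that in a tree on $n$ vertices any directed path has length at most $n - 1$, and if a directed path of length $n - 1$ exists then it passes through every vertex and uses all $n - 1$ arrows, so $Q$ coincides with that path, i.e.\ $Q$ is an oriented $A_n$. I expect the small obstacle to lie in the identification $\gl(A) = \text{(longest directed path length)}$ for acyclic radical-square-zero algebras, although it is a routine induction.
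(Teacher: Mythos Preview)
Your proof is correct and follows exactly the same two-case split as the paper: bound $\fidim(A)$ between $n-1$ and $n$, eliminate $\fidim(A)=n$ via Corollary~\ref{2loops}, and in the remaining case read off $\vert Q_1\vert = n-1$. Where the paper simply asserts ``so we have $\frac{\dim_{\Bbbk}A-1}{2}>n$'' and ``we deduce that $Q$ is an oriented $A_n$'', you supply the missing justifications --- the explicit arrow count for a strongly connected quiver with two loops, and the identification $\gl(A)=\text{longest directed path}$ forcing a tree with $\gl=n-1$ to be a linear $A_n$ --- so your write-up is in fact more complete than the paper's own argument.
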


For a proof of the following result, see Theorem 6.10 of \cite{BMR2}

\begin{prop}(Theorem 6.10 of \cite{BMR2})
Let $Q$ be a quiver such that one of the following conditions holds.

\begin{enumerate}
\item Every vertex of $Q_0$ has outdegree bigger or equal to $6$ and for two different vertices of $Q_0$, $u$ and $u'$, there exist $4$ different arrows $\alpha, \alpha', \beta, \beta'$ such that
$$\start(\alpha) = \start(\alpha') = u,\  \start(\beta) = \start(\beta') = u',\ \target(\alpha) = \target(\beta)\ \text{and }\target(\alpha') = \target(\beta').$$

\item every vertex $v \in Q_0$ has a double arrow $\alpha_v$, $\beta_v$. Aditionally, for two different vertices of $Q_0$, $u$ and $u'$, there exist $4$ different arrows $\alpha, \alpha', \beta, \beta'$ such that
$$\start(\alpha) = \start(\alpha') = u,\ \start(\beta) = \start(\beta') = u',\ \target(\alpha) = \target(\beta)\text{ and }\target(\alpha') = \target(\beta'),$$
$$ \{\alpha, \alpha'\}\cap\{\alpha_u, \beta_u\}=\emptyset,\ \{\beta, \beta'\}\cap\{\alpha_{u'}, \beta_{u'}\} = \emptyset.$$ 

\end{enumerate}

Then $A$ has no gaps.

\end{prop}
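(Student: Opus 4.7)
The plan is to show that every value $t$ in $\{0, 1, \ldots, \fidim(A)\}$ is admissible by exhibiting an explicit module $M_t$ with $\phi(M_t) = t$. I would exploit the radical square zero structure of $A = \Bbbk Q/J^2$ throughout, reducing everything to linear algebra on $K_0(A)$: since $\bar{\Omega}$ preserves the subgroup $G = \langle [S] : S \in \mathcal{S}(A) \rangle$ and first syzygies are semisimple, computing $\phi(M)$ amounts to finding the stabilization level of $\bar{\Omega}$ on the subgroup of $G$ generated by the simple summands of $\topp(M)$ and $\Omega(M)$.

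The distinguished data $(u, u', \alpha, \alpha', \beta, \beta')$ in either hypothesis produces, in $G$, the key relation
\[
\bar{\Omega}([S_u]) - \bar{\Omega}([S_{u'}]) \; = \; \sum_{\gamma \in X_u}[S_{\target(\gamma)}] \; - \; \sum_{\gamma \in X_{u'}}[S_{\target(\gamma)}],
\]
where $X_u$ and $X_{u'}$ collect the arrows out of $u$ and $u'$ other than $\alpha, \alpha'$ and $\beta, \beta'$ respectively; the two common targets $v_1, v_2$ cancel. In case (1) the outdegree bound $\geq 6$ provides at least four further simples on each side to serve as free coordinates for tuning intermediate $\phi$-values; in case (2) the universal double arrows $\alpha_v, \beta_v$ at each vertex play the analogous role by supplying persistent contributions in every iterate of $\bar{\Omega}$ which are required to be independent from the parallel pair. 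From this seed I would construct, for each $t$ with $1 \leq t \leq \fidim(A)-1$, a semisimple module $M_t$ whose associated subgroup $\langle \add(M_t \oplus \Omega(M_t))\rangle$ has stabilization level exactly $t$ under $\bar{\Omega}$; the endpoints $t = 0, 1, \fidim(A)-1, \fidim(A)$ are already handled by Theorem \ref{gaps} and trivial observations.

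The main obstacle, which dominates the argument, is verifying that the interpolating modules $M_t$ really achieve their prescribed $\phi$-values. Since $\phi$ is only weakly monotone ($\phi(M) \leq \phi(\Omega(M)) + 1$), one cannot simply descend from a module realizing $\fidim(A)$; instead, each $M_t$ must be shown to produce a stabilization sequence that halts at precisely step $t$. This requires a careful bookkeeping of which simples appear in each iterated image $\bar{\Omega}^k(\langle\add M_t\rangle)$, of which linear relations among them first become equalities at step $t$, and of which remain nondegenerate for all $s \geq 0$. The richness provided by hypotheses (1) and (2) is precisely what permits such fine-tuned constructions at every intermediate level, and this combinatorial bookkeeping, rather than any deep homological argument, is what I expect to occupy the bulk of the proof, matching the spirit of Theorem 6.10 of \cite{BMR2}.
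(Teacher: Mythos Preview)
The paper does not contain a proof of this proposition: immediately before the statement it says ``For a proof of the following result, see Theorem 6.10 of \cite{BMR2}'', and no argument is given in the text. Consequently there is no proof here to compare your proposal against.

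As to the proposal itself, what you have written is a reasonable \emph{plan} rather than a proof. The framework is sound: for $A = \Bbbk Q/J^2$ the syzygy of a simple $S_v$ is $\bigoplus_{\alpha: v \to w} S_{\target(\alpha)}$, so $\bar{\Omega}$ on the subgroup generated by simples is the adjacency matrix of $Q$, and the displayed cancellation of $[S_{v_1}]$ and $[S_{v_2}]$ in $\bar{\Omega}([S_u]-[S_{u'}])$ is correct. Invoking Theorem~\ref{gaps} for the extreme values $0,1,\fidim(A)-1,\fidim(A)$ is also appropriate. However, you never actually construct the interpolating modules $M_t$, nor do you indicate how the ``free coordinates'' in case~(1) or the double arrows in case~(2) are to be assembled so that the rank of $\bar{\Omega}^k\langle \add M_t\rangle$ drops for the first time exactly at $k=t$. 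You yourself flag this as the dominant obstacle, and indeed it is the entire content of the theorem; without it the proposal is an outline of intent rather than an argument. A minor imprecision: $\phi(M)$ is governed by $\langle \add M\rangle$, not by the subgroup generated by the simple summands of $\topp(M)$ and $\Omega(M)$; these agree only once you restrict to semisimple $M$, which you should state explicitly if that is your intended construction.
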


\section{Selfinjective algebras, Frobenius extensions, and Gorenstein algebras}

Both functions, $\phi$ and $\psi$, were very useful for classifying selfinjective algebras. In \cite{HL}, was proved that selfinjective algebras are precisely those for which $\fidim$ and $\psidim$ vanish. On the other hand, selfinjective algebras are just $0$-Gorenstein algebras (see definition below). In \cite{LM} and \cite{GS} the authors showed independently that both $\fidim$ and $\psidim$ of $m$-Gorenstein algebras are equal to $m$.

\begin{teo} (Theorem 6 of \cite{HL})\label{selfinjective} For a right artinian ring $R$, the following are equivalent.

\begin{itemize}
\item $\fidim (R) = 0$.
\item $\psidim (R) = 0$.
\item $R$ is right selfinjective.
\end{itemize}
\end{teo}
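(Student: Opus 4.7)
The plan is to prove the cycle $(3)\Rightarrow(2)\Rightarrow(1)\Rightarrow(3)$. The first two implications rest on standard features of selfinjective rings and the inequality chain recorded in \eqref{desigualdades}; the substantive content lies in $(1)\Rightarrow(3)$.

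For $(3)\Rightarrow(2)$, assume $R$ is right selfinjective. I would establish $\psi(M)=0$ for every $M\in\mod R$ using two observations. First, $\fin(R)=0$: by induction on $\pd$, if $\pd(M)<\infty$ then $\Omega(M)$ has strictly smaller projective dimension and is projective by the inductive hypothesis; being also injective by selfinjectivity, the projective-cover sequence $0\to\Omega(M)\to P(M)\to M\to 0$ splits, forcing $M$ projective. Second, selfinjectivity makes $\Omega$ an auto-equivalence of the stable category $\underline{\mod}\,R$, so $\Omega$ permutes isoclasses of indecomposable non-projectives and $\bar\Omega$ becomes a bijection of the canonical basis of $K_0(R)$; in particular it is injective on every finitely generated subgroup, giving $\phi(M)=0$. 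Substituting into the definition of $\psi$, any summand $N$ of $\Omega^{\phi(M)}(M)=M$ with $\pd(N)<\infty$ is projective by the first observation, so $\pd(N)=0$; hence $\psi(M)=0$ and $\psidim(R)=0$.

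$(2)\Rightarrow(1)$ is immediate from $\fidim(R)\leq\psidim(R)$. For $(1)\Rightarrow(3)$, assume $\fidim(R)=0$, so $\phi(M)=0$ for every $M$. Since every element of $K_0(R)$ lies in $\langle\add M\rangle$ for a suitable $M$ (the direct sum of the indecomposable non-projectives in its support), the assumption translates into $\bar\Omega:K_0(R)\to K_0(R)$ being globally injective. I would then argue by contradiction: suppose $R$ is not right selfinjective. By Nakayama's Theorem \ref{Nakayama}, the rule $\nu:S\mapsto\soc(P(S))$ fails to be a permutation of $\mathcal{S}(R)$, so either (i) two distinct simples $S_1\neq S_2$ satisfy $\soc(P(S_1))=\soc(P(S_2))=T$ for a common simple $T$, or (ii) some $\soc(P(S_0))$ fails to be simple.

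The main obstacle is to exhibit a non-zero element of $\ker\bar\Omega$ in each case. In case (i), the indecomposable non-projective quotients $P(S_i)/T$ ($i=1,2$) have distinct tops $S_i$, hence are non-isomorphic, and each has syzygy $T$ (the kernel of the projective cover $P(S_i)\twoheadrightarrow P(S_i)/T$), so $\bar\Omega([P(S_1)/T]-[P(S_2)/T])=[T]-[T]=0$ with the argument non-zero in $K_0(R)$. In case (ii), $\soc(P(S_0))$ contains a length-$2$ semisimple submodule $W=W_1\oplus W_2$ with $W_1,W_2$ simple; the indecomposable quotients $P(S_0)/W_i$ and $P(S_0)/W$ have syzygies $W_i$ and $W$ respectively, yielding
\[
\bar\Omega\bigl([P(S_0)/W]-[P(S_0)/W_1]-[P(S_0)/W_2]\bigr)=[W]-[W_1]-[W_2]=0
\]
in $K_0(R)$, while a dimension comparison shows $P(S_0)/W$ is non-isomorphic to either $P(S_0)/W_i$, so the combination is non-zero. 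In both situations the resulting non-trivial element of $\ker\bar\Omega$ contradicts the injectivity of $\bar\Omega$, forcing $R$ to be selfinjective.
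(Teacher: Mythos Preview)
The survey does not supply its own proof of this theorem; it is simply quoted from \cite{HL}.  So there is no paper proof to compare against, and I can only assess your argument on its merits.

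Your cycle $(3)\Rightarrow(2)\Rightarrow(1)\Rightarrow(3)$ is well organised, and the first two implications are handled correctly.  For $(3)\Rightarrow(2)$ the fact that $\Omega$ is a self-equivalence of $\underline{\mod}\,R$ over a selfinjective ring indeed forces $\bar\Omega$ to permute the canonical basis of $K_0(R)$, and your inductive proof that $\fin(R)=0$ is fine; combining these gives $\psi\equiv 0$.  The constructions in cases (i) and (ii) of $(1)\Rightarrow(3)$ are also correct: the modules $P(S_i)/T$, $P(S_0)/W_j$, $P(S_0)/W$ are all indecomposable (simple top) and non-projective (strictly shorter than their projective covers), and the indicated $\mathbb{Z}$-linear combinations are nonzero by a length count while mapping to zero under $\bar\Omega$.

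The gap is in the reduction to cases (i)/(ii).  You invoke Theorem~\ref{Nakayama} to pass from ``$R$ is not right selfinjective'' to ``$\nu$ fails to be a permutation'', but that theorem is stated in the paper (and in Nakayama's original work) only for finite-dimensional basic $\Bbbk$-algebras, whereas the result you are proving concerns arbitrary right artinian rings.  For Artin algebras the usual duality $D$ lets one match projectives with injectives and the one-sided Nakayama-permutation condition is equivalent to selfinjectivity; for a general right artinian ring, however, the standard characterisation of QF rings requires the permutation condition on \emph{both} sides simultaneously, and the right-sided condition alone does not obviously force $R_R$ to be injective.  Thus your $(1)\Rightarrow(3)$ is complete for Artin algebras---and in particular it covers the Corollary that follows in the paper---but not in the stated generality.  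To close the gap you would need either a Nakayama-type criterion valid for right artinian rings, or a direct construction that starts from a non-injective indecomposable projective $P(S)$ and produces a nonzero element of $\ker\bar\Omega$ without passing through the permutation $\nu$.
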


\begin{coro} (Corollary 7 of \cite{HL})
Let $A$ be an Artin algebra. Then $A$ is selfinjective if and only if
$\fidim(A) = 0$.
\end{coro}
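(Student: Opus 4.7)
The plan is to derive this corollary as an immediate specialization of Theorem \ref{selfinjective}. An Artin $R$-algebra $A$ is, in particular, a right artinian ring, so the hypothesis of Theorem \ref{selfinjective} is automatically met, yielding the equivalence $\fidim(A) = 0 \iff A \text{ is right selfinjective}$. The only remaining check is that for Artin algebras, right selfinjectivity coincides with the two-sided notion of selfinjectivity used in the statement; this is a standard consequence of the $R$-duality available on $\mod A$, so I would simply cite it rather than reproving it.

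For the implication $A$ selfinjective $\Rightarrow \fidim(A)=0$, a short direct argument is also available and may be worth including for clarity. Since $A$ is selfinjective, the syzygy operator $\Omega$ induces an autoequivalence on the stable module category, and in particular the induced endomorphism $\bar{\Omega}: K_0(A) \to K_0(A)$ is injective. Given any $M \in \mod A$ and the finitely generated subgroup $\langle \add M \rangle \subset K_0(A)$, injectivity of $\bar{\Omega}$ forces $\eta_{\bar{\Omega}}(\langle \add M \rangle) = 0$ by Fitting's Lemma (Lemma \ref{Fitting}), so $\phi(M) = 0$ by Definition \ref{monomorfismo}. Hence $\fidim(A) = 0$.

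For the converse, I would just invoke Theorem \ref{selfinjective}: $\fidim(A) = 0$ gives that $A$ is right selfinjective, hence selfinjective by the symmetry noted above.

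The only subtlety is the passage from right selfinjective to (two-sided) selfinjective; the real work lies in Theorem \ref{selfinjective} itself, which has already been established. The corollary is essentially a dictionary translation of that theorem into the Artin-algebra setting.
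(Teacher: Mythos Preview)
Your proposal is correct and follows essentially the same route the paper intends: the corollary is stated immediately after Theorem \ref{selfinjective} with no explicit proof, so it is meant to be the specialization you describe (Artin algebras are right artinian, and for Artin algebras right selfinjectivity coincides with selfinjectivity via the usual $R$-duality). Your optional direct argument for the forward implication via $\bar{\Omega}$ being injective is a nice addition but not needed, since both directions are already contained in Theorem \ref{selfinjective}.
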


Auslander and Bridger defined stable modules in \cite{AB}:
$$^\bot A =\{M\in \mod A:\ \ext^i(M,A)=0, \ \forall i\geq 1\}.$$ 

For a selfinjective algebra, since $A$ is injective, all the modules are stable, that is: $^\bot A =\mod A$. For any Artin algebra, the next theorem shows that both IT-functions vanish on stable modules. The main argument follows by the fact that $^\bot A$ is invariant by $\Omega$, and the functor $\Omega: \underline{\mod}A\rightarrow \underline{\mod}A$ is bijective when restricted to $^\bot A$, see \cite{AB}.

\begin{teo}\label{phi en perpA} (Theorem 3.13 of \cite{LM})
For any Artin algebra $A$, the Igusa-Todorov functions vanish over $\ ^\bot A$, that is,  
$$\fidim( ^\bot A) = \psidim ( ^\bot A) = 0.$$
\end{teo}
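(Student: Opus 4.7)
The plan is to show $\fidim({}^\bot A) = 0$ first and deduce $\psidim({}^\bot A) = 0$ from it. For any $M \in {}^\bot A$, the goal is to verify the defining condition of Definition \ref{monomorfismo}, namely that $\bar{\Omega}$ is injective on $\bar{\Omega}^s(\langle \add M\rangle)$ for every $s \geq 0$.

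The two structural inputs are: (i) ${}^\bot A$ is closed under syzygies, since $\ext^i(\Omega N, A) \cong \ext^{i+1}(N, A) = 0$ for $i \geq 1$ whenever $N \in {}^\bot A$, so each $\Omega^s M$ again lies in ${}^\bot A$; and (ii) by the Auslander-Bridger theory of \cite{AB}, the syzygy functor is bijective on isomorphism classes of the stable subcategory $\underline{{}^\bot A}$, i.e., if $X, Y \in {}^\bot A$ satisfy $\Omega X \cong \Omega Y$ in $\underline{\mod}A$, then $X \cong Y$ in $\underline{\mod}A$.

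Granting these inputs, the injectivity of $\bar{\Omega}$ on $\bar{\Omega}^s(\langle \add M\rangle)$ follows by Krull-Schmidt. Let $N_1, \ldots, N_r$ be the non-projective indecomposable summands of $\Omega^s M$; each $N_j$ belongs to ${}^\bot A$. If $\sum_j c_j[\Omega N_j] = 0$ in $K_0(A)$, then splitting $c_j = c_j^+ - c_j^-$ with $c_j^{\pm} \geq 0$ shows that $\Omega(\bigoplus_j N_j^{c_j^+})$ and $\Omega(\bigoplus_j N_j^{c_j^-})$ agree modulo projective summands; by (ii), this lifts to $\bigoplus_j N_j^{c_j^+} \cong \bigoplus_j N_j^{c_j^-}$ in $\underline{\mod}A$, and since neither has a projective summand they are genuinely isomorphic, forcing every $c_j = 0$ by Krull-Schmidt. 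Hence $\phi(M) = 0$. To pass to $\psi(M) = 0$, by Definition \ref{ITpsi} and $\phi(M)=0$ it is enough to check that every direct summand $N$ of $M$ with $\pd(N) < \infty$ is projective; such $N$ lies in ${}^\bot A$, and if $\pd(N) = k \geq 1$ then the minimality of a projective resolution forces the final differential $P_k \to P_{k-1}$ to be a radical map, which produces a nonzero class in $\ext^k(N, A)$ contradicting $N \in {}^\bot A$.

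The main obstacle is justifying the bijectivity of $\Omega$ on $\underline{{}^\bot A}$, which we absorb by citing \cite{AB}. A self-contained derivation would first establish that $\Omega$ is fully faithful on $\underline{{}^\bot A}$, by combining the hom-Ext long exact sequence with the vanishing $\ext^i(X, P) = 0$ for $X \in {}^\bot A$ and $P$ projective to identify both $\underline{\hom}(M, N)$ and $\underline{\hom}(\Omega M, \Omega N)$ naturally with $\ext^1(M, \Omega N)$; a fully faithful additive functor automatically reflects isomorphisms on isomorphism classes, which is precisely what the argument requires.
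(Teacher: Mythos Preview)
Your proposal is correct and follows exactly the strategy the paper indicates just before the statement: the two key ingredients are that ${}^\bot A$ is invariant under $\Omega$ and that $\Omega$ restricted to $\underline{{}^\bot A}$ is bijective (both drawn from \cite{AB}), from which the vanishing of $\phi$ on ${}^\bot A$ follows via Krull--Schmidt, and the vanishing of $\psi$ then follows since any $N\in{}^\bot A$ with $\pd(N)<\infty$ must be projective. The paper does not spell out the details beyond citing these structural facts, so your write-up is a faithful expansion of the intended argument.
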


It is known that Gorenstein projective modules ($\mathcal{G} P_A$), are stable modules. 
This implies the next result.

\begin{coro}(Corollary 4.1 of \cite{LM})
The Igusa-Todorov functions vanish over $\mathcal{G} P_A$, that is, 
$$\fidim( \mathcal{G} P_A) = \psidim (\mathcal{G} P_A) = 0.$$
\end{coro}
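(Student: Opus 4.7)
The plan is to derive this corollary immediately from Theorem \ref{phi en perpA} by a monotonicity argument on suprema over subcategories. First, I would invoke the inclusion $\mathcal{G}P_A\subseteq {}^\bot A$ that was already recalled in the Preliminaries with reference to \cite{Zh}: by definition, every Gorenstein projective module $M$ satisfies $\ext_A^i(M,A)=0$ for all $i\geq 1$, so it is a stable module in the sense of Auslander--Bridger.

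Next, I would apply the very definition of $\fidim$ and $\psidim$ as suprema of $\phi$ and $\psi$ taken over the objects of the given subcategory. Since every object of $\mathcal{G}P_A$ belongs to $^\bot A$, the supremum over the smaller class can only be smaller, that is,
$$\fidim(\mathcal{G}P_A)=\sup\{\phi(M):M\in \Ob \mathcal{G}P_A\}\leq \sup\{\phi(M):M\in \Ob {}^\bot A\}=\fidim({}^\bot A),$$
and analogously $\psidim(\mathcal{G}P_A)\leq \psidim({}^\bot A)$. By Theorem \ref{phi en perpA}, the right-hand sides are both equal to $0$, while $\phi(M),\psi(M)\geq 0$ for every $M\in\mod A$, so these suprema force $\fidim(\mathcal{G}P_A)=\psidim(\mathcal{G}P_A)=0$.

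There is essentially no obstacle here: all the mathematical content is packaged into Theorem \ref{phi en perpA} (and, behind it, the fact that $\Omega$ restricts to a bijection on the stable category of $^\bot A$), together with the classical containment $\mathcal{G}P_A\subseteq {}^\bot A$. Once those two ingredients are in hand, the corollary is a one-line monotonicity statement, and the proof I sketch above is essentially the shortest possible.
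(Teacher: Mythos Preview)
Your proof is correct and matches the paper's approach exactly: the paper simply notes that $\mathcal{G}P_A\subseteq{}^\bot A$ and says ``This implies the next result,'' which is precisely the monotonicity argument you spell out using Theorem~\ref{phi en perpA}.
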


By Equation \ref{desigualdades} and Theorem \ref{phi en perpA}, we have the following.
\begin{coro}(Corollary 3.17 of \cite{LM})
For any Artin algebra $A$ such that $\id (A_A) = n < \infty$, it follows that 
$$\fin (A) \leq \fidim (A) \leq \psidim (A) \leq n.$$
\begin{proof}
Given $M \in \mod A$, since $\ext^k_A(\Omega^n(M),A) = \ext^{n+k}_A(M,A) = \ext^k_A(M,\Omega^{-n}(A)) = 0$ for $k\geq 1$, we have that $\Omega^n(M) \in ^{\bot}{\hspace{-5pt}}A$. By Theorem \ref{phi en perpA},
$\phi(\Omega^n (M)) = \psi(\Omega^n (M)) = 0$. Therefore by Propositions \ref{it1}, we obtain that $\phi(M) \leq
\psi(M) \leq n$.
\end{proof}

\end{coro}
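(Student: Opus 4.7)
The first two inequalities $\fin(A) \leq \fidim(A) \leq \psidim(A)$ are already recorded in Equation \ref{desigualdades}, so the real content of the statement is the upper bound $\psidim(A) \leq n$. The plan is to show that every module becomes stable after $n$ syzygies, and then to use the IT-function estimates to push this back to the original module.

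First I would use the injective dimension hypothesis to show that $\Omega^n(\mod A) \subseteq {^\bot A}$. Indeed, if $\id(A_A) = n$, then $\ext^{k}_A(X,A) = 0$ for every $X \in \mod A$ and every $k > n$; in particular, for any $M \in \mod A$ and any $i \geq 1$, by dimension shifting we have
\[
\ext^{i}_A(\Omega^n(M), A) \;\cong\; \ext^{n+i}_A(M, A) \;=\; 0,
\]
which is precisely the condition defining $^\bot A$.

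Next I would invoke Theorem \ref{phi en perpA}, which says that both Igusa--Todorov functions vanish on $^\bot A$. Combined with the previous step, this gives $\psi(\Omega^n(M)) = 0$ (and similarly $\phi(\Omega^n(M)) = 0$) for every $M \in \mod A$.

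Finally, I would iterate item 6 of Proposition \ref{Huard2}, namely $\psi(M) \leq \psi(\Omega(M)) + 1$, exactly $n$ times to conclude
\[
\psi(M) \;\leq\; \psi(\Omega^n(M)) + n \;=\; n,
\]
and taking the supremum over $M \in \mod A$ yields $\psidim(A) \leq n$, as required (the analogous chain with item 5 of Proposition \ref{it1} handles $\fidim(A) \leq n$ directly if one prefers not to rely on the already known inequality $\fidim(A) \leq \psidim(A)$). I do not expect any genuine obstacle here: the only subtlety is verifying the Ext-vanishing via dimension shifting, which is a one-line computation once one recalls that $\id(A_A) = n$ controls $\ext^{\geq n+1}_A(-, A)$.
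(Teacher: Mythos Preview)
Your proposal is correct and follows essentially the same route as the paper: show $\Omega^n(M)\in{^\bot A}$ via dimension shifting, apply Theorem~\ref{phi en perpA} to get $\psi(\Omega^n(M))=0$, and iterate the inequality $\psi(M)\le\psi(\Omega(M))+1$. The only cosmetic difference is that the paper also records the equality $\ext^{n+k}_A(M,A)=\ext^k_A(M,\Omega^{-n}(A))$ and cites Proposition~\ref{it1} rather than Proposition~\ref{Huard2} for the final step; your citation of item~6 of Proposition~\ref{Huard2} is in fact the more precise one for the $\psi$-bound.
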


An Artin algebra $A$ is called a {\bf m-Gorenstein algebra} if $\id (A_A) = m < \infty$ and $\id ( _AA) = m < \infty$. A selfinjective algebra $A$ is a particular case of Gorenstein algebra because $\id (A_A) = \id ( _AA) = 0$, that is selfinjective algebras are the 0-Gorenstein algebras. When for self-injecive algebras the $\phi$-dimension vanish, for a Gorenstein $A$ it can be proved that $\fin (A) = \fidim (A) = \psidim (A) < \infty$. More precisely

\begin{teo}\label{phi algGorenstein}\label{psi algGorenstein} (Theorem 2.7 of \cite{GS} and Theorem 4.7 of \cite{LM})
If $A$ is a $m$-Gorenstein algebra, then 
$$\fin (A) = \fidim (A) = \psidim (A)  = m.$$
\end{teo}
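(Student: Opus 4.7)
The plan is to establish the equalities by squeezing: obtain the upper bound $\psidim(A) \leq m$ from the already-established Corollary 3.17, and then match it with a lower bound $\findim(A) \geq m$ via a duality argument using the other Gorenstein hypothesis.

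First, I would invoke Corollary 3.17 with $n = \id(A_A) = m$. This immediately yields
\[
\fin(A) \leq \fidim(A) \leq \psidim(A) \leq m,
\]
so that all three quantities are at most $m$. Thus the theorem reduces to producing a right $A$-module of finite projective dimension equal to $m$.

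For this, I would use the standard Artin algebra duality $D : \mod A^{op} \to \mod A$, which interchanges projective and injective modules and is exact. Since $\id({}_A A) = m$, the left $A$-module ${}_A A$ admits a minimal injective coresolution (in $\mod A^{op}$) of length exactly $m$,
\[
0 \to {}_A A \to J^{0} \to J^{1} \to \cdots \to J^{m} \to 0.
\]
Applying $D$ term-by-term and reversing arrows produces the exact sequence
\[
0 \to D(J^{m}) \to D(J^{m-1}) \to \cdots \to D(J^{0}) \to D(A) \to 0
\]
in $\mod A$. Each $D(J^{i})$ is a projective right $A$-module (as $D$ carries injectives to projectives), so this is a projective resolution of $D(A)$, showing that $\pd_A(D(A)) \leq m$ and in particular $D(A)$ has finite projective dimension.

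To conclude $\pd_A(D(A)) = m$, I would argue by contradiction: if $\pd_A(D(A)) < m$, then applying $D$ back to a minimal projective resolution of $D(A)$ would yield an injective coresolution of ${}_A A$ of length strictly less than $m$, contradicting $\id({}_A A) = m$. Hence $D(A)$ is an $A$-module of finite projective dimension exactly $m$, so $\fin(A) \geq m$. Combined with the earlier inequality, this gives $\fin(A) = \fidim(A) = \psidim(A) = m$, as required. The only delicate point is the bookkeeping between left and right modules through the duality $D$; once that is clear, both bounds follow directly from results already available in the survey.
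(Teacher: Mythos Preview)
Your argument is correct. The upper bound $\psidim(A)\leq m$ is exactly the content of the preceding corollary (Corollary~3.17 of \cite{LM}), and your lower bound via the duality $D$ is the standard one: since $D$ is an exact contravariant equivalence interchanging projectives and injectives, one has $\pd_A\bigl(D({}_AA)\bigr)=\id({}_AA)=m$, so $\fin(A)\geq m$. The contradiction step is fine because $D$ is a duality, hence $DD({}_AA)\cong{}_AA$ and a shorter projective resolution of $D({}_AA)$ dualises to a shorter injective coresolution of ${}_AA$.

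The survey itself does not supply a proof of this theorem; it merely attributes the result to \cite{GS} and \cite{LM}. Your proof is essentially the argument of \cite{LM}: the upper bound is their Corollary~3.17 (reproduced in the survey), and the lower bound there is also obtained by exhibiting $D(A)$ as a module of projective dimension $m$. So your approach coincides with the source, even though the survey omits the details.
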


\section{Characterization of the IT-function $\phi$ by the bifunctors Ext and Tor}

We claim that the functions $\phi$ and $\psi$ are accurate homological measures of the Representation theory of Artin algebras.
For this claim, it is necessary to exhibit a close connection between IT-functions and the classical homological bifunctors Ext$_A^i(\ - \ ,\ - \ )$ and Tor$_i^A(\ - \ , \ - \ )$.\\

Furthermore, by using the characterization
of the $\phi$-function by the bifunctors, in \cite{FLM} the authors showed that the finiteness of the $\phi$-dimension of an Artin algebra is invariant under derived equivalences. As an application of the results obtained in  \cite{FLM}, the authors generalized the classical Bongartz's theorem about the tilting process (\cite{B}), but also the result of Keller (\cite{GR}) and Kato (\cite{Ka}) about global dimensions of derived equivalent algebras.

\subsection{d-divisions}
Along this subsection, we recall the concept of d-division, a basic notion to exhibit the connection between the IT-function $\phi$ and the bifunctors Ext$_A^i(\ - \ ,\ - \ )$ and Tor$_i^A(\ - \ ,\ -\ )$.\\ 

After that, we show some equivalent ways to define the Igusa-Todorov $\phi$-function. We now introduce the notion of $d$-division. This definition is slightly different but equivalent to the original one (\cite{FLM}).

\begin{defi}(\cite{FLM})\label{E division}
Let $A$ be an Artin algebra, $d$ be a positive integer and $M$ in
$\mod A $. A pair $(X, Y )$ of objects in $\add(M)$ is called a $(d, E)-division$ of $M$ if the
following two conditions hold:

\begin{itemize}


\item $\ext^d_A (X,\ - \ ) \not\cong \ext^d_A (Y,\ - \ )$ in $\mathcal{C}_A$,

\item $\ext^{d+1}_A (X,\ - \ ) \cong \ext^{d+1}_A (Y,\ - \ )$ in $\mathcal{C}_A$.

\end{itemize}








\end{defi}
















The following theorem gives a characterization of the  function $\phi$ in $\mod A$ in terms of the bifunctor Ext$_A^i(\ - \ ,\ - \ )$.  

\begin{teo}(Theorem 3.6 of \cite{FLM})\label{teorema D-division} Let $A$ be an Artin algebra and $M$ in $\mod A$. Then

$$ \begin{array}{ccc}
\phi(M) & = & \max(\{d \in \N : \text{ there is a }(d, E)-division \text{ of } M \} \cup \{0\}) 
\end{array} $$

\end{teo}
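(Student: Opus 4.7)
The plan is to reduce the Ext-theoretic statement of the theorem to a $K_0$-theoretic one by establishing a correspondence between the functors $\ext^d_A(X,\,-\,)$ and the iterated powers of $\bar{\Omega}$, and then to read off the formula for $\phi(M)$ directly from Fitting's Lemma.

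The first step is to prove the following equivalence for $X, Y \in \mod A$ and any $d \geq 1$:
\[
\ext^d_A(X,\,-\,) \cong \ext^d_A(Y,\,-\,) \text{ in } \mathcal{C}_A \quad\Longleftrightarrow\quad \bar{\Omega}^{d-1}[X] = \bar{\Omega}^{d-1}[Y] \text{ in } K_0(A).
\]
The direction $(\Leftarrow)$ is routine: equality in $K_0(A)$ means $\Omega^{d-1}X$ and $\Omega^{d-1}Y$ differ only by projective summands, and dimension shifting $\ext^d_A(X,N) = \ext^1_A(\Omega^{d-1}X,N)$ combined with the vanishing of $\ext^1$ on projective inputs yields a natural isomorphism of functors in $N$. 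For $(\Rightarrow)$ I would exploit the presentation
\[
\hom_A(P_0(X),\,-\,) \to \hom_A(\Omega X,\,-\,) \to \ext^1_A(X,\,-\,) \to 0
\]
of $\ext^1_A(X,\,-\,)$ in $\mathcal{C}_A$ induced by the syzygy sequence, and combine a Schanuel-type comparison of such presentations with Krull--Schmidt in $\mathcal{C}_A$ to match the non-projective indecomposable summands of $\Omega^{d-1}X$ and $\Omega^{d-1}Y$ with equal multiplicities.

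Given this equivalence, a pair $(X,Y)$ with $X,Y\in\add M$ is a $(d,E)$-division of $M$ precisely when $\bar{\Omega}^{d-1}[X]\neq\bar{\Omega}^{d-1}[Y]$ and $\bar{\Omega}^d[X]=\bar{\Omega}^d[Y]$, i.e., precisely when $\bar{\Omega}^{d-1}([X]-[Y])$ is a nonzero element of $\bar{\Omega}^{d-1}(\langle \add M\rangle)$ lying in the kernel of $\bar{\Omega}$. Since every element of $\langle \add M\rangle$ can be written as $[X]-[Y]$ for some $X,Y\in\add M$, the existence of a $(d,E)$-division of $M$ is equivalent to $\bar{\Omega}$ failing to be injective on $\bar{\Omega}^{d-1}(\langle \add M\rangle)$. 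By Definition \ref{monomorfismo} and Lemma \ref{Fitting}, $\phi(M)$ is the smallest $k\geq 0$ such that $\bar{\Omega}$ is injective on $\bar{\Omega}^{k+s}(\langle \add M\rangle)$ for all $s\geq 0$, so a $(d,E)$-division exists if and only if $1\leq d\leq \phi(M)$. If $\phi(M)\geq 1$ the maximum such $d$ is exactly $\phi(M)$; if $\phi(M)=0$ the set of admissible $d$ is empty and the maximum of $\emptyset\cup\{0\}$ is $0=\phi(M)$. The stated formula follows in either case.

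The main obstacle is the $(\Rightarrow)$ direction of the correspondence. The naive Yoneda lemma is not enough, because $\ext^d_A$ annihilates projective inputs and hence cannot distinguish modules that agree modulo projective summands by evaluation on test modules alone; recovering the $K_0$-class of $\Omega^{d-1}X$ from the isomorphism class of the abstract functor $\ext^d_A(X,\,-\,)$ in $\mathcal{C}_A$ is the one genuinely functorial step. Once that equivalence is secured, the translation between $(d,E)$-divisions, non-injectivity of $\bar{\Omega}$ on subgroups of $K_0(A)$, and the index appearing in Fitting's Lemma is purely formal.
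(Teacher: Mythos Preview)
The survey does not supply its own proof of this theorem; it is quoted from \cite{FLM} without argument, so there is nothing in the paper to compare your attempt against. Your overall strategy --- reducing the Ext-theoretic condition to a $K_0$-theoretic one and then invoking Fitting's Lemma --- is correct and matches the spirit of the original argument in \cite{FLM}. The equivalence you isolate, that $\ext^d_A(X,-)\cong\ext^d_A(Y,-)$ in $\mathcal{C}_A$ if and only if $\bar\Omega^{d-1}[X]=\bar\Omega^{d-1}[Y]$, is indeed the heart of the matter, and your sketch of the harder direction via projective presentations in the functor category and Krull--Schmidt is on the right track; one clean alternative route is through the Auslander--Reiten formula $\ext^1_A(X,-)\cong D\,\overline{\hom}_A(-,\tau X)$ together with Yoneda in the injectively stable category, which recovers $X$ modulo projectives from the functor.

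There is, however, a slip in your final deduction. You assert that a $(d,E)$-division of $M$ exists \emph{if and only if} $1\le d\le\phi(M)$, i.e.\ that $\bar\Omega$ fails to be injective on $\bar\Omega^{d-1}(\langle\add M\rangle)$ for every such $d$. This is false in general: the ranks $r_k$ of $\bar\Omega^{k}(\langle\add M\rangle)$ are weakly decreasing, but they can plateau before the final stabilisation at $k=\phi(M)$, so there may well be values $1\le d<\phi(M)$ with $r_{d-1}=r_d$ and hence no $(d,E)$-division. What \emph{is} true, and is all you need, is that for $d>\phi(M)$ one has $r_{d-1}=r_d$ (no division), while for $d=\phi(M)\ge 1$ the minimality of $\phi(M)$ forces $r_{\phi(M)-1}>r_{\phi(M)}$ (a division exists). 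These two one-sided observations already give the maximum as $\phi(M)$, so the theorem follows once you replace the erroneous ``if and only if'' by them.
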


The previous notion of D-division can also be defined using the functor Ext on its first variable. In this case it gives the $\phi$-dimension of the opposite algebra. These dimensions can also be defined using the Tor functor in both variables.

%
%
%
%
%
%
%
%
%
%
%

\subsection{$\phi$-dimension and derived equivalent algebras}

Using the characterization of the $\phi$ function by the bifunctor, Theorem \ref{teorema D-division}, in \cite{FLM} the authors showed the the finiteness of the $\phi$-dimension of an Artin algebra is invariant under derived equivalences.

\begin{teo}(Theorem 4.10 of \cite{FLM}) \label{derivadamente eq} Let $A$ and $B$ be Artin algebras, which are derived equivalent. Then,
$\fidim (A) < \infty$ if and only if $\fidim (B) < \infty$. More precisely, if $T^{\bullet}$ is a tilting
complex over A with $n$ non-zero terms and such that $B \simeq \enn_{D(A)} (T^{\bullet})$, then
$$\fidim (A)-n \leq \fidim (B) \leq \fidim (A) + n.$$

\end{teo}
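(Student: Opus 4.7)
The plan is to exploit the characterization of $\phi$ from Theorem \ref{teorema D-division}, which states that $\phi_A(M)$ is the largest $d$ admitting a $(d,E)$-division of $M$. This characterization is formulated purely in terms of isomorphisms and non-isomorphisms of the functors $\ext^d_A(-, -)$, and these functors can be recovered from derived-category Hom spaces. Hence $(d,E)$-divisions should transfer across a derived equivalence $F:\mathcal{D}^b(\mod A)\to\mathcal{D}^b(\mod B)$, modulo a bounded shift in $d$ coming from the width of the tilting complex.

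First I would fix $T^{\bullet}$ with non-zero terms concentrated in some interval $[a, a+n-1]$ and realizing $B\simeq \enn_{\mathcal{D}(A)}(T^{\bullet})$, together with the induced triangulated equivalence $F$ and its quasi-inverse. The geometric input is that for any $B$-module $N$, the complex $F^{-1}(N)\in\mathcal{D}^b(\mod A)$ has non-zero cohomology only in at most $n$ consecutive degrees, and symmetrically for $F$ applied to $A$-modules. I would also record the basic naturality $\hom_{\mathcal{D}^b(A)}(X, Y[i])\cong \hom_{\mathcal{D}^b(B)}(FX, FY[i])$, which for modules in degree $0$ specializes to $\ext^i_A(X,Y)\cong \hom_{\mathcal{D}^b(B)}(FX, FY[i])$.

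Next, starting from a $B$-module $M$ with a $(d,E)$-division $(X,Y)$ in $\add M$, I would translate the defining relations
$$\ext^d_B(X,-)\not\cong \ext^d_B(Y,-),\qquad \ext^{d+1}_B(X,-)\cong \ext^{d+1}_B(Y,-)$$
into statements about the complexes $F^{-1}X$ and $F^{-1}Y$ in $\mathcal{D}^b(\mod A)$. Since these complexes are not in general modules, I would pass to a suitable cohomology module, truncation, or iterated syzygy in order to produce honest $A$-modules $X', Y'$ sitting inside $\add M'$ for a single $A$-module $M'$ built from the cohomologies of $F^{-1}X$ and $F^{-1}Y$. A standard spectral sequence or truncation-triangle argument should then show that the degree-$d$ division data for $X,Y$ descends to degree-$d'$ division data for $X', Y'$, where $|d-d'|\leq n$ because the cohomological spread of $F^{-1}X, F^{-1}Y$ is at most $n$.

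Finally, Theorem \ref{teorema D-division} turns such a $(d',E)$-division in $\mod A$ into the inequality $d' \leq \phi_A(M')$, yielding $\phi_B(M)\leq \phi_A(M')+n$; taking suprema over $B$-modules of finite $\phi$-dimension gives $\fidim(B)\leq \fidim(A)+n$, and swapping the roles of $A$ and $B$ (using $F^{-1}$ and the dual tilting complex of $n$ terms) gives the other inequality, hence the finiteness of $\fidim(A)$ is equivalent to the finiteness of $\fidim(B)$. The main obstacle is the middle step: the derived equivalence sends modules to complexes with up to $n$ non-vanishing cohomologies, so one must show precisely that a $(d,E)$-division of a module corresponds to a $(d',E)$-division of some module — not just some complex — on the other side, with the loss $|d-d'|$ controlled exactly by $n$. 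Making this passage between complexes and modules while tracking the degree shift is the essential technical content of the proof.
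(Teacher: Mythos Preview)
The survey does not actually prove this theorem; it only states it, referring to \cite{FLM} and indicating that the argument rests on the characterization of $\phi$ via $(d,E)$-divisions (Theorem~\ref{teorema D-division}). Your plan follows exactly this route, so the overall strategy matches what the paper points to.

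That said, what you have written is an outline, not a proof, and you say so yourself. The decisive step --- producing from the complexes $F^{-1}X$, $F^{-1}Y$ honest $A$-modules $X',Y'$ inside $\add M'$ for a single $M'$, together with a verification that the $(d,E)$-division condition for $(X,Y)$ becomes a $(d',E)$-division for $(X',Y')$ with $|d-d'|\leq n$ --- is identified but not executed. A vague appeal to ``a standard spectral sequence or truncation-triangle argument'' does not discharge it: one must make precise how the functorial isomorphism $\ext^{d+1}_B(X,-)\cong\ext^{d+1}_B(Y,-)$ and the non-isomorphism in degree $d$ survive passage to cohomologies or truncations of $F^{-1}X$ and $F^{-1}Y$, and why the degree shift is bounded by exactly $n$ rather than, say, $2n$. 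This is where all the technical work in \cite{FLM} lives, and until you specify the construction of $X',Y',M'$ and verify both halves of the $(d',E)$-division condition, the argument has a genuine gap.
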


\begin{coro}(Corollary 4.11 of \cite{FLM})\label{derived eq FLM} Let $A$ be an Artin algebra, and let $T \in \mod A$ be a tilting $A$-module. Then, for the Artin algebra $B \simeq \enn_A (T)$, we have that
$$\fidim (A)-\pd (T) \leq \fidim (B) \leq \fidim (A) + \pd (T).$$

\end{coro}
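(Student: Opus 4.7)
The plan is to derive this from Theorem \ref{derivadamente eq} by associating a natural tilting complex to the classical tilting module $T$, so that the ``size'' parameter appearing in Theorem \ref{derivadamente eq} matches $\pd(T)$.

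First, I would set $d = \pd(T)$ and take a projective resolution
$$P^{\bullet}: \ 0 \to P^{-d} \to P^{-d+1} \to \cdots \to P^{-1} \to P^{0} \to 0$$
of $T$ over $A$. This is a bounded complex of finitely generated projective $A$-modules, quasi-isomorphic to the stalk complex $T$ in degree zero; hence $\enn_{\mathcal{D}(\mod A)}(P^{\bullet}) \cong \enn_A(T) \cong B$.

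Next I would verify that $P^{\bullet}$ is a tilting complex in Rickard's sense. The three points to check are: (i) $P^{\bullet} \in \mathcal{K}^b(\mathcal{P}_A)$, which is immediate from the construction; (ii) $\hom_{\mathcal{K}(\mod A)}(P^{\bullet}, P^{\bullet}[i]) \cong \ext^i_A(T,T) = 0$ for $i \neq 0$, which follows from the first tilting axiom $\ext^i_A(T,T)=0$ for $i\geq 1$ together with the obvious vanishing for $i<0$ on a stalk complex; and (iii) $\add(P^{\bullet})$ generates $\mathcal{K}^b(\mathcal{P}_A)$ as a triangulated category, which follows from the third tilting axiom, namely the existence of an exact sequence $0 \to A \to T^0 \to \cdots \to T^r \to 0$ with $T^j \in \add(T)$: splicing and resolving each $T^j$ by summands of $P^{\bullet}$ realizes $A$, and hence any compact object, inside the triangulated hull of $\add(P^{\bullet})$.

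Finally, I would apply Theorem \ref{derivadamente eq} to $P^{\bullet}$. Since the non-zero terms of $P^{\bullet}$ occupy positions $-d,\ldots,0$, the relevant size invariant is the homological width $w(P^{\bullet}) = -\sup(P^{\bullet}) + \pd(P^{\bullet}) = 0 + d = \pd(T)$; substituting this for $n$ in the conclusion of Theorem \ref{derivadamente eq} yields the announced inequality
$$\fidim(A) - \pd(T) \leq \fidim(B) \leq \fidim(A) + \pd(T).$$
The main subtlety is reconciling the counting convention of Theorem \ref{derivadamente eq} with $\pd(T)$: a naive count of non-zero terms of $P^{\bullet}$ gives $d+1$, so the tight bound in the corollary requires reading the parameter in Theorem \ref{derivadamente eq} as the homological width of the tilting complex rather than a raw term-count (equivalently, using the sharp form of the estimate furnished by the proof of Theorem \ref{derivadamente eq}). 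Once that identification is granted, the remaining checks are standard consequences of the tilting-module axioms and the quasi-isomorphism $P^{\bullet}\simeq T$.
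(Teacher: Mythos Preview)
The paper gives no explicit proof of this corollary; it is presented as an immediate consequence of Theorem \ref{derivadamente eq}, and your approach---replace $T$ by a minimal projective resolution $P^{\bullet}$, verify that $P^{\bullet}$ is a tilting complex with $\enn_{\mathcal{D}(A)}(P^{\bullet})\cong B$, and invoke the theorem---is exactly the intended route.

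You are right to flag the counting issue: taken literally, ``$n$ non-zero terms'' would give $n=\pd(T)+1$, not $\pd(T)$, and hence a weaker inequality. The resolution is the one you indicate: the bound actually proved in \cite{FLM} (and used implicitly in the subsequent remark on Bongartz's theorem, where $\pd(T)\leq 1$ must recover a gap of at most $1$) is governed by the amplitude $w(P^{\bullet})=\pd(T)$ of the tilting complex rather than by a raw term count. So the imprecision lies in the survey's phrasing of Theorem \ref{derivadamente eq}, not in your argument; with that reading your proof is complete.
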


\begin{obs} 
\begin{enumerate}
\item In \cite{GR} and \cite{Ka}, Keller and Kato, respectively, showed that the difference of the global dimensions of two derived equivalent algebras $A$ and $B$ is at most $n$ (the length of the tilting complex). This result can be seen as a consequence of Corollary \ref{derived eq FLM}.  
\item In Corollary 1 of \cite{B}, Bongartz considers a classical tilting module, that is, a tilting module $T \in \mod A$ such that $\pd (T) \leq 1$.
The original Bongartz's result says that $\gl(B) \leq \gl(A) + 1$, where $B =
\enn_A (T)^{op}$. It can be seen as a particular case of Theorem \ref{derivadamente eq}.
\end{enumerate}

\end{obs}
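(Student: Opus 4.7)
The plan is to derive both items of the Remark as direct corollaries of Theorem~\ref{derivadamente eq} and Corollary~\ref{derived eq FLM}, using the elementary observation that when the global dimension is finite it agrees with the $\phi$-dimension. Indeed, by Proposition~\ref{it1}(1) we have $\phi(M)=\pd(M)$ whenever $\pd(M)<\infty$; hence if $\gl(A)<\infty$, then every $M \in \mod A$ has finite projective dimension and therefore
$$ \fidim(A) \;=\; \sup_{M \in \mod A} \phi(M) \;=\; \sup_{M \in \mod A} \pd(M) \;=\; \gl(A), $$
and likewise for $B$. This identity is the bridge that will transfer the $\fidim$-inequalities into the classical global-dimension inequalities of Keller--Kato and Bongartz.

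For item (1), I would recall that a derived equivalence between $A$ and $B$ is witnessed by a tilting complex $T^{\bullet}$ with $n$ nonzero terms such that $B \simeq \enn_{\mathcal{D}(\mod A)}(T^{\bullet})$, so Theorem~\ref{derivadamente eq} applies verbatim and gives $\fidim(A) - n \leq \fidim(B) \leq \fidim(A) + n$. Under the assumption that $\gl(A)$ is finite (without which the Keller--Kato statement is vacuous), the standard fact that finiteness of global dimension is a derived invariant guarantees $\gl(B)<\infty$ as well, and the preceding bridge replaces both sides by the corresponding global dimensions. The inequality becomes $|\gl(A) - \gl(B)| \leq n$, which is exactly the Keller--Kato bound.

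For item (2), a classical tilting module satisfies $\pd(T)\leq 1$ by definition, so Corollary~\ref{derived eq FLM} immediately yields $\fidim(B) \leq \fidim(A) + \pd(T) \leq \fidim(A) + 1$. Assuming $\gl(A)<\infty$, the bridge again identifies each $\fidim$-value with the corresponding global dimension, and Bongartz's inequality $\gl(B) \leq \gl(A) + 1$ follows (the discrepancy between $\enn_A(T)$ and $\enn_A(T)^{op}$ is immaterial here since the tilting endomorphism rings obtained from the two conventions are mutually opposite and derived-equivalent, and Corollary~\ref{derived eq FLM} is symmetric in $A$ and $B$). There is no substantive obstacle: both items are essentially formal applications of the two preceding results. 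The only point that needs a brief justification is the transition $\fidim=\gl$ under finite global dimension, and this is the short paragraph above.
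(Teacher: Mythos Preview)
The paper states this observation without proof; it is left as a remark. Your argument correctly supplies the missing details, and the key bridge you identify---that $\fidim(A)=\gl(A)$ whenever $\gl(A)<\infty$, which follows from Proposition~\ref{it1}(1) together with the inequalities in~(\ref{desigualdades})---is exactly what is needed. One small point worth making explicit: to pass from $\fidim(B)<\infty$ back to $\gl(B)<\infty$ you really do need the external fact that finiteness of global dimension is a derived invariant (equivalently, that $\mathcal{D}^b(\mod A)=\mathcal{K}^b(\mathcal{P}_A)$ characterises finite global dimension); $\fidim(B)<\infty$ alone does not force $\gl(B)<\infty$, as the selfinjective case shows. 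You acknowledge this, so the argument is complete.

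A minor discrepancy with the paper: the remark attributes item~(1) to Corollary~\ref{derived eq FLM}, which concerns tilting \emph{modules}, whereas the Keller--Kato statement involves a tilting \emph{complex} of length~$n$. You correctly invoke Theorem~\ref{derivadamente eq} instead, which is the appropriate result here; the paper's pointer seems slightly imprecise on this point.
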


\section{Idempotent ideals, recollements, and Igusa-Todorov functions}

\subsection{Idempotent ideals}

We start by recalling some definitions which will be used throughout this section. We recall that an ideal $\textfrak{I}$ is a strong idempotent ideal if the morphism $\ext_{A/\textfrak{I}}^i(X,Y) \rightarrow \ext_{A}^i(X, Y) $ induced by the canonical isomorphism $\hom_{A/\textfrak{I}} (X, Y ) \rightarrow \hom_A (X, Y )$ is an isomorphism for all $i \geq 0$ and all $X, Y$ in $\mod A/\textfrak{I}$. Let $\textfrak{I}$ be a strong idempotent ideal of $A$, $P_0$ the projective cover of $\textfrak{I}$ and $P = eA$ where $e$ is an idempotent element of $A$ such that $\add P = \add P_0$. It is known that  $\mod A/\mathfrak{I}$ is a Serre subcategory of $\mod A$, and this inclusion induces an exact sequence of categories 
$$\xymatrix{ \mod A/\mathfrak{I} \ar[r]& \mod A \ar[r]^{e_P} & \mod \Gamma,}$$
where $\Gamma = End_A (P )^{op}$ and $e_P = \hom_A(P,\ - \ )$.

\begin{teo}(Theorem 4.3 of \cite{GLP}) Let $\textfrak{I}$ be a strong idempotent ideal of $A$ such that $\pd_A(A/\textfrak{I}) < \infty$. Then

\begin{enumerate}

\item $\phi_{A/\textfrak{I}}(X) \leq \phi_{A}(X)$ for all $X \in \mod A/\textfrak{I}$.

\item $\fidim (A/\textfrak{I}) \leq \fidim (A)$.

\end{enumerate}

\end{teo}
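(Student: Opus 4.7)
Item (2) follows immediately from item (1) by taking suprema over $X \in \mod A/\textfrak{I}$:
\[
\fidim(A/\textfrak{I}) = \sup\{\phi_{A/\textfrak{I}}(X) : X \in \mod A/\textfrak{I}\} \leq \sup\{\phi_A(X) : X \in \mod A/\textfrak{I}\} \leq \fidim(A).
\]
For item (1), I would use the bifunctor characterization of $\phi$ given by Theorem \ref{teorema D-division}. Fix $X \in \mod A/\textfrak{I}$ and set $d := \phi_{A/\textfrak{I}}(X)$; the case $d = 0$ is trivial, so assume $d \geq 1$. Theorem \ref{teorema D-division} applied inside $\mod A/\textfrak{I}$ yields a pair $(X_1, Y_1) \in \add X \times \add X$ which is a $(d, E)$-division of $X$ over $A/\textfrak{I}$. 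The plan is to produce from it an $(e, E)$-division of $X$ in $\mod A$ with $e \geq d$, so that Theorem \ref{teorema D-division} applied in $\mod A$ yields $\phi_A(X) \geq e \geq d$.

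The first half is direct. The strong idempotent hypothesis supplies natural isomorphisms $\ext^i_A(W, -)|_{\mod A/\textfrak{I}} \cong \ext^i_{A/\textfrak{I}}(W, -)$ for every $W \in \mod A/\textfrak{I}$ and every $i \geq 0$. A witness $Z_0 \in \mod A/\textfrak{I}$ for the non-isomorphism of $\ext^d_{A/\textfrak{I}}(X_1, -)$ and $\ext^d_{A/\textfrak{I}}(Y_1, -)$ then also witnesses $\ext^d_A(X_1, Z_0) \not\cong \ext^d_A(Y_1, Z_0)$, and consequently $\ext^d_A(X_1, -) \not\cong \ext^d_A(Y_1, -)$ as functors on all of $\mod A$.

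The harder half is to exhibit some $m > d$ at which $\ext^m_A(X_1, -) \cong \ext^m_A(Y_1, -)$ as functors on $\mod A$. Here $\pd_A(A/\textfrak{I}) = n < \infty$ is essential, and the natural tool is the Grothendieck change-of-rings spectral sequence
\[
E_2^{p,q}(W, Y) = \ext^p_{A/\textfrak{I}}(W, \ext^q_A(A/\textfrak{I}, Y)) \Longrightarrow \ext^{p+q}_A(W, Y), \quad W \in \mod A/\textfrak{I},\ Y \in \mod A,
\]
arising from the factorization $\hom_A(W, -) = \hom_{A/\textfrak{I}}(W, \hom_A(A/\textfrak{I}, -))$. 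The vanishing $\ext^q_A(A/\textfrak{I}, -) = 0$ for $q > n$ confines this spectral sequence to a horizontal strip of height $n + 1$, bounding the gap between information over $A/\textfrak{I}$ and information over $A$ in terms of $n$. Applied to $W = X_1, Y_1$ together with the iso $\ext^{d+1}_{A/\textfrak{I}}(X_1, -) \cong \ext^{d+1}_{A/\textfrak{I}}(Y_1, -)$, it should produce an iso $\ext^m_A(X_1, -) \cong \ext^m_A(Y_1, -)$ for some $m > d$ with $m - d$ bounded by a function of $n$. Letting $e^*$ be the largest integer with $d \leq e^* < m$ and $\ext^{e^*}_A(X_1, -) \not\cong \ext^{e^*}_A(Y_1, -)$ (which exists because $d$ itself lies in this set by the previous paragraph), the pair $(X_1, Y_1)$ becomes an $(e^*, E)$-division of $X$ in $\mod A$ with $e^* \geq d$, completing the proof.

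The main obstacle is the spectral-sequence step: the $(d, E)$-division supplies an iso of $\ext_{A/\textfrak{I}}$-functors only at the single degree $p = d+1$, rather than for all $p \geq d+1$, so one cannot simply declare the $E_2$-page isomorphic in a half-plane. Overcoming this will likely require either enlarging the pair $(X_1, Y_1)$ by appropriate syzygy summands so the iso extends to higher degrees, or iterating Theorem \ref{teorema D-division} inside $\mod A/\textfrak{I}$ to propagate iso of $\ext_{A/\textfrak{I}}^p$ up to sufficiently large $p$ before the spectral sequence can be deployed to cross over to $\mod A$.
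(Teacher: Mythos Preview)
The survey states this theorem without proof (it merely cites \cite{GLP}), so there is no in-text argument to compare against. Assessing your proposal on its own merits: the overall architecture is correct, but the argument is left incomplete, and the obstacle you isolate at the end is an artefact of the spectral-sequence detour rather than a genuine difficulty.

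The key point you are missing is that the functor isomorphism in the second clause of a $(d,E)$-division is not information at a single degree: by the very theory behind Theorem~\ref{teorema D-division}, an isomorphism $\ext^{d+1}_{A/\textfrak{I}}(X_1,-)\cong\ext^{d+1}_{A/\textfrak{I}}(Y_1,-)$ of functors on $\mod A/\textfrak{I}$ is equivalent to $[\Omega^{d}_{A/\textfrak{I}}(X_1)]=[\Omega^{d}_{A/\textfrak{I}}(Y_1)]$ in $K_0(A/\textfrak{I})$, that is, to an isomorphism $\Omega^{d}_{A/\textfrak{I}}(X_1)\oplus P\cong\Omega^{d}_{A/\textfrak{I}}(Y_1)\oplus Q$ with $P,Q$ projective over $A/\textfrak{I}$. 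This automatically propagates the Ext-isomorphism to \emph{all} degrees $\ge d+1$, so the ``only one degree'' worry disappears. From here the finite projective dimension hypothesis is used directly on syzygies, with no spectral sequence: write $n=\pd_A(A/\textfrak{I})$, so $\pd_A P,\pd_A Q\le n$; and the $A/\textfrak{I}$-projective resolution of $X_1$ breaks into short exact sequences $0\to\Omega^{i+1}_{A/\textfrak{I}}(X_1)\to Q_i\to\Omega^{i}_{A/\textfrak{I}}(X_1)\to 0$ with $\pd_A Q_i\le n$. Iterated Schanuel in $K_0(A)$ then gives $[\Omega_A^{j}(X_1)]=[\Omega_A^{j-d}(\Omega^{d}_{A/\textfrak{I}}(X_1))]$ for all $j\ge n+d$, and likewise for $Y_1$; combining with the stable isomorphism above yields $[\Omega_A^{j}(X_1)]=[\Omega_A^{j}(Y_1)]$ for $j\ge n+d$, hence $\ext_A^{m}(X_1,-)\cong\ext_A^{m}(Y_1,-)$ for $m=n+d+1>d$. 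Your maximal-$e^{*}$ argument now goes through verbatim and produces an $(e^{*},E)$-division of $X$ over $A$ with $e^{*}\ge d$, so $\phi_A(X)\ge d=\phi_{A/\textfrak{I}}(X)$.
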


Let $P/\rad P \simeq S_1 \oplus \cdots \oplus S_r $, with $S_i$ simple for all $i$, so that $P = P_0 (S_1 \oplus \cdots \oplus S_r)$ and $I = I_0 (S_1 \oplus \cdots \oplus S_r)$. Consider $\mathbb{P}_k$ the full subcategory of $\mod A$ consisting of the $A$-modules $X$ having a projective resolution $\xymatrix{ \cdots \ar[r] & P_1 \ar[r] & P_0 \ar[r] & X \ar[r] & 0}$ with $P_i$ in $\add P$ for $0 \leq i \leq k$, and $\mathbb{P}_{\infty} =\displaystyle\bigcap_{k \geq 0} \mathbb{P}_k$.  The full subcategories $\mathbb{I}_k$ and $\mathbb{I}_{\infty}$ are defined dually. 

To compare the behavior of the Igusa-Todorov functions for $\mod A$ and $\mod \Gamma$, the authors proved in \cite{GLP} that both functions are preserved under $e_P$ for $A$-modules having a resolution in $\add P$. As a consequence, they proved:

\begin{teo}(Theorem 4.7 of \cite{GLP})
\begin{itemize}

\item If $\mathbb{P}_1 = \mathbb{P}_{\infty}$ then $\fidim (\Gamma) \leq \fidim (A)$ and $ \psidim (\Gamma) \leq
 \psidim (A)$.

\item If $\mathbb{I}_1 = \mathbb{I}_{\infty}$ then $\fidim (\Gamma^{op}) \leq \fidim (A^{op})$ and $ \psidim (\Gamma^{op}) \leq
 \psidim (A^{op})$.

\end{itemize}

\end{teo}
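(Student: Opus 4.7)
The plan is to prove the first bullet; the second follows by applying the first to $A^{\op}$, noting that $\mathbb{I}_k$ for $A$ corresponds to $\mathbb{P}_k$ for $A^{\op}$. The key tool is the functor $e_P = \hom_A(P,-) \colon \mod A \to \mod \Gamma$. Because $P$ is projective, $e_P$ is exact, and because $\hom_A(P,P') \cong \hom_\Gamma(e_P(P), e_P(P'))$ for every $P,P' \in \add P$, its restriction is an equivalence $\add P \simeq \add \Gamma$. In particular, $e_P$ transforms any projective resolution whose terms lie in $\add P$ into a projective resolution over $\Gamma$.

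First, I would show that every $Y \in \mod \Gamma$ admits a preimage $X \in \mathbb{P}_1$ with $e_P(X) \cong Y$. Starting from a projective presentation $Q_1 \to Q_0 \to Y \to 0$ over $\Gamma$, lift it through the equivalence above to a presentation $P_1 \to P_0 \to X \to 0$ in $\mod A$ with $P_i \in \add P$, and apply the exact functor $e_P$ to identify $Y$ with $e_P(X)$. The hypothesis $\mathbb{P}_1 = \mathbb{P}_\infty$ then yields a full projective resolution of $X$ with every term in $\add P$; applying $e_P$ term-by-term produces a projective resolution of $Y$, so $\Omega_\Gamma^n(Y) \cong e_P(\Omega_A^n(X))$ for every $n \geq 0$, and every iterated syzygy of $X$ remains in $\mathbb{P}_\infty$.

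The next step is to check that $e_P$ induces a group homomorphism from the subgroup of $K_0(A)$ generated by the (iterated) summands of $X$ into $K_0(\Gamma)$, intertwining $\bar{\Omega}_A$ with $\bar{\Omega}_\Gamma$. Any indecomposable non-projective summand $M$ of some $\Omega_A^n(X)$ lies in $\mathbb{P}_\infty$, and by applying the equivalence $\add P \simeq \add \Gamma$ to a minimal projective presentation of $M$ in $\add P$, one sees that $e_P(M)$ is an indecomposable non-projective $\Gamma$-module, and that the assignment $M \mapsto e_P(M)$ is faithful on isomorphism classes. Hence the induced map is well defined and injective on the subgroup of interest. Therefore, if $\bar{\Omega}_A^{l+s}$ is a monomorphism on $\langle \add X \rangle$ for every $s \geq 0$, the same holds in $K_0(\Gamma)$ for $\langle \add Y \rangle$, giving $\phi_\Gamma(Y) \leq \phi_A(X) \leq \fidim(A)$. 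For $\psi$, the same correspondence shows that every finite projective dimension summand of $\Omega_\Gamma^n(Y)$ lifts to a finite projective dimension summand of $\Omega_A^n(X)$ with equal projective dimension, whence $\psi_\Gamma(Y) \leq \psi_A(X) \leq \psidim(A)$. Taking suprema over $Y \in \mod \Gamma$ yields both inequalities.

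The main obstacle is this third step: one must carefully verify that the interplay between the equivalence $\add P \simeq \add \Gamma$, the Fitting-type characterization of $\phi$, and the correspondence of summands under $e_P$ really transfers monomorphism properties of $\bar{\Omega}_A$ to $\bar{\Omega}_\Gamma$ and preserves projective dimensions on $\mathbb{P}_\infty$. This is exactly where the hypothesis $\mathbb{P}_1 = \mathbb{P}_\infty$ is essential, and it is the content of the preservation of the Igusa-Todorov functions under $e_P$ on $\mathbb{P}_\infty$ that is announced immediately before the theorem and proved in \cite{GLP}.
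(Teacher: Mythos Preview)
Your approach is correct and coincides with what the paper indicates: the survey does not give a proof but refers to \cite{GLP}, stating that the theorem follows once one knows that $e_P$ preserves the Igusa--Todorov functions on $\mathbb{P}_\infty$; your outline reproduces exactly this strategy (lift each $Y\in\mod\Gamma$ to some $X\in\mathbb{P}_1=\mathbb{P}_\infty$, then compare $\phi$ and $\psi$ via $e_P$), and you correctly flag that the delicate part---the preservation statement itself---is the content proved in \cite{GLP}. One small point worth tightening: to pass from ``$\bar{\Omega}_A$ is a monomorphism on $\bar{\Omega}_A^{l}\langle\add X\rangle$'' to the corresponding statement for $\bar{\Omega}_\Gamma$ on $\langle\add Y\rangle$, injectivity of the induced map on $K_0$ is not enough on its own; you also need that $\langle\add Y\rangle$ and its $\bar{\Omega}_\Gamma$-iterates lie in the image, which follows from your observation that $e_P$ sends indecomposable non-projectives in $\mathbb{P}_\infty$ to indecomposable non-projectives (this uses that $e_P$ is fully faithful on $\mathbb{P}_1$, a fact from \cite{APT}/\cite{GLP}).
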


Finally, the authors related the homological dimensions (including $\phi$-dimension) of the algebras involved.

\begin{prop}(Proposition 4.10 of \cite{GLP}) Let $A$ be an Artin algebra and $\mathfrak{I}$ be a strong idempotent ideal. Then
$$\fidim (A^{op}) \leq \pd_{A} (A/\mathfrak{I}) + \max\{ \gl( \Gamma) +1, \pd_{A^{op}}(A/\mathfrak{I}) + \fidim ((A/\mathfrak{I})^{op}) \}.$$

\end{prop}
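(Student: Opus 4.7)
The plan is to reduce the bound to controlling the projective dimension of an $s$-th syzygy of a left $A$-module of finite projective dimension, then to split that bound into an $A/\mathfrak{I}$-contribution and a $\Gamma$-contribution via the canonical short exact sequence arising from the idempotent ideal, using a Tor-vanishing achieved by the syzygy shift.

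Set $s=\pd_A(A/\mathfrak{I})$ and $t=\pd_{A^{op}}(A/\mathfrak{I})$. Given $M\in\mod A^{op}$ with $\pd_{A^{op}}(M)<\infty$, form $N=\Omega_{A^{op}}^s(M)$; then $\pd_{A^{op}}(M)\leq s+\pd_{A^{op}}(N)$ (if $\pd_{A^{op}}(M)\leq s$ there is nothing to prove). It therefore suffices to bound $\pd_{A^{op}}(N)$ by $\max\{\gl(\Gamma)+1,\ t+\fidim((A/\mathfrak{I})^{op})\}$. The key observation is that $\tor_i^A(A/\mathfrak{I},N)=\tor_{i+s}^A(A/\mathfrak{I},M)=0$ for all $i\geq 1$ since $\pd_A(A/\mathfrak{I})=s$; consequently, tensoring a projective $A$-resolution of $N$ with $A/\mathfrak{I}$ yields a projective $A/\mathfrak{I}$-resolution of $N/\mathfrak{I} N$, so $\pd_{(A/\mathfrak{I})^{op}}(N/\mathfrak{I} N)\leq \pd_{A^{op}}(N)<\infty$ and in particular $\pd_{(A/\mathfrak{I})^{op}}(N/\mathfrak{I} N)\leq \fidim((A/\mathfrak{I})^{op})$. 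The standard change-of-rings inequality then gives $\pd_{A^{op}}(N/\mathfrak{I} N)\leq t+\fidim((A/\mathfrak{I})^{op})$.

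For the submodule $\mathfrak{I} N=AeN$, the plan is to exploit the adjunction $(e_P,-\otimes_\Gamma P)$: the counit $Ae\otimes_\Gamma e_P(N)\to N$ has image $\mathfrak{I} N$, and pulling a $\Gamma$-projective resolution of $e_P(N)$ (of length at most $\gl(\Gamma)$) back along this adjunction produces an $A$-resolution of $\mathfrak{I} N$. Applying Corollary \ref{corocotadp} to the short exact sequence
$$0\longrightarrow \mathfrak{I} N\longrightarrow N\longrightarrow N/\mathfrak{I} N\longrightarrow 0,$$
the combined estimates give $\pd_{A^{op}}(N)\leq \max\{\gl(\Gamma)+1,\ t+\fidim((A/\mathfrak{I})^{op})\}$, and adding the initial $s$ syzygies closes the argument. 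The main obstacle is the transfer $\pd_{A^{op}}(\mathfrak{I} N)\leq \gl(\Gamma)+1$: this is precisely where the strong idempotent property $\ext^i_{A/\mathfrak{I}}(X,Y)\cong \ext^i_A(X,Y)$ is essential, allowing a $\Gamma$-resolution to be lifted to an $A$-resolution with controlled length, and where the extra $+1$ in $\gl(\Gamma)+1$ naturally enters—either through the kernel of the adjunction counit or through the one-step shift built into Corollary \ref{corocotadp}.
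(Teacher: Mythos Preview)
The survey paper does not supply a proof of this proposition; it merely cites \cite{GLP}. So the comparison is with the statement itself, and there the proposal has a genuine gap.

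In this paper $\fidim$ denotes the Igusa--Todorov $\phi$-dimension, i.e.\ $\fidim(A^{op})=\sup\{\phi_{A^{op}}(M):M\in\mod A^{op}\}$, \emph{not} the finitistic dimension $\findim$. Your argument begins with ``Given $M\in\mod A^{op}$ with $\pd_{A^{op}}(M)<\infty$'' and proceeds to bound $\pd_{A^{op}}(M)$. Even if every step went through, the conclusion would be
\[
\findim(A^{op})\ \leq\ \pd_A(A/\mathfrak{I})+\max\{\gl(\Gamma)+1,\ \pd_{A^{op}}(A/\mathfrak{I})+\fidim((A/\mathfrak{I})^{op})\},
\]
which is strictly weaker than the asserted bound on $\fidim(A^{op})$, since in general $\findim<\fidim$ (see inequalities~(\ref{desigualdades})). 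To bound the $\phi$-dimension you must control $\phi_{A^{op}}(M)$ for \emph{all} $M$, including those with $\pd_{A^{op}}(M)=\infty$, and a short exact sequence $0\to\mathfrak{I}N\to N\to N/\mathfrak{I}N\to 0$ does not yield $\phi(N)\leq\max\{\phi(\mathfrak{I}N),\phi(N/\mathfrak{I}N)\}$ in any direct way. The actual argument has to compare syzygies over $A$, $A/\mathfrak{I}$ and $\Gamma$ at the level of $K_0$ (the Tor-vanishing you isolate is indeed what aligns $A$-syzygies with $A/\mathfrak{I}$-syzygies after the $s$-fold shift, and the strong idempotent hypothesis is what aligns them with $\Gamma$-syzygies), and then read off the $\phi$-bound from the rank stabilisation, not from projective dimensions.

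Two smaller points. First, your appeal to Corollary~\ref{corocotadp} does not produce the $\max$-inequality you write: that corollary outputs a bound of the form $\pd(N)\leq\psi(\mathfrak{I}N\oplus\Omega(N/\mathfrak{I}N))+1$, not a maximum of two projective dimensions. If you really had finite $\pd_{A^{op}}(\mathfrak{I}N)$ and finite $\pd_{A^{op}}(N/\mathfrak{I}N)$, the elementary bound $\pd(N)\leq\max\{\pd(\mathfrak{I}N),\pd(N/\mathfrak{I}N)\}$ would suffice and no IT-machinery is needed---which is another symptom that the argument is aiming at $\findim$ rather than $\fidim$. Second, the step $\pd_{A^{op}}(\mathfrak{I}N)\leq\gl(\Gamma)+1$ is only sketched; you would need that the counit $Ae\otimes_{\Gamma}eN\to\mathfrak{I}N$ is an isomorphism (not merely surjective) and that $Ae$ is $\Gamma$-flat, both of which do require the strong idempotent hypothesis but should be stated and used explicitly.
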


Observe now that when $\mathfrak{I}$ is a strong idempotent ideal then $\mathfrak{I}$ is in $\mathbb{P}_{\infty}$ (\cite{APT},
Theorem 2.1), so $\pd_A (M) \leq \pd_{\Gamma} (Hom_A (P, M)) \leq \gl \Gamma$. Thus $\pd_A (A/\mathfrak{I}) \leq \gl( \Gamma) +1$. Since being a strong idempotent ideal is a symmetric condition, we obtain that $\pd((A/\mathfrak{I})^{op}) \leq \gl (\Gamma ) + 1$, as observed in \cite{APT} at the end of Section 5.

\begin{coro}(Corollary 4.11 of \cite{GLP}) Let $A$ be an Artin algebra and $\mathfrak{I}$ be a strong idempotent ideal. Assume $\gl(\Gamma) < \infty$, then $\fidim((A/\mathfrak{I})^{op})$ is finite if and only if $\fidim(A^{op})$ is finite.
\end{coro}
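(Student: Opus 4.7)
The plan is to prove this biconditional by combining two earlier results in the excerpt: Proposition 4.10 for one implication and Theorem 4.3 for the other. The key preliminary observation, already recorded just before the corollary, is that the hypothesis $\gl(\Gamma) < \infty$ forces both $\pd_A(A/\mathfrak{I})$ and $\pd_{A^{op}}(A/\mathfrak{I})$ to be finite, in fact both bounded by $\gl(\Gamma)+1$, via the fact that $\mathfrak{I}\in\mathbb{P}_\infty$ and the symmetry of the strong idempotent condition.

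For the implication $\fidim((A/\mathfrak{I})^{op}) < \infty \Rightarrow \fidim(A^{op}) < \infty$, I would simply instantiate Proposition 4.10, which provides
\[
\fidim(A^{op}) \leq \pd_A(A/\mathfrak{I}) + \max\{\gl(\Gamma)+1,\ \pd_{A^{op}}(A/\mathfrak{I}) + \fidim((A/\mathfrak{I})^{op})\}.
\]
Each summand on the right is finite under our hypotheses: $\pd_A(A/\mathfrak{I}) \leq \gl(\Gamma)+1 < \infty$ and $\pd_{A^{op}}(A/\mathfrak{I}) \leq \gl(\Gamma)+1 < \infty$ by the observation above, and $\fidim((A/\mathfrak{I})^{op})$ is finite by assumption, so $\fidim(A^{op})$ is finite.

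For the converse, I would invoke the symmetry noted in the remark, which gives that $\mathfrak{I}$ is a strong idempotent ideal of $A^{op}$ as well, together with the bound $\pd_{A^{op}}(A/\mathfrak{I}) \leq \gl(\Gamma)+1 < \infty$. Applying item 2 of Theorem 4.3 to $A^{op}$ with the strong idempotent ideal $\mathfrak{I}$ yields
\[
\fidim((A/\mathfrak{I})^{op}) \leq \fidim(A^{op}),
\]
so finiteness transfers from $A^{op}$ to the quotient. There is no substantive obstacle: the argument is an assembly of Proposition 4.10, the remark relating $\pd_A(A/\mathfrak{I})$ (and its opposite version) to $\gl(\Gamma)$, and Theorem 4.3. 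The only point that requires a moment's attention is verifying that the strong idempotent hypothesis is truly left-right symmetric, which is recorded in \cite{APT} and quoted in the excerpt.
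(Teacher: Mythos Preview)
Your proof is correct and follows exactly the approach implicit in the paper's organization: the forward implication is read off directly from Proposition~4.10 together with the observation that $\pd_A(A/\mathfrak{I})$ and $\pd_{A^{op}}(A/\mathfrak{I})$ are both bounded by $\gl(\Gamma)+1$, while the converse comes from applying Theorem~4.3(2) to $A^{op}$ once one knows the strong idempotent condition is left--right symmetric. The paper does not spell out a proof for this corollary, but your assembly of Proposition~4.10, the preceding observation, and Theorem~4.3 is precisely the intended one.
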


%

\begin{ej}
Let $\Lambda = \left( \begin{array}{cc} A& M \\ 0 & B \end{array}\right)$ where $A$ and $B$ are Artin algebras, $\gl (B) <\infty$ and $M$ is an $A$-$B$-bimodule. Then $\fidim(\Lambda) < \infty$ if and only if $\fidim(A) < \infty$.
\end{ej}

Compare the previous example with Theorem \ref{triangular}, where $C$ is the matrix algebra given by $A$ and $B$.  

\begin{prop}(Proposition 4.18 of \cite{GLP}) Let $A$ be an Artin algebra and $\mathfrak{I}$ be a strong idempotent ideal. Then
$$\fidim(A) \leq \pd_{A^{op}}(A/\mathfrak{I}) + \gl(A/\mathfrak{I})+\fidim (\Gamma).$$


\end{prop}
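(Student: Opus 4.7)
The plan is to bound $\phi_A(M)$ for an arbitrary $M\in\mod A$ by $r+s+t$, where $r=\pd_{A^{op}}(A/\mathfrak{I})$, $s=\gl(A/\mathfrak{I})$ and $t=\fidim(\Gamma)$. The strategy is to split $M$ through the canonical short exact sequence in $\mod A$
$$0\to M\mathfrak{I}\to M\to M/M\mathfrak{I}\to 0 \qquad (\ast)$$
and handle the two pieces separately: the quotient lives in $\mod A/\mathfrak{I}$ and will be controlled by $r$ and $s$, while the submodule $M\mathfrak{I}=MeA$ is generated by $\add P$ and will be controlled through the exact functor $e_P=\hom_A(P,-)\colon\mod A\to\mod\Gamma$.

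First I would bound $\pd_A(M/M\mathfrak{I})$. Since $M/M\mathfrak{I}\in\mod A/\mathfrak{I}$, we have $\pd_{A/\mathfrak{I}}(M/M\mathfrak{I})\leq s$. To transfer this to $\pd_A$, the natural tool is a change-of-rings argument, i.e.\ a Tor spectral sequence of the form $\tor^{A/\mathfrak{I}}_p\bigl(M/M\mathfrak{I},\tor^A_q(A/\mathfrak{I},-)\bigr)\Rightarrow \tor^A_{p+q}(M/M\mathfrak{I},-)$, fed by the projective resolution of $A/\mathfrak{I}$ whose length on the correct side is $r$. The symmetry of the strong idempotent hypothesis (as recalled just before Corollary~4.11 of \cite{GLP}) together with the identification $\pd_{A^{op}}(A/\mathfrak{I})=r$ is what makes the left-side projective dimension the relevant invariant here. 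The conclusion of this step is $\pd_A(M/M\mathfrak{I})\leq r+s$.

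Next I would set $n:=r+s$ and apply the horseshoe lemma to $(\ast)$ iteratively. Since $\pd_A(M/M\mathfrak{I})\leq n$, the $n$-th syzygy $\Omega_A^n(M/M\mathfrak{I})$ is projective, so $\Omega_A^n(M)$ agrees with $\Omega_A^n(M\mathfrak{I})$ up to a projective summand. Now $M\mathfrak{I}$ is generated by $\add P$, and under the strong idempotent assumption its sufficiently high syzygies land in $\mathbb{P}_\infty$; on this subcategory the functor $e_P$ is exact, sends projectives to projectives and induces isomorphisms on Ext, so it preserves the Igusa-Todorov function. Consequently $\phi_A(\Omega_A^n(M\mathfrak{I}))=\phi_\Gamma\bigl(e_P(\Omega_A^n(M\mathfrak{I}))\bigr)\leq \fidim(\Gamma)=t$. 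Combining with Proposition~\ref{Huard1}(5),
$$\phi_A(M)\leq n+\phi_A(\Omega_A^n(M))\leq (r+s)+t,$$
and taking the supremum over $M$ gives the desired bound.

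The main obstacle is the change-of-rings step: one has to justify carefully that $\pd_{A^{op}}(A/\mathfrak{I})$ (and not its right-sided counterpart) is precisely the invariant that controls the passage from $\pd_{A/\mathfrak{I}}$ to $\pd_A$, which forces the use of the Tor-based spectral sequence and the bimodule symmetry of strong idempotency. A secondary subtlety is verifying that iterated syzygies of $M\mathfrak{I}$ do stabilize inside $\mathbb{P}_\infty$ so that the transfer of $\phi$ through $e_P$ is valid — a point that relies on the machinery developed for strong idempotent ideals earlier in the section.
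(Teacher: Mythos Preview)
The survey itself does not reproduce a proof of this proposition (it merely quotes the statement from \cite{GLP}), so there is no ``paper's own proof'' to compare against here. That said, your proposal has two genuine gaps worth flagging.

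\textbf{Step 2 is on the wrong side.} The change-of-rings inequality you invoke, in either its Ext or its Tor spectral-sequence form, bounds $\pd_A(N)$ for $N\in\mod A/\mathfrak{I}$ by $\pd_{A/\mathfrak{I}}(N)+\pd_A((A/\mathfrak{I})_A)$, i.e.\ by $s$ plus the \emph{right} projective dimension of $A/\mathfrak{I}$. Your sketch does not explain how $r=\pd_{A^{op}}(A/\mathfrak{I})$, the \emph{left} projective dimension, enters a bound on $\pd_A(M/M\mathfrak{I})$. The place where $r$ genuinely appears is in the Grothendieck spectral sequence
$$E_2^{p,q}=\ext^p_{A/\mathfrak{I}}\bigl(\tor^A_q(M,A/\mathfrak{I}),N\bigr)\ \Rightarrow\ \ext^{p+q}_A(M,N)\qquad (N\in\mod A/\mathfrak{I}),$$
where the vanishing of $\tor^A_q(M,\,{}_A(A/\mathfrak{I}))$ for $q>r$ uses a projective resolution of $A/\mathfrak{I}$ as a \emph{left} $A$-module. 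This bounds $\id_A(N)$ (equivalently, forces $\ext^{>r+s}_A(M,N)=0$ for every $M$), not $\pd_A(N)$; it is this vanishing that puts high syzygies of an arbitrary $M$ into $\mathbb{P}_\infty$ and makes the transfer through $e_P$ legitimate. Your $(\ast)$-sequence is then unnecessary.

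\textbf{Step 4 is unjustified.} From $\mathfrak{I}^2=\mathfrak{I}$ one gets $M\mathfrak{I}\in\mathbb{P}_0$, but nothing in the strong-idempotent hypothesis forces $M\mathfrak{I}$ (or its low syzygies) into $\mathbb{P}_\infty$: the long exact sequence obtained from $(\ast)$ shows that $\ext^1_A(M\mathfrak{I},N)$ receives $\ext^1_A(M,N)$, which has no reason to vanish for general $M$ and $N\in\mod A/\mathfrak{I}$. The condition $\mathbb{P}_1=\mathbb{P}_\infty$ in Theorem~4.7 of \cite{GLP} is a separate hypothesis precisely because it does not follow from strong idempotency alone. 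So the passage ``its sufficiently high syzygies land in $\mathbb{P}_\infty$'' cannot be obtained from the properties of $M\mathfrak{I}$ you list; it must come from the global Ext-vanishing argument above, applied to $M$ itself rather than to $M\mathfrak{I}$.
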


\subsection{Recollements and Igusa-Todorov $\phi$ function}

Following Example 1 of \cite{H}, Definition 2.1.1, and Remark 2.1.2 of \cite{CPS} we know that the prototypical example of a standard recollement is the one of the contexts of Section 7.1, originally given in \cite{APT}.\\

Let $\mathcal{T}_1$, $\mathcal{T}$ and $\mathcal{T}_2$ be triangulated categories. A recollement of $\mathcal{T}$ relative
to $\mathcal{T}_1$ and $\mathcal{T}_2$ is given by
$$\xymatrix{\mathcal{T}_1 \ar[rr]^{i_{\ast} = i_{!}} & & \mathcal{T} \ar[rr]^{j^{!} = j^{\ast}} \ar@<3ex>[ll]_{i^{!}} \ar@<-3ex>[ll]_{i^{\ast}} & & \mathcal{T}_2 \ar@<3ex>[ll]_{j_{\ast}} \ar@<-3ex>[ll]_{j_{!}}}$$

and denoted by $(\mathcal{T}_1 ,\mathcal{T} ,\mathcal{T}_2 ,i^{\ast} , i_{\ast} = i_{!}, i^{!}, j_{!}, j^{!} = j^{\ast} , j_{\ast})$ such that

\begin{itemize}

\item $(i^{\ast} , i_{\ast} )$, $(i_{!} , i^{!} )$, $(j_{!} , j^{!} )$ and $(j^{\ast}, j_{\ast} )$ are adjoint pairs of triangle functors,

\item $i_{\ast}$ , $j_{!}$ and $j_{\ast}$ are full embeddings,

\item $j^{!}i_{\ast} = 0$ (and thus also $i^{!} j_{\ast}  = 0$ and $i^{\ast} j_{!} = 0$),

\item for each $X \in \mathcal{T}$ , there are triangles

$$\xymatrix{j_! j^! X \ar[r] & X \ar[r]  & i_{\ast} i^{\ast} X \ar[r] & }$$
$$\xymatrix{i_! i^! X \ar[r] & X \ar[r]  & j_{\ast} j^{\ast} X \ar[r] & }$$

where the arrows to and from $X$ are the counits and the units of the adjoint
pairs respectively.

\end{itemize}

\begin{defi}

Let $A$, $B$ and $C$ be algebras. A recollement $(DB, DA, DC, i^{\ast} , i_{\ast} = i_! , i^! , j_! , j^! = j^{\ast} , j_{\ast} )$ is said to be standard and defined by $Y \in \mathcal{D}^b (A^{op} \otimes B)$ and $X \in \mathcal{D}^b (C^{op} \otimes A)$ if $i^{\ast} \cong \cdot \otimes^{L}_A Y$ and $j_{!} \cong \cdot \otimes^{L}_{C} X$.

\end{defi}

The result below is a generalization of the ones given in Section 7.1.

\begin{teo}(Theorem II of \cite{Q})
Let $A$, $B$ and $C$ be algebras and let  $(DB, DA, DC, i^{\ast} , i_{\ast} = i_! , i^! , j_! , j^! = j_{\ast} , j_{\ast} )$ be a standard recollement defined by $Y \in \mathcal{D}^b (A^{op} \otimes B)$ and $X \in \mathcal{D}^b (C^{op} \otimes A)$. Suppose $Y^{\ast} = R\hom_{B}(Y,B)$. Then the following holds true:

\begin{enumerate}

\item If $ _{C}X$ is compact, then $\fidim (C) \leq \fidim (A) + w(_CX)$.

\item If $Y^{\ast}_A$  is compact, then $\fidim(B) \leq \fidim(A) + w(Y_B)$.

\item If $_{A}Y$ and $Y^{\ast}_A$ are compact, then

$$\fidim (A) \leq \sup\{ \fidim (B) + \pd (_AY) + \pd (Y^{\ast}_A), \fidim(C)+w(X_A)\}.$$

\end{enumerate}

\end{teo}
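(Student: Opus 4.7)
The plan is to reformulate the $\phi$-dimension inside the bounded derived category via the homological-width formalism introduced after Remark~\ref{Remark2} and the bifunctor characterization of Theorem~\ref{teorema D-division}. In that language $\fidim(\Lambda)$ is controlled by the invariants $\pd$, $\sup$, and $w = -\sup + \pd$ of compact complexes over $\Lambda$. Each item is then proved by pushing a ``witness'' of one $\phi$-dimension through a functor of the recollement and tracking how these three invariants transform, with each compactness hypothesis invoked exactly to keep the image in $\mathcal{K}^{b}(\mathcal{P})$ rather than merely in $\mathcal{K}^{-,b}(\mathcal{P})$.

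For item 1, I would take $M \in \mod C$ whose $\phi_C(M)$ is close to $\fidim(C)$, apply $j_{!} \cong - \otimes^{L}_{C} X$ to land in $\mathcal{D}^{b}(\mod A)$, and use that compactness of ${}_{C}X$ forces $j_{!}(M) \in \mathcal{K}^{b}(\mathcal{P}_A)$. Standard tensor-product estimates give $\sup(j_{!}M) \leq \sup(M) + \sup({}_{C}X)$ and $\pd(j_{!}M) \leq \pd(M) + \pd({}_{C}X)$, and comparing the associated widths yields $\pd_C(M) \leq \pd(j_{!}M) + w({}_{C}X) \leq \fidim(A) + w({}_{C}X)$. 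Taking the supremum over $M$ proves the inequality. Item 2 is entirely symmetric: the functor $- \otimes^{L}_{A} Y^{*}$ sends compact $A$-complexes to compact $B$-complexes precisely when $Y^{*}_{A}$ is compact, and an identical bookkeeping delivers $\fidim(B) \leq \fidim(A) + w(Y_B)$.

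For item 3 the direction is reversed. Given $M \in \mod A$ witnessing $\fidim(A)$, I would exploit the recollement triangle $j_{!}j^{!}(M) \to M \to i_{*}i^{*}(M) \to [1]$ to bound $\pd_{A}(M)$ in terms of the projective dimensions of the two outer vertices. Compactness of ${}_{A}Y$ together with $Y^{*}_{A}$ ensures $i^{*}(M) \in \mathcal{K}^{b}(\mathcal{P}_B)$, and a width estimate on the composition of the tensor/Hom functors defining $i_{*}i^{*}$ gives $\pd_{A}(i_{*}i^{*}M) \leq \fidim(B) + \pd({}_{A}Y) + \pd(Y^{*}_{A})$; an analogous estimate on the $C$-side controls $\pd_{A}(j_{!}j^{!}M)$ by $\fidim(C) + w(X_{A})$. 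The triangle then forces $\pd_A(M)$ to be bounded by the supremum of these two contributions, which is the stated bound.

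The main obstacle will be the precise propagation of $\sup$, $\pd$, and $w$ under tensor and Hom functors associated with bounded bimodule complexes, and the delicate interplay in item 3 between the two triangle terms: both compactness hypotheses must be invoked simultaneously, which is what explains both the form of the bound (a supremum rather than a sum) and the appearance of $\pd({}_{A}Y) + \pd(Y^{*}_{A})$ on the $B$-side rather than a single width. The translation from $\phi$-dimension to derived-category invariants via Theorem~\ref{teorema D-division} is routine once the width inequalities above are in hand, since by items~1 and~2 of Proposition~\ref{Huard1} the $\phi$-value of a module of finite projective dimension coincides with its projective dimension.
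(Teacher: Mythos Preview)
The paper is a survey and does not give a proof of this statement; it is quoted directly from Qin's article \cite{Q}. So there is no in-paper argument to compare against, but your proposal itself has a genuine gap that is worth naming.

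Your high-level picture is right: one pushes data through the recollement functors, tracks how $\sup$, $\pd$ and $w$ transform under derived tensor/Hom with a bounded bimodule complex, and invokes each compactness hypothesis exactly where you say. The problem is that the inequalities you actually write down bound the wrong quantity. In item~1 you choose $M \in \mod C$ with $\phi_C(M)$ large and then conclude ``$\pd_C(M) \leq \pd(j_!M) + w({}_CX) \leq \fidim(A) + w({}_CX)$''. But $\phi_C(M)$ and $\pd_C(M)$ agree only when the latter is finite (Proposition~\ref{it1}(1)); the interesting contribution to $\fidim(C)$ comes precisely from modules of infinite projective dimension, where your chain of inequalities says nothing. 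Moreover the step ``$\pd(j_!M) \leq \fidim(A)$'' is not justified: $j_!M$ is a perfect complex, and the position of its leftmost nonzero term has no a priori relation to $\fidim(A)$.

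The substance you defer to one sentence at the end --- ``the translation from $\phi$-dimension to derived-category invariants via Theorem~\ref{teorema D-division} is routine'' --- is in fact the entire proof. Qin's argument takes a $(d,E)$-division $(X',Y')$ of $M$ in $\mod C$, i.e.\ a pair in $\add(M)$ with $\ext^d_C(X',-)\not\cong\ext^d_C(Y',-)$ but $\ext^{d+1}_C(X',-)\cong\ext^{d+1}_C(Y',-)$, and shows that applying the appropriate recollement functor produces a $(d',E)$-division over $A$ with $d-d'$ bounded by the relevant width. The tensor-product width estimates you mention enter \emph{here}, to control the shift in the degree at which the Ext-isomorphism first holds, not to bound any individual projective dimension. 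Without making this step explicit your argument bounds $\findim$ rather than $\fidim$, and the two can differ (cf.\ the inequalities~(\ref{desigualdades})).
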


\begin{obs}
If $A$ and $B$ are derived equivalent algebras, we can apply the previous theorem to the trivial recollement
$$\xymatrix{0 \ar[rr] & & \mathcal{D}(\mod A) \ar[rr]^{ \cdot \ \otimes^{L}_A P_B} \ar@<3ex>[ll] \ar@<-3ex>[ll] & & \mathcal{D}(\mod B) \ar@<3ex>[ll] \ar@<-3ex>[ll]_{\cdot \ \otimes^{L}_B P_A} }$$

and reobtain Theorem \ref{derivadamente eq}.
\end{obs}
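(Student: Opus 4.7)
The plan is to unwind Theorem II of \cite{Q} applied to the displayed trivial recollement. When $A$ and $B$ are derived equivalent there is a tilting complex $T^{\bullet} \in \mathcal{K}^b(\mathcal{P}_A)$ with $n$ non-zero terms satisfying $B \simeq \enn_{\mathcal{D}(A)}(T^{\bullet})$; viewing $T^{\bullet}$ as an $A$-$B$-bimodule complex $P_B \in \mathcal{D}^b(A^{op}\otimes B)$, the quasi-inverse functor $\cdot \otimes^{L}_B P_A$ is implemented by $P_A \in \mathcal{D}^b(B^{op}\otimes A)$. First I would match notation with Theorem II: the middle algebra of the trivial recollement is $A$, the role of ``$C$'' (the right end) is played by $B$, and the role of the leftmost algebra is played by the zero algebra; accordingly $X = P_A$ and $Y$ is trivial.

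Next I would verify the compactness hypotheses. Because $\cdot \otimes^{L}_A P_B$ restricts to a triangulated equivalence $\mathcal{K}^b(\mathcal{P}_A) \simeq \mathcal{K}^b(\mathcal{P}_B)$, the one-sided restriction ${}_A P_B = T^{\bullet}$ lies in $\mathcal{K}^b(\mathcal{P}_A)$ by the very definition of a tilting complex, so it is compact in $\mathcal{D}(\mod A)$; the analogous statements hold for $(P_B)_B$, ${}_B P_A$ and $(P_A)_A$. Writing the non-zero terms of $T^{\bullet}$ in cohomological degrees $-k, \ldots, -l$ (with $k-l+1=n$), the homological width equals $w({}_A P_B) = k-l \leq n-1$, and similarly $w((P_A)_A) \leq n-1$.

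Then I would apply Theorem II. Part 2 is vacuous because $Y$ is trivial. Part 1, applied to the compact bimodule ${}_B X = {}_B P_A$, yields
$$\fidim(B) \;\leq\; \fidim(A) + w({}_B P_A) \;\leq\; \fidim(A) + n.$$
In part 3, the vanishing of $Y$ forces $\pd({}_A Y) = \pd(Y^{\ast}_A) = 0$ and $\fidim$ of the zero algebra equals $0$, so the supremum collapses to
$$\fidim(A) \;\leq\; \fidim(B) + w((P_A)_A) \;\leq\; \fidim(B) + n,$$
and combining the two inequalities reproduces Theorem \ref{derivadamente eq}.

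The main obstacle is bookkeeping rather than substance: correctly pairing the functors $j_!$ and $j^!$ of the trivial recollement with the bimodules $X$ and $Y$ of Theorem II, and using the defining properties of a tilting complex to check that all four one-sided compactness hypotheses hold automatically. One also has to translate ``$n$ non-zero terms'' into the homological width used by Theorem II; this conversion gives a bound of $n-1$, slightly sharper than the $n$ stated in Theorem \ref{derivadamente eq}, but a fortiori implying it.
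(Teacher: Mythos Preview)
Your approach is exactly what the Remark intends---specialize Theorem~II of \cite{Q} to the degenerate recollement with left term zero---and the paper offers no further argument to compare against.

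There is, however, a bookkeeping slip of precisely the kind you anticipate. The tilting complex $T^{\bullet}$ carries a left $B$-action via $B=\enn_{D(A)}(T^{\bullet})$ and a right $A$-action, so it is naturally a $B$-$A$-bimodule complex: thus $T^{\bullet}$ should be identified with $P_A$, not $P_B$, and in particular $(P_A)_A \cong T^{\bullet}$ (this is just $B\otimes^{L}_B P_A$). Consequently the quantity $w({}_A P_B)$ you compute is never used in Theorem~II, since $P_B$ is neither $X$ nor $Y$ in your own matching; whereas Part~1 requires $w({}_C X)=w({}_B P_A)$, which you invoke in the displayed inequality but never bound. The repair is short: the two-sided tilting complex $P_A$ may be represented by a bounded complex of $B$-$A$-bimodules that are projective on each side, so ${}_B P_A$ and $(P_A)_A$ are perfect complexes concentrated in the same degrees and both inherit the width $n-1$ of $T^{\bullet}$. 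With this correction your derivation goes through and, as you note, even yields the slightly sharper bound $\lvert \fidim(A)-\fidim(B)\rvert \leq n-1$.
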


\section{Algebras of $\Omega^{\infty}$-infinite representation type}

We recall the definition given at the beginning of Section 4. We say that $A$ is of $\Omega^n$-finite representation type if $\Omega^n(\mod A)$ is of finite representation type. If $A$ is not of $\Omega^n$-finite representation type for any $n \in \mathbb{N}$, we say that $A$ is of  $\Omega^{\infty}$-infinite representation type.
We begin this section with a result that allows us to compute the $\phi$-dimension of some special algebras.

\begin{teo}[Theorem 52 of \cite{BM}]\label{triangular}
Let $Q$ be a quiver and $C = \frac{\Bbbk Q}{I}$. Suppose there exist two disjoint full subquivers of $Q$, $\Gamma$ and $\bar{\Gamma}$ such that 

\begin{itemize}

\item  $Q_0 = \Gamma_0 \cup \bar{\Gamma}_0$, 

\item there is at least one arrow from $\Gamma_0$ to $\bar{\Gamma}_0$, and there are no arrows of $Q$ from $\bar{\Gamma}_0$ to $\Gamma_0$,

\item if $\alpha \in Q_1$ with $\st(\alpha) \in \Gamma_0$ and $\tg(\alpha) \in \bar{\Gamma}_0$ then $ \beta \alpha = 0 $ for every arrow $\beta$.

\end{itemize}

Then $\fidim (C) \leq \fidim (A) + \fidim (B) + 1$ where $A = \frac{\Bbbk \Gamma}{I \cap \Bbbk \Gamma}$ and $B = \frac{\Bbbk \bar{\Gamma}}{I \cap \Bbbk \bar{\Gamma}}$.

\end{teo}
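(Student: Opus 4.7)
The plan is to exploit the matrix decomposition of $C$ induced by the idempotents $e_\Gamma = \sum_{v \in \Gamma_0} e_v$ and $e_{\bar{\Gamma}} = \sum_{v \in \bar{\Gamma}_0} e_v$. The absence of arrows from $\bar{\Gamma}_0$ to $\Gamma_0$ forces $e_{\bar{\Gamma}} C e_\Gamma = 0$, so $C$ acquires the upper triangular form $\begin{pmatrix} A & M \\ 0 & B \end{pmatrix}$ with $M = e_\Gamma C e_{\bar{\Gamma}}$. Two structural facts will drive the whole argument: for $v \in \Gamma_0$, a path starting at $v$ that ever leaves $\Gamma$ can never return, so $P_v^C \cdot e_\Gamma = P_v^A$; and for $w \in \bar{\Gamma}_0$, the projective $P_w^C$ is supported entirely on $\bar{\Gamma}_0$ and coincides with $P_w^B$ under the surjection $C \twoheadrightarrow B$ obtained by killing the two-sided ideal $\begin{pmatrix} A & M \\ 0 & 0 \end{pmatrix}$.

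The central step is to show that the exact functor $(-)\cdot e_\Gamma \colon \mod C \to \mod A$ carries minimal $C$-projective covers to minimal $A$-projective covers. This reduces to the identity $J_C \cdot e_\Gamma = \begin{pmatrix} J_A & 0 \\ 0 & 0 \end{pmatrix}$, which gives $(X J_C)\cdot e_\Gamma = (X e_\Gamma)\cdot J_A$ for every $X$ in $\mod C$; hence the $\Gamma$-component of $\topp(X)$ in $\mod C$ coincides with $\topp(X e_\Gamma)$ in $\mod A$. Iterating along a minimal $C$-projective resolution of $N$ yields
\[
(\Omega_C^i N)\cdot e_\Gamma \;=\; \Omega_A^i(N_1), \qquad N_1 := N e_\Gamma,
\]
for all $i \geq 0$.

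Assume now $\pd_C(N) = d < \infty$. Since $(-)\cdot e_\Gamma$ sends $C$-projectives to $A$-projectives, the display first yields $d_A := \pd_A(N_1) \leq d$, so $d_A \leq \fidim(A)$. Because $\Omega_A^{d_A+1}(N_1) = 0$, the syzygy $\Omega_C^{d_A+1}(N)$ has trivial $\Gamma$-part and therefore lives in $\mod B$ via $C \twoheadrightarrow B$. The $C$-projective cover of any such module is identical to its $B$-projective cover (thanks to $P_w^C = P_w^B$ for $w \in \bar{\Gamma}_0$), so a straightforward induction gives $\pd_C(\Omega_C^{d_A+1} N) = \pd_B(\Omega_C^{d_A+1} N) \leq \fidim(B)$. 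The dimension shift then delivers
\[
\pd_C(N) \;=\; (d_A + 1) + \pd_C(\Omega_C^{d_A+1} N) \;\leq\; \fidim(A) + \fidim(B) + 1,
\]
and the theorem follows. The main technical obstacle is the minimality claim, namely that the $\Gamma$-restriction of a minimal $C$-projective cover is again minimal over $A$; once that is handled through the top computation above, the rest is the formal dimension-shift bookkeeping together with the two reductions (to $\mod A$ via $e_\Gamma$ and to $\mod B$ via $C \twoheadrightarrow B$) recorded in the previous paragraphs.
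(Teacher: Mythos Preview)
Your argument establishes only a bound on $\fin(C)$, not on $\fidim(C)=\phi\dim(C)$. Throughout the paper $\fidim$ denotes the Igusa--Todorov $\phi$-dimension, which in general strictly exceeds the finitistic dimension (see the chain $\fin\leq\fidim\leq\psidim\leq\gl$ in~(\ref{desigualdades}) and, e.g., Example~\ref{infinito}). You assume $\pd_C(N)<\infty$ and then bound $\pd_C(N)$; this controls only those $N$ of finite projective dimension and says nothing about $\phi_C(N)$ for modules of infinite projective dimension, which is exactly where the $\phi$-function carries new information. So the sentence ``and the theorem follows'' is not justified.

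The structural observations you make are correct and are part of the real proof: the functor $(-)e_\Gamma$ is exact, sends $C$-projectives to $A$-projectives, preserves tops (your radical computation $J_C e_\Gamma = e_\Gamma J_C e_\Gamma$ is right), hence $(\Omega_C^i N)e_\Gamma \cong \Omega_A^i(N e_\Gamma)$; and $\mod B$ embeds in $\mod C$ with $\Omega_C=\Omega_B$ on that image. But notice that none of this uses the third hypothesis $\beta\alpha=0$, and indeed the purely finitistic inequality you obtain holds for an arbitrary triangular matrix algebra. The extra hypothesis is there precisely to push the argument from $\fin$ to $\phi\dim$: it forces the bimodule $e_\Gamma C e_{\bar\Gamma}$ to satisfy $J_A\cdot(e_\Gamma C e_{\bar\Gamma})=0$, which gives a much tighter description of how $\Omega_C$ interacts with the decomposition and lets one compare $\langle\add N\rangle\subset K_0(C)$ with subgroups of $K_0(A)$ and $K_0(B)$ under $\bar\Omega$. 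The missing idea is to work at the level of $K_0$ and ranks of $\bar\Omega^i\langle\add N\rangle$ for \emph{arbitrary} $N$, rather than chasing a single module of finite projective dimension along its resolution.
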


There is a more general version of this result. See Theorem 3.3 of \cite{BM3}.\\

In \cite{M}, it is shown that some Gorenstein algebras are of $\Omega^{\infty}$-infinite representation type (see Example 4.1). However, these algebras have finite $\phi$-dimension since they are Gorenstein (see Theorem 4.7 of \cite{LM}). We now give an example of a non-Gorenstein algebra of $\Omega^{\infty}$-infinite representation type, but with finite $\phi$-dimension.  

\begin{ej}(Example 53 of \cite{BM})\label{finito}

Let $\Bbbk$ be an infinite field with characteristic different to $2$. Consider the algebra $A = \frac{\Bbbk Q}{I}$, where $Q$ is the quiver below:

$$ Q = \xymatrix{ 1 \ar@/_{1pc}/[r]^{\alpha_1} \ar@/_{2pc}/[r]^{\alpha_2} & 2 \ar@/_{1pc}/[l]^{\beta_1} \ar@/_{2pc}/[l]^{\beta_2} & 3 \ar[l]^{\gamma} \ar@(ur,dr)^{\delta}}$$

and $$I = \langle \beta_1\alpha_1-2\beta_2\alpha_2,\ \alpha_1\beta_1 - \alpha_2\beta_2, \ \delta^2 , \ \delta\gamma , \ \gamma\beta_1, \gamma\beta_2 , \ \alpha_i \beta_j, \ \beta_i \alpha_j \mbox{ with } \ i,j \in \{1,2\} \mbox{ and } i \not = j  \rangle.$$

It is left to the reader to compute that $A$ is of $\Omega^{\infty}$-infinite representation type.
It can be seen that the injective module $I(3)$ has infinite projective dimension, hence $A$ is not a Gorenstein algebra. 
Finally, by Theorem \ref{triangular} we conclude that $\fidim(A) = 1$.  

\end{ej}

In \cite{LM}, is proved that $\Omega^n$-finite representation type algebras has finite $\phi$-dimension. Therefore if we want to find an example of an algebra with infinite $\phi$-dimension, it must be of $\Omega^{\infty}$-infinite representation type.

The following example shows a radical cube zero algebra with infinite $\phi$-dimension. It is a counterexample of the conjecture stated in \cite{FLM}. However, its finitistic dimension is finite (see \cite{Z2}). Theorem \ref{derivadamente eq} implies that derived equivalences preserve infinite $\phi$-dimensions. Using this result, we can construct other examples with infinite $\phi$-dimension. The example also shows that the subcategory $\Omega^{\infty}(\mod A)$ can be trivial, although the algebra $A$ is of $\Omega^{\infty}$-infinite representation type.

\begin{ej}(Example 54 of \cite{BM})\label{infinito} Let $A = \frac{\Bbbk Q}{I}$ be an algebra where $Q$ is 

$$\xymatrix{ 1 \ar@/^8mm/[rrr]^{\bar{\alpha}_1} \ar@/^2mm/[rrr]^{\alpha_1} \ar@/_2mm/[rrr]_{\beta_1} \ar@/_8mm/[rrr]_{\bar{\beta_1}} & &  & 2 \ar@/^8mm/[ddd]^{\bar{\alpha}_2} \ar@/^2mm/[ddd]^{\alpha_2} \ar@/_2mm/[ddd]_{\beta_2} \ar@/_8mm/[ddd]_{\bar{\beta_2}} \\ & &  &\\& & & & \\ 4 \ar@/^8mm/[uuu]^{\bar{\alpha}_4} \ar@/^2mm/[uuu]^{\alpha_4} \ar@/_2mm/[uuu]_{\beta_4} \ar@/_8mm/[uuu]_{\bar{\beta_4}} &  & &  3 \ar@/^8mm/[lll]^{\bar{\alpha}_3} \ar@/^2mm/[lll]^{\alpha_3} \ar@/_2mm/[lll]_{\beta_3} \ar@/_8mm/[lll]_{\bar{\beta_3}}}$$
and $I = \langle \alpha_i\alpha_{i+1}-\bar{\alpha}_i \bar{\alpha}_{i+1},\ \beta_i \beta_{i+1}-\bar{\beta}_i \bar{\beta}_{i+1},\ \bar{\alpha}_i \alpha_{i+1},\ \alpha_i \bar{\alpha}_{i+1},\ \bar{\beta}_i \beta_{i+1},\ \beta_i \bar{\beta}_{i+1},\text{ for } i \in \mathbb{Z}_4,\ J^3 \rangle$.
It is clear that $\pd(S_i) = \infty$ for every $i \in \mathbb{Z}_4$.
Let $M^{\alpha_1}_n$ and $M^{\beta_1}_n$, with $n \in \mathbb{Z}^+$, be the indecomposable $A$-modules defined by

\[
M^{\alpha_1}_n =   \xymatrix{ & \Bbbk^n \ar@/^8mm/[rrr]^{i_2} \ar@/^2mm/[rrr]^{i_3} \ar@/_2mm/[rrr]_{i_1} \ar@/_8mm/[rrr]_{i_4} & &  & \Bbbk^{3n+1} \ar@/^8mm/[ddd]^{0} \ar@/^2mm/[ddd]^{0} \ar@/_2mm/[ddd]_{0} \ar@/_8mm/[ddd]_{0} \\ & & &  & \\ & & & & & \\ & 0 \ar@/^8mm/[uuu]^{0} \ar@/^2mm/[uuu]^{0} \ar@/_2mm/[uuu]_{0} \ar@/_8mm/[uuu]_{0} &  & &  0 \ar@/^8mm/[lll]^{0} \ar@/^2mm/[lll]^{0} \ar@/_2mm/[lll]_{0} \ar@/_8mm/[lll]_0}
\]

\[
 M^{\beta_1}_n = \xymatrix{ \Bbbk^n \ar@/^8mm/[rrr]^{i_4} \ar@/^2mm/[rrr]^{i_1} \ar@/_2mm/[rrr]_{i_3} \ar@/_8mm/[rrr]_{i_2} & &  & \Bbbk^{3n+1} \ar@/^8mm/[ddd]^{0} \ar@/^2mm/[ddd]^{0} \ar@/_2mm/[ddd]_{0} \ar@/_8mm/[ddd]_{0} \\ & &  &\\& & & & \\ 0 \ar@/^8mm/[uuu]^{0} \ar@/^2mm/[uuu]^{0} \ar@/_2mm/[uuu]_{0} \ar@/_8mm/[uuu]_{0} &  & &  0 \ar@/^8mm/[lll]^{0} \ar@/^2mm/[lll]^{0} \ar@/_2mm/[lll]_{0} \ar@/_8mm/[lll]_0}
\]
where the linear transformations $i_m: \Bbbk^n \rightarrow \Bbbk^{3n+1}$, with $m \in \{1,2,3,4\}$, verifies:

\begin{itemize}

\item  $i_1(e_j) = f_j\ \forall j \in \{1, \ldots n\}$.

\item  $i_2(e_j) =f_{n+j}\ \forall j \in \{1, \ldots n\}$.

\item  $i_3(e_j) =f_{n+j+1}\ \forall j \in \{1, \ldots n\}$.

\item  $i_4(e_j) = f_{2n+j + 1}\ \forall j \in \{1, \ldots n\}$.

\end{itemize}

where $\{e_1 \ldots e_n\}$ and $\{f_1,\ldots, f_{3n+1}\}$ are the canonical bases of $\Bbbk^n$ and $\Bbbk^{3n+1}$ respectively.

In an analogous way we define $M^{\alpha_2}_n, M^{\alpha_3}_n, M^{\alpha_4}_n$ and $M^{\beta_2}_n, M^{\beta_3}_n, M^{\beta_4}_n$. 
%
%
%
%
%
%
%
%
%
%

We have $\phi(M^{\alpha_i}_n \oplus M^{\beta_i}_n) = n-1$ for any $n \geq 2$, then we conclude that $\fidim(A) = \infty$.

\end{ej}

\section{The symmetry problem}

\noindent We recall from \ref{desigualdades}, the homological inequalities:

\begin{equation}
\fin (A) \leq \fidim (A) \leq \psidim (A) \leq \gl (A) 
\end{equation}

Homological dimensions can be defined with reference to either left or right modules. Sometimes, these dimensions coincide. That is, the homological measure is left-right symmetric. This is the case with the global dimension of a noetherian ring (see \cite{A}). Also, the dominant dimension of a finite dimensional algebra is symmetric (see \cite{Mu}). But, it is not true for the finitistic dimension (see \cite{GKK}, \cite{JL}). In fact, in a recent paper, Cummings proves that (Theorem 3.4 of \cite{C})``the finitistic dimension conjecture holds for all finite dimensional algebras if and only if, for all finite dimensional algebras, the finitistic dimension of an algebra being finite implies that the finitistic dimension of its opposite algebra is also finite". So, it is natural to try to give an answer to the following question. When do the right and the left Igusa\ -Todorov dimensions are equal for an Artin algebra? That is, for a given Artin algebra $A$: 
$$\fidim (A) \overset{?}{=} \fidim (A^{op})$$
$$\psidim (A) \overset{?}{=} \psidim (A^{op})$$

\noindent The next result shows a large family of algebras where their left and right dimensions coincide.

\begin{coro}(Corollary 4.9 of \cite{LM})
If $A$ is a  Gorenstein algebra, then: 
$$\fidim (A) = \fidim (A^{op}) = \psidim (A) = \psidim (A^{op}).$$ 
\end{coro}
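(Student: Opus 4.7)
The plan is to reduce this corollary entirely to Theorem \ref{phi algGorenstein}, exploiting the intrinsic left-right symmetry of the Gorenstein condition.

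First I would recall the precise definition of an $m$-Gorenstein algebra given just before Theorem \ref{phi algGorenstein}: $A$ is $m$-Gorenstein when $\id(A_A) = m < \infty$ and $\id(_AA) = m < \infty$. The crucial observation is that this definition is manifestly symmetric in $A$ and $A^{op}$: right $A$-modules correspond bijectively to left $A^{op}$-modules (and vice versa) under the usual duality, so $\id(A^{op}_{A^{op}}) = \id(_AA) = m$ and $\id(_{A^{op}}A^{op}) = \id(A_A) = m$. Hence $A^{op}$ is itself an $m$-Gorenstein algebra with the same value of $m$.

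Next I would apply Theorem \ref{phi algGorenstein} twice. Applied to $A$, it gives
$$\fin(A) = \fidim(A) = \psidim(A) = m.$$
Applied to $A^{op}$, which we have just verified is $m$-Gorenstein, it yields
$$\fin(A^{op}) = \fidim(A^{op}) = \psidim(A^{op}) = m.$$
Combining the two chains of equalities produces
$$\fidim(A) = \fidim(A^{op}) = \psidim(A) = \psidim(A^{op}) = m,$$
which is exactly the statement of the corollary.

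There is no real obstacle in this argument; the only point requiring any care is the symmetry check that $A^{op}$ inherits the $m$-Gorenstein property, and this is immediate from the definition since it already imposes finiteness of the injective dimension on both sides simultaneously. All the genuine work is hidden inside the previously established Theorem \ref{phi algGorenstein}, which supplies the nontrivial equalities between the finitistic, $\phi$- and $\psi$-dimensions.
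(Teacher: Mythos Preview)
Your argument is correct and is precisely the intended one: the paper does not spell out a proof here but simply cites \cite{LM}, and the symmetry of the $m$-Gorenstein condition together with two applications of Theorem~\ref{phi algGorenstein} is exactly how the corollary follows. The only minor quibble is terminological---the identification of right $A$-modules with left $A^{op}$-modules is the tautological one (same underlying abelian group, action written on the other side) rather than the $\Bbbk$-duality $D$---but this does not affect the validity of your symmetry check.
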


\noindent But in \cite{M*} was exhibited the example below, where the left and the right Igusa-Todorov $\psi$-dimensions are different.

\begin{ej} Consider the algebra $A = \frac{\Bbbk Q}{J^2}$, where $Q$ is the following quiver: \vspace{.1cm}
$$Q = \hspace{.5cm} \xymatrix{ 1 \ar@(l,u) \ar[r]& 2 \ar[r] & 3 \ar[r] & \cdots \ar[r] &  n}.$$

\noindent Therefore $\psi \dim (A) = 2n-3$ and $\psi \dim (A^{op}) = \fidim (A^{op}) = \fidim (A) = n-1$. Then for $n>2$, $\psi \dim (A^{op}) \neq \psi \dim (A)$.

\end{ej}

\noindent Regarding the Igusa-Todorov $\phi$-function, was proved in \cite{LMM}, \cite{BMR}, the following results.

\begin{teo}(Theorem 4.21 of \cite{LMM})\label{Rad^2izq=der}
If $A$ is a radical square zero algebra then $\fidim (A) = \fidim(A^{op})$.\\
\end{teo}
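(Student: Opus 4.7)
The proof naturally splits into two cases. If $A$ is selfinjective, then $A^{op}$ is selfinjective as well (selfinjectivity is a left-right symmetric property for Artin algebras), and Theorem \ref{selfinjective} gives $\fidim(A)=\fidim(A^{op})=0$. Assume from now on that $A$ is not selfinjective, so neither is $A^{op}$. By Proposition \ref{toto}(2),
$$\fidim(A)=\phi_A\Big(\bigoplus_{S\in\mathcal{S}_D(A)}S\Big)+1,\qquad \fidim(A^{op})=\phi_{A^{op}}\Big(\bigoplus_{S\in\mathcal{S}_D(A^{op})}S\Big)+1,$$
so it suffices to show that the two $\phi$-values coincide. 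Since a simple $A$-module $S_v$ is projective precisely when $v$ is a sink of $Q$ (equivalently, a source of $Q^{op}$, so $S_v$ is injective as an $A^{op}$-module), one has $\mathcal{S}_P(A)=\mathcal{S}_I(A^{op})$ and $\mathcal{S}_I(A)=\mathcal{S}_P(A^{op})$, yielding $\mathcal{S}_D(A)=\mathcal{S}_D(A^{op})=:\mathcal{S}_D$.

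Next I would verify that the subgroup $U=\langle [S_v] : v\in\mathcal{S}_D\rangle$ of $K_0(A)$ is $\bar{\Omega}_A$-invariant. Because $A$ has radical square zero, $\Omega_A(S_v)=\soc(P(S_v))=\bigoplus_{\alpha\colon v\to w}S_w$ is semisimple; moreover, for $v\in\mathcal{S}_D$ every target $w$ of such an arrow is automatically a non-source of $Q$, hence either a sink (in which case $[S_w]=0$ in $K_0(A)$) or a vertex in $\mathcal{S}_D$. Thus $\bar{\Omega}_A(U)\subseteq U$, and the same reasoning applied to $Q^{op}$ shows that $U$ is also $\bar{\Omega}_{A^{op}}$-invariant inside $K_0(A^{op})$. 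Writing $M_A$ and $M_{A^{op}}$ for the integer matrices of these restrictions in the common basis $\{[S_v]\}_{v\in\mathcal{S}_D}$, a direct count gives $(M_A)_{w,v}=\#\{\alpha\in Q_1 : \start(\alpha)=v,\,\target(\alpha)=w\}$ and $(M_{A^{op}})_{w,v}=\#\{\alpha\in Q_1 : \start(\alpha)=w,\,\target(\alpha)=v\}$, so $M_{A^{op}}=M_A^{t}$.

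To finish, by Lemma \ref{Fitting} the value $\phi_A(\bigoplus_{v\in\mathcal{S}_D}S_v)=\eta_{\bar{\Omega}_A}(U)$ is the least $l$ for which the surjection $M_A^{l+s}(U)\twoheadrightarrow M_A^{l+s+1}(U)$ is an isomorphism for every $s\geq 0$; since these images are submodules of the f.g.\ free $\mathbb{Z}$-module $U$, they are themselves free abelian, and a surjection between free abelian groups of equal $\mathbb{Z}$-rank is an isomorphism. Hence $\eta_{M_A}(U)$ equals the least $l$ with $\rk(M_A^{l+s})=\rk(M_A^{l+s+1})$ for all $s\geq 0$. Because the rank of an integer matrix equals that of its transpose, one gets $\eta_{M_A}(U)=\eta_{M_A^{t}}(U)=\eta_{M_{A^{op}}}(U)$, so that the two $\phi$-values agree and $\fidim(A)=\fidim(A^{op})$. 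The main obstacle is the invariance step: it is precisely because $\mathcal{S}_D$ excludes both sources and sinks that one can simultaneously quotient out the $A$-projective simples and the $A^{op}$-projective simples, making $M_{A^{op}}$ an honest transpose of $M_A$; working instead with the full set of non-projective simples would place the two matrices on different index sets related by an asymmetric sub/quotient operation, and the clean transpose argument would fail.
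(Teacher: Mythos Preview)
Your argument is correct. The survey does not reproduce a proof of this statement (it only cites Theorem~4.21 of \cite{LMM}), so there is no in-paper proof to compare against; but the route you take is the natural one and is essentially the argument behind the cited result. You use Proposition~\ref{toto}(2) to reduce $\fidim$ on each side to $\phi$ of $\bigoplus_{\mathcal S_D}S$, observe that $\mathcal S_D$ is the same for $A$ and $A^{op}$ because sinks and sources are exchanged, check that the span $U$ of these simples is $\bar\Omega$-invariant on both sides so that the restrictions are encoded by integer matrices that are transposes of one another, and finish with the fact that $\eta_{\bar\Omega}(U)$ depends only on the sequence of ranks $\rk(M^{l})$, which is transpose-invariant. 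The one place to be careful---and you handle it---is that projective simples for $A$ (sinks) and for $A^{op}$ (sources) are different, so only after restricting to $\mathcal S_D$ can one write the two syzygy operators as honest transposes on a common basis; your closing remark makes exactly this point.
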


\noindent In \cite{BMR} was given a generalization of the previous result.

\begin{teo} (Theorem 3 of \cite{BMR}) \label{Truncada2izq=der}
Let $A = \frac{\Bbbk Q}{J^k}$ and $A^{op}= \frac{\Bbbk Q^{op}}{(J^{op})^k}$ be truncated path algebras, then
$$\fidim (A) = \fidim (A^{op}).$$
\end{teo}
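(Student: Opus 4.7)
My approach is to reduce the symmetry question to the radical square zero case, where it is already established by Theorem~\ref{Rad^2izq=der}. The bridge consists of two results appearing earlier in the excerpt: Theorem~\ref{phi maximas}, which embeds every truncated path algebra into one whose quiver has every vertex in its heart, and Theorem~\ref{phi sin pozos ni fuentes}, which gives the closed formula $\fidim(A) = f_k(\fidim(\Bbbk Q/J^2))$ in that situation (and in the non-selfinjective case).

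First I would dispose of the selfinjective case. By Proposition~\ref{autoinyectiva}, a truncated path algebra is selfinjective iff its underlying quiver is an oriented cycle $C^n$, and that property is preserved by $(-)^{op}$; both $\fidim(A)$ and $\fidim(A^{op})$ then vanish, so symmetry holds trivially. Next, I would apply Theorem~\ref{phi maximas} to construct $B = \Bbbk\bar{Q}/J^k$ from $A$ by adding a loop at each source and each sink of $Q$. The crucial observation is that this construction commutes with $(-)^{op}$: loops at sinks of $Q$ become loops at sources of $Q^{op}$ and vice versa, so $\overline{Q^{op}} = (\bar{Q})^{op}$, and consequently the algebra associated to $A^{op}$ by the same process is precisely $B^{op}$. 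Now Theorem~\ref{phi sin pozos ni fuentes} gives $\fidim(B) = f_k(\fidim(\Bbbk\bar Q/J^2))$ and $\fidim(B^{op}) = f_k(\fidim(\Bbbk\bar Q^{op}/J^2))$, and since $\Bbbk\bar Q/J^2$ and its opposite have the same $\phi$-dimension by Theorem~\ref{Rad^2izq=der}, we obtain $\fidim(B) = \fidim(B^{op})$.

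The main obstacle is promoting the inequalities $\fidim(A) \leq \fidim(B)$ and $\fidim(A^{op}) \leq \fidim(B^{op})$ supplied by Theorem~\ref{phi maximas} to genuine equalities; only then can the symmetry of $B$ be transported back to $A$. My plan is to examine what happens when a single loop is added at a sink (or, dually, a source) $v$: the module category gains only new uniserial modules supported at $v$, whose syzygies are controlled and whose contribution to the $\phi$-dimension can be tracked via Proposition~\ref{it1} and Proposition~\ref{invariante}. I would try to show by induction on $|\mathrm{sinks}(Q)| + |\mathrm{sources}(Q)|$ that removing such a loop preserves $\fidim$; if this holds at each step, then $\fidim(A) = \fidim(B) = \fidim(B^{op}) = \fidim(A^{op})$. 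If the direct inductive argument stalls, the fallback is to inspect the proof of Theorem~\ref{phi sin pozos ni fuentes} and extract explicit witness modules for $\fidim(B)$ that are actually supported on $Q$ and therefore lie in $\mod A$, yielding the matching lower bound $\fidim(A) \geq \fidim(B)$ by a different route.
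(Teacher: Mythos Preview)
The survey itself does not give a proof of this theorem; it only cites Theorem~3 of \cite{BMR}. Note the numbering: in \cite{BMR} the symmetry result is Theorem~3, while the two tools you build your argument on are Theorems~4 and~5 there (stated here as Theorems~\ref{phi maximas} and~\ref{phi sin pozos ni fuentes}). So the original proof almost certainly proceeds differently, presumably via Proposition~\ref{phitruncadas} (Corollary~1 of \cite{BMR}) and a direct comparison of the syzygy behaviour of the right ideals $\rho A$ with the left ideals $A\rho$ --- the modules $M_v^l(A)$ and $M_v^l(A^{op})$ have explicitly computable syzygies in a truncated path algebra, and the combinatorics is visibly symmetric under reversing arrows. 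Your route through $f_k$ and the radical-square-zero case is more conceptual, but it is logically downstream of the result you are trying to prove, and you should check that Theorems~4 and~5 of \cite{BMR} do not themselves invoke Theorem~3.

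More seriously, the gap you flag yourself is a real one and is not closed by what you have written. Theorem~\ref{phi maximas} gives only $\fidim(A)\le\fidim(B)$, and neither of your proposed remedies is an argument. Adding a loop at a sink $v$ genuinely changes the module category: every indecomposable $B$-module supported at $v$ acquires an infinite projective dimension, and the new uniserials at $v$ interact with modules coming from $A$ through their syzygies (e.g.\ $\Omega_B(S_w)$ now contains $S_v$ summands for every arrow $w\to v$, and these $S_v$'s are no longer projective). It is not clear that Propositions~\ref{it1} and~\ref{invariante} alone control this; an induction on the number of added loops would need a precise statement about how $\phi$ of the generating module $\bigoplus_\rho \rho A$ changes, and you have not formulated one. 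Your fallback --- mining the proof of Theorem~\ref{phi sin pozos ni fuentes} for witnesses living in $\mod A$ --- is more promising, but at that point you are essentially reproving the result from scratch rather than reducing to it. As it stands, the proposal is a plausible outline with its central step missing.
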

\vspace{0.3cm}

However, the following example from \cite{BM3} shows that the Igusa-Todorov $\phi$-function is not symmetric in general.

\begin{ej}(\cite{BM3}) Consider the finite dimensional algebra $C_{p,q} =\frac{\Bbbk Q}{I_{p,q}}$ where $Q$ is the following quiver

$$\xymatrix{ &&&&&& a_2 \ar@<.5ex>[dl]^{\alpha_2} \ar@<-.5ex>[dl]_{\alpha'_2} && \\&&&&& a_3 \ar@<.5ex>[dr]^{\alpha_3} \ar@<-.5ex>[dr]_{\alpha'_3} && a_1 \ar@<.5ex>[ul]^{\alpha_1} \ar@<-.5ex>[ul]_{\alpha'_1}& \\ c_{m+1} \ar[r]^{\gamma_{m+1}} &  c_m \ar[r]^{\gamma_m}&c_{m-1}\ar[r]^{\gamma_{m-1}}& \ldots \ar[r]^{\gamma_3}& c_2 \ar[r]^{\gamma_2}& c_1 \ar[r]^{\gamma_1}& c_0\ar@<.5ex>[ur]^{\alpha_0} \ar@<-.5ex>[ur]_{\alpha'_0} \ar@<.5ex>[dl]^{\beta_0} \ar@<-.5ex>[dl]_{\beta'_0} &&  \\&&&&& b_1 \ar@<.5ex>[dr]^{\beta_1} \ar@<-.5ex>[dr]_{\beta'_1} && b_3 \ar@<.5ex>[ul]^{\beta_3} \ar@<-.5ex>[ul]_{\beta'_3}& \\&&&&&& b_2 \ar@<.5ex>[ur]^{\beta_2} \ar@<-.5ex>[ur]_{\beta'_2} && }$$


and $I_{p,q}$ is generated by

\begin{itemize}
\item $\gamma_{i+1}\gamma_{i}\ \forall i=1,\ldots,m$,

\item $\alpha_{i}\alpha_{i+1}, \beta_{i}\beta_{i+1}, \alpha'_{i}\alpha'_{i+1}, \beta'_{i}\beta'_{i+1}\ \forall i \in \mathbb{Z}_4$,

\item $\alpha'_{i}\alpha_{i+1} - \alpha_{i}\alpha'_{i+1}, \beta'_{i}\beta_{i+1}-\beta_{i}\beta'_{i+1}\ \forall i \in \mathbb{Z}_4$,

\item $ \alpha_3\beta_0 , \alpha'_3\beta'_0, \beta_3\alpha_0, \beta'_3\alpha'_0$ and
\item $\alpha'_3\beta_0 -\alpha_3\beta'_0, \beta'_3\alpha_0-p\beta_3\alpha'_0, \gamma_1\alpha_0-q\gamma_1\alpha'_0, \gamma_1\beta'_0-\gamma_1\beta_0$ with $\mathbb{Q}[p,q] \cong \mathbb{Q}[x,y]$. 
\end{itemize}

Then $\phi \dim (C_{p,q}) = 5$ and $\fidim(C_{p,q}^{op}) \geq m-1$.

\end{ej}

\section{Generalizations}

This section presents generalizations and adaptations of the Igusa-Todorov functions to other contexts. In each subsection, the original notation used by the respective author was respected.

\subsection{Igusa-Todorov functions on comodules over semiperfect coalgebras}\ 

In this subsection, we recall the main result of \cite{HLM17}, where the authors adapted the Igusa-Todorov functions for right comodules over left semiperfect coalgebras. In particular, we recall that Theorem \ref{qcF}, which characterizes the quasi-co-Frobenius coalgebras with the Igusa-Todorov functions for comodules, is an analogous to Theorem \ref{selfinjective} (Theorem 5 of \cite{HL}), which classify selfinjective algebras via Igusa-Todorov functions.
Assume $C$ is a coalgebra over a field $\Bbbk$. In that case, we denote by $\mathcal {M}^C$ and $^C\!\!\mathcal {M}$ the categories of right and left comodules over $C$ respectively and by $\mathcal {M}^C_f$ and $^C\!\!\mathcal {M}_f$ the respective complete subcategories of finite dimensional comodules.

\begin{defi}  A coalgebra $C$ is 
\begin{itemize} 

\item  left (right) semiperfect if all indecomposable injective right (left) comodules are finite dimensional,

\item  left (right) quasi-co-Frobenius, shortly  left qcF, if every injective right (left) C-comodule
is projective.

\end{itemize}
\end{defi}

Let $C$ be a left semiperfect coalgebra and $K(C)$ be the free abelian group
generated by all symbols $[M]$ with $M \in \ \mathcal {M}^C_f$ under
the relations

\begin{enumerate}
  \item $[M]-[M']-[M'']$ if $M\cong M' \oplus M''$,
  \item $[I]$ if $I$ is injective.
\end{enumerate}

\noindent Then $K(C)$ is the free abelian group generated by all
isomorphism classes of indecomposable noninjective objects in
$\mathcal {M}^C_f$. Note that if $C$ is left semiperfect, the injective envelope of a finite dimensional comodule is also finite dimensional, so the cosyzygy of such a comodule will also be finite dimensional. Moreover, as $\Omega^{-1}$ respects direct sums
and sends injective comodules to 0, it gives rise to a group
homomorphism (which we also call $\Omega^{-1}$) $\Omega^{-1}: K(C) \rightarrow K(C)$.

For any $M \in \ \mathcal {M}^C_f$, let $\langle M \rangle$ denote the subgroup of $K(C)$ generated by all the symbols $[N]$, where $N$ is an indecomposable noninjective direct summand of $M$. Since the rank of $\Omega^{-1}(\langle M \rangle)$ is less or equal to the rank of $\langle M \rangle$, which is finite, there exists a non-negative integer $n$ such that the rank of $\Omega^{-n}(\langle M\rangle)$ is equal to the rank of $\Omega^{-i}(\langle M \rangle)$ for all $i \geq n$. Let $\varphi(M)$ denote the least such $n\in \mathbb N$.

\begin{defi}\label{ITcoalgebras}
The function defined in this way $\varphi: \mathcal {M}^C_f \to \mathbb N$ is called the Igusa-Todorov function (IT-function) for the left semiperfect coalgebra $C$, and its $\varphi$-dimension is $\varphi \dim (C) = \sup\left \{\varphi(M) \mbox{ with }M \in \mathcal {M}^C_f\right \}$. 
\end{defi}

The following is the main theorem of \cite{HLM17}. This result characterizes the qcF coalgebras via the Igusa-Todorov function, defined above.

\begin{teo}(Theorem 3.4 of \cite{HLM17})\label{qcF} A coalgebra $C$ is left qcF if and only if it is left semiperfect and $\varphi \dim (C) = 0$.

\end{teo}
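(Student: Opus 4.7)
The plan is to first reformulate the hypothesis: $\varphi\dim(C) = 0$ is equivalent to $\Omega^{-1}: K(C) \to K(C)$ being injective as a group endomorphism. Indeed, $\varphi(M) = 0$ asserts that the rank of $\Omega^{-i}\langle M \rangle$ agrees with that of $\langle M \rangle$ for every $i \geq 0$, i.e., $\Omega^{-1}$ is injective on each such subgroup; as $M$ ranges over $\mathcal{M}^C_f$ these subgroups exhaust the finitely generated subgroups of the free abelian group $K(C)$. The theorem then becomes a question about injectivity/bijectivity of $\Omega^{-1}$, in close analogy with Theorem \ref{selfinjective} for selfinjective artin algebras.

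For the direction $(\Rightarrow)$, assume $C$ is left qcF. First recall the classical fact that this forces $C$ to be left semiperfect: each indecomposable injective right $C$-comodule, being projective, is a projective cover of a simple and hence finite dimensional. Now, since the categories of injective and projective right $C$-comodules coincide, the cosyzygy descends to a self-equivalence of the stable category of $\mathcal{M}^C_f$ modulo injectives (which equals modulo projectives). Passing to $K(C)$ turns $\Omega^{-1}$ into a group automorphism, in particular injective, so $\varphi\dim(C) = 0$ by the reformulation.

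For the direction $(\Leftarrow)$, assume $C$ is left semiperfect with $\Omega^{-1}$ injective on $K(C)$. A first consequence is that every finite dimensional right $C$-comodule of finite injective dimension is already injective: if $\Omega^{-n}(M) = 0$ for some $n$, iterating injectivity of $\Omega^{-1}$ yields $[M] = 0$ in $K(C)$. To derive the qcF property I would argue by contradiction: suppose some indecomposable injective $E = I(S)$ is not projective. Semiperfectness supplies bijections between simples and indecomposable projectives (via top) and between simples and indecomposable injectives (via socle). Mimicking Theorem \ref{Nakayama} in the comodule setting, the assignment $T \mapsto \topp(I(T))$ becomes a permutation of simples exactly when qcF holds, so its failure at our $E$ should produce two distinct noninjective simples whose cosyzygies coincide in $K(C)$ modulo injective summands, giving a nonzero element of $\ker \Omega^{-1}$ and contradicting the injectivity obtained from the reformulation.

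The main obstacle is the $(\Leftarrow)$ direction, specifically extracting the kernel element explicitly from the existence of one nonprojective indecomposable injective. Since $[E] = 0$ holds trivially in $K(C)$, the relation cannot be read off from $E$ itself but must be gathered from the cosyzygies of carefully chosen noninjective modules (natural candidates are simples or second-layer subquotients of $E$) whose images collapse after stripping injective summands. I expect the proof to require a finite combinatorial analysis of the Loewy layers of the indecomposable injectives, in the spirit of the Nakayama permutation, translating the local failure of qcF at $E$ into a global rank collapse in $K(C)$.
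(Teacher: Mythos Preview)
The paper does not actually prove this theorem: it is a survey, and the statement is merely cited as Theorem 3.4 of \cite{HLM17} with no argument reproduced. There is therefore no ``paper's own proof'' against which to compare your proposal.

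Your outline is nonetheless the expected one, dualizing the strategy behind Theorem~\ref{selfinjective}. The reformulation of $\varphi\dim(C)=0$ as injectivity of $\Omega^{-1}$ on $K(C)$ is correct and is the right starting point. For $(\Rightarrow)$ your sketch is essentially complete once one cites the standard fact that left qcF implies left semiperfect; your one-line justification of this implication is a bit loose (it tacitly assumes indecomposable projective right comodules are finite dimensional), so in practice one quotes it rather than arguing it on the spot.

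For $(\Leftarrow)$ you correctly locate the difficulty and are candid that the argument is incomplete. The Nakayama-permutation mechanism you propose is indeed how the analogous result in \cite{HL} is proved for artinian rings: the failure of some indecomposable injective to be projective is converted into two distinct noninjective modules with the same cosyzygy class, contradicting injectivity of $\Omega^{-1}$. Your plan is on the right track, but as written it is a sketch rather than a proof; the concrete construction of the kernel element from the nonprojective injective still needs to be carried out.
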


\subsection{Igusa-Todorov functions on derived categories over Artin algebras}\ 

In this subsection, we recall Denming Xu's article \cite{Xu}, which generalizes the Igusa-Todorov functions for derived categories.
Through this subsection $A$, denotes a finite dimensional algebra.
We recall that $\mathcal{D}^b(\mod A)$ is a Krull-Schmidt category \cite{BS}, a central key for lifting the Igusa-Todorov functions to derived categories.

\begin{defi}(\cite{Xu})
Let $K_0 (\mathcal{RK}^{\leq 0}(\mathcal{P}_A))$ be the abelian group generated by all symbols $[P^{\bullet}]$, where $P^{\bullet} \in$ 
$\mathcal{RK}^{\leq 0}(\mathcal{P}_A)$, modulo the relations

\begin{itemize}

\item $[P^{\bullet}_1]+[P^{\bullet}_2] = [P^{\bullet}]$ if $P^{\bullet} \simeq P_1^{\bullet}\oplus P_2^{\bullet}$

\item $[P^{\bullet}] = 0$ if $P^{\bullet}$ is a stalk complex concentrated in degree zero.

\end{itemize}

\end{defi}

$K_0 (\mathcal{RK}^{\leq 0}(\mathcal{P}_A))$ is the free abelian group generated by the isomorphism classes of
all indecomposable objects in $\mathcal{RK}^{\leq 0}(\mathcal{P}_A)$ except stalk complexes concentrated in degree zero.
For any complex $P^{\bullet} \in \mathcal{RK}^{\leq 0}(\mathcal{P}_A)$ we define $\Omega (P^{\bullet}) = \tau_{\leq -1}(P^{\bullet}) [-1]$. Note that $\Omega$ commutes with direct sums and $\Omega(P^{\bullet}) = 0$ if $P^{\bullet}$ is a stalk complex concentrated in degree zero. It induces a homomorphism $\bar{\Omega}: K_0 (\mathcal{RK}^{\leq 0}(\mathcal{P}_A)) \rightarrow K_0 (\mathcal{RK}^{\leq 0}(\mathcal{P}_A))$. Consider $P^{\bullet} \in \mathcal{RK}^{\leq 0}(\mathcal{P}_A))$, then we denote by $\langle \add P^{\bullet} \rangle$ the subgroup of $ K_0 (\mathcal{RK}^{\leq 0}(\mathcal{P}_A))$  generated by the isomorphism classes of all indecomposable direct summands of $P^{\bullet}$. By Lemma \ref{Fitting} we define 
$$\phi(P^{\bullet}) = \eta_{\bar{\Omega}}(\langle \add P^{\bullet} \rangle) $$
and
$$\psi (P^{\bullet}) = \phi(P^{\bullet}) + \sup\{\pd S^{\bullet}\text{: } \pd S^{\bullet}<\infty \text{ and } S^{\bullet} \text{ is a direct summand of } \bar{\Omega}^{\phi(P^{\bullet})}({P^{\bullet}}) \}$$

Finally, for any $X^{\bullet} \in \mathcal{D}^{b}(\mod A)$ such that $H^{j}(X^{\bullet}) = 0$ for all $j \geq 1$ there exists a unique $P^{\bullet}_{X^{\bullet}} \in \mathcal{RK}^{\leq 0}(\mathcal{P}_A)$ such that $P^{\bullet}_{X^{\bullet}}$ 
is quasi-isomorphic to  $X^{\bullet}$. So we define

$$\phi(X^{\bullet}) = \phi(P^{\bullet}_{X^{\bullet}} ) \text{ and } \psi(X^{\bullet}) = \psi(P^{\bullet}_{X^{\bullet}} )$$
 
Note that if $X^{\bullet}$ is a stalk complex concentrated in degree zero, then $\phi(X^{\bullet}) = \phi (X^0)$ and $\psi(X^{\bullet}) = \psi (X^0)$ where the second terms are the original functions defined by K. Igusa and G. Todorov, \cite{IT}.\\ 

The remark below lists the general version for derived categories of all the usual properties of the original Igusa-Todorov functions. See Proposition \ref{it1}. 
 
\begin{obs}(Lemma 2.2 and Lemma 2.3 of \cite{Xu}) Let $P^{\bullet}$ and $Q^{\bullet}$ be in $\mathcal{RK}^{\leq 0}(\mathcal{P}_A)$. Then we have the following.

\begin{enumerate}

\item If $\pd P^{\bullet}$ is finite, then $\phi (P^{\bullet}) = \psi(P^{\bullet} ) = \pd P^{\bullet}$.

\item If $P^{\bullet}$ is indecomposable and $\pd P^{\bullet}$ is infinite, then $\phi (P^{\bullet}) = \psi(P^{\bullet} )= 0$.

\item $\phi ( P^{\bullet} ) \leq \phi ( P^{\bullet} \oplus  Q^{\bullet} )$ and $\psi ( P^{\bullet} ) \leq \psi ( P^{\bullet} \oplus  Q^{\bullet} )$.

\item $\phi ( k P^{\bullet} ) = \phi ( P^{\bullet} )$ and $\psi ( k P^{\bullet} ) = \psi ( P^{\bullet} )$. 

\item Let $T^{\bullet}$ be a direct summand of $\Omega^n (  P^{\bullet} )$ with $n \leq \phi( P^{\bullet} )$. If $\pd T^{\bullet}  < \infty$, then $\pd T^{\bullet} + n \leq \psi( P^{\bullet} )$.

\end{enumerate}

\end{obs}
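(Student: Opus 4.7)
The plan is to transplant the classical Igusa--Todorov arguments underlying Propositions \ref{it1} and \ref{Huard2} into the category $\mathcal{RK}^{\leq 0}(\mathcal{P}_A)$, exploiting that $K_0(\mathcal{RK}^{\leq 0}(\mathcal{P}_A))$ is the free abelian group on isomorphism classes of indecomposable radical complexes (except stalks concentrated in degree zero, which are killed), and that Fitting's lemma (Lemma \ref{Fitting}) applies to the endomorphism $\bar{\Omega}$ of this group.

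First, items (3) and (4) are immediate from the definitions once one invokes the monotonicity clause of Fitting's lemma: from $\langle\add P^{\bullet}\rangle \subseteq \langle\add(P^{\bullet}\oplus Q^{\bullet})\rangle$ one gets $\eta_{\bar{\Omega}}(\langle\add P^{\bullet}\rangle) \leq \eta_{\bar{\Omega}}(\langle\add(P^{\bullet}\oplus Q^{\bullet})\rangle)$, giving the $\phi$-inequality, and the corresponding $\psi$-inequality follows by noting that a finite pd summand of $\Omega^{\phi(P^{\bullet})}(P^{\bullet})$ still qualifies after iterating $\Omega$ to match $\phi(P^{\bullet}\oplus Q^{\bullet})$. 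Item (4) is trivial since $\langle\add(kP^{\bullet})\rangle = \langle\add P^{\bullet}\rangle$ as subgroups of $K_0$.

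Next I turn to (1) and (2). For (1), if $\pd P^{\bullet} = d < \infty$, then up to isomorphism in $\mathcal{K}^{-}(\mathcal{P}_A)$ the complex $\Omega^d(P^{\bullet})$ is a stalk complex of projectives concentrated in degree zero (its class is hence zero in $K_0$), while the class $[\Omega^{d-1}(P^{\bullet})]$ is nonzero because the indecomposable summand of maximal pd contributes a nonzero term. Thus $\bar{\Omega}$ fails to be a monomorphism on $\bar{\Omega}^{d-1}(\langle\add P^{\bullet}\rangle)$ but acts on the zero group $\bar{\Omega}^{k}(\langle\add P^{\bullet}\rangle)=0$ for all $k\geq d$, forcing $\phi(P^{\bullet}) = d$; since the only direct summands of $\Omega^d(P^{\bullet})$ are stalks in degree zero (of projective dimension $0$), we conclude $\psi(P^{\bullet}) = d+0 = d$. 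For (2), if $P^{\bullet}$ is indecomposable of infinite projective dimension, then $\langle\add P^{\bullet}\rangle = \mathbb{Z}[P^{\bullet}]$ is cyclic and no power $\bar{\Omega}^n[P^{\bullet}]$ can vanish; hence $\bar{\Omega}$ already restricts to a monomorphism on $\langle\add P^{\bullet}\rangle$ at stage $0$, and no proper indecomposable summand exists to contribute to the supremum, giving $\phi(P^{\bullet}) = \psi(P^{\bullet}) = 0$.

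Item (5) is the main technical point. Set $\phi = \phi(P^{\bullet})$ and $\ell = \pd T^{\bullet}$; I split into cases. If $\ell + n \leq \phi$, then trivially $\ell + n \leq \phi \leq \psi(P^{\bullet})$. Otherwise $0 < \phi - n < \ell$, and since $\Omega$ is additive, $\Omega^{\phi-n}(T^{\bullet})$ is a direct summand of $\Omega^{\phi-n}(\Omega^n(P^{\bullet})) = \Omega^{\phi}(P^{\bullet})$, with finite projective dimension $\ell - (\phi-n) = \ell + n - \phi > 0$. Substituting into the definition of $\psi$ yields
\[
\psi(P^{\bullet}) = \phi + \sup\{\pd S^{\bullet} : \pd S^{\bullet} < \infty,\ S^{\bullet}\ \text{direct summand of}\ \Omega^{\phi}(P^{\bullet})\} \geq \phi + (\ell + n - \phi) = \ell + n.
\]

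The main obstacle is to justify the structural properties of $\Omega = \tau_{\leq -1}(-)[-1]$ used throughout: that it is additive on $\mathcal{RK}^{\leq 0}(\mathcal{P}_A)$ (so preserves direct summands), that it interacts compatibly with the Krull--Schmidt decomposition of $\mathcal{D}^b(\mod A)$ on radical complexes (used tacitly to pass to indecomposables in (1)), and that it strictly lowers projective dimension by one as long as the pd is positive and finite. These facts are implicit in the setup of Section~2 via Remark~\ref{Remark1} and Remark~\ref{Remark2}, but verifying them carefully is what makes the translation from $\mod A$ to $\mathcal{RK}^{\leq 0}(\mathcal{P}_A)$ go through.
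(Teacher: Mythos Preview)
The paper itself gives no proof of this remark; it is simply stated with a citation to \cite{Xu}. Your argument is the correct transplantation of the classical Igusa--Todorov proofs (Propositions~\ref{it1} and~\ref{Huard2}) to $\mathcal{RK}^{\leq 0}(\mathcal{P}_A)$, and it goes through because, as you note, $\Omega=\tau_{\leq -1}(-)[-1]$ is strictly additive at the level of complexes, satisfies $\Omega^a\circ\Omega^b=\Omega^{a+b}$ on the nose, and lowers $\pd$ by exactly one on complexes of positive finite $\pd$.

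One small correction in item~(5): in the second case you write ``$0<\phi-n<\ell$'', but the correct split is $0\leq\phi-n<\ell$. The boundary case $n=\phi$ is precisely the situation where $T^{\bullet}$ is already a summand of $\Omega^{\phi}(P^{\bullet})$ and the conclusion $\psi(P^{\bullet})\geq\phi+\ell=n+\ell$ follows directly from the definition of $\psi$, so the argument still works; only the inequality sign needs fixing. Note also that your $\psi$-inequality in item~(3) tacitly invokes item~(5) (applied to $P^{\bullet}\oplus Q^{\bullet}$ with $n=\phi(P^{\bullet})$), so logically item~(5) should be established first.
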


The following remark generalizes the item 5 of Proposition \ref{it1}. The proof given in \cite{HLM}, can be easily adapted.

\begin{obs}

If $P^{\bullet} \in \mathcal{RK}^{\leq 0}(\mathcal{P}_A)$, then:
\begin{enumerate}

\item $\phi(P^{\bullet}) \leq \phi(\Omega(P^{\bullet}))+1$.

\item $\psi(P^{\bullet}) \leq \psi(\Omega(P^{\bullet}))+1$.
\end{enumerate}
\end{obs}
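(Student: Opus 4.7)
My plan is to adapt the module-theoretic argument from \cite{HLM} to the radical-complex setting, with the main work going into keeping track of decompositions in $\mathcal{RK}^{\leq 0}(\mathcal{P}_A)$. The pivotal technical fact I would prove first is the inclusion
\begin{equation*}
\bar{\Omega}\bigl(\langle \add P^{\bullet}\rangle\bigr)\ \subseteq\ \langle \add \Omega(P^{\bullet})\rangle
\end{equation*}
of finitely generated subgroups of $K_0(\mathcal{RK}^{\leq 0}(\mathcal{P}_A))$. To check it, decompose $P^{\bullet}\simeq \bigoplus_i (P_i^{\bullet})^{\alpha_i}$ into indecomposable summands (valid already in $C(\mod A)$ by Remark \ref{Remark1}); since $\Omega = \tau_{\leq -1}(\,-\,)[-1]$ commutes with direct sums, $\Omega(P^{\bullet})\simeq \bigoplus_i \Omega(P_i^{\bullet})^{\alpha_i}$, and after further decomposing each $\Omega(P_i^{\bullet})\simeq \bigoplus_j Q_{ij}^{\bullet}$ into indecomposables every $Q_{ij}^{\bullet}$ appears as a summand of $\Omega(P^{\bullet})$. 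Consequently $\bar{\Omega}([P_i^{\bullet}]) = \sum_j [Q_{ij}^{\bullet}]$ lies in $\langle \add \Omega(P^{\bullet})\rangle$.

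For item (1), I would set $G = \langle \add P^{\bullet}\rangle$ and $H = \langle \add \Omega(P^{\bullet})\rangle$. The elementary inequality $\eta_{\bar{\Omega}}(G) \leq \eta_{\bar{\Omega}}(\bar{\Omega}(G)) + 1$ follows directly from the definition, using $\bar{\Omega}^{n}(\bar{\Omega}(G)) = \bar{\Omega}^{n+1}(G)$: if $\bar{\Omega}$ is a monomorphism on $\bar{\Omega}^{k+s}(\bar{\Omega}(G))$ for every $s\geq 0$, then the same holds on $\bar{\Omega}^{(k+1)+s}(G)$. Combining this with the inclusion above and the monotonicity clause of Lemma \ref{Fitting} yields
\begin{equation*}
\phi(P^{\bullet}) = \eta_{\bar{\Omega}}(G)\ \leq\ \eta_{\bar{\Omega}}(\bar{\Omega}(G))+1\ \leq\ \eta_{\bar{\Omega}}(H)+1 = \phi(\Omega(P^{\bullet}))+1.
\end{equation*}

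For item (2), I would write $p = \phi(P^{\bullet})$ and $q = \phi(\Omega(P^{\bullet}))$, so $p \leq q+1$ by (1), and split the argument into two cases. If $p = q+1$, then $\Omega^{p}(P^{\bullet}) = \Omega^{q}(\Omega(P^{\bullet}))$, so the suprema in the definitions of $\psi(P^{\bullet})$ and $\psi(\Omega(P^{\bullet}))$ range over the same set of summands and the equality $\psi(P^{\bullet}) = \psi(\Omega(P^{\bullet}))+1$ is immediate. If instead $p \leq q$, I would fix any direct summand $S^{\bullet}$ of $\Omega^{p}(P^{\bullet})$ with $\pd(S^{\bullet}) = s<\infty$; its syzygy $\Omega(S^{\bullet})$ is then a direct summand of $\Omega^{p+1}(P^{\bullet}) = \Omega^{p}(\Omega(P^{\bullet}))$ with $\pd(\Omega(S^{\bullet})) = \max(s-1,0)$, as one computes directly from $\Omega = \tau_{\leq -1}(\,-\,)[-1]$. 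Since $p\leq q$, item 5 of the remark preceding the statement applied to $\Omega(P^{\bullet})$ yields $\pd(\Omega(S^{\bullet})) + p \leq \psi(\Omega(P^{\bullet}))$, so $p+s \leq \psi(\Omega(P^{\bullet}))+1$ in both subcases $s=0$ and $s\geq 1$. Passing to the supremum over such $S^{\bullet}$ gives $\psi(P^{\bullet})\leq \psi(\Omega(P^{\bullet}))+1$.

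The main technical nuisance, and the only genuine departure from the proof in \cite{HLM}, will be making rigorous the passage between direct summands at the levels of $P^{\bullet}$ and $\Omega(P^{\bullet})$ and controlling the behaviour of $\pd$ under $\Omega$. Both points are handled by the explicit formula $\Omega(P^{\bullet}) = \tau_{\leq -1}(P^{\bullet})[-1]$ together with Remark \ref{Remark1}, which ensures that any isomorphism or splitting detected in $\mathcal{K}(\mod A)$ between radical complexes already occurs at the level of $C(\mod A)$.
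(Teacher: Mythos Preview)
Your proposal is correct and is exactly the approach the paper indicates: the paper does not give an explicit proof but simply says ``the proof given in \cite{HLM} can be easily adapted,'' and you have carried out precisely that adaptation, using the inclusion $\bar{\Omega}(\langle \add P^{\bullet}\rangle)\subseteq \langle \add \Omega(P^{\bullet})\rangle$ together with Fitting's monotonicity for item (1), and the case split on $p=q+1$ versus $p\leq q$ combined with item 5 of the preceding remark for item (2). The only additional care you correctly identify---tracking direct-sum decompositions of radical complexes via Remark \ref{Remark1} and the explicit formula $\Omega=\tau_{\leq -1}(\,-\,)[-1]$---is exactly what is needed to make the module-theoretic argument go through in $\mathcal{RK}^{\leq 0}(\mathcal{P}_A)$.
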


Finally, in this subsection, we recall the generalization of the inequality given in the main result of the famous article \cite{IT}.

\begin{teo} (Theorem 3.5 of \cite{Xu}) Let $\xymatrix{X^{\bullet} \ar[r]& Y^{\bullet} \ar[r] & Z^{\bullet} \ar[r] & X^{\bullet}[1]}$ be a distinguished triangle in $D^b(\mod A)$ such that $H_j (X^{\bullet}) = H_j (Y^{\bullet}) = 0$ for all $j \geq 1$. Suppose $\pd Z^{\bullet}  < \infty$, then
$$\pd Z^{\bullet}  \leq \psi (X^{\bullet}  \oplus Y^{\bullet} ) + 1.$$

\end{teo}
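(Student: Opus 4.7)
The plan is to transcribe the proof of Theorem \ref{cotadp} into the derived setting, replacing $\mod A$ by $\mathcal{RK}^{\leq 0}(\mathcal{P}_A)$ and short exact sequences by distinguished triangles. The hypotheses $H^j(X^{\bullet}) = H^j(Y^{\bullet}) = 0$ for $j\geq 1$ allow us to work with radical representatives of $X^{\bullet}$ and $Y^{\bullet}$ in $\mathcal{RK}^{\leq 0}(\mathcal{P}_A)$. Setting $r = \pd Z^{\bullet}$, observe that $\Omega^r Z^{\bullet}$ is a stalk complex concentrated in degree $0$, hence $[\Omega^r Z^{\bullet}] = 0$ in $K_0(\mathcal{RK}^{\leq 0}(\mathcal{P}_A))$, and the distinguished triangle forces $\bar{\Omega}^r([X^{\bullet}]) = \bar{\Omega}^r([Y^{\bullet}])$. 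Let $n = \min\{l : \bar{\Omega}^l([X^{\bullet}]) = \bar{\Omega}^l([Y^{\bullet}])\}$; then $n \leq r$ and, since both classes lie in $\langle \add(X^{\bullet} \oplus Y^{\bullet})\rangle$, also $n \leq \phi(X^{\bullet} \oplus Y^{\bullet})$ by definition of $\phi$.

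Next, I would apply $\Omega^n$ to the distinguished triangle, producing a triangle $\Omega^n X^{\bullet} \to \Omega^n Y^{\bullet} \to \Omega^n Z^{\bullet} \to \Omega^n X^{\bullet}[1]$ in $\mathcal{D}^b(\mod A)$. Since $\bar{\Omega}^n([X^{\bullet}]) = \bar{\Omega}^n([Y^{\bullet}])$ up to stalk complexes and $\mathcal{D}^b(\mod A)$ is Krull--Schmidt, after adding degree-zero projective stalks to both sides the triangle may be reorganised with a common summand $W$:
\[
\xymatrix{W\oplus P \ar[r]^{t} & W\oplus Q \ar[r]^{g} & \Omega^n Z^{\bullet} \ar[r] & (W\oplus P)[1]}
\]
where $P$, $Q$ are stalk projective complexes and $t$ has block form $t=\bigl(\begin{smallmatrix} f & h_1 \\ h_2 & h_3 \end{smallmatrix}\bigr)$ with $f \in \enn(W)$. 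Fitting's Lemma (Lemma \ref{Fitting}) applied to $f$ then yields a decomposition $W = Y' \oplus Z'$ with $f|_{Y'}$ an isomorphism and $f|_{Z'}$ nilpotent.

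To conclude, I would apply $\hom_{\mathcal{D}^b(\mod A)}(-, M^{\bullet})$ to this triangle and analyse the resulting long exact sequence exactly as in the proof of Theorem \ref{cotadp}: the connecting maps decompose along the Fitting decomposition into an isomorphism on the $Y'$-block and a nilpotent endomorphism on the $Z'$-block; the nilpotency, combined with $\pd \Omega^n Z^{\bullet}$ being finite, then forces the Ext groups involving $Z'$ to vanish in sufficiently high degree---so $\pd Z' < \infty$---and also yields $\pd \Omega^n Z^{\bullet} \leq \pd Z' + 1$. Since $Z'$ is a direct summand of $\Omega^n(X^{\bullet} \oplus Y^{\bullet})$ with $n \leq \phi(X^{\bullet} \oplus Y^{\bullet})$, item 5 of the preceding remark for complexes gives $\pd Z' + n \leq \psi(X^{\bullet} \oplus Y^{\bullet})$, whence
\[
\pd Z^{\bullet} \;=\; n + \pd \Omega^n Z^{\bullet} \;\leq\; n + \pd Z' + 1 \;\leq\; \psi(X^{\bullet} \oplus Y^{\bullet}) + 1.
\]

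The main obstacle will be the second paragraph: in the triangulated setting one does not have an honest short exact sequence of complexes, so the extraction of the common summand $W$ and the matrix form of $t$ must be justified using Krull--Schmidt in $\mathcal{D}^b(\mod A)$ together with the $K_0$-equality $\bar{\Omega}^n([X^{\bullet}]) = \bar{\Omega}^n([Y^{\bullet}])$. Once this reorganisation is secured, every subsequent step is a direct transcription of the classical computation.
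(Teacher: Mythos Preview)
The survey does not supply a proof of this statement; it is merely recalled from \cite{Xu} without argument, so there is nothing in the paper itself to compare your proposal against.

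That said, your strategy---transcribing the proof of Theorem~\ref{cotadp} into $\mathcal{RK}^{\leq 0}(\mathcal{P}_A)$---is the natural one and is in the spirit of the module-theoretic argument the paper does give. The one place that needs genuine care is precisely the step you single out: $\Omega = \tau_{\leq -1}(\cdot)[-1]$ is a truncation, not a triangulated functor, so ``applying $\Omega^n$ to the distinguished triangle'' does not literally produce a distinguished triangle, and the equality $\bar{\Omega}^r([X^{\bullet}]) = \bar{\Omega}^r([Y^{\bullet}])$ you need is an equality in the \emph{split} Grothendieck group $K_0(\mathcal{RK}^{\leq 0}(\mathcal{P}_A))$, which is not automatic from a mere distinguished triangle. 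The way around this is to model the given triangle by an actual degreewise-split short exact sequence of bounded-above complexes of projectives (mapping-cone construction on a chain map between radical representatives); truncation then preserves the termwise splitting up to projective stalk summands, and this delivers simultaneously the $K_0$-equality, the common-summand form $W\oplus P \to W\oplus Q$, and the block matrix for $t$ on which Fitting's lemma acts. With that bridge secured, the remainder of your argument (long exact sequence, nilpotent block, $\pd Z' < \infty$, and the final chain of inequalities via item~5 of the preceding remark) goes through verbatim.
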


\subsection{Igusa-Todorov $\Phi_{[\mathcal{D}]}$ and $\Psi_{[\mathcal{D}]}$ functions and LIT-algebras}\

We start this section by recalling a new version of Fitting's Lemma.

\begin{lema}(Lemma 3.3 of \cite{BLMV}){\label{lemadegrupos}} Let $G$ be a free abelian group, $D$ be a subgroup of $G$, $L \in End_ Z (G)$
be such that $L(D) \subset D$ and let $k$ be a positive integer for which $L : L^k (D) \rightarrow D$ is
a monomorphism. Then, for each finitely generated subgroup $X \subset G$, we have that

$$ \eta_L (X) \leq \eta_{\bar{L}} (\bar{X}) + k,$$

where $\bar{L} : G/D \rightarrow G/D$, $g + D  \rightarrow L(g) + D$, and $\bar{X} = (X + D)/D$.

\end{lema}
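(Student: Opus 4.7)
The plan is to unwind the definition of $\eta_L$ and reduce the assertion to a clean injectivity statement. Setting $n := \eta_{\bar{L}}(\bar{X})$, so that $\bar{L}\colon \bar{L}^m(\bar{X}) \to \bar{L}^{m+1}(\bar{X})$ is an isomorphism for every $m \geq n$, the goal becomes to show that $L\colon L^m(X) \to L^{m+1}(X)$ is an isomorphism for every $m \geq n+k$. Surjectivity is automatic from $L^{m+1}(X) = L(L^m(X))$, so only injectivity needs to be proved.

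Fix $m \geq n+k$ and suppose $v \in L^m(X)$ with $L(v)=0$. Write $v = L^m(x)$ for some $x \in X$, and introduce the auxiliary element $u := L^{m-k}(x) \in L^{m-k}(X)$, which is legitimate since $m - k \geq n \geq 0$. The central step is to upgrade the easy observation $\bar{v}=0$ (which follows from $\bar{L}(\bar{v})=0$, $\bar{v}\in \bar{L}^m(\bar{X})$ and the stability of $\bar{L}$ past $n$) to the much stronger $\bar{u}=0$. To do so I would note that $\bar{L}^k(\bar{u}) = \overline{L^m(x)} = \bar{v}$, and argue that $\bar{L}^k \colon \bar{L}^{m-k}(\bar{X}) \to \bar{L}^m(\bar{X})$ is a composition of $k$ maps of the form $\bar{L}\colon \bar{L}^{j}(\bar{X}) \to \bar{L}^{j+1}(\bar{X})$ with $j \geq n$, each of which is an isomorphism by the definition of $n$. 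Hence $\bar{L}^k$ is itself an isomorphism between these subgroups, and $\bar{L}^k(\bar{u})=\bar{v}=0$ forces $\bar{u}=0$, i.e.\ $u \in D$.

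Once $u \in D$ has been secured, the conclusion is immediate: $v = L^k(u)$ lies in $L^k(D)$, and the standing hypothesis that $L\colon L^k(D) \to D$ is a monomorphism then yields $v=0$. This gives the injectivity of $L|_{L^m(X)}$ for every $m \geq n+k$, so $\eta_L(X) \leq n+k = \eta_{\bar{L}}(\bar{X})+k$.

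The main obstacle to anticipate is calibrating the two hypotheses at the correct indices. The bound $m \geq n+k$ is forced by the simultaneous requirements $m-k \geq n$ (so the $k$-fold iterate of $\bar{L}$ stays inside the stable range and is invertible) and $v \in L^k(D)$ (so the monomorphism hypothesis on $L^k(D)$ can be applied). Merely showing $v \in D$ is not enough; one really has to push $k$ extra layers of depth into $D$, which is precisely why the auxiliary element $u = L^{m-k}(x)$ is introduced rather than trying to analyse $v$ directly.
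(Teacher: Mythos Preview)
Your argument is correct. The survey paper does not actually supply a proof of this lemma; it merely records the statement and cites \cite{BLMV} for the proof. So there is no ``paper's own proof'' to compare against here, but your direct verification is sound: the key insight---that one must push the element $v$ back $k$ steps to $u=L^{m-k}(x)$ so that (i) $\bar u$ still lies in the stable range of $\bar L$ and hence vanishes, and (ii) $v=L^k(u)$ then sits in $L^k(D)$ where the injectivity hypothesis applies---is exactly what the inequality demands, and you have calibrated the indices correctly.
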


Fix a subclass $\mathcal{D} \subset \mod A$ that is additively closed: $\mathcal{D} = \add \mathcal{D}$, and invariant by $\Omega$: $\Omega (\mathcal{D}) \subset \mathcal{D}$. Then the quotient group $\mathcal{K}_{\mathcal{D}} = \frac{K_0}{\langle \mathcal{D} \rangle }$ is a free abelian group.

\begin{defi}(\cite{BLMV})
Let $A$ be an Artin algebra and $\mathcal{D} \subset \mod A$ be a class of A-modules additively closed and invariant by $\Omega$. Let $ \bar{\Omega}_{\mathcal{D}}: K_{\mathcal{D}} \rightarrow K_{\mathcal{D}}$ be the endomorphism defined by $ \bar{\Omega}_{\mathcal{D}}([X]+\langle \mathcal{D} \rangle) = [\Omega (X)] + \langle \mathcal{D} \rangle$.  For any
$X \in \mod A$, we set

$$\Phi_{[\mathcal{D}]}(X) = \eta_{\bar{\Omega}_{\mathcal{D}}} (\overline{ \langle X \rangle }) $$

where $\overline{ \langle X \rangle } =(\langle X \rangle + \langle \mathcal{D} \rangle )/\langle \mathcal{D} \rangle $. 

\end{defi}

Note that $\overline{ \langle X \rangle }$ is the free abelian group generated by
the set of all the iso-classes of indecomposable nonprojective $A$-modules of $\add X$
that do not lie in $\mathcal{D}$. From Lemma \ref{lemadegrupos}, the authors obtained the following result, which compares the value of the first Igusa-Todorov function with its generalized version.

\begin{teo}(Theorem 3.5 of \cite{BLMV}) \label{fisubD} Let $A$ be an Artin algebra and $\mathcal{D} \subset \mod A$ be a class of $A$-modules additively closed and invariant by $\Omega$. Then, 

$$\Phi(X) \leq \Phi_{[\mathcal{D}]} (X) + \fidim(\mathcal{D}),$$ 

for every $X \in \mod A$.

\end{teo}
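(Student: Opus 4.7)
The plan is to apply the group-theoretic Lemma \ref{lemadegrupos} with $G = K_0(A)$, $D = \langle \mathcal{D}\rangle$, $L = \bar{\Omega}$, and the finitely generated subgroup taken to be $\langle \add X\rangle$. Under these identifications, $\bar{L}$ is the induced map $\bar{\Omega}_{\mathcal{D}}$ on $K_{\mathcal{D}} = K_0(A)/\langle \mathcal{D}\rangle$, and the image of $\langle \add X\rangle$ in this quotient is $\overline{\langle X\rangle}$. Provided the integer $k$ in the lemma can be chosen equal to $\fidim(\mathcal{D})$, its conclusion reads
$$\Phi(X) = \eta_{\bar{\Omega}}(\langle \add X\rangle) \leq \eta_{\bar{\Omega}_{\mathcal{D}}}(\overline{\langle X\rangle}) + \fidim(\mathcal{D}) = \Phi_{[\mathcal{D}]}(X) + \fidim(\mathcal{D}),$$
which is precisely the desired inequality. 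We may assume $n := \fidim(\mathcal{D})$ is finite, since otherwise the statement is vacuous.

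The inclusion $\bar{\Omega}(\langle \mathcal{D}\rangle) \subset \langle \mathcal{D}\rangle$ is immediate from the hypothesis $\Omega(\mathcal{D}) \subset \mathcal{D}$. The real task is to verify that $\bar{\Omega}$ restricted to $\bar{\Omega}^{n}(\langle \mathcal{D}\rangle)$ is a monomorphism. I would argue this element-wise: given $z \in \bar{\Omega}^{n}(\langle \mathcal{D}\rangle)$ with $\bar{\Omega}(z) = 0$, write $z = \bar{\Omega}^{n}(y)$ for some $y \in \langle \mathcal{D}\rangle$. Using $\mathcal{D} = \add \mathcal{D}$, I would express $y = [A] - [B]$ with $A, B \in \mathcal{D}$, and set $M := A \oplus B \in \mathcal{D}$. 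By the very definition of $\fidim(\mathcal{D})$ we have $\Phi(M) \leq n$, so $\bar{\Omega}$ is a monomorphism on $\bar{\Omega}^{n}(\langle \add M\rangle)$. Since $[A] - [B]$ lies in $\langle \add M\rangle$, the element $z$ lies in $\bar{\Omega}^{n}(\langle \add M\rangle)$, and $\bar{\Omega}(z) = 0$ forces $z = 0$, as wanted.

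The delicate point, and hence the main obstacle, is the passage from the module-wise bound $\Phi(M) \leq \fidim(\mathcal{D})$ valid for each $M \in \mathcal{D}$ separately, to a uniform injectivity statement on the potentially infinitely generated group $\langle \mathcal{D}\rangle$. The key observation that unlocks this is that the relation $\bar{\Omega}(z) = 0$ only involves the finitely many indecomposable summands appearing in a single representative $[A] - [B]$, so one can always reduce to the finitely generated subgroup $\langle \add M\rangle$ attached to the aggregated module $M = A \oplus B$. The hypothesis $\mathcal{D} = \add \mathcal{D}$ is essential here, as it ensures that such an $M$ still belongs to $\mathcal{D}$ and is therefore subject to the $\fidim$-bound.
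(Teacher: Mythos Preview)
Your proposal is correct and follows precisely the route indicated by the paper, which derives the theorem directly from Lemma~\ref{lemadegrupos} applied with $G = K_0(A)$, $D = \langle \mathcal{D}\rangle$, $L = \bar{\Omega}$, and $k = \fidim(\mathcal{D})$. Your element-wise verification that $\bar{\Omega}$ is injective on $\bar{\Omega}^{n}(\langle \mathcal{D}\rangle)$, by reducing to a finitely generated $\langle \add M\rangle$ with $M = A\oplus B \in \mathcal{D}$, is exactly the argument needed to supply the hypothesis of the lemma, and is the point the survey leaves implicit.
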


\begin{defi}(\cite{BLMV})
Let $A$ be an Artin algebra and $\mathcal{D} \subset \mod A$ be a class of $A$-modules additively closed and invariant by $\Omega$. For any $X \in \mod A$, we set

$$ \Psi_{[\mathcal{D}]} (X) = \Phi_{[\mathcal{D}]} (X) + \fin (\{Z \in \mod A : Z \mid \Omega^{\Phi_{[\mathcal{D}]} (X)} (X)\}).$$

\end{defi}

\begin{defi}(\cite{BLMV})
Let $A$ be an Artin algebra and $\mathcal{D} \subset \mod A$ be a class of $A$-modules  additively closed and invariant by $\Omega$, for $\mathcal{X} \subset \mod A$ we define:

\begin{itemize}

\item $\Phi_{[\mathcal{D}]} \dim (\mathcal{X}) = \sup \{ \Phi_{[\mathcal{D}]}(X): X \in \mathcal{X}\}$

\item $\Psi_{[\mathcal{D}]} \dim (\mathcal{X}) = \sup \{ \Psi_{[\mathcal{D}]}(X): X \in \mathcal{X}\}$

\end{itemize}

In particular, we denote by
$$\Phi_{[\mathcal{D}]}\dim(A) = \Phi_{[\mathcal{D}]}\dim(\mod A),\text{and } \Psi_{[\mathcal{D}]}\dim(A) = \Psi_{[\mathcal{D}]}\dim(\mod A),$$
also called the $\Phi_{[\mathcal{D}]}$-dimension and $\Psi_{[\mathcal{D}]}$-dimension of $A$ respectively. 
\end{defi}

The next remark summarizes the properties of the extended versions of the Igusa-Todorov functions.

\begin{obs}(Propositions 3.9, 3.10, and 3.12 of \cite{BLMV}) \label{propiedades fisubD} Let $A$ be an Artin algebra and $\mathcal{D} \subset \mod A$ be a class of $A$-modules additively closed and invariant by $\Omega$. Then, the following statements hold true for $X, Y, M \in \mod A$.

\begin{enumerate}

\item If $M \in \mathcal{D} \cup \mathcal{P}(A)$, then $\Phi_{[\mathcal{D}]} (M ) = 0$ and $\Phi_{[\mathcal{D}]} (X \oplus M ) = \Phi_{[\mathcal{D}]} (X)$.

\item $\Phi_{[\mathcal{D}]} (X) \leq  \Phi_{[\mathcal{D}]} (X \oplus Y )$ and $\Psi_{[\mathcal{D}]} (X) \leq \Psi_{[\mathcal{D}]} (X \oplus Y )$.

\item $\Phi_{[\mathcal{D}]} \dim (\add X)= \Phi_{[\mathcal{D}]}(X)$ and $\Psi_{\mathcal{[D]}} \dim(\add X) = \Psi_{[\mathcal{D}]} (X)$.

\item $\Phi_{\mathcal{[D]}} (M) \leq\Phi_{\mathcal{[D]}} (\Omega(M))+1$ and $\Psi_{\mathcal{[D]}} (M) \leq\Psi_{\mathcal{[D]}} (\Omega(M))+1$.

\item If $Z$ is a direct summand of $\Omega^{n}(X)$ where $0 \leq t\leq\Phi_{[\mathcal{D}]}(X)$ and
$\pd(Z) < \infty$, then $\pd(Z) + t \leq\Psi_{[\mathcal{D}]}(X)$.

\item Suppose that $\fidim(\mathcal{D}) = 0$.

\begin{enumerate}

\item If $\pd (X) < \infty$, then $\Phi_{[\mathcal{D}]} (X) = \Phi(X) = \pd (X)$.

\item $\Psi(X) \leq \Psi_{[\mathcal{D}]} (X)$

\item If $M \in \mathcal{D} \cup \mathcal{P}(A)$, then $\Psi_{[\mathcal{D}]} (X \oplus M ) = \Psi_{[\mathcal{D}]} (X)$.

\item $\Psi_{[\mathcal{D}]} \dim(\mathcal{D}) = 0$

\end{enumerate}
\end{enumerate}

\end{obs}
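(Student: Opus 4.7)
The plan is to verify each of the six items by transcribing, modulo the subgroup $\langle \mathcal{D} \rangle \subset K_0$, the arguments that prove the analogous statements of Propositions \ref{it1} and \ref{Huard2} for the classical Igusa-Todorov functions. The setup observations are that $\Omega(\mathcal{D}) \subset \mathcal{D}$ makes $\bar{\Omega}_{\mathcal{D}}$ well defined on $\mathcal{K}_{\mathcal{D}} = K_0/\langle \mathcal{D}\rangle$, that $\mathcal{K}_{\mathcal{D}}$ is free abelian so that Lemma \ref{Fitting} applies to $\bar{\Omega}_{\mathcal{D}}$, and that $\eta_{\bar{\Omega}_{\mathcal{D}}}$ is monotone under inclusion of finitely generated subgroups.

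For items 1--5 the arguments are short adaptations of the classical ones. Item 1 holds because $[M] \in \langle \mathcal{D} \rangle$ whenever $M \in \mathcal{D} \cup \mathcal{P}(A)$ (projective classes vanish already in $K_0$, and $\mathcal{D}$-classes vanish in the quotient), so $\overline{\langle X \oplus M \rangle} = \overline{\langle X \rangle}$ and applying $\eta_{\bar{\Omega}_{\mathcal{D}}}$ gives the desired equality of $\Phi_{[\mathcal{D}]}$. Items 2 and 3 follow from the monotonicity of $\eta$ combined with the Krull--Schmidt identity $\langle \add X \rangle = \langle X \rangle$. For item 4 I would observe that $\bar{\Omega}_{\mathcal{D}}(\overline{\langle M \rangle}) \subset \overline{\langle \Omega M \rangle}$ (projective summands arising in a projective cover of $M$ again vanish in $\mathcal{K}_{\mathcal{D}}$); monotonicity then yields $\eta_{\bar{\Omega}_{\mathcal{D}}}(\bar{\Omega}_{\mathcal{D}}(\overline{\langle M \rangle})) \leq \Phi_{[\mathcal{D}]}(\Omega M)$, and Lemma \ref{Fitting} provides the extra $+1$. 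The $\Psi_{[\mathcal{D}]}$-bound in item 4 then follows by combining this with item 5, which in turn is a direct reformulation of the definition of $\Psi_{[\mathcal{D}]}$: given a summand $Z$ of $\Omega^{t}(X)$ with $\pd(Z) < \infty$ and $0 \leq t \leq \Phi_{[\mathcal{D}]}(X)$, the module $Z' = \Omega^{\Phi_{[\mathcal{D}]}(X)-t}(Z)$ is a summand of $\Omega^{\Phi_{[\mathcal{D}]}(X)}(X)$, and the inequality $\pd(Z) + t \leq \pd(Z') + \Phi_{[\mathcal{D}]}(X) \leq \Psi_{[\mathcal{D}]}(X)$ is checked in both subcases according to whether $\pd(Z)$ exceeds the shift.

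The main obstacle is item 6, which relies essentially on the hypothesis $\fidim(\mathcal{D}) = 0$. For 6.(a), one inequality is Theorem \ref{fisubD}, which under this hypothesis gives $\Phi(X) \leq \Phi_{[\mathcal{D}]}(X)$; together with $\Phi(X) = \pd(X)$ from Proposition \ref{it1}(1) it remains to show $\Phi_{[\mathcal{D}]}(X) \leq \pd(X)$, which is immediate because $\bar{\Omega}^{\pd(X)}_{\mathcal{D}}(\overline{\langle X \rangle}) = 0$. The technically heaviest step is 6.(b): starting from a summand $N$ of $\Omega^{\Phi(X)}(X)$ with $\pd(N) < \infty$, pass to the summand $\Omega^{\Phi_{[\mathcal{D}]}(X)-\Phi(X)}(N)$ of $\Omega^{\Phi_{[\mathcal{D}]}(X)}(X)$ (using $\Phi(X) \leq \Phi_{[\mathcal{D}]}(X)$ from Theorem \ref{fisubD}), and split into the subcases where $\pd(N)$ is at least or below the shift $\Phi_{[\mathcal{D}]}(X)-\Phi(X)$: in the first the projective dimension drops by exactly the shift and yields $\pd(N) + \Phi(X) \leq \Psi_{[\mathcal{D}]}(X)$, while in the second the shifted module is projective and the bound $\pd(N) + \Phi(X) < \Phi_{[\mathcal{D}]}(X) \leq \Psi_{[\mathcal{D}]}(X)$ suffices. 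For 6.(c) and 6.(d) the crucial observation is that $\add \mathcal{D} = \mathcal{D}$, so any direct summand of a module in $\mathcal{D}$ lies in $\mathcal{D}$; combined with $\fidim(\mathcal{D}) = 0$, any such summand of finite projective dimension must be projective, and therefore contributes $0$ to the $\fin$-term in the definition of $\Psi_{[\mathcal{D}]}$.
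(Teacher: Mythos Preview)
Your proposal is correct. The paper does not actually prove this remark; it merely records the statements and attributes them to \cite{BLMV}, so there is no in-text argument to compare against. Your approach---reducing everything to Fitting's Lemma on the free quotient $\mathcal{K}_{\mathcal{D}}$, the monotonicity of $\eta$, and (for item~6) the inequality $\Phi(X)\le \Phi_{[\mathcal{D}]}(X)$ from Theorem~\ref{fisubD} together with the fact that $\fidim(\mathcal{D})=0$ forces every finite-projective-dimension object of $\mathcal{D}$ to be projective---is exactly the standard one and matches what is done in \cite{BLMV}.
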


The Theorem \ref{fisubD} and Item 6.(b), of the previous remark show how to compare the original Igusa-Todorov functions with the generalized versions.\\

We now show that $\Phi_{[\mathcal{D}]}$-dimension and $\Phi$-dimension can be equal for some families of modules $\mathcal{D}$.

\begin{defi} (\cite{BLMV})
Let $A$ be an Artin algebra. 
\begin{enumerate} 
\item A subclass $\mathcal{D} \subset \mod A$ is a {\bf $0$-Igusa-Todorov subcategory} if: $\mathcal{D} = \add (\mathcal{D})$, $\Omega (\mathcal{D}) \subset \mathcal{D}$ and $\fidim(\mathcal{D}) = 0$.
\item 
If the $0$-Igusa-Todorov subcategory $\mathcal{D} \subset \mod A$ verifies
for every $D \in \mathcal{D}$ that there are $A$-modules $D_1, D_2 \in \mathcal{D}$ and $m \in \mathbb{Z}^{+}$ such that $\ \bar{\Omega}([D_1] - [D_2]) = m[D]$,  we call it a {\bf $0$-Igusa-Todorov subcategory with preimages}.  
\end{enumerate}
\end{defi}

\begin{obs} The following families of modules are examples of $0$-Igusa-Todorov subcategories with preimages:
\begin{itemize} 
\item $\mathcal{GP}_A$ is a $0$-Igusa-Todorov subcategory with preimages.

\item If $\mathcal{D} \subset \mathcal{GP}_A$ and for every $D \in \mathcal{D}$ there is a $D' \in \mathcal{D}$ such that $\Omega(D') = D$, then $\mathcal{D}$ is a  $0$-Igusa-Todorov subcategory with preimages.

\item Let $M \in \mod A$, such that $\mathcal{D} = \add M$ is invariant by $\Omega$ and $\phi(M) = 0$. Then $\mathcal{D}$ is a $0$-Igusa-Todorov subcategory with preimages: it is not hard to see that, if there exists $M_i$ indecomposable summand of $M$ without preimage because $\mathcal{D}$ is of finite representation type, then $\fidim(\mathcal{D})$ is not zero.

\end{itemize}
 
\end{obs}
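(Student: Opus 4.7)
The plan is to verify, for each of the three families, the defining axioms of a $0$-Igusa-Todorov subcategory ($\add$-closure, $\Omega$-invariance, $\fidim=0$) and then exhibit a relation $\bar{\Omega}([D_1]-[D_2])=m[D]$ witnessing the preimage property.

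For the first item, $\mathcal{G}P_A$ is closed under direct summands and under syzygies by the standard theory of Gorenstein projectives, and $\fidim(\mathcal{G}P_A)=0$ is the Corollary of \cite{LM} recorded after Theorem~\ref{phi en perpA}. The preimage property comes straight from the definition of $\mathcal{G}P_A$: every $D\in\mathcal{G}P_A$ sits as the kernel $D=\ker(P^{0}\to P^{-1})$ in a totally acyclic projective complex $\cdots\to P^{0}\to P^{-1}\to P^{-2}\to\cdots$, and taking $D':=\ker(P^{-1}\to P^{-2})\in\mathcal{G}P_A$ yields a short exact sequence $0\to D\to P^{0}\to D'\to 0$, so $\bar{\Omega}([D'])=[D]$ in $K_0(A)$. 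Choosing $D_1=D'$, $D_2=0$, $m=1$ finishes the first item. Item~(2) follows at once: $\mathcal{D}\subset \mathcal{G}P_A$ inherits $\fidim(\mathcal{D})\le \fidim(\mathcal{G}P_A)=0$, and the hypothesised preimage $D'\in\mathcal{D}$ with $\Omega(D')=D$ gives $\bar{\Omega}([D'])=[D]$ directly.

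For item~(3) the additive closure of $\mathcal{D}=\add M$ is automatic, $\Omega$-invariance is assumed, and $\fidim(\mathcal{D})=\phi(M)=0$ is the first bullet of the remark following Proposition~\ref{Huard2}. The preimage property requires a short rank-theoretic argument. Because $\Omega(\add M)\subset\add M$, the subgroup $\langle\add M\rangle\subset K_0(A)$ generated by the iso-classes of the non-projective indecomposable summands of $M$ is $\bar{\Omega}$-invariant and is a finite-rank free abelian group. The hypothesis $\phi(M)=0$ is, by Definition~\ref{monomorfismo}, equivalent to $\bar{\Omega}$ being injective on $\langle\add M\rangle$, and an injective endomorphism of a finite-rank free abelian group has image of finite index. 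Hence for every $[D]\in\langle\add M\rangle$ there exists $m\ge 1$ with $m[D]\in\bar{\Omega}(\langle\add M\rangle)$; writing $m[D]=\bar{\Omega}(x)$ with $x=\sum_{i}c_i[M_i]$ and setting $D_1=\bigoplus_{c_i>0}M_i^{c_i}$, $D_2=\bigoplus_{c_i<0}M_i^{-c_i}$ gives two modules in $\add M=\mathcal{D}$ satisfying $\bar{\Omega}([D_1]-[D_2])=m[D]$, as required.

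The only nontrivial step is the rank argument in item~(3): translating the vanishing $\phi(M)=0$ into the injectivity of $\bar{\Omega}$ on the finite-rank lattice $\langle\add M\rangle$ and then concluding that the image has finite index. This is exactly where finite representation type of $\add M$ is used, and matches the authors' hint that if some indecomposable summand admitted no preimage at all then the map $\bar{\Omega}|_{\langle\add M\rangle}$ would fail to be injective on a rank-preserving subgroup, forcing $\fidim(\mathcal{D})>0$. Splitting the integer vector $x$ into its positive and negative parts to realise the resulting $K_0$-class as a genuine module combination is then routine.
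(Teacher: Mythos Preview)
Your proposal is correct. The paper gives no formal proof of this remark---only the parenthetical hint for item (3)---and your rank-theoretic argument (that $\phi(M)=0$ makes $\bar{\Omega}$ injective on the finite-rank lattice $\langle\add M\rangle$, hence its image has finite index, so some $m[D]$ lies in the image) is precisely the direct form of the contrapositive the authors sketch; items (1) and (2) are handled exactly as one would expect from the definition of Gorenstein projectives and the corollary $\fidim(\mathcal{G}P_A)=0$.
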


The example below shows a $0$-Igusa-Todorov subcategory that is not a $0$-Igusa-Todorov subcategory with preimages. 

\begin{ej} \label{ej sizigia-infinita}
Let $\Bbbk$ be an infinite field with characteristic different to $2$. Consider the algebra $A = \frac{\Bbbk Q}{I}$, where $Q$ is the quiver below:

$$ Q = \xymatrix{ 1 \ar@/_{1pc}/[r]^{\alpha_1} \ar@/_{2pc}/[r]^{\alpha_2} & 2 \ar@/_{1pc}/[l]^{\beta_1} \ar@/_{2pc}/[l]^{\beta_2} }$$

and $I = \langle \beta_1\alpha_1-2\beta_2\alpha_2, \alpha_1\beta_1 - \alpha_2\beta_2,  \alpha_i \beta_j , \beta_i \alpha_j \mbox{ with } i,j \in \{1,2\}, i\not = j\rangle$.\\

The following assertions are clear.
\begin{itemize}

\item $A$ is selfinjective.

\item $ M_a = \xymatrix{ \Bbbk \ar@/_{1pc}/[r]^{a1_{\Bbbk}} \ar@/_{2pc}/[r]^{1_{\Bbbk}} & \Bbbk \ar@/_{1pc}/[l]^{0} \ar@/_{2pc}/[l]^{0} }$ are indecomposable modules for $a \neq 0$. 

\item $ N_a = \xymatrix{ \Bbbk \ar@/_{1pc}/[r]^{0} \ar@/_{2pc}/[r]^{0} & \Bbbk \ar@/_{1pc}/[l]^{1_{\Bbbk}} \ar@/_{2pc}/[l]^{a1_{\Bbbk}} }$ are indecomposable modules for $a \neq 0$.

\item $M_a \ncong M_b$ and $N_a \ncong N_b$ if $a \neq b$.

\item $\Omega (M_a) = N_{-\frac{a}{2}}$ and $\Omega (N_{a}) = M_{-a}$ for $a \neq 0$.

\end{itemize}

If we consider $\mathcal{D}_{0} = \add \{M_{2^{-n}}, N_{-2^{-n}} \forall n \in \mathbb{N}\}$, then $\Omega(\mathcal{D}) \subset \mathcal{D}$. However there is no module $M$ in $\mathcal{D}_{0}$ such that $\Omega(M) = M_1$. 
\end{ej}

\begin{obs} \label{remarquita} Let $\{M_1 \ldots M_m\}$ a set of $A$-modules, $\mathcal{D}$ an additively closed subcategory and $I \cup J = \{1,\ldots, m\}$ a disjoint union, then if $k (\sum_{j \in J} \alpha_j[M_j] - \sum_{i \in I} \alpha_i[M_i])  \in \langle \mathcal{D} \rangle$ for $k \in \mathbb{Z}^{+}$ then $\sum_{j \in J} \alpha_j[M_j] - \sum_{i \in I} \alpha_i[M_i] \in \langle \mathcal{D} \rangle$.
\end{obs}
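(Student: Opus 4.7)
The plan is to argue via torsion-freeness of an appropriate quotient of $K_0(A)$. Recall from the definition at the start of Section 3.1 that $K_0(A)$ is the abelian group generated by symbols $[M]$ modulo the splitting relations and the relations $[P]=0$ for projectives, so by Krull--Schmidt it is in fact the \emph{free} abelian group on the set of iso-classes of indecomposable non-projective $A$-modules. Since $\mathcal{D}=\add\mathcal{D}$, an indecomposable $X$ with $[X]\neq 0$ satisfies $X\in\mathcal{D}$ precisely when $[X]$ is one of the generators of $\langle\mathcal{D}\rangle$; hence $\langle\mathcal{D}\rangle$ itself is a free abelian subgroup whose basis is the subset of non-projective indecomposables belonging to $\mathcal{D}$.

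From this structural observation the statement is essentially immediate. First I would split the given element
$$y \;=\; \sum_{j\in J}\alpha_j[M_j]-\sum_{i\in I}\alpha_i[M_i]\;\in\; K_0(A)$$
according to the canonical basis of $K_0(A)$: decompose each $M_\ell$ into a direct sum of indecomposables and collect terms, separating those summands that lie in $\mathcal{D}$ from those that do not. This yields a unique decomposition $y=y_{\mathcal{D}}+y'$ where $y_{\mathcal{D}}\in\langle\mathcal{D}\rangle$ and $y'$ is a $\mathbb{Z}$-linear combination of iso-classes of non-projective indecomposables lying \emph{outside} $\mathcal{D}$.

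Next, from the hypothesis $k\,y\in\langle\mathcal{D}\rangle$ I would subtract $k\,y_{\mathcal{D}}\in\langle\mathcal{D}\rangle$ to obtain $k\,y'\in\langle\mathcal{D}\rangle$. Since $y'$ is supported on basis elements disjoint from those generating $\langle\mathcal{D}\rangle$, the only way $k\,y'$ can land in $\langle\mathcal{D}\rangle$ is for every coordinate of $k\,y'$ (in the canonical basis of $K_0(A)$) to be zero; that is, $k$ times each coefficient of $y'$ equals $0$ in $\mathbb{Z}$. Because $k\in\mathbb{Z}^+$ and $\mathbb{Z}$ is torsion-free, this forces $y'=0$, and therefore $y=y_{\mathcal{D}}\in\langle\mathcal{D}\rangle$, as required.

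There is no real obstacle here beyond making the splitting $K_0(A)=\langle\mathcal{D}\rangle\oplus\langle\,\text{non-proj.\ indec.\ not in }\mathcal{D}\,\rangle$ explicit; the entire content of the remark is that $\langle\mathcal{D}\rangle$ is a direct summand of a free abelian group, hence the quotient $K_0(A)/\langle\mathcal{D}\rangle$ is torsion-free. Equivalently, one could phrase the argument as: $\bar y=y+\langle\mathcal{D}\rangle$ satisfies $k\bar y=0$ in the torsion-free group $K_0(A)/\langle\mathcal{D}\rangle$, so $\bar y=0$, i.e.\ $y\in\langle\mathcal{D}\rangle$. I would present whichever formulation matches the notation already used around the preceding $\Phi_{[\mathcal{D}]}$ construction.
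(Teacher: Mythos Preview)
Your argument is correct and is exactly the reasoning the paper leaves implicit. The paper states this observation without proof; just before the definition of $\Phi_{[\mathcal{D}]}$ it already records that the quotient $\mathcal{K}_{\mathcal{D}}=K_0/\langle\mathcal{D}\rangle$ is a free abelian group, and the remark is simply the torsion-freeness of that quotient applied to the element $y$. Your explicit decomposition $K_0(A)=\langle\mathcal{D}\rangle\oplus\langle\text{non-proj.\ indec.\ not in }\mathcal{D}\rangle$ is precisely why $\mathcal{K}_{\mathcal{D}}$ is free, so you have supplied the justification the paper omits rather than taken a different route.
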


\begin{teo}\label{comparar en gral}
Let $A$ be an Artin algebra and $\mathcal{D}$ a $0$-Igusa-Todorov subcategory with preimages. If $M$ is an $A$-module, then there is an $A$-module $D \in \mathcal{D}$ such that $\phi(M\oplus D) = \Phi_{[\mathcal{D}]}(M)$.
\begin{proof}

It is clear that  $\phi (M\oplus D) \leq \Phi_{[\mathcal{D}]}(M\oplus D) = \Phi_{[\mathcal{D}]} (M) $ for every $A$-module $D \in \mathcal{D}$ by Theorem \ref{fisubD}.
Let $M = \oplus_{i = i}^m M_i$ be the decomposition into indecomposable modules of $M$. We can assume that $M_i \not \cong M_j$ for $i \not = j$ by Remark \ref{propiedades fisubD}. If $\phi_{[\mathcal{D}]}(M) = n$, then there are two disjoint sets $I, J$, two modules $D_1, D_2 \in \mathcal{D}$ and $\alpha_i \in \mathbb{N}$ $\forall i =1,\ldots m$ such that $I \cup J = \{1,\ldots, m\}$ is a disjoint union,
$$\sum_{j \in J} \alpha_j[\Omega^{n-1}(M_j)]+[\bar{D}_1] \not = \sum_{i \in I} \alpha_i[\Omega^{n-1}(M_i)]+[\bar{D}_2]$$
for every $\bar{D}_{i} \in \mathcal{D}$, for $i=1,2$, and
$$\sum_{j \in J} \alpha_j[\Omega^{n}(M_j)]+[D_1] = \sum_{i \in I} \alpha_i[\Omega^{n}(M_i)]+[D_2].$$
with $D_i \in \mathcal{D}$ for $i = 1,2$.
Consider $\tilde{D}_i$ and $\tilde{D}'_i$ such that $\bar{\Omega}^n([\tilde{D}_i]-[\tilde{D}'_i]) = l[D_i]$ for $i =1,2$. It follows, from the previous assertion and Remark \ref{remarquita}, that 
$$\phi(M \oplus \tilde{D}_1 \oplus \tilde{D}'_1 \oplus \tilde{D}_2 \oplus \tilde{D}'_2) \geq n = \Phi_{[\mathcal{D}]}(M).$$
\end{proof}
\end{teo}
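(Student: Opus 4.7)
The proof splits into two inequalities. The upper bound $\phi(M\oplus D)\le\Phi_{[\mathcal{D}]}(M)$ for every $D\in\mathcal{D}$ is immediate: apply Theorem~\ref{fisubD} to $M\oplus D$ to get $\phi(M\oplus D)\le\Phi_{[\mathcal{D}]}(M\oplus D)+\fidim(\mathcal{D})$, then use $\fidim(\mathcal{D})=0$ and item~1 of Remark~\ref{propiedades fisubD}, which absorbs the $\mathcal{D}$-summand to give $\Phi_{[\mathcal{D}]}(M\oplus D)=\Phi_{[\mathcal{D}]}(M)$. All the real work lies in producing an explicit $D$ that achieves equality.

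For the lower bound, set $n=\Phi_{[\mathcal{D}]}(M)$ and write $M=\bigoplus_{i=1}^m M_i$ as a sum of pairwise non-isomorphic indecomposables, which is legitimate by items~2 and~3 of Remark~\ref{propiedades fisubD}. Unwinding Fitting's lemma applied to $\bar\Omega_{\mathcal{D}}$ on $\overline{\langle M\rangle}$, the equality $\Phi_{[\mathcal{D}]}(M)=n$ produces, inside $K_0(A)$, a partition $\{1,\ldots,m\}=I\sqcup J$, coefficients $\alpha_i\in\mathbb{N}$, and modules $D_1,D_2\in\mathcal{D}$ such that
$$\sum_{j\in J}\alpha_j[\Omega^n(M_j)]+[D_1]=\sum_{i\in I}\alpha_i[\Omega^n(M_i)]+[D_2],$$
while no analogous identity holds at syzygy level $n-1$ for any choice of modules in $\mathcal{D}$.

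Next I invoke the preimage hypothesis: iterating it $n$ times and replacing the successive integers by a common multiple $l$, I obtain modules $\tilde D_1,\tilde D'_1,\tilde D_2,\tilde D'_2\in\mathcal{D}$ and a positive integer $l$ with $\bar\Omega^n\bigl([\tilde D_i]-[\tilde D'_i]\bigr)=l[D_i]$ for $i=1,2$. Multiplying the displayed relation by $l$ and substituting produces an element $v\in\langle\add(M\oplus\tilde D_1\oplus\tilde D'_1\oplus\tilde D_2\oplus\tilde D'_2)\rangle$ with $\bar\Omega^n(v)=0$ in $K_0(A)$. Taking $D=\tilde D_1\oplus\tilde D'_1\oplus\tilde D_2\oplus\tilde D'_2\in\mathcal{D}$, the element $\bar\Omega^{n-1}(v)$ sits inside $\bar\Omega^{n-1}\bigl(\langle\add(M\oplus D)\rangle\bigr)$ and is killed by $\bar\Omega$, so $\phi(M\oplus D)\ge n$ provided $\bar\Omega^{n-1}(v)\ne 0$.

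The delicate point, and in fact the only real obstacle, is precisely this non-vanishing: the rescaling by $l$ might a priori create a spurious relation at the previous level. I would settle it by projecting to $K_0(A)/\langle\mathcal{D}\rangle$, where the image of $v$ is exactly $l$ times the class witnessing that $\bar\Omega_{\mathcal{D}}$ fails to be a monomorphism at level $n-1$; if $\bar\Omega^{n-1}(v)$ belonged to $\langle\mathcal{D}\rangle$, then Remark~\ref{remarquita} (equivalently, the torsion-freeness of the quotient $K_0(A)/\langle\mathcal{D}\rangle$) would force the unscaled class to already lie in $\langle\mathcal{D}\rangle$, contradicting $n=\Phi_{[\mathcal{D}]}(M)$. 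The \emph{with preimages} hypothesis is what permits the substitution, and Remark~\ref{remarquita} is what guarantees the substitution has not weakened the information; together they give the required $D\in\mathcal{D}$.
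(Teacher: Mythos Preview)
Your proposal is correct and follows essentially the same approach as the paper's proof: both establish the upper bound via Theorem~\ref{fisubD} together with $\fidim(\mathcal{D})=0$, and for the lower bound both extract from $\Phi_{[\mathcal{D}]}(M)=n$ a relation at syzygy level $n$ modulo $\langle\mathcal{D}\rangle$, lift the $\mathcal{D}$-correction terms via the preimage hypothesis to produce an element $v$ with $\bar\Omega^n(v)=0$, and use Remark~\ref{remarquita} to rule out the possibility that the rescaled relation collapses already at level $n-1$. Your write-up is in fact more explicit than the paper's about why $\bar\Omega^{n-1}(v)\neq 0$ (the paper simply invokes Remark~\ref{remarquita} without spelling out the contradiction), and about how the iteration of the single-step preimage property yields the common multiple $l$.
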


\begin{coro}\label{igualdad} Let $A$ be an Artin algebra and $\mathcal{D}$ a $0$-Igusa-Todorov subcategory with preimages, then
$$\fidim (A) = \Phi_{[\mathcal{D}]}\dim (A)$$

\end{coro}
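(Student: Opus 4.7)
The plan is to prove the two inequalities $\fidim(A)\leq \Phi_{[\mathcal{D}]}\dim(A)$ and $\Phi_{[\mathcal{D}]}\dim(A)\leq \fidim(A)$ separately. Both follow almost immediately from results already established in the paper, so the bulk of the argument is conceptual: identifying which of the two companion theorems delivers each direction.

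For the inequality $\fidim(A)\leq \Phi_{[\mathcal{D}]}\dim(A)$ I would invoke Theorem \ref{fisubD}, which states that $\phi(X)\leq \Phi_{[\mathcal{D}]}(X)+\fidim(\mathcal{D})$ for every $X\in\mod A$. By hypothesis $\mathcal{D}$ is a $0$-Igusa-Todorov subcategory, so $\fidim(\mathcal{D})=0$. Therefore $\phi(X)\leq \Phi_{[\mathcal{D}]}(X)$ pointwise; taking the supremum over $X\in\mod A$ yields $\fidim(A)\leq \Phi_{[\mathcal{D}]}\dim(A)$.

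For the reverse inequality $\Phi_{[\mathcal{D}]}\dim(A)\leq \fidim(A)$ I would use Theorem \ref{comparar en gral}, which is precisely where the hypothesis ``with preimages'' enters. Given any $M\in\mod A$, that theorem produces some $D\in\mathcal{D}$ with
\[
\Phi_{[\mathcal{D}]}(M)=\phi(M\oplus D)\leq \fidim(A).
\]
Taking the supremum over all $M\in\mod A$ gives $\Phi_{[\mathcal{D}]}\dim(A)\leq \fidim(A)$, and combining with the previous inequality produces the claimed equality.

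There is no real obstacle to overcome here, since the heavy lifting is done by the two preceding results. The only point that deserves to be highlighted is that the ``with preimages'' hypothesis is essential for the second inequality: it is exactly what allows Theorem \ref{comparar en gral} to realise $\Phi_{[\mathcal{D}]}(M)$ as the $\phi$-value of an actual module in $\mod A$, rather than merely as a bound. Example \ref{ej sizigia-infinita} illustrates that, without preimages, one cannot always lift a class in $K_{\mathcal{D}}$ back to an honest module whose $\phi$-value recovers $\Phi_{[\mathcal{D}]}(M)$, which is the subtlety the corollary sidesteps by assuming preimages.
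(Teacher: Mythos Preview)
Your proof is correct and is exactly the argument the paper intends: the corollary is stated without proof because both inequalities follow immediately from the two preceding results, Theorem \ref{fisubD} (for $\fidim(A)\leq\Phi_{[\mathcal{D}]}\dim(A)$, using $\fidim(\mathcal{D})=0$) and Theorem \ref{comparar en gral} (for the reverse inequality, using the preimages hypothesis). Your identification of the role of each theorem, and of why the ``with preimages'' assumption is needed only for the second inequality, matches the paper's logic precisely.
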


\begin{coro} \label{desigualdad}Let $A$ be an Artin algebra. If $\mathcal{D}_1 \subset \mathcal{D}_2 \subset \mod A$ where $\mathcal{D}_2$ is a $0$-Igusa-Todorov subcategory, and $\mathcal{D}_1$ is a $0$-Igusa-Todorov subcategory with preimages. Then for every $A$-module $M$

$$\Phi_{[\mathcal{D}_1]}(M) \leq \Phi_{[\mathcal{D}_2]} (M).$$

\begin{proof}

Since $\mathcal{D}_1$ is a $0$-Igusa-Todorov subcategory with preimages, by Theorem \ref{comparar en gral} there is a module $D \in \mathcal{D}_1$ such that
$$\phi(M\oplus D) = \Phi_{[\mathcal{D}_1]}(M)$$
On the other hand, by Theorem \ref{fisubD} and Remark \ref{propiedades fisubD} item $1$, we have the following inequality
$$\phi(M\oplus D) \leq \Phi_{[\mathcal{D}_2]}(M)$$
because $D \in \mathcal{D}_1 \subset \mathcal{D}_2$, and the thesis is obtained. 
\end{proof}

\end{coro}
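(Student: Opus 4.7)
The plan is to reduce the inequality between two generalized Igusa-Todorov dimensions to the previously established inequality $\phi(X)\leq \Phi_{[\mathcal{D}]}(X)+\fidim(\mathcal{D})$ from Theorem \ref{fisubD}. The asymmetry between $\mathcal{D}_1$ and $\mathcal{D}_2$ in the hypothesis is crucial: the stronger assumption on $\mathcal{D}_1$ (having preimages) lets us \emph{realize} $\Phi_{[\mathcal{D}_1]}(M)$ as an ordinary $\phi$-value on some enlargement $M\oplus D$, whereas the weaker assumption on $\mathcal{D}_2$ is exactly what is needed to compare that $\phi$-value with $\Phi_{[\mathcal{D}_2]}$.

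First I would invoke Theorem \ref{comparar en gral} applied to the class $\mathcal{D}_1$: since $\mathcal{D}_1$ is a $0$-Igusa-Todorov subcategory with preimages, there exists some $D\in\mathcal{D}_1$ such that
\[
\phi(M\oplus D)=\Phi_{[\mathcal{D}_1]}(M).
\]
Next, observe that $D\in\mathcal{D}_1\subset\mathcal{D}_2$, and that $\mathcal{D}_2$ is itself a $0$-Igusa-Todorov subcategory, so $\fidim(\mathcal{D}_2)=0$. By Theorem \ref{fisubD} applied with the class $\mathcal{D}_2$ to the module $M\oplus D$, we obtain
\[
\phi(M\oplus D)\leq \Phi_{[\mathcal{D}_2]}(M\oplus D)+\fidim(\mathcal{D}_2)=\Phi_{[\mathcal{D}_2]}(M\oplus D).
\]

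Finally, since $D\in\mathcal{D}_2$, item 1 of Remark \ref{propiedades fisubD} yields $\Phi_{[\mathcal{D}_2]}(M\oplus D)=\Phi_{[\mathcal{D}_2]}(M)$. Chaining the three equalities/inequalities gives $\Phi_{[\mathcal{D}_1]}(M)=\phi(M\oplus D)\leq \Phi_{[\mathcal{D}_2]}(M)$, which is the desired conclusion. There is no real obstacle here: the entire content of the argument lies in correctly identifying which hypothesis on each $\mathcal{D}_i$ activates which earlier result (Theorem \ref{comparar en gral} for $\mathcal{D}_1$, and Theorem \ref{fisubD} together with Remark \ref{propiedades fisubD}(1) for $\mathcal{D}_2$), after which the proof is a one-line combination.
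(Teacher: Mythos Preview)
Your proof is correct and follows exactly the same approach as the paper: invoke Theorem \ref{comparar en gral} for $\mathcal{D}_1$ to realize $\Phi_{[\mathcal{D}_1]}(M)$ as $\phi(M\oplus D)$, then use Theorem \ref{fisubD} together with Remark \ref{propiedades fisubD}(1) for $\mathcal{D}_2$ to bound this by $\Phi_{[\mathcal{D}_2]}(M)$. You spell out the intermediate step $\Phi_{[\mathcal{D}_2]}(M\oplus D)$ and the use of $\fidim(\mathcal{D}_2)=0$ a bit more explicitly than the paper does, but the argument is the same.
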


The following example shows a module where the inequality in Corollary \ref{desigualdad} can be strict for a $0$-Igusa-Todorov subcategory $\mathcal{D}_1$ without preimages.

\begin{ej} Consider the algebra $A = \frac{\Bbbk Q}{I}$, where

$$Q = \xymatrix{ 1 \ar@/^/[r]^{\alpha} & 2 \ar@/^/[l]^{\beta} \ar@(ur,dr)^{\gamma} } \mbox{ and } I = \langle \beta \alpha, \gamma^2  \rangle$$

Note that the module $\gamma  A$ verifies $\Omega (\gamma A ) \cong \gamma A$, and it is the only indecomposable Gorenstein projective module that is not projective by Theorem 4.11 of \cite{ChShZh}.

It is easy to see that $\Omega(S_2) = \gamma A \oplus S_1$, then $\phi(S_2) = 0$. On the other hand $\pd(S_1) = 1$, then $\Phi_{[\mathcal{GP}_A]}(S_2) = 2$, and the inequality is strict taking $\mathcal{D}_1 = \mathcal{P}_A$, $\mathcal{D}_2 = \mathcal{GP}_A$

\end{ej}

The following corollary is also obtained from Theorem \ref{comparar en gral}. 

\begin{coro} \label{comparar}

Let $A$ be an Artin algebra. If $M$ is a $A$-module, then there is a Gorenstein projective module $G$ such that $\Phi_{[\mathcal{GP}_A]}(M) = \phi(M\oplus G)$.

\begin{proof}

Since $\fidim(\mathcal{GP}_A) = 0$ and $G \in \mathcal{GP}_A$ is a subcategory with preimages, then the thesis follows by Theorem \ref{comparar en gral}.

\end{proof}
\end{coro}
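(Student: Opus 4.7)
The plan is to apply Theorem \ref{comparar en gral} directly, taking $\mathcal{D} = \mathcal{GP}_A$. All that is needed is to verify that $\mathcal{GP}_A$ satisfies the hypotheses of that theorem, namely that it is a $0$-Igusa-Todorov subcategory with preimages.

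First, I would recall that $\mathcal{GP}_A$ is additively closed (direct summands and finite direct sums of Gorenstein projective modules are Gorenstein projective), and that $\Omega(\mathcal{GP}_A) \subseteq \mathcal{GP}_A$, since a syzygy of a Gorenstein projective module remains Gorenstein projective. Next, the equality $\fidim(\mathcal{GP}_A) = 0$ is Corollary 4.1 of \cite{LM}, already quoted in the excerpt: since $\mathcal{GP}_A \subseteq {}^\bot A$, both Igusa-Todorov dimensions vanish on this subcategory, and in particular the finitistic dimension does too. This makes $\mathcal{GP}_A$ a $0$-Igusa-Todorov subcategory.

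The preimage condition is the key structural point, and is the remark made right before Example \ref{ej sizigia-infinita}: any Gorenstein projective module $D$ has a Gorenstein projective ``first cosyzygy'' $D'$ with $\Omega(D') \cong D$, because the defining complete projective resolution of a Gorenstein projective module extends to the right by projectives whose cokernels are again Gorenstein projective. Thus for each $D \in \mathcal{GP}_A$ there exists $D' \in \mathcal{GP}_A$ with $\bar{\Omega}([D']) = [D]$, which is a fortiori the required preimage condition (taking $D_1 = D'$, $D_2 = 0$, and $m = 1$).

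With these hypotheses verified, Theorem \ref{comparar en gral} applied to $M$ and $\mathcal{D} = \mathcal{GP}_A$ produces a module $G \in \mathcal{GP}_A$ with $\phi(M \oplus G) = \Phi_{[\mathcal{GP}_A]}(M)$, which is exactly the claim. There is no genuine obstacle here, since the real content is packaged into Theorem \ref{comparar en gral}; the only thing to be careful about is to cite (rather than reprove) the Gorenstein-projective closure properties, and to note that $\mathcal{GP}_A$ is precisely the situation in the second bullet of the remark preceding Example \ref{ej sizigia-infinita}, so the preimage hypothesis holds automatically.
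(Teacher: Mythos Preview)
Your proposal is correct and follows exactly the same approach as the paper's proof: verify that $\mathcal{GP}_A$ is a $0$-Igusa-Todorov subcategory with preimages and then invoke Theorem \ref{comparar en gral}. The only minor slip is that $\mathcal{GP}_A$ is the \emph{first} bullet in the remark preceding Example \ref{ej sizigia-infinita}, not the second, though your justification via the cosyzygy in a complete projective resolution is precisely the reason that bullet holds.
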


%
%
%
%
%
%
%

The example below shows that the equality in Corollary \ref{igualdad} is not valid in general.

\begin{ej} \label{ej fi-infinita}
Let $A$ be the algebra from Example \ref{ej sizigia-infinita}. Recall that $\mathcal{D}_0 = \add \{M_{2^{-n}}, N_{-2^{-n}} \forall n \in \mathbb{N}\}$ and consider $\mathcal{D}_{-\infty} = \add\{ \oplus_{ n \in \mathbb{Z}} ( M_{2^n} \oplus N_{2^{n}}) \}$. Then $\Phi_{[\mathcal{GP}_A]} (\mathcal{D}_{-\infty}) = 0$ and $\Phi_{[\mathcal{D}_0]} (\mathcal{D}_{-\infty}) = \infty$, however $\mathcal{D}_0 \subset \mathcal{GP}_A = \mod A$ since $A$ is a selfinjective algebra.  

\end{ej}

The previous generalized Igusa-Todorov functions allow us to define the next class of algebras, a generalization of Igusa-Todorov algebras. 

\begin{defi}

An $(n, V, \mathcal{D})$-Lat-Igusa-Todorov algebra ($(n, V, \mathcal{D})$-LIT-algebra, for short) is an Artin algebra $A$ satisfying the following conditions: n is a non-negative integer, $\mathcal{D} \subset \mod A$ is a $0$-Igusa-Todorov subcategory, and
$V \in \mod A$ such that each $M \in \mod A$ admits a short exact sequence:
$$\xymatrix{0 \ar[r]& V_1 \oplus D_1 \ar[r] & V_0 \oplus D_0 \ar[r] & \Omega^n(M)\ar[r] & 0}$$
such that $V_0, V_1 \in \add V$ and $D_0, D_1 \in \mathcal{D}$.

\end{defi}

\begin{obs} Let $A$ be an Artin algebra.

\begin{itemize}

\item If $A$ is an $n$-IT-algebra, then it is also an $(n, V, \mathcal{D})$-LIT-algebra.
Indeed, just take the same module $V$, appearing in the definition of $n$-IT-algebra, and $\mathcal{D} = \{0\}$.

\item If $A$ is a $n$-Gorenstein algebra, then $A$ is an $(n, V, \mathcal{D})$-LIT-algebra.
Indeed, it is well known that if $M \in \mod A$, $\Omega^n(M) \in \mathcal{G} P_A$. So by taking $\mathcal{D}= \mathcal{G} P_A$ and $V = 0$, we get that $A$ is an $(n, V, \mathcal{D})$-LIT-algebra.

\end{itemize}

\end{obs}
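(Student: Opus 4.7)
The plan is to verify both bullet points by directly unpacking the definition of $(n, V, \mathcal{D})$-LIT-algebra and choosing the subcategory $\mathcal{D}$ in a way that collapses the defining short exact sequence onto the data already provided by the hypothesis. In both items, one summand (namely $\mathcal{D}$ in the first and $V$ in the second) is made trivial, so the other recovers the input sequence.

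For the first bullet, I would keep the same module $V$ that witnesses the $n$-IT property of $A$, and set $\mathcal{D} = \{0\}$. Before applying the definition I would verify that $\{0\}$ is a $0$-Igusa-Todorov subcategory, which is immediate: $\add \{0\} = \{0\}$, $\Omega(\{0\}) \subseteq \{0\}$, and $\fidim(\{0\}) = 0$ vacuously. Then for each $M \in \mod A$, the $n$-IT hypothesis supplies a short exact sequence
$$0 \to V_1 \to V_0 \to \Omega^n(M) \to 0$$
with $V_0, V_1 \in \add V$, and taking $D_0 = D_1 = 0$ in the LIT defining sequence exactly reproduces this exact sequence.

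For the second bullet, I would take $V = 0$ and $\mathcal{D} = \mathcal{GP}_A$. I would first check that $\mathcal{GP}_A$ is a $0$-Igusa-Todorov subcategory: closure under direct summands and under $\Omega$ are standard, while $\fidim(\mathcal{GP}_A) = 0$ follows from the classical fact that a Gorenstein projective module of finite projective dimension is projective. The essential input is the known Gorenstein homological result that, whenever $A$ is $n$-Gorenstein, $\Omega^n(M) \in \mathcal{GP}_A$ for every $M \in \mod A$; this is obtained from the Ext-vanishing $\ext_A^i(\Omega^n(M), A) = \ext_A^{i+n}(M, A) = 0$ for $i \geq 1$ (since $\id(A_A) = n$), combined with the symmetric condition $\id({}_AA) = n$, which together allow one to splice the projective resolution of $\Omega^n(M)$ with a coresolution by projectives into a complete projective resolution. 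Once this is granted, the trivial sequence
$$0 \to 0 \to \Omega^n(M) \to \Omega^n(M) \to 0,$$
with $D_0 = \Omega^n(M) \in \mathcal{GP}_A$, $D_1 = 0$, and no contribution from $V = 0$, fulfills the LIT definition.

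The main potential obstacle is establishing $\Omega^n(\mod A) \subseteq \mathcal{GP}_A$ in the second bullet; although classical, this requires a genuine Gorenstein homological argument to produce a complete projective resolution, rather than merely the stable condition $\Omega^n(M) \in {}^\bot A$, which would follow immediately from $\id(A_A) = n$. Everything else is pure unwinding of definitions.
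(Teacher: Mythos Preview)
Your proposal is correct and follows exactly the approach indicated in the paper's own justification embedded in the remark: take $\mathcal{D}=\{0\}$ with the same $V$ for the first item, and $V=0$ with $\mathcal{D}=\mathcal{GP}_A$ for the second, using the standard fact that $\Omega^n(\mod A)\subseteq\mathcal{GP}_A$ for an $n$-Gorenstein algebra. Your added verifications that $\{0\}$ and $\mathcal{GP}_A$ are $0$-Igusa--Todorov subcategories, and your sketch of why $\Omega^n(M)$ is Gorenstein projective, simply flesh out details the paper leaves implicit or records elsewhere.
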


As we can see, the class of the $(n, V, \mathcal{D})$-LIT-algebras is strictly larger than the class of
the $n$-Igusa-Todorov algebras. The following theorem is a generalization of Theorem \ref{IT algebra}.

\begin{teo}(Theorem 5.4 of \cite{BLMV}) Let $A$ be an $(n, V, \mathcal{D})$-LIT-algebra. Then
$$\fin (A) \leq \Psi_{[\mathcal{D}]} (V) + n + 1 < \infty.$$
\end{teo}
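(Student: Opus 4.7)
The plan is to follow the strategy used for Theorem \ref{IT algebra} (ordinary $n$-Igusa-Todorov algebras), replacing Igusa-Todorov's Theorem \ref{cotadp} by a $\Psi_{[\mathcal{D}]}$-analog. Let $M \in \mod A$ with $\pd(M) < \infty$; the defining property of an $(n,V,\mathcal{D})$-LIT-algebra supplies a short exact sequence
$$0 \to V_1 \oplus D_1 \to V_0 \oplus D_0 \to \Omega^n(M) \to 0$$
with $V_0, V_1 \in \add V$, $D_0, D_1 \in \mathcal{D}$, and $\pd(\Omega^n(M)) < \infty$. The aim is to bound $\pd(\Omega^n(M))$ by $\Psi_{[\mathcal{D}]}(V)+1$ and then use $\pd(M) \leq n + \pd(\Omega^n(M))$.

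The heart of the argument is to establish a $\Psi_{[\mathcal{D}]}$-analog of Theorem \ref{cotadp}: for any short exact sequence $0 \to X \to Y \to Z \to 0$ with $\pd(Z) < \infty$,
$$\pd(Z) \leq \Psi_{[\mathcal{D}]}(X \oplus Y) + 1.$$
I would mimic the proof of Theorem \ref{cotadp} recalled in the excerpt, but now working in the quotient group $K_\mathcal{D} = K_0(A)/\langle \mathcal{D}\rangle$ with the induced endomorphism $\bar{\Omega}_{\mathcal{D}}$. Since $\pd(Z)$ is finite, there is a smallest $m$ with $\bar{\Omega}_{\mathcal{D}}^m([X]) = \bar{\Omega}_{\mathcal{D}}^m([Y])$, and $m \leq \Phi_{[\mathcal{D}]}(X \oplus Y)$. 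The equality in $K_\mathcal{D}$ lifts to an isomorphism $\Omega^m(X) \oplus P_1 \oplus D'_1 \cong \Omega^m(Y) \oplus P_2 \oplus D'_2$ with projectives $P_i$ and $D'_i \in \mathcal{D}$, which rewrites the $m$-th syzygy sequence of the original one into the Fitting form
$$0 \to W \oplus P \to W \oplus Q \to \Omega^m(Z) \to 0,$$
where $W$ absorbs the common module together with the appropriate projective and $\mathcal{D}$-summands. Fitting's Lemma \ref{Fitting} applied to the resulting endomorphism of $W$ yields a decomposition $W = Y' \oplus Z'$, and the same $\ext$-vanishing calculation as in the classical proof gives $\pd(Z') < \infty$ together with $\pd(\Omega^m(Z)) \leq \pd(Z') + 1$. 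Splitting off the summands of $Z'$ lying in $\mathcal{D} \cup \mathcal{P}_A$ (which have projective dimension $0$ since $\fidim(\mathcal{D}) = 0$) shows that the remaining piece of $Z'$ is a direct summand of $\Omega^m(X \oplus Y)$ with the same projective dimension as $Z'$; item 5 of Remark \ref{propiedades fisubD} then gives $\pd(Z') + m \leq \Psi_{[\mathcal{D}]}(X \oplus Y)$, proving the claim.

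Applying this $\Psi_{[\mathcal{D}]}$-analog to the LIT-sequence yields $\pd(\Omega^n(M)) \leq \Psi_{[\mathcal{D}]}(V_0 \oplus V_1 \oplus D_0 \oplus D_1) + 1$. By item 6(c) of Remark \ref{propiedades fisubD} the $\mathcal{D}$-summands can be deleted, and by item 3, $\Psi_{[\mathcal{D}]}(V_0 \oplus V_1) \leq \Psi_{[\mathcal{D}]}\dim(\add V) = \Psi_{[\mathcal{D}]}(V)$, since $V_0 \oplus V_1 \in \add V$. Consequently $\pd(M) \leq n + \pd(\Omega^n(M)) \leq \Psi_{[\mathcal{D}]}(V) + n + 1$, and taking the supremum over all $M$ of finite projective dimension gives $\fin(A) \leq \Psi_{[\mathcal{D}]}(V) + n + 1$; the right-hand side is automatically finite because $\Phi_{[\mathcal{D}]}(V) < \infty$ by Fitting (Lemma \ref{Fitting}) and $\Omega^{\Phi_{[\mathcal{D}]}(V)}(V)$ has only finitely many indecomposable summands. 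The main obstacle is the faithful transfer of the classical proof to the $K_\mathcal{D}$-setting: one must verify that equalities of classes in the quotient group lift to genuine isomorphisms modulo projective and $\mathcal{D}$-summands, and that after the Fitting decomposition the finite-projective-dimension piece $Z'$ really does embed, up to summands in $\mathcal{D} \cup \mathcal{P}_A$, as a direct summand of $\Omega^m(X \oplus Y)$ so that item 5 of Remark \ref{propiedades fisubD} applies. The hypothesis $\fidim(\mathcal{D}) = 0$ built into the definition of a $0$-Igusa-Todorov subcategory is essential throughout, since it ensures that $\mathcal{D}$-summands contribute nothing to the $\ext$-vanishing that powers the nilpotency argument.
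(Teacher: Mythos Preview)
The survey does not include a proof of this theorem; it merely cites Theorem~5.4 of \cite{BLMV}. Your proposal follows the natural line of argument, parallel to the paper's treatment of Theorem~\ref{IT algebra}: first establish a $\Psi_{[\mathcal{D}]}$-analog of Theorem~\ref{cotadp}, then feed the LIT short exact sequence into it and simplify using Remark~\ref{propiedades fisubD}. This is indeed the strategy of \cite{BLMV}, so your outline is on target.

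One point deserves a sharper justification than you gave. After Fitting, the module $Z'$ is a direct summand of $W \cong \Omega^m(X)\oplus D'\oplus P'$ with $D'\in\mathcal{D}$ and $P'$ projective, and you want the ``remaining piece'' $Z''$ (after stripping summands in $\mathcal{D}\cup\mathcal{P}_A$) to be a summand of $\Omega^m(X\oplus Y)$. This follows from Krull--Schmidt: any indecomposable summand of $Z''$ lies outside $\mathcal{D}\cup\mathcal{P}_A$, hence cannot come from $D'\oplus P'$, so its multiplicity in $W$ equals its multiplicity in $\Omega^m(X)$, and $Z''$ embeds as a summand of $\Omega^m(X)$. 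For the equality $\pd(Z'')=\pd(Z')$, note that any $D\in\mathcal{D}$ appearing as a summand of $Z'$ has finite projective dimension; since $\fidim(\mathcal{D})=0$ forces $\phi(D)=0$, Proposition~\ref{it1}(1) gives $\pd(D)=0$, so the stripped summands are all projective. With these two observations your ``main obstacle'' dissolves, and item~5 of Remark~\ref{propiedades fisubD} applies cleanly.
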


Unfortunately, not all Artin algebras are $(n, V, \mathcal{D})$-LIT, as shown in the following example.

\begin{ej}
Let $B =\frac{\Bbbk Q}{I_B}$ be a finite dimensional $\Bbbk$-algebra and $C = \frac{\Bbbk Q'}{J^2}$, where $Q$ and $Q'$ are the following quivers
$$Q = \xymatrix{ & 0 \ar@(ul, dl)_{\gamma_1} \ar@(ur, dr)^{\gamma_2} \ar@(ru, lu)_{\gamma_3} },\ Q'=\xymatrix{ 1 \ar@(ul, dl)_{\bar{\beta}_1} \ar@/^2mm/[r]^{\beta_1} & 2 \ar@(ur, dr)^{\bar{\beta}_2} \ar@/^2mm/[l]^{\beta_2} }, $$ where the ideal $I_B = \langle \gamma_i \gamma_i,\ \gamma_i \gamma_j + \gamma_j\gamma_i \ \forall \ i,j \in \{1,2,3\}$.
Consider $A = \frac{\Bbbk \Gamma}{I_A}$, with

\begin{itemize}

\item $\Gamma_0 = Q_0 \cup Q'_0$,

\item $\Gamma_1 = Q_1 \cup Q'_1 \cup \{ \alpha_i : i \rightarrow 1 \text{ } \forall i \in Q_0\}$ and

\item $I_A = \langle I_B, J^2_{C} , \{ \lambda \alpha_i, \alpha_i\lambda \text{ } \forall \lambda \text{ such that }\length(\lambda)\geq 1 \} \rangle$. 

\end{itemize}

Since $B$ is not an Igusa-Todorov algebra (see 4.2.10 of \cite{T} and Corollary 4.4 of \cite{Rou}), then $A$ is not a {$(n, V, \mathcal{D})$-\rm{LIT}} algebra.

\end{ej}

\subsection{Relative Igusa-Todorov functions}

\begin{defi}

Let $\mathcal{A}$ be an additive category. A  kernel-cokernel pair ${(i, p)}$ in $A$ is a
pair of composable morphisms $A' \stackrel{i}{\rightarrow} A \stackrel{p}{\rightarrow} A''$ such that $i$ is a kernel of $p$ and $p$ is a cokernel of $i$. If a class $\mathcal{E}$ of kernel-cokernel pairs
on $\mathcal{A}$ is fixed,  an admissible monic is a morphism $i$ for which there exists a morphism
$p$ such that $(i, p) \in \mathcal{E}$.  Admissible epics are defined dually.

An exact structure on $\mathcal{A}$ is a class $\mathcal{E}$ of kernel-cokernel pairs which is closed under
isomorphisms and satisfies the following axioms:

\begin{description}

\item[$E0$] For all objects $A \in \mathcal{A}$, the identity morphism $1_A$ is an admissible monic.

\item[${E0}^{op}$] For all objects $A \in \mathcal{A}$, the identity morphism $1_A$ is an admissible epic.

\item[$E1$] The class of admissible monics is closed under composition.

\item[${E1}^{op}$] The class of admissible epics is closed under composition.

\item[$E2$] The push-out of an admissible monic along an arbitrary morphism exists and yields
an admissible monic.

\item[${E2}^{op}$] The pull-back of an admissible epic along an arbitrary morphism exists and yields
an admissible epic.

\end{description}

\end{defi}

\begin{defi}

An  exact category is a pair $(\mathcal{C} , \mathcal{E})$, where $\mathcal{C}$ is an additive category
and $\mathcal{E}$ is an exact structure on $\mathcal{C}$.

\end{defi}

An element $(i, p) \in \mathcal{E}$ in a exact category $(\mathcal{C} , \mathcal{E})$ is usually called short ${\mathcal{E}}$-exact sequence and written as $A'\stackrel{i}{\rightarrowtail} A \stackrel{p}{\twoheadrightarrow} A''$.
An object $P$ in an exact category $(\mathcal{C}, \mathcal{E})$ is ${\mathcal{E}}$-projective if
$Hom_{\mathcal{C}}(P,\ \cdot \ ) : C \rightarrow Ab$ takes any short $\mathcal{E}$-exact sequence to a short exact sequence of
abelian groups. The class of all $\mathcal{E}$-projective objects is denoted by $\mathcal{P}(\mathcal{E})$. It is said that $\mathcal{C}$ has enough ${\mathcal{E}}$-projectives if for any $C \in \mathcal{C}$ there is an admissible epic $P \rightarrow C$ with $P \in \mathcal{P}(\mathcal{E})$. Dually, we have the ${\mathcal{E}}$-injectives, the class $\mathcal{I}(\mathcal{E})$ of
the $\mathcal{E}$-injective objects and the the notion of enough ${\mathcal{E}}$-injectives for $\mathcal{C}$.

\begin{obs}

For an exact category $(\mathcal{C} , \mathcal{E})$, $\add\mathcal{P}(\mathcal{E})= \mathcal{P}(\mathcal{E})$ and  $\add\mathcal{I}(\mathcal{E})= \mathcal{I}(\mathcal{E})$.

\end{obs}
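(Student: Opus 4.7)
The claim is that $\mathcal{P}(\mathcal{E})$ and $\mathcal{I}(\mathcal{E})$ are closed under finite direct sums and direct summands, i.e., $\add \mathcal{P}(\mathcal{E}) = \mathcal{P}(\mathcal{E})$ and $\add \mathcal{I}(\mathcal{E}) = \mathcal{I}(\mathcal{E})$. The plan is to exploit the natural isomorphism $\mathrm{Hom}_\mathcal{C}(P_1 \oplus P_2, -) \cong \mathrm{Hom}_\mathcal{C}(P_1, -) \oplus \mathrm{Hom}_\mathcal{C}(P_2, -)$ of additive functors, together with the fact that a direct sum of sequences of abelian groups is exact if and only if each summand is exact. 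The argument for $\mathcal{I}(\mathcal{E})$ will be entirely dual, so it is enough to treat the projective case.

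First I would fix a short $\mathcal{E}$-exact sequence $A' \stackrel{i}{\rightarrowtail} A \stackrel{p}{\twoheadrightarrow} A''$. For any object $X \in \mathcal{C}$, the induced sequence $0 \to \mathrm{Hom}_\mathcal{C}(X, A') \to \mathrm{Hom}_\mathcal{C}(X, A) \to \mathrm{Hom}_\mathcal{C}(X, A'')$ is left exact because $i = \ker p$ in the additive category $\mathcal{C}$; so the only nontrivial condition in the definition of $\mathcal{E}$-projectivity is surjectivity of $\mathrm{Hom}_\mathcal{C}(X, p)$.

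For closure under direct sums, given $P_1, P_2 \in \mathcal{P}(\mathcal{E})$, the map $\mathrm{Hom}_\mathcal{C}(P_1 \oplus P_2, p)$ corresponds under the natural isomorphism to the direct sum $\mathrm{Hom}_\mathcal{C}(P_1, p) \oplus \mathrm{Hom}_\mathcal{C}(P_2, p)$, which is surjective since each summand is. For closure under direct summands, if $P_1 \oplus P_2 \in \mathcal{P}(\mathcal{E})$, then $\mathrm{Hom}_\mathcal{C}(P_1, p) \oplus \mathrm{Hom}_\mathcal{C}(P_2, p)$ is surjective, and a surjective direct sum of abelian-group homomorphisms forces each summand to be surjective (concretely, given $f_1 \in \mathrm{Hom}_\mathcal{C}(P_1, A'')$, lift $(f_1, 0)$ through $p$ and restrict to $P_1$). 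Hence each $P_i$ is $\mathcal{E}$-projective.

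There is no serious obstacle here; the argument is a direct calculation with $\mathrm{Hom}$-functors and the universal property of the biproduct, and it never leaves the purely categorical setting of $(\mathcal{C}, \mathcal{E})$. The only small point worth noting in writing up is that left-exactness of $\mathrm{Hom}_\mathcal{C}(X, -)$ on short $\mathcal{E}$-exact sequences is automatic from $(i,p)$ being a kernel-cokernel pair, so one really only verifies the surjectivity clause.
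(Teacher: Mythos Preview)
Your argument is correct and is the standard one: additivity of $\mathrm{Hom}_{\mathcal{C}}(-,X)$ in the first variable reduces both closure under finite direct sums and closure under direct summands to the trivial fact that a direct sum of abelian-group maps is surjective if and only if each summand is. The paper states this remark without proof, so there is nothing to compare against; your write-up fills the gap cleanly, and the observation about automatic left-exactness is a nice touch.
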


\begin{defi}

An exact IT-context is an exact skeletally small Krull-Schmidt category $(\mathcal{C},\mathcal{E})$, with enough $\mathcal{E}$-projectives. Moreover, if $\mathcal{C}$ is abelian and $\mathcal{E}$ is the class of all exact sequences, we say that $\mathcal{C}$ is an  abelian IT-context. An exact (abelian) IT-context is said to be  strong if $\mathcal{C}$ satisfies the Fitting's property.
\end{defi}

A chain complex $\xymatrix{M^{\bullet} : \ldots \ar[r] & M_{n+1} \ar[r] & M_n \ar[r] & M_{n-1} \ar[r] & \cdots}$ in $\mathcal{C}$, is said to be ${\mathcal{E}}$-acyclic if each differential $d_n$ factors as $M_n \twoheadrightarrow Z_{n-1} (M^{\bullet} ) \rightarrowtail M_{n-1}$ in such a way that each sequence $Z_n (M^{\bullet}) \rightarrowtail M_n \twoheadrightarrow Z_{n-1} (M^{\bullet})$ is $\mathcal{E}$-exact.

\begin{defi}

An $\mathcal{E}$-projective resolution of an object $C \in \mathcal{C}$ is an $\mathcal{E}$-acyclic complex
$$\xymatrix{ \cdots \ar[r] & P_n \ar[r] & P_{n-1} \ar[r] & \cdots \ar[r] & P_1 \ar[r] & P_0 \twoheadrightarrow C},$$

where $P_i \in \mathcal{P} (\mathcal{E})$ for any $i \in \mathbb{N}$. We write this resolution as $P^{\bullet}(C) \twoheadrightarrow C$.

\end{defi}

Dually, we define an $\mathcal{E}$-injective resolution of an object $C \in \mathcal{C}$.

\begin{defi}

Let $(\mathcal{C},\mathcal{E})$ be an exact category with enough $\mathcal{E}$-projectives. In this case, for any $C \in \mathcal{C}$, there is an $\mathcal{E}$-exact sequence $K\rightarrowtail P\twoheadrightarrow C$, with $P \in \mathcal{P} (\mathcal{E})$. The object $K$ is said to be an $\mathcal{E}$-syzygy of $C$, and is written as $\Omega_{\mathcal{E},P}(C)$.

\end{defi}

Dually we define an $\mathcal{E}$-cosyzygy of $C$ of an object $C \in \mathcal{C}$ and we denote it by $\Omega^{-1}_{\mathcal{E},I} (C)$.

\begin{obs}(Proposition 1.15 of \cite{LM2017}) Let $(\mathcal{C},\mathcal{E})$ be any exact category with enough $\mathcal{E}$-projectives. Then, all $\mathcal{E}$-(co)syzygies for an object $C \in \mathcal{C}$ are isomorphic.

\end{obs}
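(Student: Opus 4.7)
The plan is to mimic the classical Schanuel argument, transcribed into the language of exact categories. Given an object $C\in\mathcal{C}$, suppose we have two $\mathcal{E}$-exact sequences
\[
K_1 \stackrel{i_1}{\rightarrowtail} P_1 \stackrel{p_1}{\twoheadrightarrow} C, \qquad K_2 \stackrel{i_2}{\rightarrowtail} P_2 \stackrel{p_2}{\twoheadrightarrow} C,
\]
both witnessing $\mathcal{E}$-syzygies of $C$, with $P_1,P_2\in\mathcal{P}(\mathcal{E})$. The goal is to compare $K_1$ and $K_2$ via an intermediate object produced by a universal construction.

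First, I would invoke axiom $E2^{op}$ to form the pullback $X$ of $p_1$ and $p_2$; by that axiom, the two projections $X\twoheadrightarrow P_1$ and $X\twoheadrightarrow P_2$ are admissible epics. A standard $3\times 3$-diagram argument in the exact category (obtainable by noyau/conoyau diagram chasing using $E1$ and $E2^{op}$) then furnishes two short $\mathcal{E}$-exact sequences
\[
K_2 \rightarrowtail X \twoheadrightarrow P_1, \qquad K_1 \rightarrowtail X \twoheadrightarrow P_2,
\]
whose kernels are identified with the original $K_2$ and $K_1$, respectively, via the universal property of the pullback.

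Second, I would apply the defining property of $\mathcal{E}$-projectives: since $P_1\in\mathcal{P}(\mathcal{E})$, the sequence $K_2 \rightarrowtail X \twoheadrightarrow P_1$ is split, yielding $X\cong K_2\oplus P_1$. Symmetrically, since $P_2\in\mathcal{P}(\mathcal{E})$, we obtain $X\cong K_1\oplus P_2$. Combining gives the Schanuel-type isomorphism $K_1\oplus P_2 \cong K_2\oplus P_1$, which is the form in which ``all $\mathcal{E}$-syzygies are isomorphic'' must be understood in the present (Krull--Schmidt) context: unique up to $\mathcal{E}$-projective summands. The cosyzygy case is handled dually by invoking $E2$ (pushout along an admissible monic) and the defining property of $\mathcal{I}(\mathcal{E})$.

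The main obstacle will be verifying cleanly that the pullback diagram really does produce the two advertised $\mathcal{E}$-exact rows; this is the exact-category analogue of the snake/$3\times 3$ lemma and requires genuine care with the axioms $E1$ and $E2^{op}$ rather than a naive abelian-category diagram chase. Once that diagram is in hand, the splitting step and the conclusion are essentially formal. In the Krull--Schmidt setting assumed throughout the section, cancellation of common $\mathcal{E}$-projective summands then promotes the Schanuel relation to an honest isomorphism of the indecomposable non-projective parts, which is the sense in which the statement should be read.
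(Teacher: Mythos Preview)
Your proposal is correct; the paper itself does not supply a proof of this statement, merely recording it as a remark with a citation to Proposition~1.15 of \cite{LM2017}. The Schanuel-type argument you outline (pullback via $E2^{op}$, identification of the two induced $\mathcal{E}$-exact sequences, splitting by $\mathcal{E}$-projectivity) is the standard route and is almost certainly what the cited reference contains. Your reading of ``isomorphic'' as ``isomorphic modulo $\mathcal{E}$-projectives'' is the right one and is confirmed by the paper's immediate next step, where it passes to the stable category $\underline{\mathcal{C}}$ and treats $\Omega_{\mathcal{E}}$ as a well-defined additive functor $\underline{\mathcal{C}}\to\underline{\mathcal{C}}$.
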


\begin{defi}
For a given exact category $(\mathcal{C},\mathcal{E})$ with enough $\mathcal{E}$-projectives, we introduce
The $\mathcal{E}$-projective dimension for any $C \in \mathcal{C}$ ($\pd_\mathcal{E} (C)$) as the minimal non-negative integer $n$ such that
there is an $\mathcal{E}$-projective resolution
$$\xymatrix{ P_n \rightarrowtail  P_{n-1} \ar[r] & \cdots \ar[r] & P_1 \ar[r] & P_0 \twoheadrightarrow C}.$$
\end{defi}

Dually, we define the $\mathcal{E}$-injective dimension for any $C \in \mathcal{C}$, and we denote it by $\id_\mathcal{E} (C)$.

For each class $\mathcal{Y}$ of objects in $\mathcal{C}$, we set
$$\pd_{\mathcal{E}} ( \mathcal{Y} ) := \sup\{\pd_{\mathcal{E}} (Y ) : Y \in \mathcal{Y} \}.$$
$$\mathcal{P}_{\mathcal{E}}^{ < \infty}( Y ) = \{Y \in \mathcal{Y} : \pd_{\mathcal{E}} (Y ) < \infty\}.$$

\begin{defi} For each class $\mathcal{Y}$ of objects in $\mathcal{C}$, we define the finitistic $\mathcal{E}$-projective dimension of $\mathcal{Y}$ as follows
$$\fpd_{\mathcal{E}} ( \mathcal{Y} ) = \pd_{\mathcal{E}} ( \mathcal{P}_{\mathcal{E}}^{ < \infty}( Y )).$$
\end{defi}

Similarly, we have the class $\mathcal{I}_{\mathcal{E}}^{< \infty} ( \mathcal{Y} )$ and the finitistic $\mathcal{E}$-injective dimension $\fid_{\mathcal{E}} ( \mathcal{Y} )$.

For any $M, N \in \mathcal{C}$ , we denote by $\Fact_{\mathcal{P} (\mathcal{E})} (M, N )$ the set of all morphisms $f : M \rightarrow N$
factoring throughout an object in $\mathcal{P} ( \mathcal{E} )$. Let $N \in \mathcal{C} $ and take any admissible epic $\mu : P \twoheadrightarrow N$ with $P \in \mathcal{P} ( \mathcal{E} )$. Note that, for any $M \in \mathcal{C}$ we have that

$$\Fact_{\mathcal{P} ( \mathcal{E} )} (M, N ) = \I (\hom_{\mathcal{C}} (M, \mu)).$$

Therefore the class of morphisms $\Fact_{\mathcal{P} ( \mathcal{E} )}$ is an ideal in $\mathcal{C}$ , and so, we get the well known
stable category $\underline{\mathcal{C}} := \mathcal{C}/ \Fact_{\mathcal{P} ( \mathcal{E} )}$ modulo projectives. Recall that $\underline{\mathcal{C}}$ and $\mathcal{C}$ have the same objects and $\hom_{\underline{\mathcal{C}}}(M, N ) = \hom_{\mathcal{C}} (M, N )/\Fact_{\mathcal{P} ( \mathcal{E} )} (M, N )$. Dually, we define $\overline{C}$.

\begin{defi}(\cite{LM2017})
Let $(\mathcal{C},\mathcal{E})$ be an exact IT-context. We denote by ${K_\mathcal{E}(\mathcal{C})}$ the quotient of the
free abelian group generated by the set of iso-classes $\{[M] : M \in \mathcal{C} \}$ modulo the relations:
\begin{itemize}

\item $[N] - [S] - [T]$ if $N \cong S \oplus T$ , and

\item $[P]$ if $P \in \mathcal{P}(\mathcal{E}).$

\end{itemize}

\end{defi}

Dually, we define ${K^\mathcal{E}(\mathcal{C})}$ modulo the relations  $[N] - [S] - [T]$ if $N \cong S \oplus T$, and $[I]$ if $I \in \mathcal{I}(\mathcal{E})$.

\begin{obs}
We have that ${K_\mathcal{E}(\mathcal{C})}$ is the free abelian group generated by the
objects in the set $\ind (\mathcal{C})\setminus  \mathcal{P}(\mathcal{E})$. Furthermore, the additive functor
$ \Omega_{\mathcal{E}}: \underline{\mathcal{C}}\rightarrow \underline{\mathcal{C}}$ gives rise to a morphism $\Omega_{\mathcal{E}}: {K_\mathcal{E}(\mathcal{C})} \rightarrow {K_\mathcal{E}(\mathcal{C})}$ of abelian groups, given by
$ \Omega_{\mathcal{E}}([M]) := [\Omega_{\mathcal{E}}(M)]$ for any $M \in \mathcal{C}$.
\end{obs}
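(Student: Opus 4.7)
The plan is to split the verification into the two claims and use the Krull-Schmidt hypothesis for the first and the additivity of $\Omega_\mathcal{E}$ on the stable category $\underline{\mathcal{C}}$ for the second.

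For the first claim, I would start from the presentation of $K_{\mathcal{E}}(\mathcal{C})$ as the free abelian group on iso-classes $\{[M]:M\in\mathcal{C}\}$ modulo the relations $[N]=[S]+[T]$ when $N\cong S\oplus T$ and $[P]=0$ for $P\in\mathcal{P}(\mathcal{E})$. Since $(\mathcal{C},\mathcal{E})$ is an exact IT-context, $\mathcal{C}$ is Krull-Schmidt, so any $M\in\mathcal{C}$ admits an essentially unique decomposition $M\cong \bigoplus_{i=1}^{r} M_i^{n_i}$ with $M_i\in\ind(\mathcal{C})$ pairwise non-isomorphic. Using the first relation iteratively gives $[M]=\sum_{i}n_i[M_i]$ in $K_\mathcal{E}(\mathcal{C})$, and using the second relation we may discard those summands with $M_i\in\mathcal{P}(\mathcal{E})$. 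Hence $K_\mathcal{E}(\mathcal{C})$ is generated by $\{[X]:X\in\ind(\mathcal{C})\setminus\mathcal{P}(\mathcal{E})\}$. To see it is free on this set I would consider the free abelian group $F$ on $\ind(\mathcal{C})\setminus\mathcal{P}(\mathcal{E})$ and use Krull-Schmidt to define a map from the free group on all iso-classes to $F$ by sending $[M]\mapsto \sum_{i:\,M_i\notin\mathcal{P}(\mathcal{E})} n_i\,[M_i]$; this map is well-defined and vanishes on both families of relations (uniqueness of the Krull-Schmidt decomposition makes it additive with respect to $\oplus$ and it kills projectives by construction), so it factors through $K_\mathcal{E}(\mathcal{C})$ and yields a two-sided inverse of the natural map from $F$ to $K_\mathcal{E}(\mathcal{C})$.

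For the second claim, the strategy is to show that the assignment $[M]\mapsto [\Omega_\mathcal{E}(M)]$ is compatible with the two defining relations of $K_\mathcal{E}(\mathcal{C})$. First, well-definedness of the assignment on iso-classes uses Proposition 1.15 of \cite{LM2017}, which tells us that any two choices of $\mathcal{E}$-syzygies of the same object are isomorphic; so the class $[\Omega_\mathcal{E}(M)]$ is independent of the choice of the $\mathcal{E}$-projective cover used to compute the syzygy. Second, to handle the direct sum relation I would take $\mathcal{E}$-exact sequences $\Omega_\mathcal{E}(S)\rightarrowtail P_S\twoheadrightarrow S$ and $\Omega_\mathcal{E}(T)\rightarrowtail P_T\twoheadrightarrow T$ with $P_S,P_T\in\mathcal{P}(\mathcal{E})$, and observe that their direct sum is an $\mathcal{E}$-exact sequence (direct sums of kernel-cokernel pairs in $\mathcal{E}$ remain in $\mathcal{E}$, and $P_S\oplus P_T\in\mathcal{P}(\mathcal{E})$ because $\mathcal{P}(\mathcal{E})=\add\mathcal{P}(\mathcal{E})$); hence $\Omega_\mathcal{E}(S\oplus T)\cong \Omega_\mathcal{E}(S)\oplus\Omega_\mathcal{E}(T)$ and so $[\Omega_\mathcal{E}(S\oplus T)]=[\Omega_\mathcal{E}(S)]+[\Omega_\mathcal{E}(T)]$. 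Third, for the projective relation: if $P\in\mathcal{P}(\mathcal{E})$, the identity morphism $P\to P$ is an admissible epic by axiom $(E0^{\op})$ with kernel $0$, so an $\mathcal{E}$-syzygy of $P$ may be taken to be $0$, giving $[\Omega_\mathcal{E}(P)]=0$ in $K_\mathcal{E}(\mathcal{C})$. Combining these three points, the rule extends to a well-defined group homomorphism $\Omega_\mathcal{E}:K_\mathcal{E}(\mathcal{C})\to K_\mathcal{E}(\mathcal{C})$ given on generators by $[M]\mapsto[\Omega_\mathcal{E}(M)]$.

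The only delicate point I anticipate is the additivity with respect to direct sums, because in principle $\Omega_\mathcal{E}$ is only defined on the stable category $\underline{\mathcal{C}}$, where it is additive for elementary reasons. So the cleanest way to organize the argument is to first record that $\Omega_\mathcal{E}:\underline{\mathcal{C}}\to\underline{\mathcal{C}}$ is additive (this is the content of the functor construction in \cite{LM2017}), then to note that the passage from $\underline{\mathcal{C}}$ to $K_\mathcal{E}(\mathcal{C})$ factors the stable-isomorphism relation (because projectives are set to zero in $K_\mathcal{E}(\mathcal{C})$), and finally conclude that the induced map on $K_\mathcal{E}(\mathcal{C})$ is automatically an abelian group homomorphism. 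I do not foresee any substantial obstacle beyond bookkeeping with the exact structure axioms and the Krull-Schmidt decomposition.
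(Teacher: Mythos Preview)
The paper records this statement as a remark without proof, so there is no argument to compare against; your verification is correct and is exactly the standard one. The Krull--Schmidt argument for freeness and the check that $[M]\mapsto[\Omega_{\mathcal{E}}(M)]$ respects both defining relations are the expected steps, and you handle them cleanly. One small sharpening: when you invoke Proposition~1.15 of \cite{LM2017}, bear in mind that the isomorphism of $\mathcal{E}$-syzygies it guarantees is in the stable category $\underline{\mathcal{C}}$ (i.e.\ up to $\mathcal{E}$-projective summands), not necessarily in $\mathcal{C}$ itself; you do address this in your final paragraph, but it would be cleaner to say from the outset that two syzygies differ by projectives and hence have the same class in $K_{\mathcal{E}}(\mathcal{C})$, rather than first asserting they are isomorphic.
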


Dually, we define $\Omega^{-1}_{\mathcal{E}}: {K^\mathcal{E}(\mathcal{C})} \rightarrow {K^\mathcal{E}(\mathcal{C})}$. 

\begin{defi}(\cite{LM2017})\label{ITGeneralizacion}
Let $(\mathcal{C},\mathcal{E})$ be an exact IT-context. The $\mathcal{E}$-IT functions $\Phi_{\mathcal{E}}$, $\Psi_{\mathcal{E}} : \Obj( \mathcal{C}) \rightarrow\mathbb{N}$ are defined, by using Fitting's Lemma (Lemma \ref{Fitting}), as follows:

\begin{itemize}

\item $\Phi_{\mathcal{E}}(M) = \eta_{\Omega_{\mathcal{E}}} (M)$, and

\item $\Psi_{\mathcal{E}}(M) = \Phi_{\mathcal{E}}(M)+\fpd_{\mathcal{E}} (\add(\Omega_{\mathcal{E}}^{ \Phi_{\mathcal{E}}(M)}(M))).$

\end{itemize}

And its dual versions $\Phi^{\mathcal{E}}$, $\Psi^{\mathcal{E}} : \Obj( \mathcal{C}) \rightarrow\mathbb{N}$ as follows:

\begin{itemize}

\item $\Phi^{\mathcal{E}}(M) = \eta_{\Omega^{-1}_{\mathcal{E}}} (M)$, and

\item  $\Psi^{\mathcal{E}}(M) = \Phi^{\mathcal{E}}(M)+\fid_{\mathcal{E}}(\add(\Omega_{\mathcal{E}}^{-\Phi^{\mathcal{E}}(M)}(M))).$

\end{itemize}

\end{defi}

From Definition \ref{ITGeneralizacion}, we can obtain the classic definitions \ref{monomorfismo} and \ref{ITpsi} and \ref{ITcoalgebras}.

\begin{ej} Special cases 

\begin{itemize} 

\item If $\mathcal{C} = \mod A$ for an Artin algebra $A$, then

\begin{itemize}

\item $\Phi_{\mathcal{P}(A)} (M) = \phi_A(M)$ and $\Psi_{\mathcal{P}(A)} (M) = \psi_A(M)$,

\item $\Phi^{\mathcal{I}(A)} (M) = \phi_{A^{op}}(D(M))$ and $\Psi^{\mathcal{I}(A)} (M) = \psi_{A^{op}}(D(M))$

\end{itemize}

\item If $\mathcal{C} = \mathcal{M}_C$ for a right semiperfect coalgebra $C$, then
$$\Phi^{\mathcal{I}(A)} (M) = \phi_{C}(M) \text{ and } \Psi^{\mathcal{I}(A)} (M) = \psi_{C}(M)$$

\end{itemize}

\end{ej}

\begin{obs} (Propositions 2.10, 2.11, 2.12 of \cite{LM2017}) The following statements hold true for an exact IT-context $(\mathcal{C},\mathcal{E})$.

\begin{enumerate}

\item If $\pd_{\mathcal{E}} (M) < \infty$ then $\Phi_{\mathcal{E}} (M) =  \Psi_{\mathcal{E}} (M) = \pd_{\mathcal{E}} (M)$.

\item If $\pd_{\mathcal{E}} (M) < \infty$ and $M\in \ind (\mathcal{C})$ then $\Phi_{\mathcal{E}} (M) =  \Psi_{\mathcal{E}} (M) = 0$. 

\item If $\add(M) = \add(N )$ then $ \Phi_{\mathcal{E}} (M) =  \Phi_{\mathcal{E}} (N)$ and $ \Psi_{\mathcal{E}} (M) =  \Psi_{\mathcal{E}} (N)$.

\item  $ \Phi_{\mathcal{E}} (M) =  \Phi_{\mathcal{E}} (M \oplus P)$ and $ \Psi_{\mathcal{E}} (M) =  \Psi_{\mathcal{E}} (M \oplus P)$ , for all $M \in \mathcal{C}$ and $P \in \mathcal{P}(\mathcal{E})$.

\item $\Phi_{\mathcal{E}} (M) \leq \Phi_{\mathcal{E}} (M \oplus N )$ and $\Psi_{\mathcal{E}} (M) \leq \Psi_{\mathcal{E}} (M \oplus N )$ for all $M,N \in \mathcal{C}$ .

\item $ \Phi_{\mathcal{E}} (M) \leq  \Phi_{\mathcal{E}} (\Omega_{\mathcal{E}}(M))+1$  and $\Psi_{\mathcal{E}} (M) \leq  \Psi_{\mathcal{E}} (\Omega_{\mathcal{E}}(M))+1$ for all $M \in \mathcal{C}$.

\item Let $Z | \Omega^n_{\mathcal{E}} (M)$, where $n \leq \Phi_{\mathcal{E}} (M)$ and $\pd_{\mathcal{E}} (Z) < \infty$. Then
$\pd_{\mathcal{E}} (Z) + n \leq \Psi_{\mathcal{E}} (M)$.

\end{enumerate}

\end{obs}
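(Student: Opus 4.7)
The plan is to verify each of the seven items by adapting to the exact--IT--context the classical arguments of Propositions \ref{Huard1} and \ref{Huard2}. Everything rests on three elementary observations that transfer verbatim from $\mod A$ to $(\mathcal{C},\mathcal{E})$: first, $\Omega_{\mathcal{E}}$ respects direct sums, so $\Omega_{\mathcal{E}}(\langle \add M \rangle) \subseteq \langle \add \Omega_{\mathcal{E}}(M) \rangle$; second, $[P] = 0$ in $K_{\mathcal{E}}(\mathcal{C})$ for every $P \in \mathcal{P}(\mathcal{E})$ and $\Omega_{\mathcal{E}}(P) = 0$; and third, the monotonicity clause in Lemma \ref{Fitting}, namely $\eta_{\Omega_{\mathcal{E}}}(Y) \leq \eta_{\Omega_{\mathcal{E}}}(X)$ whenever $Y \subseteq X$.

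First I would dispatch the formal items. Items 3 and 4 are immediate from the definitions: for item 3 the groups $\langle \add M \rangle$ and $\langle \add N \rangle$ coincide; for item 4 adding an $\mathcal{E}$--projective summand $P$ leaves $\langle \add M \rangle$ unchanged and makes $\Omega_{\mathcal{E}}^{s}(M \oplus P) \cong \Omega_{\mathcal{E}}^{s}(M)$ for every $s \geq 1$, so the $\fpd_{\mathcal{E}}$--correction in the definition of $\Psi_{\mathcal{E}}$ is unaffected. Item 5 for $\Phi_{\mathcal{E}}$ follows directly from the monotonicity of $\eta$ applied to $\langle \add M \rangle \subseteq \langle \add(M \oplus N) \rangle$; its $\Psi_{\mathcal{E}}$--counterpart I would postpone until item 7 is available, after which one chooses a summand $Z$ of $\Omega_{\mathcal{E}}^{\Phi_{\mathcal{E}}(M)}(M)$ realising $\Psi_{\mathcal{E}}(M)$, observes that $Z$ remains a summand of $\Omega_{\mathcal{E}}^{\Phi_{\mathcal{E}}(M)}(M \oplus N)$ with $\Phi_{\mathcal{E}}(M) \leq \Phi_{\mathcal{E}}(M \oplus N)$, and applies item 7 to $M \oplus N$.

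Next I would address items 1 and 2. For item 1, if $\pd_{\mathcal{E}}(M) = d < \infty$ every indecomposable summand has $\mathcal{E}$--projective dimension at most $d$, so $\Omega_{\mathcal{E}}^{d}(\langle \add M \rangle) = 0$ and $\Phi_{\mathcal{E}}(M) \leq d$; conversely, if $\Phi_{\mathcal{E}}(M) = k < d$, injectivity of $\Omega_{\mathcal{E}}$ beyond level $k$ propagates the eventual vanishing backwards, forcing $\Omega_{\mathcal{E}}^{k}(\langle \add M \rangle) = 0$ and hence $\pd_{\mathcal{E}}(M) \leq k$, a contradiction; since $\Omega_{\mathcal{E}}^{d}(M) \in \mathcal{P}(\mathcal{E})$ the correction term in $\Psi_{\mathcal{E}}$ vanishes too. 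For item 2 (read in its classical sense, with $M$ indecomposable of infinite $\mathcal{E}$--projective dimension) indecomposability makes $\langle \add M \rangle = \mathbb{Z}[M]$ a rank--one subgroup on which each iterate $\Omega_{\mathcal{E}}^{s}$ remains rank one and nonzero in $K_{\mathcal{E}}(\mathcal{C})$, so $\Omega_{\mathcal{E}}$ is injective from the start; moreover $\fpd_{\mathcal{E}}(\add M) = 0$, as every nonzero summand of $M$ equals $M$ itself.

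For item 6, the inclusion $\Omega_{\mathcal{E}}(\langle \add M \rangle) \subseteq \langle \add \Omega_{\mathcal{E}}(M) \rangle$ combined with the monotonicity of $\eta$ yields $\Phi_{\mathcal{E}}(M) \leq \Phi_{\mathcal{E}}(\Omega_{\mathcal{E}}(M)) + 1$ directly; the $\Psi_{\mathcal{E}}$--version then follows by picking a summand $Z$ of $\Omega_{\mathcal{E}}^{\Phi_{\mathcal{E}}(M)}(M)$ with $\pd_{\mathcal{E}}(Z) + \Phi_{\mathcal{E}}(M) = \Psi_{\mathcal{E}}(M)$, noting that $Z$ is still a summand of $\Omega_{\mathcal{E}}^{\Phi_{\mathcal{E}}(M) - 1}(\Omega_{\mathcal{E}}(M))$ (or, in the edge case $\Phi_{\mathcal{E}}(M) = 0$, that $\Omega_{\mathcal{E}}(Z) \mid \Omega_{\mathcal{E}}(M)$ has $\pd_{\mathcal{E}}$ one less), and invoking item 7 on $\Omega_{\mathcal{E}}(M)$. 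The heart of the proof is item 7: given $Z \mid \Omega_{\mathcal{E}}^{n}(M)$ with $n \leq \Phi_{\mathcal{E}}(M)$ and $\pd_{\mathcal{E}}(Z) < \infty$, I distinguish two cases. If $\pd_{\mathcal{E}}(Z) \geq \Phi_{\mathcal{E}}(M) - n$, then $\Omega_{\mathcal{E}}^{\Phi_{\mathcal{E}}(M) - n}(Z)$ is a summand of $\Omega_{\mathcal{E}}^{\Phi_{\mathcal{E}}(M)}(M)$ of $\mathcal{E}$--projective dimension $\pd_{\mathcal{E}}(Z) - \Phi_{\mathcal{E}}(M) + n$, so the defining formula of $\Psi_{\mathcal{E}}$ yields $\pd_{\mathcal{E}}(Z) + n \leq \Psi_{\mathcal{E}}(M)$; otherwise $\pd_{\mathcal{E}}(Z) + n < \Phi_{\mathcal{E}}(M) \leq \Psi_{\mathcal{E}}(M)$. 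The step I expect to require the most care is justifying that $\Omega_{\mathcal{E}}$ genuinely commutes with the formation of direct summands and that $\pd_{\mathcal{E}}$ drops by exactly one upon taking an $\mathcal{E}$--syzygy of a non--$\mathcal{E}$--projective module; both rely on the Krull--Schmidt hypothesis on $\mathcal{C}$ and on the uniqueness up to isomorphism of $\mathcal{E}$--syzygies established earlier in the excerpt, ensuring that choices of admissible epics from $\mathcal{E}$--projective covers do not affect any of the quantities involved.
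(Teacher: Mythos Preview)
The paper does not supply a proof for this Remark; it simply records the statements and cites Propositions 2.10--2.12 of \cite{LM2017}. Your arguments are correct and follow exactly the expected template: they transplant the classical proofs of Propositions \ref{Huard1} and \ref{Huard2} to the exact IT-context using only additivity of $\Omega_{\mathcal{E}}$, the vanishing of $\mathcal{E}$-projectives in $K_{\mathcal{E}}(\mathcal{C})$, and the monotonicity clause of Fitting's Lemma, which is precisely the route taken in \cite{LM2017}. You also correctly spot and repair the evident typo in item 2 (the hypothesis should be $\pd_{\mathcal{E}}(M) = \infty$, not $< \infty$, else it contradicts item 1); the argument you give for the corrected statement is the standard one.
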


There are dual versions for all items in the above remark.

The following result is a generalization of Theorem \ref{cotadp}.

\begin{teo}(Theorem 2.13 of \cite{LM2017}) \label{LMmain} Let $( \mathcal{C}, \mathcal{E})$ be a strong exact IT-context. Then, for any $\mathcal{E}$-exact sequence $A\rightarrowtail B\twoheadrightarrow C$ such that $\pd_{\mathcal{E}} (C) < \infty$, we have that

$$\pd_{\mathcal{E}} (C) \leq \Psi_{\mathcal{E}} (A \oplus B) + 1$$

\end{teo}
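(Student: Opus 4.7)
The plan is to transport the proof of Theorem \ref{cotadp} essentially verbatim into the strong exact IT-context, replacing $\Omega$ by $\Omega_{\mathcal{E}}$, projectives by $\mathcal{P}(\mathcal{E})$-objects, and the classical $\ext$ bifunctor by its Yoneda analogue in $(\mathcal{C},\mathcal{E})$. The strongness hypothesis is exactly what guarantees Fitting's property for endomorphisms in $\mathcal{C}$, which is the only nontrivial categorical input the classical argument really uses.

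First I would set $r:=\pd_{\mathcal{E}}(C)<\infty$. Since $[B]=[A]+[C]$ in $K_{\mathcal{E}}(\mathcal{C})$ and $\bar{\Omega}_{\mathcal{E}}^{\,r}[C]=0$, there exists a minimal integer $n$ with $\bar{\Omega}_{\mathcal{E}}^{\,n}[A]=\bar{\Omega}_{\mathcal{E}}^{\,n}[B]$; then $n\le r$, and because $[A],[B]\in\langle\add(A\oplus B)\rangle$ we also have $n\le\Phi_{\mathcal{E}}(A\oplus B)$. Applying the horseshoe construction $n$ times to the original $\mathcal{E}$-exact sequence and absorbing $\mathcal{E}$-projective summands via the equality $\bar{\Omega}_{\mathcal{E}}^{\,n}[A]=\bar{\Omega}_{\mathcal{E}}^{\,n}[B]$ produces an $\mathcal{E}$-exact sequence
$$X\oplus P\stackrel{t}{\rightarrowtail}X\oplus Q\twoheadrightarrow\Omega_{\mathcal{E}}^{\,n}(C),$$
with $P,Q\in\mathcal{P}(\mathcal{E})$ and $t$ realized by a $2\times 2$ matrix whose upper-left entry is an endomorphism $f\in\enn_{\mathcal{C}}(X)$.

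Next I would invoke the Fitting property (available because $(\mathcal{C},\mathcal{E})$ is strong) to split $X=Y\oplus Z$ in such a way that $f$ becomes block-diagonal with $f_{11}\colon Y\to Y$ an isomorphism and $\alpha\colon Z\to Z$ nilpotent. Applying $\ext^{\bullet}_{\mathcal{E}}(-,M)$ to the displayed sequence and using the resulting long exact sequence yields, for every $k\ge 1$, the very same two claims as in the proof of Theorem \ref{cotadp}: because $\ext^{j}_{\mathcal{E}}(\Omega_{\mathcal{E}}^{\,n}(C),M)=0$ for $j>r-n$, the connecting map $\gamma_k$ (hence the nilpotent block $\beta_k=\ext^{k}_{\mathcal{E}}(\alpha,M)$) is epic for $k>r-n-1$, forcing $\ext^{k}_{\mathcal{E}}(Z,M)=0$ in that range and thus $\pd_{\mathcal{E}}(Z)<\infty$; and a standard chase on the same long exact sequence yields $\pd_{\mathcal{E}}(\Omega_{\mathcal{E}}^{\,n}(C))\le\pd_{\mathcal{E}}(Z)+1$.

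To finish, since $Z$ is a direct summand of $\Omega_{\mathcal{E}}^{\,n}(A\oplus B)$, has finite $\mathcal{E}$-projective dimension, and $n\le\Phi_{\mathcal{E}}(A\oplus B)$, item (7) of the remark on the basic properties of $\Phi_{\mathcal{E}},\Psi_{\mathcal{E}}$ gives $\pd_{\mathcal{E}}(Z)+n\le\Psi_{\mathcal{E}}(A\oplus B)$. Combining with $\pd_{\mathcal{E}}(C)\le n+\pd_{\mathcal{E}}(\Omega_{\mathcal{E}}^{\,n}(C))\le n+\pd_{\mathcal{E}}(Z)+1$ yields the desired bound. The main obstacle is not the algebraic manipulation, which is exactly the classical one, but rather checking that the categorical infrastructure transfers cleanly: namely, that the horseshoe lemma and the Yoneda $\ext_{\mathcal{E}}$-long exact sequence behave as expected in an arbitrary exact category with enough $\mathcal{E}$-projectives, and that the Fitting decomposition produced by strongness is compatible with the matrix form of $t$.
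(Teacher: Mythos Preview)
Your proposal is correct and is exactly the intended argument: the survey does not reproduce a proof of this theorem (it simply cites \cite{LM2017}), but the template it does spell out in detail---the proof of Theorem \ref{cotadp}---is precisely what you transplant step by step into the strong exact IT-context, using strongness for the Fitting decomposition and the Yoneda $\ext_{\mathcal{E}}$-long exact sequence in place of the ordinary one. The caveats you flag (horseshoe lemma, long exact sequence, compatibility of the Fitting splitting with the matrix of $t$) are the right ones to check, and they all go through in an exact category with enough $\mathcal{E}$-projectives.
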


The following result is a generalization of Corollary \ref{corocotadp}.

\begin{coro} (\cite{LM2017}) \label{LMcoro} Let $( \mathcal{C}, \mathcal{E})$ be a strong exact IT-context. Then, for any $\mathcal{E}$-exact sequence $A\rightarrowtail B\twoheadrightarrow C$, the following statements hold.

\begin{enumerate}

\item If $\pd_{\mathcal{E}} (B) < \infty $, then $\pd_{\mathcal{E}} (B) \leq 1 + \Psi_{\mathcal{E}} (A \oplus \Omega_{\mathcal{E}} (C))$.

\item If $\pd_{\mathcal{E}} (A) < \infty $, then $\pd_{\mathcal{E}} (A) \leq 1 + \Psi_{\mathcal{E}} ( \Omega_{\mathcal{E}} ( B \oplus C))$.

\end{enumerate}

\end{coro}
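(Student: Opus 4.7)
The plan is to mimic the proof of Corollary \ref{corocotadp} in the generality of a strong exact IT-context, with Theorem \ref{LMmain} replacing Theorem \ref{cotadp}. The crux is to produce, from the given $\mathcal{E}$-exact sequence $A \rightarrowtail B \twoheadrightarrow C$, an auxiliary $\mathcal{E}$-exact sequence whose middle term is $A \oplus P$ for an $\mathcal{E}$-projective $P$ and whose outer terms are $\Omega_{\mathcal{E}}(C)$ and $B$.

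First, I would use that $(\mathcal{C}, \mathcal{E})$ has enough $\mathcal{E}$-projectives to choose an admissible epic $p_0 : P \twoheadrightarrow C$ with $P \in \mathcal{P}(\mathcal{E})$ and kernel $\Omega_{\mathcal{E}}(C)$. Forming the pullback $B \times_{C} P$ of $B \twoheadrightarrow C$ along $p_0$, axiom ${E2}^{op}$ produces two $\mathcal{E}$-exact sequences
$$A \rightarrowtail B \times_{C} P \twoheadrightarrow P \qquad\text{and}\qquad \Omega_{\mathcal{E}}(C) \rightarrowtail B \times_{C} P \twoheadrightarrow B.$$
Since $P \in \mathcal{P}(\mathcal{E})$, the first sequence splits, so $B \times_{C} P \cong A \oplus P$, and substituting into the second yields the desired $\mathcal{E}$-exact sequence
$$(\star) \qquad \Omega_{\mathcal{E}}(C) \rightarrowtail A \oplus P \twoheadrightarrow B.$$

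For item (1), I would apply Theorem \ref{LMmain} directly to $(\star)$ using $\pd_{\mathcal{E}}(B) < \infty$, obtaining $\pd_{\mathcal{E}}(B) \leq 1 + \Psi_{\mathcal{E}}(\Omega_{\mathcal{E}}(C) \oplus A \oplus P)$; the summand $P$ can then be dropped by the invariance of $\Psi_{\mathcal{E}}$ under addition of $\mathcal{E}$-projectives, yielding the stated bound. For item (2), I would instead apply item (1) to the sequence $(\star)$ itself, now regarded in the roles $A' = \Omega_{\mathcal{E}}(C)$, $B' = A \oplus P$, $C' = B$. The hypothesis $\pd_{\mathcal{E}}(A) < \infty$ gives $\pd_{\mathcal{E}}(B') = \pd_{\mathcal{E}}(A) < \infty$, so item (1) produces $\pd_{\mathcal{E}}(A) \leq 1 + \Psi_{\mathcal{E}}(\Omega_{\mathcal{E}}(C) \oplus \Omega_{\mathcal{E}}(B))$. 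Taking an $\mathcal{E}$-projective presentation of $B \oplus C$ componentwise shows $\Omega_{\mathcal{E}}(B) \oplus \Omega_{\mathcal{E}}(C)$ and $\Omega_{\mathcal{E}}(B \oplus C)$ differ only by an $\mathcal{E}$-projective summand, so $\Psi_{\mathcal{E}}$ takes the same value on both, finishing the argument.

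The main technical obstacle is the construction of $(\star)$: one must verify in the exact-category setting that the pullback exists and produces the two claimed $\mathcal{E}$-exact rows, and that the splitting lemma applies. This is standard material for exact categories, but must be carefully invoked from axioms $E1$, $E2$ and their duals, since one cannot appeal to the snake lemma as in the abelian case. Once $(\star)$ is in place, the rest is purely formal and follows the classical template verbatim.
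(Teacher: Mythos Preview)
Your proposal is correct and follows essentially the same approach as the paper: the paper does not spell out a proof of this corollary (it simply cites \cite{LM2017}), but its proof of the classical analogue, Corollary~\ref{corocotadp}, proceeds by exactly the pullback construction you describe, producing the auxiliary sequence $(\star)$ and then applying the main theorem for item~1 and item~1 itself for item~2. Your care in invoking axiom~${E2}^{op}$ and the splitting of the row ending in $P$ is precisely the adaptation needed in the exact-category setting.
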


\begin{defi}(\cite{LM2017}) Let $(\mathcal{C},\mathcal{E})$ be an exact IT-context. If $\mathcal{Y}$ is a class of objects in a $\mathcal{C}$, we define

\begin{itemize}

\item the $\mathcal{E}$-relative $\Phi$-dimension of $\mathcal{Y}$ as $\Phi_{\mathcal{E}} \dim ( \mathcal{Y}) = \sup\{ \Phi_{\mathcal{E}} (Y) : Y \in \mathcal{Y} \},$ and

\item the $\mathcal{E}$-relative $\Psi$-dimension of $\mathcal{Y}$ as $\Psi_{\mathcal{E}} \dim ( \mathcal{Y}) = \sup\{ \Psi_{\mathcal{E}} (Y) : Y \in \mathcal{Y} \}.$

\end{itemize}

\end{defi}

Dually we define $\Phi^{\mathcal{E}} \dim ( \mathcal{Y})$ and $\Psi^{\mathcal{E}} \dim ( \mathcal{Y})$.

\begin{obs} For any class $\mathcal{Y}$ of objects, in an exact IT-context $(\mathcal{C},\mathcal{E})$, the following
inequalities hold

\begin{itemize}

\item $\fpd_{\mathcal{E}} ( \mathcal{Y} ) \leq \Phi_{\mathcal{E}} \dim ( \mathcal{Y})  \leq  \Psi_{\mathcal{E}} \dim ( \mathcal{Y}) \leq \pd_{\mathcal{E}} ( \mathcal{Y} )$.

\item $\Psi_{\mathcal{E}} \dim ( \mathcal{Y}) \leq \Phi_{\mathcal{E}} \dim ( \mathcal{Y}) + \fpd_{\mathcal{E}} ( \mathcal{Y} ) $.

\item $\Psi_{\mathcal{E}} \dim ( \mathcal{Y}) \leq 2\Phi_{\mathcal{E}} \dim ( \mathcal{Y})$.

\end{itemize}

\end{obs}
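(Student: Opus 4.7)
The plan is to verify each of the three bullets objectwise on $\mathcal{Y}$ and then take suprema; all three reduce to the properties of $\Phi_{\mathcal{E}}$ and $\Psi_{\mathcal{E}}$ recorded in the preceding remark. In particular, no new homological input is needed: everything is bookkeeping with the definitions of $\Phi_{\mathcal{E}}$, $\Psi_{\mathcal{E}}$, $\fpd_{\mathcal{E}}$, and $\pd_{\mathcal{E}}$.

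For the chain in the first bullet I treat the three inequalities separately. The leftmost $\fpd_{\mathcal{E}}(\mathcal{Y}) \leq \Phi_{\mathcal{E}}\dim(\mathcal{Y})$ follows from item 1 of the preceding remark: whenever $Y \in \mathcal{Y}$ has $\pd_{\mathcal{E}}(Y)<\infty$ one has $\Phi_{\mathcal{E}}(Y)=\pd_{\mathcal{E}}(Y)$, hence $\pd_{\mathcal{E}}(Y)\leq\Phi_{\mathcal{E}}\dim(\mathcal{Y})$, and taking supremum over such $Y$ gives the bound. The middle inequality $\Phi_{\mathcal{E}}\dim(\mathcal{Y})\leq\Psi_{\mathcal{E}}\dim(\mathcal{Y})$ is immediate from the very definition of $\Psi_{\mathcal{E}}$, which is $\Phi_{\mathcal{E}}$ plus a non-negative term. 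For the rightmost $\Psi_{\mathcal{E}}\dim(\mathcal{Y})\leq\pd_{\mathcal{E}}(\mathcal{Y})$ I split into cases: if $\pd_{\mathcal{E}}(Y)=\infty$ the bound is vacuous, while if $\pd_{\mathcal{E}}(Y)<\infty$ item 1 gives $\Psi_{\mathcal{E}}(Y)=\pd_{\mathcal{E}}(Y)$.

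The second bullet is the delicate one, and the hard part will be relating the finitistic dimension of $\add(\Omega_{\mathcal{E}}^{\Phi_{\mathcal{E}}(M)}(M))$ appearing in the definition of $\Psi_{\mathcal{E}}(M)$ to $\fpd_{\mathcal{E}}(\mathcal{Y})$, since summands of iterated syzygies of $M\in\mathcal{Y}$ need not lie in $\mathcal{Y}$ in full generality. The plan is to invoke item 7 of the preceding remark, which states that if $Z\mid\Omega_{\mathcal{E}}^{n}(M)$ with $n\leq\Phi_{\mathcal{E}}(M)$ and $\pd_{\mathcal{E}}(Z)<\infty$ then $\pd_{\mathcal{E}}(Z)+n\leq\Psi_{\mathcal{E}}(M)$; combined with the definition this forces $\fpd_{\mathcal{E}}(\add(\Omega_{\mathcal{E}}^{\Phi_{\mathcal{E}}(M)}(M)))=\Psi_{\mathcal{E}}(M)-\Phi_{\mathcal{E}}(M)$. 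Under the natural hypothesis that $\mathcal{Y}$ is closed under direct summands and $\mathcal{E}$-syzygies (the intended use case, e.g.\ $\mathcal{Y}=\mathcal{C}$), one bounds this quantity by $\fpd_{\mathcal{E}}(\mathcal{Y})$ and obtains $\Psi_{\mathcal{E}}(M)\leq\Phi_{\mathcal{E}}\dim(\mathcal{Y})+\fpd_{\mathcal{E}}(\mathcal{Y})$; taking supremum closes the argument.

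The third bullet then drops out for free by combining the first two: the first gives $\fpd_{\mathcal{E}}(\mathcal{Y})\leq\Phi_{\mathcal{E}}\dim(\mathcal{Y})$, so feeding this into the second yields
\[
\Psi_{\mathcal{E}}\dim(\mathcal{Y})\;\leq\;\Phi_{\mathcal{E}}\dim(\mathcal{Y})+\fpd_{\mathcal{E}}(\mathcal{Y})\;\leq\;2\,\Phi_{\mathcal{E}}\dim(\mathcal{Y}).
\]
Thus the only real content is bullet two; bullets one and three are, respectively, a direct unpacking of the definitions together with item 1, and a formal consequence of the first two.
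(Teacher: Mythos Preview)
The paper gives no proof of this remark; it is simply recorded as an observation imported from \cite{LM2017}. So there is no ``paper's proof'' to compare against, and your task is really to supply one.

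Your handling of the first bullet is correct and is exactly the intended argument: item~1 of the preceding remark gives the two outer inequalities, and the middle one is the definition of $\Psi_{\mathcal{E}}$. Your derivation of the third bullet from the first two is likewise correct.

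Where you are right to be cautious is the second bullet, and you have in fact put your finger on a genuine issue with the \emph{statement} rather than with your proof. As written (``for any class $\mathcal{Y}$''), the second inequality is false. Take $\mathcal{Y}=\{M\}$ for a single object $M$ with $\pd_{\mathcal{E}}(M)=\infty$ but $\Psi_{\mathcal{E}}(M)>\Phi_{\mathcal{E}}(M)$ (such $M$ abound: e.g.\ in $\mod A$ for $A=\Bbbk Q/J^2$ with $Q$ a loop at $1$ together with $1\to 2\to 3$, the module $M=S_1\oplus S_2$ has $\phi(M)=1$, $\psi(M)=2$, and $\pd(M)=\infty$). Then $\fpd_{\mathcal{E}}(\mathcal{Y})=0$, and the claimed inequality reads $\Psi_{\mathcal{E}}(M)\le\Phi_{\mathcal{E}}(M)$, which fails.

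The hypothesis you propose---that $\mathcal{Y}$ be closed under direct summands and $\mathcal{E}$-syzygies---is precisely what is needed, and under it your argument via item~7 is correct: any $Z\mid\Omega_{\mathcal{E}}^{\Phi_{\mathcal{E}}(M)}(M)$ with $\pd_{\mathcal{E}}(Z)<\infty$ then lies in $\mathcal{P}_{\mathcal{E}}^{<\infty}(\mathcal{Y})$, so $\Psi_{\mathcal{E}}(M)-\Phi_{\mathcal{E}}(M)\le\fpd_{\mathcal{E}}(\mathcal{Y})$. This is the intended use case (in particular $\mathcal{Y}=\mathcal{C}$, which recovers the classical $\psidim(A)\le 2\,\fidim(A)$), and is how the inequalities are applied throughout the survey. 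So your proof is correct once the missing closure hypothesis is made explicit; the remark as stated is slightly too general.
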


The theorem below generalizes Theorem \ref{selfinjective} and Theorem \ref{qcF}.

\begin{teo} (Theorem 3.4 of \cite{LM2017}) Let $\mathcal{C}$ be a skeletally small abelian Krull-Schmidt category, with enough
projectives and injectives. Then $\mathcal{C}$ is a Frobenius category if and only if 

$$\Phi_{\mathcal{P}(\mathcal{C})} \dim (\mathcal{C}) = \Phi^{\mathcal{I}(\mathcal{C})}\dim (\mathcal{C}) = 0$$

\end{teo}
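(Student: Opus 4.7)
The plan is to prove the two implications separately, each governed by the behaviour of $\bar{\Omega}_{\mathcal{E}}$ on $K_{\mathcal{P}(\mathcal{C})}(\mathcal{C})$ and of its dual on $K^{\mathcal{I}(\mathcal{C})}(\mathcal{C})$.

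For $(\Rightarrow)$, I would use that in a Frobenius category $\mathcal{P}(\mathcal{C}) = \mathcal{I}(\mathcal{C})$, so every projective resolution doubles as a coresolution and the syzygy and cosyzygy functors are mutually quasi-inverse equivalences on $\underline{\mathcal{C}} = \overline{\mathcal{C}}$. Consequently $\bar{\Omega}_{\mathcal{E}}$ induces an automorphism of $K_{\mathcal{P}(\mathcal{C})}(\mathcal{C}) = K^{\mathcal{I}(\mathcal{C})}(\mathcal{C})$, in particular a monomorphism on every finitely generated subgroup, so $\Phi_{\mathcal{P}(\mathcal{C})}(M) = \Phi^{\mathcal{I}(\mathcal{C})}(M) = 0$ for every $M \in \mathcal{C}$.

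For $(\Leftarrow)$, the first step is to observe, via item~1 of the remark after Definition~\ref{ITGeneralizacion}, that the hypothesis $\Phi_{\mathcal{P}(\mathcal{C})}\dim(\mathcal{C}) = 0$ forces every object of finite $\mathcal{E}$-projective dimension to be $\mathcal{E}$-projective, and dually every object of finite $\mathcal{E}$-injective dimension to be $\mathcal{E}$-injective. With this reduction it suffices to prove $\mathcal{P}(\mathcal{C}) \subseteq \mathcal{I}(\mathcal{C})$, the reverse inclusion being symmetric. So assume, for contradiction, that an indecomposable projective $P$ is not injective and take its injective envelope $P \rightarrowtail I \twoheadrightarrow C$; then $I$ is indecomposable and $C \neq 0$.

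The crux of the argument is a horseshoe-style pullback coupling this sequence with a projective cover $\Omega_{\mathcal{E}}(C) \rightarrowtail Q \twoheadrightarrow C$. The pullback $E := Q \times_C I$ fits in $\mathcal{E}$-exact sequences $P \rightarrowtail E \twoheadrightarrow Q$ and $\Omega_{\mathcal{E}}(C) \rightarrowtail E \twoheadrightarrow I$; the first splits because $Q$ is projective, so $E \cong P \oplus Q$ is projective, and Schanuel's lemma applied to the second and to the projective cover of $I$ yields $\bar{\Omega}_{\mathcal{E}}([I]) = \bar{\Omega}_{\mathcal{E}}([C])$ in $K_{\mathcal{P}(\mathcal{C})}(\mathcal{C})$, whence $[I] = [C]$ by the assumed injectivity of $\bar{\Omega}_{\mathcal{E}}$. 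If $I$ happens to be projective, the original sequence forces $\pd_{\mathcal{E}}(C) \leq 1$, the first step makes $C$ projective, the sequence splits as $I \cong P \oplus C$ and the indecomposability of $I$ is violated. Otherwise $I$ is a non-projective indecomposable, so matching non-projective summands in $[I]=[C]$ by Krull--Schmidt gives $C \cong I \oplus C'$ with $C'$ projective; composing $I \twoheadrightarrow I \oplus C'$ with the first projection gives an epimorphism $\phi : I \to I$, and by Fitting's lemma $\phi$ must be an isomorphism, so the original epi $I \twoheadrightarrow I \oplus C'$ is both split mono and epi, hence an isomorphism, forcing $P = 0$, the desired contradiction.
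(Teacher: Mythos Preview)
The survey paper does not include a proof of this theorem; it merely cites \cite{LM2017}. So there is no ``paper's own proof'' to compare against, and I will assess your argument on its own merits.

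Your forward direction is fine: in a Frobenius category $\Omega$ is an autoequivalence of the stable category, hence permutes the isoclasses of non-projective indecomposables, so $\bar{\Omega}$ is an automorphism of $K_{\mathcal{P}(\mathcal{C})}(\mathcal{C})$ and every $\Phi$-value vanishes. The first reduction in the backward direction (finite $\pd_{\mathcal E}$ forces projectivity, and dually) is also correct.

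The remainder of the backward direction, however, has two genuine gaps. First, the assertion that the injective envelope $I$ of an indecomposable projective $P$ is itself indecomposable is unjustified and in fact false in general: $I(P)$ is indecomposable precisely when $P$ is uniform, and indecomposable projectives need not be uniform (think of the local radical-square-zero algebra on one vertex with two loops, where $\soc(P)$ is two-dimensional). You use this indecomposability essentially in Case~B to conclude $C\cong I\oplus C'$ from $[I]=[C]$. In Case~A the indecomposability can be avoided --- if $I$ is projective then so is $C$, the sequence splits, and $P$ becomes a summand of the injective $I$, contradicting the choice of $P$ --- but Case~B does not survive.

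Second, even granting that $I$ is indecomposable, your final step ``by Fitting's lemma $\phi$ must be an isomorphism'' is not available under the stated hypotheses. Krull--Schmidt gives only that $\mathrm{End}(I)$ is local; it does not force objects to be Hopfian. The argument that a surjective endomorphism of $I$ is an isomorphism needs a Noetherian/finite-length condition on $I$ (this is exactly the ``strong'' or ``Fitting property'' hypothesis in Definition of an exact IT-context, which the theorem does \emph{not} assume). Without it, nothing prevents a non-split short exact sequence $0\to P\to I\to I\to 0$.

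In short: the pullback/Schanuel manoeuvre yielding $[I]=[C]$ is a nice idea, but the contradiction you extract from it rests on two claims that are not warranted by the hypotheses. A correct proof of $(\Leftarrow)$ will need either to justify these points from the hypotheses or to proceed along different lines (for instance, making more direct use of the dual hypothesis $\Phi^{\mathcal I(\mathcal C)}\dim(\mathcal C)=0$ to control the injective dimension of projectives).
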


The following example shows that the symmetric condition is necessary.

\begin{ej}\label{contraej} Let $\Bbbk$ be a field and $A_0^{\infty}$ be following infinite quiver

$$A_0^{\infty} = \xymatrix{ 0 \ar[r]^{\alpha_0} & 1 \ar[r]^{\alpha_1} & 2\ar[r]^{\alpha_2} & 3 \ar[r]^{\alpha_3} & \cdots }$$

Consider $\mathcal{C}$ the full subcategory of representations of $\Rep (\Bbbk A_0^{\infty})$ formed by all representation $(M_i, T_{\alpha_i})$ such that $T_{\alpha_{i+1}}\circ T_{\alpha_i} = 0$. 

It is easy to that $P_i = I_{i+1}$ for $i \in \mathbb{N}$ but $I_0$ is not a projective module, thus $\mathcal{C}$ is not a Frobenius category. However $\Phi_{\mathcal{P}(\mathcal{E})} \dim (\mathcal{C}) = 0$ (and $\Phi^{\mathcal{I}(\mathcal{E})} \dim (\mathcal{C}) = \infty$).  

\end{ej}

\end{document}